\pdfoutput=1
\documentclass[12pt,leqno]{amsart}
\usepackage{amssymb,amsmath,amsthm,bm}
\usepackage{graphicx}
\usepackage{color}
 \usepackage[textsize=tiny]{todonotes}
\usepackage[normalem]{ulem}
\usepackage[bookmarksopen,bookmarksdepth=3,colorlinks,citecolor=red,pagebackref,hypertexnames=true]{hyperref}
\usepackage[msc-links,nobysame,sorted-cites,initials]{amsrefs}
\usepackage[inline]{enumitem}
\setlist[enumerate]{font=\upshape}
\usepackage{mathtools}
\mathtoolsset{showonlyrefs}
\usepackage{tikz}
\usetikzlibrary{arrows.meta,calc}

\definecolor{darkblue}{RGB}{0,0,160}

\usepackage[lining]{libertine}
\usepackage{cabin}
\usepackage[libertine]{newtxmath}

\def\eps{\varepsilon}
\def\d{{\mathrm d}}
\def\dist{\mathrm {dist}}

\def\R {\mathbb{R}}
\def\N {\mathbb{N}}

\def\C {{\mathbb C}}

\def\M {{\mathrm M}}
\def\Z {{\mathbb Z}}

\def\P {\mathbb P}

\def\ind{\mathbf{1}}

\newcommand{\diam}{\operatorname{diam}}
\newcommand{\supp}{\operatorname{supp}}

\newcommand{\Sy}{\mathsf{Sy}}
\newcommand{\PM}{\mathsf{P}}
\newcommand{\w}{\mathsf w}
\newcommand{\nat}[1]{\langle #1\rangle^{\natural}}
\newcommand{\natl}[2]{\langle #1\rangle^{\natural,[#2]}}
\newcommand{\flt}[1]{\langle #1\rangle^{\flat}}

   \def\XXint#1#2#3{{\setbox0=\hbox{$#1{#2#3}{\int}$}
       \vcenter{\hbox{$#2#3$}}\kern-.5\wd0}}

\makeatletter
\DeclareRobustCommand\widecheck[1]{{\mathpalette\@widecheck{#1}}}
\def\@widecheck#1#2{%
    \setbox\z@\hbox{\m@th$#1#2$}%
    \setbox\tw@\hbox{\m@th$#1%
       \widehat{%
          \vrule\@width\z@\@height\ht\z@
          \vrule\@height\z@\@width\wd\z@}$}%
    \dp\tw@-\ht\z@
    \@tempdima\ht\z@ \advance\@tempdima2\ht\tw@ \divide\@tempdima\thr@@
    \setbox\tw@\hbox{%
       \raise\@tempdima\hbox{\scalebox{1}[-1]{\lower\@tempdima\box
\tw@}}}%
    {\ooalign{\box\tw@ \cr \box\z@}}}
\makeatother

\newcounter{counter}

\numberwithin{equation}{section}
\numberwithin{counter2}{section}

\newtheorem{theorem}[counter]{Theorem}
\newtheorem*{theoremx}{Theorem B$'$}
\newtheorem{introthm}{Theorem}

\numberwithin{withinsub}{subsection}
\newtheorem{corollary}{Corollary}
\numberwithin{corollary}{counter}
\newtheorem{lemma}[subsection]{Lemma}

\newtheorem{proposition}[subsection]{Proposition}

\theoremstyle{definition}
\newtheorem{definition}[subsection]{Definition}

\newtheorem*{remark*}{Remark}
\newtheorem*{warn*}{A word of warning}
\newtheorem{example}[subsection]{Example}
\newtheorem{remark}[subsection]{Remark}

\theoremstyle{definition}

 \theoremstyle{plain}

\numberwithin{figure}{section}

\begin{document}

\title[Sparse domination modulo polynomials]{Sparse domination modulo polynomials}

  \author[F.\ Di Plinio]{Francesco Di Plinio}
\address[F.\ Di Plinio]{Dipartimento di Matematica e Applicazioni ``R.\ Caccioppoli'' \\ \newline\indent Universit\`a di Napoli ``Federico II'', Via Cintia, Monte S.\ Angelo 80126 Napoli, Italy}
\email{\href{mailto:francesco.diplinio@unina.it}{\textnormal{francesco.diplinio@unina.it}}}

\subjclass{Primary: 42B20. Secondary: 42B25}
\keywords{Cancellative sparse bounds, Calder\'on--Zygmund theory, weighted Sobolev bounds, wavelet  representation}

\begin{abstract}
We develop a sparse domination theory that captures both local polynomial approximation and cancellation of the input below the scale of each sparse cube. Its coefficients are percentiles of localized smooth maximal functions of the input minus its local polynomial approximation, combining features of mean-oscillation sparse bounds and of the cancellative sparse bounds of Conde-Alonso, Lorist and Rey. Using wavelet representations, we characterize the Calder\'on--Zygmund operators whose derivatives admit these bounds by explicit polynomial testing conditions.

In the homogeneous setting, the same conditions characterize weighted Hardy--Sobolev inequalities with linear dependence on the $A_\infty$ characteristic. This quantitative dependence detects the transition from $\mathrm{BMO}$ to $L^\infty$ testing conditions that Lerner found for pointwise bounds by Riesz potentials, a distinction that qualitative weighted estimates cannot recover. In the inhomogeneous setting, we prove corresponding weighted inequalities and show that they are strictly weaker than pointwise sparse domination from the second derivative order onward. Consequences include Hardy--Sobolev estimates below the Banach range, sharp weighted Sobolev inequalities, and pointwise bounds by Riesz potentials.
\end{abstract}
\maketitle

\section{Introduction}\label{s:intro}
Singular integral operators of Calder\'on--Zygmund type, non-local and non-positive by nature, are pointwise controlled by a sum of  positive  localized  averages of the input function over  the cubes of a \emph{sparse}---in different jargon, Carleson packing---collection $\mathcal S$, see \S\ref{ss:classes} and \cites{CR16,LN19,lacey17,lerner16,  CDPO18b}.
In symbols, for each test function $f$ \begin{equation}\label{e:introavg}
|Tf(x)|\lesssim\sum_{S\in\mathcal S}\langle f\rangle_{S}\ind_S(x) , \qquad x\in \R^d,
\end{equation} with $\langle f\rangle_{S}$ standing for the average of $|f|$ on the cube $S$, and the sparse collection $\mathcal S$ depending on the input function. The original motivation for \eqref{e:introavg} is that $L^p$ and
weighted norm inequalities for $T$ reduce to those of the positive averaging operator on the right hand side, as in Lerner's proof \cite{lerner13} of the $A_2$ theorem \cite{hytonen12}, which relied on the local mean oscillation decomposition of \cite{lerner10}. Later developments beyond the Calder\'on--Zygmund class include  positive sparse domination of rough and non-kernel  singular integrals \cites{BFP16,CCDO17}, modulation invariant  operators \cites{CDPO18,DPL14}, maximal and singular Radon transforms, rough multipliers \cites{BRS24,CDPV22,L19,BRS23}, and discrete operators \cites{CKL19,HLY21}. A more complete overview may be found in the survey \cite{pereyra19}.

However, the averages in \eqref{e:introavg} do not see the vanishing moments of atoms, and for this reason cannot  lead to Hardy space estimates. Moreover, they fail to discount the local proximity of $f$ to polynomials, which governs Sobolev estimates for the derivatives of $T$. Sparse bounds retaining cancellation have developed along two complementary lines. For a class of Calder\'on--Zygmund operators satisfying \(T1=0\), Benea and Bernicot \cite{BB18} replace the averages in \eqref{e:introavg} by mean oscillations of \(f\).   Lerner \cite{lerner26}, still under the   \(T1=0\) assumption, established the same type of  mean oscillation sparse bound for Dini continuous kernels, which in turn yields a characterization, for \(d\geq2\), of the pointwise inequality \(|Tf|\lesssim I_1(|\nabla f|)\) by \(T1\in L^\infty(\R^d)\), where \(I_1\) is the Riesz potential of order one. Previous  work of  Hoang, Moen and P\'erez \cites{HMP25a,HMP25b,HMP25c}  had established this type of pointwise bound for rough singular convolution kernels. In parallel,  the recent work of Conde-Alonso, Lorist and Rey \cite{CLR26} develops instead a sparse domination principle based on percentiles of localized smooth grand maximal functions, which detect cancellation occurring below the scale of the reference cube, and lead to weighted Hardy space estimates. The contrast between the two lines is that mean oscillations subtract constants, but do not see this subscale cancellation, while the percentile coefficients of \cite{CLR26} see it, but do not discount locally constant behavior. This article designs a set of coefficients, \eqref{e:intropo}  below, combining and generalizing the scope  of both operations simultaneously.

\subsection{Percentile modulo polynomials}Below,  \(\M^\tau_{3Q}\) denotes a version of  the localized smooth grand maximal function of \cite{CLR26},
with test functions of smoothness \(\tau>0\), and \(g^{*,Q}(r)\) is the upper \(r\)-percentile of \(|g|\) on \(Q\), at  a sufficiently small level \(r>0\).
 For $m\geq 0$, let \(\PM_Q^m f\) denote the local polynomial projection of \(f\) onto \(\P^{m-1}\), the space of polynomials of degree at most $m-1$, with $\P^{-1}=\{0\}$, see \eqref{e:proj}. Set
\begin{equation}\label{e:intropo}
\nat{f}_{m,Q}
\coloneqq
\left(\M^\tau_{3Q}\big[f-\PM^m_{3Q}f\big]\right)^{*,Q}(r),
\qquad m=0,1,2,\ldots;  \end{equation}
 precise definitions are given in \S\S\ref{ss:classes}--\ref{ss:perc}. Thus one first subtracts the local polynomial, then propagates the remainder through the smooth maximal function, and finally retains a fixed percentile. For \(m=0\), \eqref{e:intropo} is the coefficient of \cite{CLR26}. For \(m\ge1\), it vanishes whenever \(f\) agrees on \(3Q\) with a polynomial of degree less than \(m\), and it is dominated both by the coefficient of order zero and, up to the factor $r^{-1}$, by the Campanato oscillation \eqref{e:campanato} \cite{campanato63}, which removes the part of $f$ that is locally polynomial, see Lemma \ref{l:po}. It may be much smaller than either, for instance for a polynomial plus a concentrated atom with vanishing mean, see Remark \ref{r:axes}.

The two operations are shown to play complementary roles in the results below. The subtraction of polynomials determines  the testing conditions on the operator being sparsely dominated.  The percentile retains the cancellation at the scales below those of the sparse cubes, which the weighted Hardy space and $A_\infty$ estimates for $p\leq1$ require. The underlying scope of this article  is then the identification of the polynomial testing conditions under which  the derivatives of Calder\'on--Zygmund operators satisfy sparse bounds by the coefficients \eqref{e:intropo}, and the corresponding quantitative weighted Hardy--Sobolev inequalities, in particular depending linearly on the $A_\infty$ characteristic.

\subsection{Quantitative weighted theory}\label{ss:introweights}
Let \[\mathcal N_mf\coloneqq\sup_Q\nat{f}_{m,Q}\ind_Q\] be the maximal percentile oscillation of order \(m\). For fixed \(0<p<\infty\) and \(w\in A_\infty\), the norms \[\|\mathcal N_mf\|_{L^p(w)}, \qquad  m\geq0,\]
 are all comparable to the \(L^p(w)\)-norm of the smooth grand maximal function of \(f\), and hence, in a range of \(p\) and \(w\), to a weighted Hardy norm \cite{ST89}; see Proposition \ref{c:hardynorm}. Thus, for fixed \(p\) and \(w\), these norms do not distinguish the polynomial order. Remarkably, the distinction reappears in the \emph{quantitative dependence on the weight}. The simplest instance is the following case of Theorem \ref{t:HS}, for the Calder\'on--Zygmund operators $T$ of order zero in the sense of Definition \ref{d:czo}. The kernel of $T$ satisfies the standard size and H\"older estimates, with H\"older exponent $\delta$, and $T$ satisfies the weak boundedness property.
\begin{introthm}\label{t:introI}
Let $T$ be a Calder\'on--Zygmund operator of order zero, $0<\tau<\delta$ and $r>0$ sufficiently small. The following are equivalent.
\begin{enumerate}[label=(\roman*)]
\item $T1\in L^\infty(\R^d)$ and $T^*1\in\mathrm{BMO}(\R^d)$.
\item For every $f\in \mathcal{C}^\infty_c(\R^d)$, the distribution $Tf$ is a locally integrable function, and there exists a sparse collection $\mathcal S$ of cubes of $\R^d$ such that
\[
|Tf|\lesssim\sum_{S\in\mathcal S}\nat{f}_{1,S}\ind_S\qquad\textrm{a.e.\ on }\R^d.
\]
\item For all $0<p<\infty$, $w\in A_\infty$ and $f\in \mathcal{C}^\infty_c(\R^d)$,
\[
\|Tf\|_{L^p(w)}\lesssim_p[w]_{A_\infty}\|\mathcal N_1f\|_{L^p(w)}.
\]
\end{enumerate}
With $\nat{f}_{0,S}$ and $\mathcal N_0$ in place of $\nat{f}_{1,S}$ and $\mathcal N_1$, the same equivalence holds with $T1\in\mathrm{BMO}(\R^d)$ in place of $T1\in L^\infty(\R^d)$.
\end{introthm}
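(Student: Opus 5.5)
The equivalence will be proved along the cycle (i)$\Rightarrow$(ii)$\Rightarrow$(iii)$\Rightarrow$(i), and the $m=0$ variant with the same scheme. We take the operator through its wavelet representation: using compactly supported smooth wavelets of regularity exceeding $\delta$, we write $T$ as a cancellative part plus two paraproducts. The cancellative part is a sum of dyadic shifts with almost diagonal coefficients, of decay governed by $\delta$ and the weak boundedness property. The paraproducts are $\Pi_{T1}$ and $\Pi_{T^*1}^*$, whose symbols are $T1$ and $T^*1$ tested against wavelets.

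\textbf{(i)$\Rightarrow$(ii).} This proceeds term by term. For the cancellative part, every wavelet coefficient $\langle f,\psi_P\rangle$ with $P\subset 3Q$ is unchanged if $f$ is replaced by $f-\PM^1_{3Q}f$, because wavelets have vanishing mean. Such coefficients are bounded by $|P|^{1/2}\inf_{P}\M^\tau_{3Q}[f-\PM^1_{3Q}f]$ once $\tau<\delta$ and the wavelets are admissible test functions. The stopping construction of \cite{CLR26} then applies verbatim: we stop where the localized maximal function exceeds its $r$-percentile. This produces the sparse family and the bound by $\nat{f}_{1,S}$.

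The paraproduct $\Pi_{T^*1}^*f=\sum_P\langle T^*1,\psi_P\rangle\langle f,\psi_P\rangle \frac{\ind_P}{|P|}$ also only sees wavelet coefficients of $f$. The Carleson condition on $T^*1\in\mathrm{BMO}$ therefore lets it be absorbed in the same stopping argument, exactly as in the $m=0$ theory.

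The genuinely new term is $\Pi_{T1}f=\sum_P\langle T1,\psi_P\rangle\langle f\rangle_{P}\,\psi_P$, which involves averages of $f$. The averages are not killed by subtracting $\PM^1$. On each stopping cube $S$ we split $\langle f\rangle_P=\langle f-\PM^1_{3S}f\rangle_P+\PM^1_{3S}f$. The first piece is controlled by $\nat{f}_{1,S}$, using percentile control of $\M^\tau_{3S}$ and the BMO Carleson property. The constant piece leaves $\PM^1_{3S}f\sum_{P\subset S}\langle T1,\psi_P\rangle\psi_P$, a localized Littlewood--Paley truncation of $T1$. It is bounded pointwise only when $T1\in L^\infty$, and then only up to a tail that telescopes along the stopping tree: the differences $\PM^1_{3S}f-\PM^1_{3\hat S}f$ between a cube and its parent are themselves controlled by $\nat{f}_{1,\hat S}$.

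\textbf{(ii)$\Rightarrow$(iii).} We run the standard $A_\infty$ argument: a sparse sum $\sum_S c_S\ind_S$ with $c_S\le \inf_S\mathcal N_1 f$ on a large portion of $S$ (the percentile) is bounded in $L^p(w)$ by $[w]_{A_\infty}\|\mathcal N_1 f\|_{L^p(w)}$. For $p\le1$ this follows from the triangle inequality for $p$-th powers together with the $A_\infty$ Carleson embedding. For $p>1$ it follows by duality or by a good-$\lambda$ argument, with linear dependence as in \cite{CLR26}.

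\textbf{(iii)$\Rightarrow$(i).} We test (iii) on specific inputs. Taking $f$ equal to $1$ on a large cube, the coefficient $\nat{f}_{1,S}$ vanishes for every cube with $3S$ inside the region where $f\equiv1$. The linear $A_\infty$ dependence is what rules out merely BMO behaviour. We test with weights $w=\ind_E$-like $A_\infty$ weights of bounded characteristic concentrated on small sets $E$; these weights detect $\|T1\|_{L^\infty}$ via Lebesgue differentiation, since a BMO but unbounded $T1$ would force the constant to blow up. To recover $T^*1\in\mathrm{BMO}$ we test duality against atoms.

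\textbf{Main obstacle.} I expect two steps to be the main difficulty. The first is the $\Pi_{T1}$ bound in (i)$\Rightarrow$(ii), namely organizing the constants telescoping. The second is extracting $L^\infty$ (rather than BMO) from the quantitative weight dependence in (iii). For the latter I would first try exponential weights $w=e^{\lambda\log(1/|x-x_0|)}$ with $[w]_{A_\infty}$ fixed.
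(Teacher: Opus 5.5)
The decisive gap is in (iii)$\Rightarrow$(i): the tests you propose cannot separate $T1\in L^\infty(\R^d)$ from $T1\in\mathrm{BMO}(\R^d)$.

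\emph{Bounded-characteristic weights.} If $T1,T^*1\in\mathrm{BMO}(\R^d)$, the $\mathcal N_0$ version of (iii) holds. For fixed $p$, the norms of $\mathcal N_0f$ and $\mathcal N_1f$ in $L^p(w)$ are comparable (Proposition \ref{c:hardynorm} and the Chebyshev bound preceding it), with constants controlled by $p$, $r$ and $[w]_{A_\infty}$. So on any class of weights of bounded characteristic, such as $|x-x_0|^{-\lambda}$ with $\lambda$ bounded away from $d$, (iii) holds for every $\mathrm{BMO}$ symbol, and no Lebesgue differentiation argument there can detect $L^\infty$.

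\emph{A single cutoff.} This does not help either. $\nat{f}_{1,S}$ vanishes on small cubes inside the plateau, but $\mathcal N_1\omega^\infty_Q\gtrsim1$ on $Q$. There $T\omega^\infty_Q$ is $T1$ minus a constant up to bounded errors, and John--Nirenberg with \eqref{e:Ainfty} already gives exactly one factor $[w]_{A_\infty}$ for a $\mathrm{BMO}$ symbol.

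\emph{The missing idea.} You need a test whose image accumulates one oscillation of $T1$ per active scale, with the number of scales matched by $[w]_{A_\infty}$. The paper takes $\Omega_{x_0,N}=\sum_{j=1}^N\omega^\infty_{Q_j}$, and writes $b=T1$. Then $\mathcal N_1\Omega\lesssim r^{-1}\min\{1,2^{Nd}|x-x_0|^{-d}\}$ uniformly in $N$, while $P_b\Omega=N(b-A_Nb(x_0))+O(N\|b\|_{\mathrm{BMO}})$ near $x_0$ (Lemma \ref{l:signedlog}(a),(c)). It pairs this with $w_{1/N}=|x-x_0|^{-d+1/N}$, which has $[w_{1/N}]_{A_\infty}\sim N$ and $w_{1/N}(B(x_0,c))\sim N$ (Lemma \ref{l:power}).
\begin{itemize}
\item Apply (iii) at $p=2$ to $P_b$ alone, after removing the other terms of the representation by the forward direction. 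This bounds the $w_{1/N}$-average of $|b-A_Nb(x_0)|^2$ on $B(x_0,c)$ uniformly in $N$.
\item The concentration of $w_{1/N}$ at $x_0$ gives $|b(x_0)-A_Nb(x_0)|\lesssim1$ at Lebesgue points.
\item Comparing Ces\`aro means at two such points gives $b\in L^\infty(\R^d)$.
\end{itemize}
This needs $T1,T^*1\in\mathrm{BMO}(\R^d)$ beforehand, which follows from (iii) with $w\equiv1$, $p=2$, and Lemma \ref{l:carleson}(ii).

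Your treatment of $\Pi_{T1}$ in (i)$\Rightarrow$(ii) also fails as stated. For overlapping smooth wavelets, $\sum_{P\subseteq S}|P|\psi_P(b)\psi_P$ need not be bounded when $b\in L^\infty(\R^d)$, cf.\ Definition \ref{d:ctc}. It differs from the bounded scale truncation $b-\mathbb E_{\ell(S)}b$ by the cubes $P\not\subseteq S$ whose supports reach the point. There are boundedly many of these at each scale between the distance to $\partial S$ and $\ell(S)$, which in general produces a logarithmic singularity along $\partial S$.

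After your telescoping, the remainder $\sum_S(c_S-c_{\widehat S})\,b_S$ is a sum of $\mathrm{BMO}$, not bounded, pieces. That structure yields norm inequalities through John--Nirenberg, as in Lemma \ref{l:middle}, but not the pointwise bound (ii).

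The paper never truncates the symbol spatially. Instead:
\begin{itemize}
\item It localizes the \emph{input} smoothly, $F_Q=\ind_QV(\omega^\infty_Q(f-\PM^1_{3Q}f))$.
\item It runs Proposition \ref{p:principle} with the weak-type exceptional sets of Proposition \ref{p:wt}.
\item It bounds the change of renormalization between nested cubes, $V(\omega^\infty_L\pi_L)$, through the cutoff testing condition \eqref{e:V1}. Here $\pi_L$ is a constant controlled by the percentile at a good point.
\end{itemize}
For $P_{T1}$ the last step is Lemma \ref{l:para}(ii): the smoothness of $\omega^\infty_L$ reduces $P_b\omega^\infty_L$ to $\omega^\infty_L(b-\mathbb E_{\ell}b)$, $\ell\sim\ell(L)$, plus bounded errors. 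The same step is what ``verbatim'' hides for the cancellative part, where it is harmless by Lemma \ref{l:V1mean}.

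Finally, in (ii)$\Rightarrow$(iii) for $p<1$, the $p$-triangle inequality and the $A_\infty$ Carleson embedding give only $[w]_{A_\infty}^{1/p}$. The good-$\lambda$ inequality of Lemma \ref{l:Ainf} gives linear dependence for all $0<p<\infty$.
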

By Lemma \ref{l:po}(iii), the Poincar\'e inequality, \eqref{e:band} and Lemma \ref{l:sparsepot}, the sparse bound (ii) implies, for $d\geq2$, the pointwise inequality \[|Tf|\lesssim I_1(|\nabla f|),\]
which Lerner characterized by $T1\in L^\infty(\R^d)$ \cite{lerner26}*{Theorem 1.2}; see \S\ref{ss:previous}. Theorem \ref{t:introI} gives this characterization for the percentile oscillations, in every dimension, together with the quantitative converse that (iii) implies (i), while (iii) with $\mathcal N_0$ in place of $\mathcal N_1$ characterizes the weaker condition $T1\in\mathrm{BMO}(\R^d)$. Thus the transition from \(\mathrm{BMO}\) to \(L^\infty\) found by Lerner for the pointwise estimate by \(I_1(|\nabla f|)\) persists at the level of weighted norm inequalities and  is detected by the uniform linear dependence on \([w]_{A_\infty}\).
The converse direction is proved by testing on logarithmic functions together with power weights whose \(A_\infty\) characteristic records the number of active scales. Roughly, \(\mathrm{BMO}\) oscillation of $T1$ can create an additional logarithmic accumulation, while the linear \(A_\infty\) factor accounts for only one scale factor. Uniform linear dependence therefore forces $T1\in L^\infty(\R^d)$.

\subsection{Derivatives and the residual-order rule}\label{ss:introjet}
Let \(T\) be a Calder\'on--Zygmund operator of order \(k\geq1\), in the sense of Definition \ref{d:czo}: its kernel has \(k\) derivatives in the first variable, with the corresponding Calder\'on--Zygmund estimates, and \(T\) satisfies the weak boundedness property. This is the global version of the framework of Green, Wick and the author \cite{DGWdom}. In its wavelet resolution, recalled in Proposition \ref{p:rep}, the derivative \(\partial^\alpha T\), \(|\alpha|=\kappa\leq k\), acts on the derivatives \(\partial^\gamma f\), \(|\gamma|\leq\kappa\), of the input as on distinct \emph{channels}; schematically, if \(\kappa\geq1\),
\[
\partial^\alpha Tf=\sum_{|\beta|=\kappa}V_{\alpha,\beta}\left(\partial^\beta f\right)+\sum_{|\gamma|\leq\kappa}\frac1{\gamma!}P^{\alpha,\gamma}\left(\partial^\gamma f\right),
\]
where the \(V_{\alpha,\beta}\) are model wavelet operators with cancellative input packets, and \(P^{\alpha,\gamma}\) is the paraproduct with symbol \(b^{\alpha,\gamma}\), which records the action of \(\partial^\alpha T\) on monomials of degree \(|\gamma|\) and is attached to the channel \(\partial^\gamma f\). The symbols with \(|\gamma|<\kappa\) are the \emph{lower} symbols, those with \(0<|\gamma|<\kappa\) the \emph{intermediate} symbols, and those with \(|\gamma|=\kappa\) the \emph{top} symbols. When \(\kappa=0\), the representation also contains an adjoint paraproduct, with symbol \(b^{0,\star}\), and \(b^{0,0}\), \(b^{0,\star}\) are the wavelet coefficients of \(T1\) and \(T^*1\).

The testing conditions for sparse domination by \eqref{e:intropo}-type coefficients follow a  residual-order rule. In a sparse bound modulo polynomials of degree less than \(m\), the symbol $b^{\alpha,\gamma}$ attached to the channel $\partial^\gamma f$ is measured at residual order $m-|\gamma|$: residual order zero requires a $\mathrm{BMO}$ condition, while every positive residual order requires $L^\infty$. The adjoint symbol is always subject to the $\mathrm{BMO}$ condition. In the homogeneous theory of Theorem \ref{t:HS}, which contains Theorem \ref{t:introI}, the lower channels are genuinely absent, and the corresponding symbols must vanish. In the inhomogeneous theory of Theorem \ref{t:jet}, the conditions on the top symbols are local, and the rule also applies to the intermediate channels, which do not appear explicitly in the sparse bounds: their percentile oscillations are controlled by those of the two extreme channels $f$ and $\nabla^\kappa f$, see Lemma \ref{l:sandwichperc}. Their residual order is still positive. Thus zero residual order recovers the usual $\mathrm{BMO}$ testing of the $T(1)$ theorem \cite{DJ84}, while positive residual order leads to bounded testing. The simplest inhomogeneous instance is that of the first derivatives, with $\kappa=m=1$.
\begin{introthm}\label{t:introII}
Let $T$ be a Calder\'on--Zygmund operator of order $k\geq1$, $|\alpha|=1$, $0<\tau<\delta$ and $r>0$ sufficiently small. The following are equivalent.
\begin{enumerate}[label=(\roman*)]
\item The symbol $b^{\alpha,0}$ belongs to $L^\infty(\R^d)$, and the top symbols $b^{\alpha,\gamma}$, $|\gamma|=1$, belong locally to $\mathrm{BMO}(\R^d)$, that is, at the scales at most one, see Definition \ref{d:symbols}.
\item For every $f\in \mathcal{C}^\infty_c(\R^d)$, the distribution $\partial^\alpha Tf$ is a locally integrable function, and there exists a sparse collection $\mathcal S$ of cubes of $\R^d$ such that
\[
|\partial^\alpha Tf|\lesssim\sum_{S\in\mathcal S}\left(\nat{f}_{1,S}+\nat{\nabla f}_{0,S}\right)\ind_S\qquad\textrm{a.e.\ on }\R^d.
\]
\item For all $0<p<\infty$, $w\in A_\infty$ and $f\in \mathcal{C}^\infty_c(\R^d)$,
\[
\|\partial^\alpha Tf\|_{L^p(w)}\lesssim_p[w]_{A_\infty}\bigg(\|\mathcal N_1f\|_{L^p(w)}+\sum_{j=1}^d\|\mathcal N_0(\partial_jf)\|_{L^p(w)}\bigg).
\]
\end{enumerate}
\end{introthm}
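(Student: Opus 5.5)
The plan is to close the cycle (i)$\Rightarrow$(ii)$\Rightarrow$(iii)$\Rightarrow$(i). All three steps run through the wavelet representation of Proposition \ref{p:rep} with $\kappa=1$. It gives
\[
\partial^\alpha Tf=\sum_{|\beta|=1}V_{\alpha,\beta}(\partial^\beta f)+P^{\alpha,0}f+\sum_{|\gamma|=1}P^{\alpha,\gamma}(\partial^\gamma f),
\]
so only the channels $f$ and $\nabla f$ appear, and there are no intermediate symbols. Theorem \ref{t:introII} is the special case $\kappa=m=1$ of Theorem \ref{t:jet}. I would nevertheless argue directly, following the residual-order rule: the channel $f$ has residual order $1$ and requires $L^\infty$, while the channels $\partial^\gamma f$ have residual order $0$ and require (local) $\mathrm{BMO}$.

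\textbf{(i)$\Rightarrow$(ii).} I treat the three pieces of the representation separately.

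The model operators $V_{\alpha,\beta}$ have cancellative input wavelets. For these I use the cancellative sparse domination principle of \cite{CLR26}, in the form recalled earlier: the coefficient $\nat{\partial^\beta f}_{0,S}$ controls each $V_{\alpha,\beta}(\partial^\beta f)$ with decay in the wavelet complexity, and summing the complexities gives a bound by $\sum_S\nat{\nabla f}_{0,S}\ind_S$.

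For the top paraproducts $P^{\alpha,\gamma}$, $|\gamma|=1$, the symbol is in local $\mathrm{BMO}$. These are standard paraproducts with Carleson symbol at scales $\le1$, and they are dominated by the same order-zero percentile coefficients of $\partial^\gamma f$. At scales $>1$ the symbol vanishes, or has decaying coefficients, by Definition \ref{d:symbols}.

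The lower paraproduct $P^{\alpha,0}f$ has symbol in $L^\infty$. Here the paraproduct pairs $f$ with a mean-zero derivative-type wavelet at each scale. I would subtract $\PM^1_{3S}f$ on each stopping cube $S$. The subtracted constants are annihilated by the cancellative wavelet, so what is left is a sum over stopping cubes of $\|b^{\alpha,0}\|_\infty$ times $\M^\tau_{3S}[f-\PM^1_{3S}f]$ on the bulk of $S$. Only $L^\infty$, with no Carleson sum, is available for the symbol. What replaces it is the extra factor $\ell(Q)^{-1}$ from the derivative, which pairs with the Poincar\'e-type gain implicit in subtracting constants. This yields the coefficient $\nat{f}_{1,S}$.

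The sparse collection is built by the stopping-time percentile construction of \cite{CLR26}, using the sandwich Lemma \ref{l:sandwichperc} where needed. I expect this step to be the main obstacle: the $L^\infty$ paraproduct has to be localized modulo constants so that the tail of the percentile is summable over the stopping tree.

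\textbf{(ii)$\Rightarrow$(iii).} The sparse operator with coefficients $\nat{\cdot}_{m,S}$ is bounded in $L^p(w)$ with linear $[w]_{A_\infty}$ dependence, by comparing each coefficient with $\inf_S\mathcal N_m f$ on a large portion of $S$ (percentile at level $r$). Then the standard sparse $A_\infty$ argument, as in Theorem \ref{t:HS}, applies for all $0<p<\infty$.

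\textbf{(iii)$\Rightarrow$(i).} For local $\mathrm{BMO}$ of the top symbols, I test on $f=x_\gamma\varphi$ localized to a cube of side $\le1$ with the weight $w\equiv1$. The right side is controlled, and the left side recovers $b^{\alpha,\gamma}$ averages, which gives local $\mathrm{BMO}$ by the usual $T(1)$ necessity argument. For $b^{\alpha,0}\in L^\infty$, I mimic the converse in Theorem \ref{t:introI}. I test on logarithmic functions $f=\log|x-x_0|$ truncated to scales between $\epsilon$ and $1$, with power weights $|x-x_0|^{a}$ whose $A_\infty$ characteristic is comparable to the number of active scales $N$. The norm of $\mathcal N_1f$ and of $\mathcal N_0(\nabla f)$ stays bounded, since $\nabla f\sim|x|^{-1}$ and constants are discounted. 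If $b^{\alpha,0}$ were unbounded, that is, only $\mathrm{BMO}$ near $x_0$, then $P^{\alpha,0}$ would produce an accumulation of order $N^2$ against a linear factor $N$, which is a contradiction. Tuning this counterexample is the second delicate point.
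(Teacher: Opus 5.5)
Your step (ii)$\Rightarrow$(iii) via Lemma \ref{l:Ainf}, and your local $\mathrm{BMO}$ necessity via the $L^2$ test of Lemma \ref{l:carleson}(i), are fine. The two steps you flag as delicate, however, rest on mechanisms that fail.

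\textbf{(i)$\Rightarrow$(ii).} The top symbols do not vanish or decay at the scales $\ell(S)>1$. Definition \ref{d:symbols}(iii) imposes nothing there, and \eqref{e:size} only gives $|b^{\alpha,\gamma}_S|\lesssim1$. Such a sequence need not be Carleson. In that case this part of $P^{\alpha,\gamma}(\partial^\gamma f)$ is not even bounded from $\dot W^{1,2}$ to $L^2$ (test on $(\cdot-c(Q))^\gamma\omega^\infty_Q$), so it cannot be dominated by $\nat{\nabla f}_{0,S}$.

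The missing idea is the integration by parts \eqref{e:Ydef}. At the scales $\ell(S)>1$ the derivative is moved onto $\vartheta_S$. This produces a model $Y_{\alpha,\gamma}$ acting on the lower channel $f$, with mean zero input packets $\ell(S)\partial^\gamma\vartheta_S$ and bounded coefficients $\ell(S)^{-1}b^{\alpha,\gamma}_S$. Lemma \ref{l:V1mean} and Theorem \ref{t:model} of order one then dominate it by $\nat{f}_{1,S}$. This is exactly why the top condition can be local.

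For the lower paraproduct, the input packets of $P_{\mathfrak b^{\alpha,0}}$ are the averages $\vartheta_S$, with $\int\vartheta_S=1$. So the constants $\PM^1_{3L}f-\PM^1_{3Q}f$, by which the renormalizations at nested stopping cubes differ, are not annihilated. They produce $P_{\mathfrak b^{\alpha,0}}(\omega^\infty_L)$ times a constant controlled by the percentile at a good point. What is needed is the cutoff testing condition \eqref{e:V1}, namely $\sup_Q\|P_{\mathfrak b^{\alpha,0}}(\omega^\infty_Q)\|_\infty\lesssim\|\mathfrak b^{\alpha,0}\|_\infty$ from Lemma \ref{l:para}(ii). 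It enters the oscillation estimate of Lemma \ref{l:osc} through Lemma \ref{l:poly}, and this is where $L^\infty$ rather than $\mathrm{BMO}$ is used. A Carleson bound is also available, since $L^\infty\subset\mathrm{BMO}$, and the weak-type step relies on it. The size bound $|b^{\alpha,0}_S|\lesssim\ell(S)^{-1}$ that your heuristic uses holds for every $T$ by \eqref{e:size}, so it cannot be where the hypothesis enters.

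\textbf{(iii)$\Rightarrow$(i).} Your logarithmic test at the scales between $\epsilon$ and $1$ does not keep the right hand side bounded. The order-zero term discounts nothing: $\mathcal N_0(\partial_jf)\geq\M^\tau_{\R^d}(\partial_jf)\gtrsim|\partial_jf|$ by \eqref{e:pwMN} and Lemma \ref{l:diff}. Since $|\nabla f|\sim|x-x_0|^{-1}$ on the annulus, $\sum_j\|\mathcal N_0(\partial_jf)\|_{L^2(w_{1/N})}$ is of order $\epsilon^{-1}$, exponential in the number $N$ of active scales.

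The two channels scale differently, so the logarithm has to sit at the large scales. The paper tests on $\Omega_{x_0,N}=\sum_{j=1}^N\omega^\infty_{Q_j}$ with $\ell(Q_j)=2^j$. Then $|\nabla\Omega|\lesssim1$, and both $\mathcal N_1\Omega$ and $\mathcal N_0(\nabla\Omega)$ are $\lesssim r^{-1}\min\{1,2^{Nd}|x-x_0|^{-d}\}$, with $L^2(w_{1/N})$ norms $\lesssim r^{-1}N^{1/2}$ by Lemmas \ref{l:signedlog}(a)--(b) and \ref{l:power}.

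On $B=B(x_0,c)$, Lemma \ref{l:signedlog}(d) and \eqref{e:size} show that the paraproduct part of $\partial^\alpha T\Omega$ is $N\mathfrak b_{\leq c}+O(N)$. Here $\mathfrak b_{\leq c}$ collects the wavelet terms of $b^{\alpha,0}$ of sidelength at most $c$, and the larger scales of all symbols are harmless because $\sum_{t>c}t^{-1}<\infty$. The terms $V_{\alpha,\beta}(\partial^\beta\Omega)$ are $O(N^{3/2}/r)$ in $L^2(w_{1/N})$ by Theorem \ref{t:model} of order zero and Lemma \ref{l:Ainf}. By (iii), $\partial^\alpha T\Omega$ obeys the same bound up to the factor $C_2$.

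Since $w_{1/N}(B)\sim N$, the $w_{1/N}$-averages of $|\mathfrak b_{\leq c}|^2$ on $B$ are bounded uniformly in $N$. The concentration of $w_{1/N}$ at its pole (Lemma \ref{l:power}) then gives $|\mathfrak b_{\leq c}(x_0)|\lesssim(1+C_2)/r$ for almost every $x_0$. This is a direct bound, not a contradiction argument. It needs no a priori $\mathrm{BMO}$ information on $b^{\alpha,0}$, which your phrase ``unbounded, that is, only $\mathrm{BMO}$ near $x_0$'' presupposes but (iii) does not supply: the unweighted test of Lemma \ref{l:carleson}(i) only gives the Carleson condition of order $\ell(Q)^{-2}$ for $b^{\alpha,0}$.
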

Theorem \ref{t:introII} is the case $\kappa=m=1$ of Theorem \ref{t:jet} and Proposition \ref{p:rigidity}(b). In accordance with the residual-order rule, the lower channel $f$, at residual order one, requires $L^\infty$, and the top channel $\nabla f$, at residual order zero, requires local $\mathrm{BMO}$. In general, for $|\alpha|=\kappa\geq1$ and $m\geq\kappa$, Theorem \ref{t:jet} characterizes the sparse bound
\[
|\partial^\alpha Tf|\lesssim\sum_{S\in\mathcal S}\left(\nat{f}_{m,S}+\nat{\nabla^\kappa f}_{m-\kappa,S}\right)\ind_S
\]
by the conditions that the lower symbols belong to $L^\infty(\R^d)$, and the top symbols belong locally to $\mathrm{BMO}(\R^d)$ if $m=\kappa$, and locally to $L^\infty(\R^d)$ if $m>\kappa$. Under the same conditions, but with $\mathrm{BMO}$ in place of $L^\infty$ for the intermediate symbols, Theorem \ref{t:HSjet} gives the corresponding weighted inequality, with $\mathcal N_mf$ and $\mathcal N_{m-\kappa}(\partial^\beta f)$, $|\beta|=\kappa$, on the right hand side. Proposition \ref{p:rigidity}(c) shows that, as soon as $\kappa\geq2$, this weighted inequality does not imply the boundedness of the intermediate symbols, so that the pointwise and norm theories separate; whether it forces the top symbols to belong locally to $L^\infty(\R^d)$ when $m>\kappa$ is open, see Remark \ref{r:open}.

For fixed $p$ and $w$, the right hand sides of these weighted inequalities are comparable, by Proposition \ref{c:hardynorm}, to the sums of the $L^p(w)$-norms of the smooth grand maximal functions of $f$ and $\nabla^\kappa f$, and thus, in a range of $p$ and $w$, to the sums of the weighted Hardy norms of $f$ and $\nabla^\kappa f$ \cite{ST89}. Weighted inhomogeneous Hardy--Sobolev, and more generally Triebel--Lizorkin, spaces with $A_\infty$ weights go back to Bui \cite{Bui82}. Even with matrix $A_\infty$ weights, smooth pointwise multipliers, and pseudo-differential operators whose adjoints map the polynomials of low degree to polynomials, are bounded on them \cite{BHYY26}*{Theorems 6.10 and 5.13}. The same holds for the Calder\'on--Zygmund operators which, together with their adjoints, annihilate the polynomials of low degree, that is, have vanishing paraproducts, and whose kernels decay at infinity faster than $|x-y|^{-d}$, at a rate depending on the space \cite{BHYY26}*{Theorem 6.24}. The decay, which excludes the Riesz transforms \cite{BHYY26}*{Remark 6.23}, handles the atoms supported on the largest cubes, which need not have vanishing mean in these inhomogeneous spaces \cite{BHYY26}*{Proposition 6.25}; a Riesz transform maps such an atom with nonvanishing mean outside $L^p(\R^d)$ when $p\leq1$. The inequalities above are of a different type: global Hardy norms of $f$ and $\nabla^\kappa f$ on the right, $L^p(w)$ on the left. To the best of our knowledge, weighted Hardy--Sobolev estimates of this type for the derivatives of Calder\'on--Zygmund operators with nonvanishing paraproducts, under $\mathrm{BMO}$ and $L^\infty$ testing conditions on their symbols, and with linear dependence on $[w]_{A_\infty}$, have not appeared in previous work. They are out of reach of standard sparse averages, and rely on the cancellation of $f$ and $\nabla^\kappa f$ below the scale of the sparse cubes, which the percentile retains.

The symbol $b^{\alpha,0}$, which records the action of $\partial^\alpha T$ on constants, need not vanish. For instance, multiplication by a bounded function \(a\) is, up to a constant multiple, a Calder\'on--Zygmund operator of every order, and \(\nabla(af)=a\nabla f+f\nabla a\). The term \(f\nabla a\) prevents a sparse bound in terms of \(\nabla f\) alone unless \(a\) is constant, while the lower channel \(f\) in (ii) of Theorem \ref{t:introII} absorbs it exactly when \(\nabla a\in L^\infty(\R^d)\), for instance for smooth periodic \(a\); see Example \ref{r:mult}.
\subsection{Methods and organization}\label{ss:intromethods}
Section \ref{s:principle} gives an operator-free stopping principle: local weak control, together with control of the oscillation between nested cubes from suitable good points, produces a sparse pointwise bound; see Remark \ref{r:principle} for its relation with the operator-free principle of \cite{LLO22}, used in \cite{CLR26}. In Section \ref{s:A}, the principle is applied to the smooth model wavelet operators $V$ of Definition \ref{d:model}, with the input renormalized at each cube $Q$ to $f-\PM^m_{3Q}f$. Theorem \ref{t:model} gives, for every $m\geq0$,
\begin{equation}\label{e:intromodel}
|Vf|
\lesssim
\sum_{S\in\mathcal S}\nat{f}_{m,S}\ind_S ,
\end{equation}
with no additional assumption if $m=0$, and, if $m\geq1$, under the single cutoff testing condition $\sup_Q\|V(\omega^\infty_Q)\|_{L^\infty(\R^d)}<\infty$, where $\omega^\infty_Q$ is a smooth cutoff which equals one on a fixed dilate of $Q$. This condition is also necessary, see \S\ref{ss:remA}. When the stopping cube changes, the two renormalizations differ by a polynomial: the good points control this polynomial by the percentile oscillation, while the cutoff testing condition controls its image under $V$. In this sense the argument lifts the subtraction of constants in \cite{BB18} to the subtraction of polynomials of arbitrary order. Cube-dependent polynomial projections inside a sparse stopping argument appear in \cite{LLO22}*{Proposition 5.4}, for the identity operator, and sparse local polynomial approximation is studied in \cite{DM21}. Mean-zero input packets satisfy the cutoff testing condition automatically; for the canonical paraproducts, boundedness of the symbol implies it by Lemma \ref{l:para}(ii): this is the model origin of the residual-order rule. The percentile Poincar\'e inequalities of Section \ref{s:poincare}, proved by the anti-integration method of \cite{DGWdom}, pass from polynomial oscillations to Sobolev information. The wavelet resolution of Proposition \ref{p:rep} reduces the derivatives of a Calder\'on--Zygmund operator to cancellative model operators and paraproducts, with one polynomial testing symbol attached to each channel. Unlike the rough dyadic representation of \cite{hytonen12}, it retains both the regularity of the kernel and the polynomial testing structure needed by the percentile bounds. The converse implications are obtained by testing on localized polynomials and, for the quantitative $A_\infty$ characterizations, on the logarithmic functions of \S\ref{ss:signedlog}.

Section \ref{s:po} develops percentile oscillations and their Hardy-space maximal functions, Section \ref{s:main} contains the main statements and their relation with previous work, and Section \ref{s:aux} the auxiliary lemmas. The pointwise machinery is developed in Sections \ref{s:principle}--\ref{s:A}, and Section \ref{s:cons} turns sparse bounds into weighted norm inequalities and Hardy norms. Section \ref{s:cz} proves the pointwise results and the sharp $A_p$ Sobolev estimates, Section \ref{s:HS} the weighted Hardy--Sobolev characterizations, and Section \ref{s:examples} collects examples and open questions. A companion work \cite{DPSquareF}, completed after the first version of this article became public, treats Wilson's intrinsic square functions \cite{Wilson07} modulo polynomials.

\section{Percentile oscillations}\label{s:po}
This section contains the notation, the percentile oscillations with their basic properties, and their comparison with Hardy norms. The auxiliary facts on cubes, maximal functions and polynomial projections, and the proof of Lemma \ref{l:po}, are in Section \ref{s:aux}.

\subsection{Notation and main objects}\label{ss:classes}

Throughout, $d\geq 1$ is the dimension of the ambient Euclidean space. \subsubsection*{Constants}The almost inequality $A\lesssim B$ means that $A\leq CB$, where $C$ is allowed to depend on $d$, on the order $m$ of the polynomials involved, and on the smoothness, decay and dilation parameters of the classes introduced below, and on nothing else. Such a constant is called \emph{admissible}, and $A\sim B$ stands for $A\lesssim B\lesssim A$. A subscript, as in $\lesssim_p$, indicates a further dependence of the constant, and constants which are kept track of are named explicitly.
\subsubsection*{Cubes and grids}\emph{Cubes} are Cartesian products of $d$ left-closed, half-open intervals of equal length. A cube $Q$ has center $c(Q)$, sidelength $\ell(Q)$ and interior $Q^\circ$, and $aQ$ is the cube with center $c(Q)$ and sidelength $a\ell(Q)$; balls $B(x,\rho)$ are Euclidean, while $\diam$ and $\dist$ refer to the maximum norm on $\R^d$. To each cube $Q$ and each $f\in L^1_{\mathrm{loc}}(\R^d)$ we associate the rescalings
\begin{equation} \label{e:norm1}
f_Q \coloneqq |Q|^{-1} \Sy_Q f, \qquad f^\infty_Q\coloneqq\Sy_Qf, \qquad \Sy_Q f(x)\coloneqq f\left(\frac{x-c(Q)}{\ell(Q)}\right),
\end{equation}
which preserve the $L^1$ and the $L^\infty$ norm, respectively. The standard dyadic grid is $\mathcal D$, and $\mathcal D(Q)$ is the collection of the \emph{dyadic subcubes} of a cube $Q$, obtained from $Q$ by $j\geq 0$ successive bisections of its sides, so that $\mathcal D(Q)=\{R\in \mathcal D: R\subseteq Q\}$ when $Q\in \mathcal D$. The dyadic parent of $L\in \mathcal D(Q)\setminus\{Q\}$ is $\widehat L$, and the \emph{shadow} of a collection $\mathcal R$ of cubes is $\mathsf{sh}(\mathcal R)\coloneqq\bigcup\{R:R\in\mathcal R\}$.

\subsubsection*{Sparse collections}A collection $\mathcal S$ of cubes is \emph{$\eta$-sparse}, $0<\eta<1$, if there exist pairwise disjoint measurable sets $G_S\subseteq S$ with
\[
|G_S|\geq \eta|S|,\qquad S\in\mathcal S,
\]
and \emph{sparse} if it is $\eta$-sparse for some $\eta$. Sparseness is equivalent to the Carleson packing condition \cite{Han18}*{Theorem 1.3}, see also \cite{LN19}*{Lemma 6.3} for dyadic cubes. Consequently, if $\mathcal S_1,\dots,\mathcal S_N$ are $\eta$-sparse, then $\mathcal S_1\cup\dots\cup\mathcal S_N$ is $\frac\eta N$-sparse.

In this paper, we deliberately keep the value of the sparseness coefficient $\eta>0$ unspecified, but admissibly bounded below. All sparse collections appearing in this article are meant to be $\eta$-sparse for $\eta\geq\eta_0$, where $\eta_0>0$ is a small admissible constant. Moreover, the sparse collections $\mathcal S$ will be subcollections of some dyadic grid on $\R^d$, therefore inheriting the grid property
\[
P,Q\in\mathcal S\implies P\cap Q\in\{P,Q,\varnothing\}.
\]
The standard $3^d$ dyadic grid trick \cite{LN19}*{Theorem 3.1} and the doubling nature of the coefficients, Lemma \ref{l:po}(v), may be used to reduce to this case, splitting a sparse collection into boundedly many subcollections of dyadic grids at the cost of a smaller percentile level, so that this property will be used without further specific mention.

\subsubsection*{Local norms, percentiles, maximal functions}The local norms of $f\in L^1_{\mathrm{loc}}(\R^d)$ on the cube $Q$ are indicated by
\[
\langle f\rangle_{p,Q}\coloneqq |Q|^{-\frac1p}\left\|f\ind_Q\right\|_{L^p(\R^d)}, \qquad \langle f \rangle_Q \coloneqq \langle f \rangle_{1,Q},
\]
and the local nonincreasing rearrangement of $f$ on $Q$ by
\begin{equation}\label{e:rearr}
f^{*,Q}(t)\coloneqq\inf\left\{s>0: \left|\{x\in Q: |f(x)|>s\}\right|\leq t|Q|\right\}, \qquad 0<t<1.
\end{equation}
In other words, $f^{*,Q}(t)$ is the upper $t$-percentile of $|f|$ on $Q$. The notation  $\M$ with no further decoration stands for the canonical   Hardy--Littlewood maximal function on $\R^d$, and $\M_s$, ${0\leq s<d}$, for the fractional maximal function,
\[
\M f \coloneqq \sup_{\substack{ Q \subset \R^d\\ Q \, \textrm{cube} } } \langle f\rangle_Q\ind_Q,\qquad \M_s f \coloneqq \sup_{\substack{ Q \subset \R^d\\ Q \, \textrm{cube} } } \ell(Q)^s\langle f\rangle_Q\ind_Q,
\]
so that $\M_0=\M$.

\subsubsection*{Weights}A \emph{weight} stands for a positive locally integrable function on $\R^d$. As usual, the weight $w$ belongs to $A_p$, $1<p<\infty$, if  \[[w]_{A_p}\coloneqq\sup_{Q\subset \R^d \textrm{ cube}}\langle w\rangle_Q\langle w^{-\frac1{p-1}}\rangle_Q^{p-1}\] is finite.
A weight $w$ on $\R^d$ belongs to $A_\infty$ if its Fujii--Wilson characteristic
\[
[w]_{A_\infty}\coloneqq\sup_{Q\subset \R^d \textrm{ cube}}\frac{1}{w(Q)}\int_Q\M(w\ind_Q) <\infty.
\]

\subsubsection*{Wavelet classes}
We recall  the smooth $L^1$-normalized wavelet classes of \cite{DGWdom}*{\S2.1}.  For ${0\leq s<1}$ and $\rho\geq 0$, introduce the norm
\[
\|\varphi\|_{s,\rho}\coloneqq \sup_{x\in \R^d}(1+|x|)^{d+\rho}\left[|\varphi(x)|+\sup_{0<|h|\leq 1}\frac{|\varphi(x+h)-\varphi(x)|}{|h|^s}\right].
\]
Throughout, let $\sigma\geq 0$ be a smoothness parameter, and $\{\sigma\}=\sigma-\lfloor\sigma\rfloor$ be its fractional part. For each cube $R$ of $\R^d$, recalling \eqref{e:norm1}, define the $L^1$-normalized class
\[
\Phi^{\sigma,\rho}(R)\coloneqq \left\{ \varphi_R: \varphi\in W^{\lfloor\sigma\rfloor,\infty}(\R^d),\, \sup_{0\leq |\alpha|\leq \lfloor\sigma\rfloor}\left\|\partial^\alpha\varphi\right\|_{\{\sigma\},\sigma+\rho}\leq 1\right\},
\]
and, when $\{\sigma\}+\rho>0$, its cancellative subclass
\[
\Psi^{\sigma,\rho}(R)\coloneqq\left\{\varphi\in \Phi^{\sigma,\rho}(R): \int_{\R^d}x^\alpha\varphi(x)\,\d x=0 \quad \forall\, 0\leq |\alpha|\leq\lfloor\sigma\rfloor\right\}.
\]
Finally, we fix once and for all a dilation parameter $\w\geq 1$, and replace the decay parameter $\rho$ by the superscript $\Subset$ to indicate the subset of the corresponding class having compact support in $\w R$, namely
\[
\Phi^{\sigma,\Subset}(R)\coloneqq \left\{\varphi\in \Phi^{\sigma,1}(R): \supp \varphi\Subset \w R \right\}, \qquad \Psi^{\sigma,\Subset}(R)\coloneqq \Phi^{\sigma,\Subset}(R)\cap \Psi^{\sigma,1}(R).
\]
The parameter $\w$ plays the same role as in \cite{DGWdom}, allowing for instance for the smooth, compactly supported wavelets of \cite{DGWdom}*{Remark 2.2 and Proposition 2.3}. To each $\varphi\in\Phi^{\sigma,\rho}(R)$ we associate the linear functional
\[
\varphi(f)\coloneqq \int_{\R^d} \varphi(x)f(x)\,\d x.
\]
Consistently with the above convention, $\varphi\in C\Phi^{\sigma,\rho}(R)$ stands for the assertion that $\varphi$ is an admissible constant multiple of an element of $\Phi^{\sigma,\rho}(R)$, and likewise for the other classes.

\subsection{Local maximal functions and polynomial projections}\label{ss:max}

Let $\tau>0$ and $Q$ be a cube. The \emph{localized smooth grand maximal function} of Conde-Alonso, Lorist and Rey \cite{CLR26} is, in the normalization of \S\ref{ss:classes},
\begin{equation}\label{e:max}
\M^\tau_Q f(x)\coloneqq \sup\left\{|\varphi(f)|:\; \varphi\in\Phi^{\tau,1}(R),\; \supp\varphi\subseteq R,\; x\in R\subseteq Q\right\}, \qquad x\in Q.
\end{equation}
It is easy to see that   $\M^\tau_Qf$ is lower semicontinuous on $Q^\circ$.   The test functions in \eqref{e:max} satisfy $\|\varphi\|_\infty\leq |R|^{-1}$ and $\|\varphi\|_1\leq 1$. Therefore \[
  \M^\tau_Qf(x)\leq \M[f\ind_Q](x), \qquad x\in Q.\]  Also, $\M^\tau_Q f\leq \M^\tau_{R}f$ on $Q$ whenever $Q\subseteq R$. Conversely, $|f|\lesssim\M^\tau_Qf$ almost everywhere on $Q$, see Lemma \ref{l:diff} below. With $\R^d$ in place of $Q$, \eqref{e:max} defines the \emph{smooth grand maximal function} $\M^\tau_{\R^d}$. For vector valued $g=(g_1,\dots,g_N)$ and $Q$ a cube or $\R^d$, we set
   \begin{equation}
\label{e:msigmavec}
\M^\tau_Qg\coloneqq\M^\tau_Qg_1+\dots+\M^\tau_Qg_N.
\end{equation}
 In particular, for $f\in W^{m,1}_{\mathrm{loc}}(\R^d)$,
\[
\M^\tau_Q\left[\nabla^mf\right]=\sum_{|\alpha|=m}\M^\tau_Q\left[\partial^\alpha f\right].
\]

\subsubsection*{Local Taylor polynomials} Hereafter,  \[
\P^{-1}\coloneqq \{0\},\qquad \P^{m}\coloneqq\left\{\textrm{polynomials on } \R^d \textrm{ of degree at most } m\right\},\; m\geq 0.\]
Let $\theta$ be a fixed smooth nonnegative function with $\int \theta=1$ and support in $\frac12[-\frac14,\frac14)^d$. For an integer $m\geq 1$ and $f\in W^{m-1,1}_{\mathrm{loc}}(\R^d)$, the Taylor type polynomial of \cite{DGWdom}*{(2.28)} is
\begin{equation}\label{e:proj} \P^{m-1}\ni
\PM^m_Qf(x)\coloneqq \sum_{|\alpha|\leq m-1}\frac{1}{\alpha!}\int_{\R^d}\theta_Q(y)\,\partial^\alpha f(y)\,(x-y)^\alpha\,\d y = \phi_{Q,x}(f),
\end{equation}
where $\theta_Q$ refers to \eqref{e:norm1} and  the second equality is obtained by integrating by parts, with
\[
\phi_{Q,x}(y)\coloneqq \sum_{|\alpha|\leq m-1}\frac{(-1)^{|\alpha|}}{\alpha!}\,\partial_y^\alpha\left[\theta_Q(y)(x-y)^\alpha\right].
\]
As $\phi_{Q,x}$ is smooth and compactly supported, the rightmost expression in \eqref{e:proj} is meaningful for all $f\in L^1_{\mathrm{loc}}(\R^d)$, and we adopt it as the definition of $\PM^m_Q f$ in this generality. We also set $\PM^0_Q\coloneqq 0$. By Lemma \ref{l:proj}, $\PM^m_Q$ is a linear projection of $L^1_{\mathrm{loc}}(\R^d)$ onto $\P^{m-1}$. The lemma also collects its other properties needed below, see also \cite{DGWdom}*{Lemma 2.26}.

\subsection{Percentile oscillations}\label{ss:perc}

We come to the main definition of this section.
Let $\tau>0$ and $0<r<1$, and let $m\geq 0$ be an integer. The \emph{percentile oscillation of order $m$} of $f\in L^1_{\mathrm{loc}}(\R^d)$ on the cube $Q$ is
\begin{equation}\label{e:po}
\nat{f}_{m,Q}\coloneqq \left(\M^\tau_{3Q}\left[f-\PM^m_{3Q}f\right]\right)^{*,Q}(r).
\end{equation}
The smoothness parameter $\tau$ and the percentile level $r$ are kept fixed and omitted from the notation. When the percentile level has to be indicated, we write $\natl{f}{r}_{m,Q}$. Unless stated otherwise, $0<r\leq r_0$ for a small admissible constant $r_0$, which may decrease from statement to statement. Implied constants are independent of $r$ unless its dependence is indicated; every change of percentile level is written explicitly. For comparison, the \emph{Campanato oscillation of order $m$} of $f$ on $Q$ is
\begin{equation}\label{e:campanato}
\flt{f}_{m,Q}\coloneqq\inf_{\pi\in\P^{m-1}}\left\langle f-\pi\right\rangle_{3Q},
\end{equation}
so that $\flt{f}_{0,Q}=\langle f\rangle_{3Q}$, and $\flt{f}_{1,Q}$ is comparable to the mean oscillation of $f$ on $3Q$ \cite{campanato63}. Weighted bounds are stated in terms of the \emph{maximal percentile oscillations}
\begin{equation}\label{e:maxfns}
\mathcal N_mf\coloneqq\sup_{Q}\nat{f}_{m,Q}\ind_Q,\qquad \mathcal N^sg\coloneqq\sup_Q\ell(Q)^s\left(\M^\tau_{3Q}g\right)^{*,Q}(r)\ind_Q,\quad s\geq0,
\end{equation}
the suprema being taken over all cubes. As $\PM^0_{3Q}=0$, $\mathcal N^0=\mathcal N_0$, thus $\mathcal N^s$ is a fractional version of $\mathcal N_0$, and it applies to vector valued $g$ as well, such as $g=\nabla^mf$, by \eqref{e:msigmavec}. Likewise, \eqref{e:po}, \eqref{e:campanato} and $\mathcal N_m$ apply to vector valued $f$, with vectors of polynomials in place of polynomials, and with the sum of the moduli of the components as the modulus $|f|$. For instance,
\[
\nat{\nabla^\kappa f}_{n,Q}=\left(\M^\tau_{3Q}\left[\nabla^\kappa f-\PM^n_{3Q}\nabla^\kappa f\right]\right)^{*,Q}(r),
\]
where $\PM^n_{3Q}$ acts componentwise and $\M^\tau_{3Q}$ is the sum \eqref{e:msigmavec} over the components. As for \eqref{e:po}, we write $\mathcal N^{[r]}_m$ when the percentile level has to be indicated.
\begin{lemma}\label{l:po}
Let $m\geq 0$ be an integer. The following hold for all cubes $Q$ and all $f\in L^1_{\mathrm{loc}}(\R^d)$, scalar, or vector valued with an admissible number of components, with the conventions of \S\ref{ss:perc}; \emph{(i)--(iii)} hold at every level $0<r<1$.
\begin{itemize}
\item[\emph{(i)}] $\nat{f+\pi}_{m,Q}=\nat{f}_{m,Q}$ for all $\pi\in \P^{m-1}$.
\item[\emph{(ii)}] $\displaystyle \nat{f}_{m,Q}\lesssim\inf_{\pi\in \P^{m-1}}\nat{f-\pi}_{0,Q}$. In particular, $\nat{f}_{n,Q}\lesssim\nat{f}_{m,Q}$ for all integers $n\geq m$.
\item[\emph{(iii)}] $\displaystyle\nat{f}_{m,Q}\lesssim \frac 1r\flt{f}_{m,Q}=\frac1r\inf_{\pi\in \P^{m-1}}\langle f-\pi\rangle_{3Q}$.
\item[\emph{(iv)}] If $0<r<\frac12$ and $|Q\cap \supp f|\leq \frac14|Q|$, then $\|\PM^m_{3Q}f\|_{L^\infty(3Q)}\lesssim\nat{f}_{m,Q}$, and $|\varphi(f)|\lesssim\nat{f}_{m,Q}$ for all $\varphi\in\Phi^{\tau,1}(Q)$ supported in $Q$.
\item[\emph{(v)}] If $0<r<1$, $A\geq1$, and $S\subseteq P$ are cubes with $\ell(P)\leq A\ell(S)$, then $\natl{f}{r}_{m,S}\lesssim\natl{f}{A^{-d}r}_{m,P}$, the implied constant being independent of $A$.
\end{itemize}
\end{lemma}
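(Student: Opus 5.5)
The plan is to treat every item as a consequence of two facts about $\PM^m_Q$ from Lemma \ref{l:proj}: it reproduces $\P^{m-1}$, and it is given by the smooth functionals $\phi_{Q,x}$. The one estimate used repeatedly is
\begin{equation}\label{e:planproj}
\|\PM^m_{3Q}g\|_{L^\infty(3Q)}\lesssim \inf_{y\in 3Q}\M^\tau_{3Q}g(y).
\end{equation}
To prove it, I would check that for $x\in 3Q$ the function $\phi_{3Q,x}$ is an admissible multiple of an element of $\Phi^{\tau,1}(3Q)$ supported in $3Q$. This holds because it is built from $\theta_{3Q}$, whose support sits well inside $3Q$, and from the polynomials $(x-y)^\alpha$, which are of size $\ell(Q)^{|\alpha|}$ there. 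Then \eqref{e:max}, with $R=3Q$, gives $|\PM^m_{3Q}g(x)|=|\phi_{3Q,x}(g)|\lesssim \M^\tau_{3Q}g(y)$ for every $y\in 3Q$. I also use the trivial rule $(a+c)^{*,Q}(r)\le a^{*,Q}(r)+c$ for a constant $c\ge 0$.

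For (i), reproduction gives $(f+\pi)-\PM^m_{3Q}(f+\pi)=f-\PM^m_{3Q}f$. For (ii), set $g=f-\pi$ with $\pi\in\P^{m-1}$, so that $f-\PM^m_{3Q}f=g-\PM^m_{3Q}g$. Then
\[
\M^\tau_{3Q}[g-\PM^m_{3Q}g]\le \M^\tau_{3Q}g+\|\PM^m_{3Q}g\|_{L^\infty(3Q)},
\]
since elements of $\Phi^{\tau,1}$ have $L^1$ norm at most one. Taking percentiles and using \eqref{e:planproj}, the last term is at most a constant times $\inf_Q \M^\tau_{3Q}g\le (\M^\tau_{3Q}g)^{*,Q}(r)=\nat{f-\pi}_{0,Q}$. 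The monotonicity in $n\ge m$ follows by choosing $\pi=\PM^m_{3Q}f\in\P^{m-1}\subseteq\P^{n-1}$ in the bound for $\nat{f}_{n,Q}$, which yields $\nat{f}_{n,Q}\lesssim\nat{f-\PM^m_{3Q}f}_{0,Q}=\nat{f}_{m,Q}$.

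For (iii), I would start from (ii) and use $\M^\tau_{3Q}h\le \M[h\ind_{3Q}]$ together with the weak type $(1,1)$ bound. This gives $|\{x\in Q:\M^\tau_{3Q}h(x)>s\}|\le r|Q|$ once $s=Cr^{-1}\langle h\rangle_{3Q}$, with $h=f-\pi$; then I take the infimum over $\pi$. For (v), with $S\subseteq P$, I apply the argument of (ii) to $g=f-\PM^m_{3P}f$ on $S$, where $\PM^m_{3S}$ reproduces the difference of the two polynomials. Since $3S\subseteq 3P$, this reduces matters to comparing $(\M^\tau_{3S}g)^{*,S}(r)$ with $(\M^\tau_{3P}g)^{*,P}(A^{-d}r)$. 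That comparison follows from $\M^\tau_{3S}g\le\M^\tau_{3P}g$ on $3S$ and from $|\{x\in S:\cdot>s\}|\le|\{x\in P:\cdot>s\}|\le A^{-d}r|P|\le r|S|$. The constants come only from \eqref{e:planproj}, so they do not depend on $A$.

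The main obstacle is (iv), because there the polynomial must be controlled by the oscillation of $f$ itself rather than of $f-\pi$. On $E\coloneqq Q\setminus\supp f$, which has $|E|\ge\frac34|Q|$, we have $f-\PM^m_{3Q}f=-\PM^m_{3Q}f$. Lemma \ref{l:diff} then gives $|\PM^m_{3Q}f|\lesssim\M^\tau_{3Q}[f-\PM^m_{3Q}f]$ a.e.\ on $E$. Since $r<\frac12$, the set of $x\in E$ where the right-hand side is at most $\nat{f}_{m,Q}$ has measure at least $\frac14|Q|$. A Remez-type inequality for $\P^{m-1}$ then transfers a bound on a subset of $Q$ of proportional measure to $L^\infty(3Q)$, which yields the first claim. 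I expect this Remez step, which I would simply quote in its standard form, to be the delicate point. For the second claim, I split $\varphi(f)=\varphi(f-\PM^m_{3Q}f)+\varphi(\PM^m_{3Q}f)$. The first term is at most $\M^\tau_{3Q}[f-\PM^m_{3Q}f](y)$ for every $y\in Q$, hence at most the percentile. The second term is at most $\|\PM^m_{3Q}f\|_{L^\infty(Q)}$, which is controlled by the first claim.
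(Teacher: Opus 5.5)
Your proposal is correct and follows the paper's proof essentially item by item, including the reduction of (v) to (i)--(ii) with $\pi=\PM^m_{3P}f$ and the splitting of $\varphi(f)$ for the second claim of (iv). The only difference is in the first claim of (iv), and it is cosmetic. You bound $|\PM^m_{3Q}f|$ from above on a set of measure $\gtrsim|Q|$ and then quote Remez, whereas the paper bounds $\M^\tau_{3Q}[f-\PM^m_{3Q}f]$ from below on the set where $|\PM^m_{3Q}f|>c_0\|\PM^m_{3Q}f\|_{L^\infty(Q)}$, using \eqref{e:polylarge} (its own compactness form of Remez), and the vector-valued case you leave implicit goes through componentwise as in the paper.
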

\begin{remark}\label{r:Nfrac}
By Lemma \ref{l:po}(iii) and the Bramble--Hilbert  \cite{BH70} estimate \[
\inf_{\pi\in\P^{m-1}}\langle f-\pi\rangle_{3Q}\lesssim\ell(Q)^m\langle|\nabla^mf|\rangle_{3Q},\] there holds $\nat{f}_{m,Q}\lesssim r^{-1}\ell(Q)^m\langle|\nabla^mf|\rangle_{3Q}$ for $m\geq1$ and $f\in W^{m,1}_{\mathrm{loc}}(\R^d)$. Therefore, with the fractional maximal functions of \S\ref{ss:classes},
\begin{equation}\label{e:Nfrac}
\mathcal N^sg\lesssim\frac1r\M_s\left(|g|\right),\quad 0\leq s<d,\qquad\qquad\mathcal N_mf\lesssim\frac1r\M_m\left(|\nabla^mf|\right),\quad 1\leq m<d.
\end{equation}
Combined with the weighted inequalities for $\M_s$, $0<s<d$, of Muckenhoupt and Wheeden \cite{MW74},  weighted bounds with right hand sides involving $\mathcal N^s$ and $\mathcal N_m$  lead to usual weighted Sobolev inequalities.
\end{remark}
\begin{remark}\label{r:axes}  Let $m\geq1$, let $Q$ be the unit cube centered at the origin, $\pi\in\P^{m-1}$ with $\|\pi\|_{L^\infty(Q)}=1$, and $a_\varepsilon\coloneqq\varepsilon^{-d}a(\varepsilon^{-1}\cdot)$, where $a\in\mathcal{C}^\infty_c(Q)$ has vanishing mean and $\|a\|_1=1$. For $g_\varepsilon\coloneqq\pi+a_\varepsilon$ and $0<\varepsilon<\varepsilon_0$, with $\varepsilon_0$ admissible,
\[
\flt{g_\varepsilon}_{m,Q}\gtrsim1,\qquad\nat{g_\varepsilon}_{0,Q}\gtrsim1,\qquad \nat{g_\varepsilon}_{m,Q}\lesssim r^{-2}\varepsilon^{\min\{\tau,1\}}.
\]
The first two bounds follow from \eqref{e:polylarge}: for the first, separate the cases of a small or a large approximating polynomial; for the second, use the size of $\pi$ off the support of $a_\varepsilon$ and Lemma \ref{l:diff}. For the third, set $s\coloneqq\min\{\tau,1\}$. The vanishing mean gives $\M^\tau_{3Q}a_\varepsilon(x)\lesssim\varepsilon^s|x|^{-d-s}$ for $|x|\gtrsim\varepsilon$. If $\varepsilon^d\leq cr$, with $c$ admissible, discard the ball of measure $r|Q|$ centered at the origin and use Lemma \ref{l:po}(i)--(ii) to obtain $\nat{g_\varepsilon}_{m,Q}\lesssim r^{-1-s/d}\varepsilon^s\leq r^{-2}\varepsilon^s$. Otherwise, Lemma \ref{l:po}(iii) gives $\nat{g_\varepsilon}_{m,Q}\lesssim r^{-1}\lesssim r^{-2}\varepsilon^s$. By contrast, the average of $\M^\tau_{3Q}[g_\varepsilon-\PM^m_{3Q}g_\varepsilon]$ on $Q$ is $\gtrsim1$, by Lemma \ref{l:diff}, as $\langle a_\varepsilon\rangle_Q=1$ and $|\PM^m_{3Q}a_\varepsilon|\lesssim\varepsilon$.
\end{remark}

\subsection{Hardy norms}\label{ss:hardymain}
For fixed $p$ and $w$, the weighted norms of the maximal percentile oscillations $\mathcal N_mf$ are comparable to smooth grand maximal norms, which are weighted Hardy norms in a range of $p$ and $w$ depending on $\tau$ \cite{ST89}. For instance, $\|\M^\tau_{\R^d}f\|_{L^p(\R^d)}$ is equivalent to the norm of $f$ in the Hardy space $H^p(\R^d)$ \cite{FS72} for $\frac d{d+\tau}<p\leq1$. Testing \eqref{e:max} with the cube $R$ itself shows that $\nat{f}_{0,R}\geq|\varphi(f)|$ whenever $\varphi\in\Phi^{\tau,1}(R)$ is supported in $R$, whence
\begin{equation}
\label{e:pwMN}
\M^\tau_{\R^d}f\leq\mathcal N_0f.
\end{equation}
Conversely, by Lemma \ref{l:po}(ii) and Chebyshev's inequality, for every $s>0$,
\[
\nat{f}_{m,Q}\lesssim\nat{f}_{0,Q}\leq r^{-\frac1s}\left\langle\left(\M^\tau_{\R^d}f\right)^s\right\rangle_Q^{\frac1s}.
\]
Choosing $s$ with $w\in A_{p/s}$, it follows that $\|\mathcal N_mf\|_{L^p(w)}\lesssim\|\M^\tau_{\R^d}f\|_{L^p(w)}$ for all $0<p<\infty$ and $w\in A_\infty$, with a constant depending on $p$, $r$ and $w$. The next proposition is the reverse inequality, with an explicit dependence on the weight; it is proved in \S\ref{ss:hardy}, through a linear sparse bound modulo polynomials for $\M^\tau_{\R^d}$, Proposition \ref{p:grand}.
\begin{proposition}\label{c:hardynorm}
Let $m\geq0$ be an integer, $0<r\leq r_0$, $0<p<\infty$ and $w\in A_\infty$. For every $f\in L^2(\R^d)$ with compact support,
\begin{equation}\label{e:hardynorm}
\big\|\M^\tau_{\R^d}f\big\|_{L^p(w)}\lesssim_p[w]_{A_\infty}\left\|\mathcal N_mf\right\|_{L^p(w)},
\end{equation}
where the implied constant grows at most linearly in $p$ as $p\to\infty$. Consequently, for fixed $p$ and $w$, the quantities $\|\mathcal N_nf\|_{L^p(w)}$, $n\geq0$, are comparable to $\|\M^\tau_{\R^d}f\|_{L^p(w)}$; in particular, $\|\mathcal N_nf\|_{L^p(\R^d)}\sim\|f\|_{H^p(\R^d)}$ for $\frac d{d+\tau}<p\leq1$.
\end{proposition}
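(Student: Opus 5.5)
The plan follows the route announced before the statement: first a linear sparse bound modulo polynomials for the grand maximal function, then the standard conversion of such a bound into an $A_\infty$ weighted inequality with linear dependence on $[w]_{A_\infty}$.

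\textbf{Step 1: the sparse bound.} I would aim for the pointwise estimate
\[
\M^\tau_{\R^d}f\lesssim\sum_{S\in\mathcal S}\nat{f}_{m,S}\ind_S\qquad\textrm{a.e.},
\]
with $\mathcal S$ sparse, for compactly supported $f\in L^2(\R^d)$; this is the content of Proposition \ref{p:grand}. The natural route is to regard $f\mapsto\M^\tau_{\R^d}f$ as a supremum over the linear functionals $\varphi(f)$ and run the operator-free stopping principle of Section \ref{s:principle}.

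\begin{itemize}
\item Local weak control is provided by the percentile itself: off a set of measure $r|Q|$, the function $\M^\tau_{3Q}[f-\PM^m_{3Q}f]$ is bounded by $\nat{f}_{m,Q}$.
\item The test functions $\varphi$ supported in cubes $R$ not contained in $3Q$ are handled at the coarser scales.
\item When passing from the renormalization $f-\PM^m_{3\widehat Q}f$ to $f-\PM^m_{3Q}f$, the two differ by a polynomial. Its $L^\infty$ size on $3Q$ is controlled at good points by the percentile oscillation, via Lemma \ref{l:po}(iv)--(v) and Lemma \ref{l:proj}.
\item The action of $\varphi$ on a polynomial is bounded by its sup norm, because $\|\varphi\|_1\le1$. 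So the grand maximal function automatically satisfies the cutoff testing condition, as a model operator with bounded "symbol" would.
\end{itemize}

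\textbf{Main obstacle.} I expect the hardest part to be the large-scale tail. Test functions supported on cubes far larger than the stopping cube see $f$ globally. I would handle this by starting from a top cube $Q_0\supset\supp f$ and observing that for $R\supsetneq 3Q_0$ one has $|\varphi(f)|\lesssim\nat{f}_{0,\cdot}$ at comparable scale, which reduces to order $0$. For $m\ge1$ I would then add a geometric chain of cubes $2^jQ_0$: on these $\PM^m$ of $f$ decays because $f$ is compactly supported, and $|\supp f\cap 2^jQ_0|\le\frac14|2^jQ_0|$ lets Lemma \ref{l:po}(iv) bound the relevant terms by $\nat{f}_{m,2^jQ_0}$. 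Some care is needed so that this chain stays sparse.

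\textbf{Step 2: the weighted inequality.} Given the sparse bound, I would use the known estimate
\[
\Big\|\sum_{S\in\mathcal S}a_S\ind_S\Big\|_{L^p(w)}\lesssim_p[w]_{A_\infty}\Big\|\sup_{S}a_S\ind_S\Big\|_{L^p(w)},\qquad w\in A_\infty,
\]
for sparse $\mathcal S$. This follows by duality for $p\ge1$, or by the layer-cake/good-$\lambda$ argument for sparse families: the level sets of $\sum_S\ind_S$ have exponential decay in $w$-measure at rate governed by $[w]_{A_\infty}$, which gives linear dependence on $[w]_{A_\infty}$ and constants linear in $p$. Taking $a_S=\nat{f}_{m,S}$, the right-hand side is at most $\|\mathcal N_mf\|_{L^p(w)}$, which yields \eqref{e:hardynorm}.

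\textbf{The comparability claims.} These follow by combining \eqref{e:hardynorm} with the reverse bound already shown before the statement via Lemma \ref{l:po}(ii) and Chebyshev. The identification with $H^p$ for $\frac d{d+\tau}<p\le1$ is then the classical grand maximal characterization \cite{FS72}.
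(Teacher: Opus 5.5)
Your proposal is correct and follows the paper's proof. Proposition \ref{p:grand} is proved in exactly this way: Proposition \ref{p:principle} is run in its vector-valued form, with one component $\ind_R(x)\varphi(h_Q)$ for each pair $(R,\varphi)$; the differences $h_Q-h_L=\PM^m_{3L}h_Q$ are controlled at good points by Lemma \ref{l:proj}(ii); the top cube $Q_0=4Q'$ and the chain $2^jQ_0$ are handled by Lemma \ref{l:po}(iv); and the good-$\lambda$ Lemma \ref{l:Ainf} then gives \eqref{e:hardynorm}.

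The one step to tighten is the final $H^p$ identification, which is not literally the Fefferman--Stein theorem, since the tests in \eqref{e:max} only have smoothness $\tau$. The paper checks it directly. On one side, $\M^\tau_{\R^d}$ dominates a fixed smooth-bump maximal function. On the other, atoms with moments vanishing up to order $\lfloor\tau\rfloor$ satisfy $\M^\tau_{\R^d}a(x)\lesssim|R|^{1-\frac1p}\ell(R)^\tau|x-c(R)|^{-d-\tau}$ far from $R$. This decay is exactly where the restriction $p>\frac d{d+\tau}$ enters.
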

\begin{remark} \label{r:weightuseful}
For $m=0$, \eqref{e:hardynorm} holds with constant one, by the pointwise bound \eqref{e:pwMN}. For $m\geq1$, it is a Fefferman--Stein inequality \cite{FS72} modulo polynomials.
Unlike the latter, \eqref{e:hardynorm} remains meaningful for $p\leq1$, where the sharp maximal function of a nonzero compactly supported function does not belong to $L^p(\R^d)$, being at least a constant multiple of $|x|^{-d}$ at infinity. The factor $[w]_{A_\infty}$ cannot be improved for $m\geq1$. Indeed, as $|\Omega|\lesssim\M^\tau_{\R^d}\Omega$ and ${\mathcal N_m\Omega\lesssim\mathcal N_1\Omega}$, it suffices to test \eqref{e:hardynorm} on the function $\Omega=\Omega_{x_0,N}$ and the cubes $Q_j$ of \S\ref{ss:signedlog}, with the weight $w\coloneqq w_{1/N}$ defined there, for which $[w]_{A_\infty}\sim N$, see Lemma \ref{l:power}. As $\Omega=N$ on $Q_1$, and by Lemma \ref{l:signedlog}(a), polar coordinates give, for $N$ large depending on $p$,
\[
\|\Omega\|_{L^p(w)}\gtrsim_p Nw(Q_N)^{\frac1p},\qquad\|\mathcal N_1\Omega\|_{L^p(w)}\lesssim_p r^{-1}w(Q_N)^{\frac1p}.
\]
As $\mathcal N_0\Omega\geq\M^\tau_{\R^d}\Omega\gtrsim|\Omega|$, by \eqref{e:pwMN} and Lemma \ref{l:diff}, the ratio of the norms of $\mathcal N_0\Omega$ and $\mathcal N_1\Omega$ in $L^p(w)$, which are comparable for fixed $p$ and $w$, is thus $\gtrsim_pr[w]_{A_\infty}$: the comparability of Proposition \ref{c:hardynorm} does not preserve the quantitative dependence on the weight, which distinguishes the residual orders in Theorem \ref{t:HS}.
\end{remark}
\section{Main results}\label{s:main}
This section introduces the Calder\'on--Zygmund operators with their symbols, and states the main results; their relation with previous work is discussed in \S\ref{ss:previous}.

\subsection{Wavelets}\label{ss:wavelets}

Throughout, $k\geq0$ is an integer, the order of the Calder\'on--Zygmund operators below, and $0<\delta\leq1$ is the H\"older exponent of their kernels. By \cite{DGWdom}*{Proposition 2.3 and Remark 2.2}, there is a family
\[
\left\{\psi_S\in c \Psi^{k+1,\Subset}(S):S\in\mathcal D\right\}
\]
of real valued functions, $c$ being an admissible constant, such that $\{|S|^{\frac12}\psi_S:S\in\mathcal D\}$ is an orthonormal basis of $L^2(\R^d)$. As usual, each $S$ carries $2^d-1$ wavelets, and sums over $S$ include this index. Thus
\begin{equation}\label{e:onb}
f=\sum_{S\in\mathcal D}|S|\psi_S(f)\psi_S,\qquad \langle f,g\rangle=\sum_{S\in\mathcal D}|S|\psi_S(f)\psi_S(g),\qquad f,g\in L^2(\R^d),
\end{equation}
where the pairing $\langle\cdot,\cdot\rangle$ is bilinear, so that $\psi_S(f)=\langle f,\psi_S\rangle$.
Fix a smooth function $\omega$ with $0\leq\omega\leq1$, equal to one on $[-\frac34,\frac34]^d$ and supported in $(-1,1)^d$, so that, in the notation \eqref{e:norm1}, $\omega^\infty_Q=1$ on $\frac32Q$ and $\supp\omega^\infty_Q\Subset2Q$.

\subsection{Calder\'on--Zygmund operators}\label{ss:forms}

As in \cite{DGWdom}*{Remark 2.6}, the operators below act on spaces $\mathcal U_j\subseteq L^2(\R^d)$, $j\geq0$, defined in \S\ref{ss:czloc} through the wavelet coefficients. They decrease in $j$; for $j\leq k$, they contain $\mathcal{C}^\infty_c(\R^d)$ and the classes $\Phi^{j+\delta,\Subset}(R)$, and $\partial^\alpha$ maps $\mathcal U_{|\alpha|}$ into $\mathcal U_0$, by Lemma \ref{l:Uj}. As $\mathcal U_0$ is strictly smaller than $L^2(\R^d)$, the continuity required in Definition \ref{d:czo} is weaker than boundedness on $L^2(\R^d)$.

\begin{definition}\label{d:czo}
Let $0\leq j\leq k$. A \emph{Calder\'on--Zygmund operator of order $(k,j)$} is a continuous linear operator $T$ from $\mathcal U_0$ to the dual $\mathcal U_j'$ of $\mathcal U_j$ with the following properties, $\langle Tf,g\rangle$ standing for the action of $Tf$ on $g\in\mathcal U_j$.
\begin{enumerate}[label=(\roman*)]
\item The \emph{weak boundedness property} holds, that is
\begin{equation}\label{e:wbp}
|R|\,\ell(R)^j\left|\langle T\zeta_R,\xi_R\rangle\right|\leq1,\qquad \zeta_R\in\Phi^{k+\delta,\Subset}(R),\quad \xi_R\in\Phi^{j+\delta,\Subset}(R),\quad R\in\mathcal D.
\end{equation}
\item For $f\in\mathcal U_0$ and $g\in\mathcal U_j$ with disjoint supports,
\[
\langle Tf,g\rangle=\iint K(x,y)f(y)g(x)\,\d y\,\d x,
\]
with a kernel $K$ satisfying, for $0\leq i\leq k-j$, $x\neq y$, and $|x-x'|+|y-y'|\leq\frac12|x-y|$,
\begin{equation}\label{e:czkernel}
\left|\nabla^i_xK(x,y)\right|\leq|x-y|^{-d-i-j},\qquad \left|\nabla^i_xK(x',y')-\nabla^i_xK(x,y)\right|\leq\frac{\left(|x-x'|+|y-y'|\right)^\delta}{|x-y|^{d+i+j+\delta}}.
\end{equation}
\end{enumerate}
A \emph{Calder\'on--Zygmund operator of order $k$} is one of order $(k,0)$.
\end{definition}

The \emph{derivatives} of a Calder\'on--Zygmund operator $T$ of order $k$ are defined by
\begin{equation}\label{e:dalpha}
\langle\partial^\alpha Tf,g\rangle\coloneqq(-1)^{|\alpha|}\left\langle Tf,\partial^\alpha g\right\rangle,\qquad f\in\mathcal U_0,\quad g\in\mathcal U_{|\alpha|},\quad |\alpha|\leq k,
\end{equation}
cf.\ \cite{DGWdom}*{(5.2) and (5.11)}.
The setting follows \cite{DGWdom}*{\S5}: properties (i) and (ii) are those of \cite{DGWdom}*{Definition 5.1} for $\Omega=\R^d$, up to the value of the constants, which we normalize to one, and the continuity of $T$ is the counterpart of that of the forms there. For $|\alpha|=\kappa$, $\partial^\alpha T$ is a constant multiple of a Calder\'on--Zygmund operator of order $(\kappa,\kappa)$, by Lemma \ref{l:reduce}(i).

\begin{definition}\label{d:symbols}
A \emph{symbol} is a sequence $b=\{b_S:S\in\mathcal D\}$, indexed correspondingly to the wavelets $\psi_S$. Let $X\in\{\mathrm{BMO}(\R^d),L^\infty(\R^d)\}$ in the discussion that follows.
\begin{enumerate}[label=(\roman*)]
\item The symbol $b$ \emph{vanishes} if $b_S=0$ for all $S\in\mathcal D$.
\item The symbol $b$ \emph{belongs to} $X$ if there is a function $\mathfrak b\in X$ with $\psi_S(\mathfrak b)=b_S$ for all $S\in\mathcal D$. Such a function is unique modulo constants, and the norm of $b$ is
\[
\|b\|_X\coloneqq\inf_c\,\|\mathfrak b-c\|_X,
\]
the infimum being taken over all constants $c$.
\item The \emph{high frequency part} of $b$ is the series \[\sum_{\ell(S)\leq1}|S|b_S\psi_S.\] The symbol $b$ \emph{belongs locally to} $X$ if its high frequency part is the wavelet expansion of a function $\mathfrak b\in X$, that is, if the symbol $\{b_S\ind_{\{\ell(S)\leq1\}}:S\in\mathcal D\}$ belongs to $X$. The \emph{local norm} $\|b\|_{X,\mathrm{loc}}$ of $b$ is the norm of the latter symbol.
\end{enumerate}
\end{definition}

Let $0\leq\kappa\leq k$ and let $\mathfrak L$ be a Calder\'on--Zygmund operator of order $(\kappa,\kappa)$. For $S\in\mathcal D$ and a polynomial $\pi$ of degree at most $\kappa$, define, with the cutoffs $\omega^\infty_Q$ of \S\ref{ss:wavelets},
\begin{equation}\label{e:polyaction}
\langle\mathfrak L\pi,\psi_S\rangle\coloneqq\lim_{A\to\infty}\left\langle\mathfrak L\left(\pi\,\omega^\infty_{AS}\right),\psi_S\right\rangle,\qquad \langle\mathfrak L\psi_S,\mathbf 1\rangle\coloneqq\lim_{A\to\infty}\left\langle\mathfrak L\psi_S,\omega^\infty_{AS}\right\rangle.
\end{equation}
The limits exist by Lemma \ref{l:cutoff}(ii)--(iii), by which the dilates $AS$ in the first limit may be replaced by the dilates $AQ$ of any cube $Q$, and they coincide with the actions of \cite{DGWdom}*{(5.3)--(5.4)}. The \emph{symbols of order $\kappa$} of a Calder\'on--Zygmund operator $T$ of order $k$ are, for $|\alpha|=\kappa$ and $S\in\mathcal D$,
\begin{equation}\label{e:symbols}
b^{\alpha,\gamma}_S\coloneqq\left\langle\partial^\alpha T\left[(\cdot-c(S))^\gamma\right],\psi_S\right\rangle,\quad |\gamma|\leq\kappa,\qquad b^{\alpha,\star}_S\coloneqq\langle\partial^\alpha T\psi_S,\mathbf 1\rangle,
\end{equation}
cf.\ \cite{DGWdom}*{\S5.8 and \S6.2}; those with $|\gamma|<\kappa$ are the \emph{lower symbols}, those with $0<|\gamma|<\kappa$ the \emph{intermediate symbols}, and those with $|\gamma|=\kappa$ the \emph{top symbols}. When $\kappa=0$, $b^{0,0}_S=\psi_S(T1)$ and $b^{0,\star}_S=\psi_S(T^*1)$ in the usual sense, while $b^{\alpha,\star}$ vanishes for $\kappa\geq1$, by Lemma \ref{l:cutoff}(iii). By Lemma \ref{l:size}(i),
\begin{equation}\label{e:size}
\left|b^{\alpha,\gamma}_S\right|\lesssim\ell(S)^{|\gamma|-\kappa},\qquad |\alpha|=\kappa,\quad |\gamma|\leq\kappa,\quad S\in\mathcal D,
\end{equation}
so that the top symbols are bounded sequences and the lower symbols are small at large scales. When the symbols of order $\kappa$ of $T$ belong, or belong locally, to $X$, we write $\mathfrak b^{\alpha,\gamma}$, $\mathfrak b^{\alpha,\star}$ for the corresponding functions $\mathfrak b$, cf.\ \cite{DGWdom}*{(4.1)}. The local conditions impose nothing on the low frequency part $\{b^{\alpha,\gamma}_S:\ell(S)>1\}$, which is bounded by \eqref{e:size}.
By Lemma \ref{l:size}(ii), a lower symbol belongs to $X$ if and only if it belongs locally to $X$.

The symbols are the coefficients of the paraproducts in the wavelet representation of $\partial^\alpha T$, Proposition \ref{p:rep}, in which the symbol $b^{\alpha,\gamma}$ acts on the channel $\partial^\gamma f$.

\subsection{Sparse bounds modulo polynomials}\label{ss:czmain}

Fix $k\geq0$ and a Calder\'on--Zygmund operator $T$ of order $k$.
Throughout, $0\leq \kappa=|\alpha|\leq k$ is the order of  the applied derivative $\partial^\alpha$, $m\geq\kappa$ is the polynomial subtraction index, $\delta$ is the H\"older exponent of the kernel. The percentile oscillations \eqref{e:po} and the functions \eqref{e:maxfns} are computed with smoothness parameter $\tau$ and percentile level $r$. When a statement assumes qualitative conditions on the symbols of $T$, $\mathfrak B(T)$ denotes the sum of the corresponding symbol norms. The kernel and weak boundedness constants of $T$ are normalized to one in Definition \ref{d:czo}, and admissible constants depend on $T$ only through its order $k$ and the H\"older exponent $\delta$ of its kernel. The dependence on $T$ is carried by $\mathfrak B(T)$, and by the constants of the inequalities for $T$ assumed in a statement, such as $C$ or $C_p(T)$, which are named when a conclusion depends on them. Unless stated otherwise, the implied constants in the conclusions are admissible multiples of $1+\mathfrak B(T)$, respectively of $1+C$, while in an assumed inequality $\lesssim$ allows a constant depending on $T$. In the norm inequalities for $\partial^\alpha Tf$, such as \eqref{e:PHS} and \eqref{e:czextreme}, it is part of the statement that $\partial^\alpha Tf$ is a locally integrable function.

Theorem \ref{t:HS} shows that, in the homogeneous setting, the conditions on the symbols \eqref{e:symbols}, the pointwise sparse bounds and the weighted Hardy--Sobolev inequalities with linear dependence on $[w]_{A_\infty}$ are equivalent. Theorem \ref{t:jet} characterizes, in terms of the symbols, the inhomogeneous pointwise sparse bounds modulo polynomials for the derivatives $\partial^\alpha T$, $|\alpha|=\kappa$. Theorem \ref{t:HSjet} gives the corresponding weighted Hardy--Sobolev inequalities under weaker conditions on the intermediate symbols, and Proposition \ref{p:rigidity} shows that, once $\kappa\geq2$, the norm theory is strictly weaker than the pointwise one. On their right hand sides, the channels $\partial^\gamma f$ enter through their percentile oscillations \eqref{e:po} modulo polynomials in $\P^{m-|\gamma|-1}$.
For $1<p<\infty$ and $A_p$ weights, sparse bounds by averages suffice: they yield the sharp weighted Sobolev inequalities of Proposition \ref{c:czweighted}, in \S\ref{ss:Ap}. By Lemma \ref{l:po}(iii), the sparse bounds of Theorems \ref{t:HS} and \ref{t:jet} imply their Campanato versions, with the oscillations \eqref{e:campanato} in place of the percentile oscillations, and the proofs of necessity only use the latter.

\subsection{Homogeneous sparse and weighted Hardy--Sobolev bounds}\label{ss:HS}

In Theorem \ref{t:HS}, the requirement that the right hand side of the sparse bound only involve the derivatives of order $\kappa$ of $f$ forces the lower symbols to vanish and the conditions on the top symbols to be global.

\begin{theorem}\label{t:HS}
Let $T$ be a Calder\'on--Zygmund operator of order $k$, $0\leq\kappa\leq k$, $|\alpha|=\kappa$, $0<\tau<\delta$, $0<r\leq r_0$, and let $m\geq\kappa$ be an integer. The following are equivalent.
\begin{enumerate}[label=(\roman*)]
\item The symbols $b^{\alpha,\gamma}$ vanish for $|\gamma|<\kappa$, and, for $|\gamma|=\kappa$, they belong to $\mathrm{BMO}(\R^d)$ if $m=\kappa$, and to $L^\infty(\R^d)$ if $m>\kappa$; if $\kappa=0$, the symbol $b^{0,\star}$ belongs to $\mathrm{BMO}(\R^d)$.
\item For every $f\in \mathcal{C}^\infty_c(\R^d)$, the distribution $\partial^\alpha Tf$ is a locally integrable function, and there exists a sparse collection $\mathcal S$ of cubes of $\R^d$ such that
\begin{equation}\label{e:Dfine}
\left|\partial^\alpha Tf\right|\lesssim\sum_{S\in\mathcal S}\sum_{|\beta|=\kappa}\nat{\partial^\beta f}_{m-\kappa,S}\ind_S\qquad\textrm{a.e.\ on }\R^d.
\end{equation}
\item   For each $0<p<\infty$,  one has the homogeneous weighted Hardy--Sobolev inequality
\begin{equation}\label{e:PHS}
\left\|\partial^\alpha Tf\right\|_{L^p(w)}\lesssim_p [w]_{A_\infty}\sum_{|\beta|=\kappa}\left\|\mathcal N_{m-\kappa}\left(\partial^\beta f\right)\right\|_{L^p(w)},\qquad w\in A_\infty,\quad f\in \mathcal{C}^\infty_c(\R^d).
\end{equation}
\end{enumerate}
Quantitatively, if $C$ is the infimum of the implied constants in \emph{(ii)}, then
\[
1+C\lesssim1+ \mathfrak B(T)\lesssim\frac{1+C}r,\] and, under \emph{(i)}, the implied constants in \emph{(iii)} are $\lesssim_p1+\mathfrak B(T)$, and grow at most linearly in $p$ as $p\to\infty$; conversely, if $C_p$ is the implied constant in \emph{(iii)}, then $1+\mathfrak B(T)\lesssim r^{-2}(1+C_2+C_{2d})$.
\end{theorem}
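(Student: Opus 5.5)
The plan is the cycle (i)$\Rightarrow$(ii)$\Rightarrow$(iii)$\Rightarrow$(i), tracking constants at each arrow.

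\emph{(i)$\Rightarrow$(ii).} Start from the wavelet resolution of Proposition \ref{p:rep}. Under (i) the lower symbols vanish, so only the top channels remain:
\[
\partial^\alpha Tf=\sum_{|\beta|=\kappa}\Big(V_{\alpha,\beta}+\tfrac1{\beta!}P^{\alpha,\beta}\Big)(\partial^\beta f),
\]
plus, when $\kappa=0$, the adjoint paraproduct with symbol $b^{0,\star}$. Each term is a smooth model wavelet operator in the sense of Definition \ref{d:model}, applied to $g=\partial^\beta f$. Theorem \ref{t:model} with $m-\kappa$ in place of $m$ then gives $|Vg|\lesssim\sum_{S}\nat{g}_{m-\kappa,S}\ind_S$, provided the cutoff testing condition $\sup_Q\|V\omega^\infty_Q\|_\infty<\infty$ holds when $m-\kappa\geq1$. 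I check this operator by operator:
\begin{itemize}
\item For the cancellative $V_{\alpha,\beta}$ and the adjoint paraproduct, the input packets have mean zero, so the condition holds automatically.
\item For the paraproduct $P^{\alpha,\beta}$, Lemma \ref{l:para}(ii) derives it from $b^{\alpha,\beta}\in L^\infty$, which is exactly what (i) assumes when $m>\kappa$.
\item When $m=\kappa$ no testing is needed, and $\mathrm{BMO}$ symbols give bounded paraproducts as model operators.
\end{itemize}
Finitely many sparse families are then merged into one by the union property of \S\ref{ss:classes}; this yields (ii) with $C\lesssim1+\mathfrak B(T)$.

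\emph{(ii)$\Rightarrow$(iii).} Use the conversion of Section \ref{s:cons}. I expect a sparse operator with percentile coefficients to satisfy $\|\sum_S\nat{g}_{n,S}\ind_S\|_{L^p(w)}\lesssim_p[w]_{A_\infty}\|\mathcal N_ng\|_{L^p(w)}$. The route is standard: $\nat{g}_{n,S}\leq\inf_S\mathcal N_ng$ on a large portion of $S$. For $p\le1$ use the $p$-triangle inequality on the sparse sum. For $p>1$ argue by duality and the $A_\infty$ Carleson embedding, which is where the linear factor $[w]_{A_\infty}$ and the linear growth in $p$ come from.

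\emph{(iii)$\Rightarrow$(i), and the $r$-dependence.} This direction is the main obstacle. First I would recover the qualitative conditions, the $\mathrm{BMO}$ memberships, the vanishing of the lower symbols and $b^{0,\star}\in\mathrm{BMO}$, from unweighted $L^2$ and $L^{2d}$ inequalities. These follow from (iii) with $w\equiv1$ together with Lemma \ref{l:po}(iii), which bounds $\mathcal N_{m-\kappa}$ by Campanato/fractional maximal functions as in Remark \ref{r:Nfrac}. The steps are:
\begin{itemize}
\item Test on localized monomials $f=(\cdot-c(Q))^\gamma\omega^\infty_{AQ}$ with $|\gamma|<\kappa$. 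Since $\partial^\beta f$ vanishes near $Q$ while $\partial^\alpha Tf$ pairs with $\psi_S$ to give $b^{\alpha,\gamma}_S$, letting $A\to\infty$ and scaling forces $b^{\alpha,\gamma}_S=0$.
\item For the $\mathrm{BMO}$ bounds on the top symbols and on $b^{0,\star}$, test $L^2$ bounds on localized functions; Carleson-measure characterizations give the claim, and the exponent $2d$ handles the fractional Sobolev scaling.
\item The delicate step is $L^\infty$ of the top symbols when $m>\kappa$. Here I would mimic Remark \ref{r:weightuseful}: take $f$ with $\partial^\beta f$ equal to the signed logarithm $\Omega_{x_0,N}$ of \S\ref{ss:signedlog} (times a monomial) and the power weight $w_{1/N}$, for which $[w]_{A_\infty}\sim N$ by Lemma \ref{l:power}. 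By Lemma \ref{l:signedlog}(a), the right-hand side of (iii) is then of order $Nr^{-1}w(Q_N)^{1/p}$. On the left, the paraproduct contributes $\gtrsim|\mathfrak b(x_0)-c|\,N\cdot N$, averaged suitably around a Lebesgue point of $\mathfrak b$. Comparing the two and letting $N\to\infty$ gives $\|\mathfrak b\|_\infty\lesssim r^{-2}(1+C_2+C_{2d})$.
\end{itemize}
The hard part is isolating the paraproduct contribution in this last step, since $V$ and the remaining terms are controlled only by the already established $\mathrm{BMO}$ bounds.

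Finally, for the quantitative claim $1+\mathfrak B(T)\lesssim(1+C)/r$ under (ii), the same tests apply to the sparse bound directly, using Lemma \ref{l:po}(iii); this costs a single factor $r^{-1}$.
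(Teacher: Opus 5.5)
\textbf{Where the proposal stands.} Your (i)$\Rightarrow$(ii) is the paper's argument. For $p\geq1$, your duality route in (ii)$\Rightarrow$(iii) is also a valid alternative to the paper's good-$\lambda$ argument. Write $G\coloneqq\sum_S\lambda_S\ind_S$ and $H\coloneqq\sup_S\lambda_S\ind_S$. The packing $\sum_{S\in\mathcal S,\,S\subseteq R}w(S)\lesssim[w]_{A_\infty}w(R)$ gives
$\int Ggw\le\sum_Sw(S)\inf_S(H\,\M^w_{\mathcal D}g)\lesssim[w]_{A_\infty}\int H\,\M^w_{\mathcal D}g\,w$.
Here $\M^w_{\mathcal D}$ is the $w$-weighted dyadic maximal function, which has norm at most $p$ on $L^{p'}(w)$; this yields both the linear $[w]_{A_\infty}$ and the linear growth in $p$.

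\textbf{Gap 1: the case $0<p<1$.} Applying the $p$-triangle inequality cube by cube gives $\|G\|_{L^p(w)}^p\le\sum_S\lambda_S^pw(S)\lesssim[w]_{A_\infty}\|H\|_{L^p(w)}^p$. That is the constant $[w]_{A_\infty}^{1/p}$, which is superlinear, so it does not prove (iii) as stated. The loss is in the first inequality, not in the packing. Take a long dyadic chain $S_0\supset S_1\supset\cdots$ shrinking to $x_0$, with $\lambda_S=1$ and $w=w_\varepsilon$. Then $\sum_Sw(S)\sim\varepsilon^{-1}w(S_0)$, while $\|G\|_{L^p(w)}^p\sim\varepsilon^{-p}w(S_0)$.

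Linear dependence for every $p$ needs the exponential decay of the sparse counting function, as in Lemma \ref{l:goodl}. Inside each maximal cube where the partial sums exceed $\lambda$, the set $\{G>2\lambda,\ H\le\gamma\lambda\}$ lies where the local counting function exceeds $\gamma^{-1}$. By John--Nirenberg and \eqref{e:Ainfty}, that set has $w$-measure $\lesssim\exp(-c/(\gamma[w]_{A_\infty}))$ times that of the cube. Choosing $\gamma^{-1}\sim p[w]_{A_\infty}$ gives Lemma \ref{l:Ainf}. Alternatively, you can keep the $p$-triangle inequality but apply it only across the dyadic levels of $\lambda_S$, using this decay within each level.

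\textbf{Gap 2: the $L^\infty$ step of (iii)$\Rightarrow$(i) with $m>\kappa$.} The obstacle you flag resolves quickly, but the step that actually needs an argument is missing.

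For isolation, note that $V_{\alpha,\beta}$ and $P^*_{\mathfrak b^{0,\star}}$ have mean-zero input packets, so they satisfy \eqref{e:V1} with no $L^\infty$ hypothesis (Lemma \ref{l:V1mean}). Theorem \ref{t:model} at order $m-\kappa$ and Lemma \ref{l:Ainf} then give them \eqref{e:PHS} with constants $\lesssim_p1+C_*$, where $C_*\coloneqq r^{-1}(1+C_2+C_{2d})$ bounds the $\mathrm{BMO}$ norms already obtained. Hence the sum $\Pi$ of all top paraproducts satisfies \eqref{e:PHS}. The cross terms need not be separated: with $b\coloneqq\mathfrak b^{\alpha,\gamma}$, Lemma \ref{l:signedlog}(c) gives $\frac{\gamma!}{N}\Pi f_\gamma=b-A_Nb(x_0)+O(C_*)$ a.e.\ on $B(x_0,c)$.

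The missing point is that the subtracted quantity is $A_Nb(x_0)=\frac1N\sum_{j\le N}\mathbb E_{2^j}b(x_0)$, not a fixed constant. It depends on $x_0$ and $N$, and need not converge, since $\mathbb E_{2^j}b(x_0)$ may grow like $j$ for $b\in\mathrm{BMO}(\R^d)$. Comparing in $L^2(w_{1/N})$ and using \eqref{e:concentrate} yields only $\limsup_N|b(x_0)-A_Nb(x_0)|\le C_\sharp$ at each Lebesgue point. To get a single bounded representative you need \eqref{e:Etb}. The differences $\mathbb E_{2^j}b(x_0)-\mathbb E_{2^j}b(x_0')$ are bounded and tend to zero, so their Ces\`aro means $A_Nb(x_0)-A_Nb(x_0')$ tend to zero. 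Hence $|b(x_0)-b(x_0')|\le2C_\sharp$.

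\textbf{Two smaller points.}
\begin{enumerate}
\item The exponent $2d>d$ is what kills the lower symbols, since $\|\nabla^\kappa f_A\|_p\sim(A\ell(S))^{|\gamma|-\kappa+d/p}$. The $\mathrm{BMO}$ conditions use only $p=2$.
\item The bound $1+\mathfrak B(T)\lesssim(1+C)/r$ from (ii) comes from the pointwise test of Lemma \ref{l:tests}(b). For $f_A=(\cdot-c(Q))^\gamma\omega^\infty_{AQ}$ with $|\gamma|<m$, only cubes with $\ell(S)\gtrsim A\ell(Q)$ contribute to \eqref{e:Dfine}. Routing through (iii) and the logarithmic test costs $r^{-2}$ instead.
\end{enumerate}
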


Corollaries \ref{c:czpot}--\ref{c:hardy} are proved in \S\ref{ss:czcor} and Section \ref{s:HS}.
\begin{corollary}\label{c:czpot}
In the setting of Theorem \ref{t:HS}, assume \emph{(i)}. Then, for each $f\in \mathcal{C}^\infty_c(\R^d)$, the sparse collection $\mathcal S$ of \emph{(ii)} satisfies the Sobolev sparse bound
\begin{equation}\label{e:D}
\left|\partial^\alpha Tf\right|\lesssim\sum_{S\in\mathcal S}\ell(S)^{m-\kappa}\left(\M^\tau_{3S}\left[\nabla^mf\right]\right)^{*,S}(r)\ind_S\qquad\textrm{a.e.\ on }\R^d,
\end{equation}
and, for $0<p<\infty$ and $w\in A_\infty$,
\begin{equation}\label{e:czMW}
\left\|\partial^\alpha Tf\right\|_{L^p(w)}\lesssim_p[w]_{A_\infty}\left\|\mathcal N^{m-\kappa}\left[\nabla^mf\right]\right\|_{L^p(w)},
\end{equation}
with an implied constant growing at most linearly in $p$ as $p\to\infty$.
\end{corollary}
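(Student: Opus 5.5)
The plan is to derive \eqref{e:D} directly from the sparse bound \eqref{e:Dfine} of Theorem \ref{t:HS}(ii), using the same collection $\mathcal S$, through a percentile Poincar\'e inequality of the form
\[
\nat{g}_{n,S}\lesssim\ell(S)^{n}\left(\M^\tau_{3S}\left[\nabla^{n}g\right]\right)^{*,S}(r'),\qquad n\geq1,
\]
applied with $g=\partial^\beta f$, $n=m-\kappa$ and $|\beta|=\kappa$. When $m=\kappa$ there is nothing to prove: $\PM^0=0$ and $\sum_{|\beta|=\kappa}\nat{\partial^\beta f}_{0,S}$ is exactly $(\M^\tau_{3S}[\nabla^\kappa f])^{*,S}(r)$, by \eqref{e:msigmavec} and the subadditivity of percentiles, up to splitting the level $r$ among an admissible number of components.

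When $m>\kappa$, I would prove the percentile Poincar\'e inequality by the anti-integration method of \cite{DGWdom}, as announced for Section \ref{s:poincare}. Fix $x\in 3S$ and a test function $\varphi\in\Phi^{\tau,1}(R)$ supported in $R$, with $x\in R\subseteq 3S$. Write $\varphi(g-\PM^n_{3S}g)$ by Taylor expansion of $g$ around the points weighted by $\theta_{3S}$, using the integral form of the remainder. This expresses the pairing as a sum of pairings of $\nabla^n g$ against functions $\Theta$. Each $\Theta$ would be $\ell(S)^n$ times a superposition of smooth bumps adapted to cubes $R'$ with $R\subseteq R'\subseteq 3S$, with summable coefficients. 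The resulting pointwise bound is $\M^\tau_{3S}[g-\PM^n_{3S}g](x)\lesssim\ell(S)^n\,\mathcal A(x)$, where $\mathcal A$ is an averaged version of $\M^\tau_{3S}[\nabla^n g]$ along chains of cubes. Passing to percentiles on $S$ at a smaller level then gives the displayed inequality. For this step it may be simpler to use the vanishing moments inherited from polynomial reproduction, as in Lemma \ref{l:proj}, and to telescope between the projections on $R$ and on $3S$.

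Once the pointwise estimate is available, \eqref{e:D} follows from \eqref{e:Dfine} by summing over $\beta$, and the change of percentile level is absorbed by the conventions on $r$. For \eqref{e:czMW}, I would invoke Theorem \ref{t:HS}(iii), and more precisely its proof: the weighted inequality comes from the sparse bound by the $A_\infty$ sparse machinery of Section \ref{s:cons}, with linear dependence on $[w]_{A_\infty}$ and linear growth in $p$. Applying that machinery to \eqref{e:D} gives
\[
\Big\|\sum_{S}\ell(S)^{m-\kappa}\left(\M^\tau_{3S}[\nabla^m f]\right)^{*,S}(r)\ind_S\Big\|_{L^p(w)}\lesssim_p[w]_{A_\infty}\left\|\mathcal N^{m-\kappa}[\nabla^mf]\right\|_{L^p(w)},
\]
since each coefficient is dominated by $\mathcal N^{m-\kappa}[\nabla^m f]$ on $S$, after the same level change, exactly as $\nat{\cdot}_{n,S}\leq\mathcal N_n$ on $S$.

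The main obstacle is the percentile Poincar\'e step. A smooth maximal function of the remainder is not obviously bounded by a percentile of the maximal function of the derivative, because Taylor remainders are nonlocal inside $3S$. Averages can be pulled through the remainder, but percentiles cannot be pulled through averages. I would first try the telescoping chain argument: control each link in the chain by $\M^\tau_{3S}[\nabla^n g]$ at a point of the good set, namely the complement of the exceptional set of measure $r|S|$. This is the same "good points" philosophy used in Section \ref{s:principle}.
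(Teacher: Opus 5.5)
Your reduction is the paper's. You obtain \eqref{e:D} from \eqref{e:Dfine}, for the same collection $\mathcal S$, through a percentile Poincar\'e inequality. Then \eqref{e:czMW} follows from Lemma~\ref{l:Ainf}, since each coefficient of \eqref{e:D} is at most $\mathcal N^{m-\kappa}[\nabla^mf]$ on its cube. The paper equivalently passes through \eqref{e:PHS} and the pointwise bound $\mathcal N_{m-\kappa}(\partial^\beta f)\lesssim\mathcal N^{m-\kappa}[\nabla^mf]$.

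\textbf{The gap.} The step you leave open as the main obstacle is not proved, and the idea you are missing is that there is no obstacle. In your own decomposition, each bump is supported in a cube $R'$ with $R\subseteq R'\subseteq3S$, hence $x\in R'$. The definition \eqref{e:max} then bounds its pairing with $\nabla^ng$ by $\M^\tau_{3S}[\nabla^ng](x)$ at the same point $x$. So your $\mathcal A(x)$ is dominated by $\M^\tau_{3S}[\nabla^ng](x)$ itself, and you get the pointwise inequality
\[
\M^\tau_{3S}\left[g-\PM^n_{3S}g\right](x)\lesssim\ell(S)^n\,\M^\tau_{3S}\left[\nabla^ng\right](x),\qquad x\in3S.
\]
This passes to percentiles on $S$ by monotonicity of the rearrangement, at the same level $r$. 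It is \eqref{e:poincare} with $n=m$, i.e.\ the second inequality of \eqref{e:poincareperc}, already proved in Proposition~\ref{p:poincare} along the route you sketch last:
\begin{itemize}
\item take a chain $R=S_0\subseteq\cdots\subseteq S_N=3S$ with $\ell(S_{i+1})\le2\ell(S_i)$;
\item telescope $\varphi-\Pi^n_{3S}\varphi$ through the dual projectors $\Pi^n_{S_i}\varphi$ of Lemma~\ref{l:dual}; each link annihilates $\P^{n-1}$ and is a bounded multiple of a test supported in $S_{i+1}\ni x$;
\item use Lemma~\ref{l:anti} to write each link as $\ell(S_{i+1})^n\sum_{|\mu|=n}\partial^\mu\psi_\mu$, with tests $\psi_\mu$ supported in $S_{i+1}$;
\item sum the geometric series $\sum_i\ell(S_i)^n\lesssim\ell(S)^n$.
\end{itemize}
Apply this to $g=\partial^\beta f$ with $n=m-\kappa$, and use $\M^\tau_{3S}[\nabla^{m-\kappa}\partial^\beta f]\le\M^\tau_{3S}[\nabla^mf]$ from \eqref{e:msigmavec}. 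That is the whole proof of \eqref{e:D}; you could simply cite Proposition~\ref{p:poincare}.

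\textbf{The good-point fix.} As phrased, in the spirit of Section~\ref{s:principle}, it would not work. A link living on a small cube of the chain around $x$ need not contain any point outside an exceptional set of measure $r|S|$. So evaluating it away from $x$ cannot be converted into a percentile on $S$; it is evaluation at $x$ that makes the percentile step trivial.

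\textbf{Two minor corrections.}
\begin{itemize}
\item No change of percentile level is needed. This matters because the statement concerns the collection of \emph{(ii)} at the level $r$ itself; a level change would force you to invoke \emph{(ii)} at another level, possibly with a different collection.
\item For $m=\kappa$, the needed bound $\nat{\partial^\beta f}_{0,S}\le(\M^\tau_{3S}[\nabla^\kappa f])^{*,S}(r)$ is plain monotonicity, since $\M^\tau_{3S}\partial^\beta f\le\M^\tau_{3S}[\nabla^\kappa f]$ pointwise. It is not an identity obtained from subadditivity of percentiles, which runs in the opposite direction.
\end{itemize}
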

By Lemma \ref{l:po}(iii), with $\nabla^mf$ and $0$ in place of $f$ and $m$, $(\M^\tau_{3S}[\nabla^mf])^{*,S}(r)\lesssim r^{-1}\langle|\nabla^mf|\rangle_{3S}$, and the corresponding fractional sparse sums are dominated by Riesz potentials, see \eqref{e:band} and Lemma \ref{l:sparsepot}. Hence, if $0<m-\kappa<d$, the Sobolev sparse bound \eqref{e:D} implies
\begin{equation}\label{e:czpot}
\left|\partial^\alpha Tf(x)\right|\lesssim\frac1r\int_{\R^d}\frac{|\nabla^mf(y)|}{|x-y|^{d-m+\kappa}}\,\d y\qquad\textrm{for almost every }x\in\R^d,
\end{equation}
and, for $0\leq m-\kappa<d$, by \eqref{e:Nfrac}, the inequality \eqref{e:czMW} implies the same inequality with $r^{-1}\M_{m-\kappa}(|\nabla^mf|)$ in place of $\mathcal N^{m-\kappa}[\nabla^mf]$.
\begin{corollary}\label{c:endpoint}
In the setting of Theorem \ref{t:HS}, let $s\coloneqq m-\kappa<d$.
\begin{enumerate}[label=(\alph*)]
\item If $s=0$, then \emph{(i)} holds if and only if $\|\partial^\alpha Tf\|_{L^p(\R^d)}\lesssim_p\|\nabla^\kappa f\|_{L^p(\R^d)}$ for all $1<p<\infty$ and $f\in \mathcal{C}^\infty_c(\R^d)$.
\item If $0<s<d$, then \emph{(i)} holds if and only if the following hold:
\begin{equation}\label{e:Linfty}
\left\|\partial^\alpha Tf\right\|_{L^\infty(\R^d)}\lesssim\left\|\nabla^mf\right\|_{L^{\frac ds,1}(\R^d)},\qquad f\in \mathcal{C}^\infty_c(\R^d),
\end{equation}
and, if $\kappa=0$, $T$ is bounded on $L^2(\R^d)$.
\end{enumerate}
\end{corollary}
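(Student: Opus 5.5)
The plan is to derive both parts from Theorem \ref{t:HS}, using Corollary \ref{c:czpot} for sufficiency and an extrapolation/testing argument for necessity.

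\emph{Sufficiency.} Assume (i). For (a), $s=0$ means $m=\kappa$. I would take the sparse bound \eqref{e:D} with $m=\kappa$ and use Lemma \ref{l:po}(iii) with $0$ in place of $m$. This gives $(\M^\tau_{3S}[\nabla^\kappa f])^{*,S}(r)\lesssim r^{-1}\langle|\nabla^\kappa f|\rangle_{3S}$, so $|\partial^\alpha Tf|$ is dominated by a sparse averaging operator applied to $|\nabla^\kappa f|$. That operator is bounded on $L^p(\R^d)$ for $1<p<\infty$, which gives the estimate.

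For (b), with $0<s<d$, I would use \eqref{e:czpot}: $|\partial^\alpha Tf|\lesssim r^{-1}I_s(|\nabla^m f|)$. The Riesz potential $I_s$ maps $L^{d/s,1}$ to $L^\infty$, because the kernel $|x|^{s-d}$ lies in $L^{d/(d-s),\infty}$, the dual of $L^{d/s,1}$. This yields \eqref{e:Linfty}. If $\kappa=0$, the condition (i) is $T1,T^*1\in\mathrm{BMO}$ (or $T1\in L^\infty$ when $m>0$), and the $T(1)$ theorem gives $L^2$ boundedness. Alternatively, apply case $m=0$ of Theorem \ref{t:HS}, whose hypothesis is implied, and then argue as in (a).

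\emph{Necessity for (a).} Here I would prove (i) of Theorem \ref{t:HS} with $m=\kappa$, in three steps.

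\begin{enumerate}
\item The lower symbols $b^{\alpha,\gamma}$, $|\gamma|<\kappa$, should vanish. I would test $f=\pi\,\omega^\infty_{AS}$ with $\pi$ a monomial of degree $|\gamma|$ and pair with $\psi_S$. The $L^p$ bound gives
\[
|\langle\partial^\alpha T f,\psi_S\rangle|\lesssim\|\nabla^\kappa f\|_{L^p}\|\psi_S\|_{L^{p'}}.
\]
Since $\nabla^\kappa\pi=0$, only the cutoff region contributes, where $|\nabla^\kappa f|\lesssim (A\ell(S))^{|\gamma|-\kappa}$ on a set of measure about $(A\ell(S))^d$. Choosing $p$ with $d/p<\kappa-|\gamma|$, for instance $p$ large, the right side tends to $0$ as $A\to\infty$. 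Hence $b^{\alpha,\gamma}_S=0$.
\item The top symbols should be in BMO. I would use the standard necessity argument of the $T(1)$ theorem: $L^2$ boundedness of $\partial^\alpha T\circ(\nabla^\kappa)^{-1}$ (for instance, writing $\nabla^\kappa$ through Riesz-type factorizations) forces the top paraproducts to be bounded on $L^2$.
\item This in turn gives the Carleson condition on $b^{\alpha,\gamma}$, $|\gamma|=\kappa$, that is, BMO.
\end{enumerate}

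For $\kappa=0$, the $L^2$ boundedness gives $T^*1\in\mathrm{BMO}$ directly. Concretely, I would test the $L^2$ bound on $f=\sum_S|S|a_S\pi_S\,\omega^\infty_{S}$-type functions localized to a cube. Alternatively, a cleaner route is to recover the weighted inequality \eqref{e:PHS} needed by Theorem \ref{t:HS}(iii) at $p=2,2d$, but that is not available from unweighted bounds. So I would do the direct testing described above.

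\emph{Necessity for (b), the main obstacle.}
\begin{itemize}
\item Lower symbols vanish: test with localized polynomials as in (a), using
\[
\|\nabla^m(\pi\omega^\infty_{AS})\|_{L^{d/s,1}}\lesssim (A\ell(S))^{|\gamma|-m+s}\to0,
\]
since $|\gamma|<\kappa=m-s$.
\item Top symbols must be in $L^\infty$: here I would dualize. Pairing with $\psi_S$ and using \eqref{e:Linfty} with $f=(\cdot-c(S))^\gamma\omega^\infty_{AQ}$, the quantity $|\langle \partial^\alpha T f,g\rangle|$ for $g\in L^1$ is controlled. Summing over wavelets at all scales below a cube $Q$, this should give $|\langle\mathfrak b^{\alpha,\gamma},g\rangle|\lesssim\|g\|_{L^1}$ uniformly in the cutoff scale. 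The cutoff contribution $\nabla^m(\pi\omega^\infty_{AQ})$ has $L^{d/s,1}$ norm $\lesssim (A\ell(Q))^{\kappa-m+s}=O(1)$ exactly at the critical index. This scale invariance is what makes $L^\infty$, rather than BMO, the natural condition.
\item The delicate point is subtracting the constant and passing to the limit $A\to\infty$ so as to identify the wavelet expansion with a bounded function.
\item If $\kappa=0$, the $L^2$ hypothesis supplies $T^*1\in\mathrm{BMO}$ via the $T(1)$ theorem, completing (i).
\end{itemize}
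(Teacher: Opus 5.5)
Your sufficiency arguments, the vanishing of the lower symbols in both parts, and the order-zero adjoint condition all follow the paper's proof. The gap is in the step you flag as the main obstacle: the bounded representative of the top symbols in (b). You only assert it (``this should give''), and the sketch does not close as written, for three reasons.

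First, your test polynomial is centered at $c(S)$, which varies with $S$. Summing over wavelets therefore never produces a single pairing $\langle\partial^\alpha Tf_A,g\rangle$ to which \eqref{e:Linfty} can be applied. The missing observation is that, once the lower symbols vanish, the binomial formula gives $b^{\alpha,\gamma}_S=\langle\partial^\alpha T[(\cdot-c)^\gamma],\psi_S\rangle$ for \emph{every} center $c$, see \eqref{e:center}. Hence one center serves all wavelets in the expansion of $g$.

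Second, the scale-truncated wavelet expansions $g_n$ of $g$ must be uniformly bounded in $L^1$. This does not follow from the wavelet series itself. It follows from writing $g_n=(\mathbb E_{2^{-n-1}}-\mathbb E_{2^n})g$ and using the uniform $L^1$ boundedness of the MRA projections.

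Third, you pair with $\mathfrak b^{\alpha,\gamma}$ as if it already existed. For $\kappa\geq1$, part (b) has no $L^2$ hypothesis, so no $\mathrm{BMO}$ function is available a priori. Moreover, $\sum_S|S|b_S\psi_S$ with merely bounded coefficients need not converge at large scales against test functions of nonzero mean.

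The paper never subtracts a constant. The finite-scale sums $b^{(n)}=\sum_{2^{-n}\leq\ell(S)\leq2^n}|S|b_S\psi_S$ are bounded functions. The bound \eqref{e:homtest} is scale invariant at $|\gamma|=\kappa$; applying it with a cube $Q\supseteq\supp g_n$ and letting $A\to\infty$ gives $|\langle b^{(n)},g\rangle|\lesssim H\|g\|_1$ uniformly in $n$. A weak-$*$ limit then has wavelet coefficients $b_S$ by orthonormality (Lemma \ref{l:recon}).

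Step 2 of your necessity argument in (a) is also not justified as stated. The channels $\partial^\gamma f$, $|\gamma|=\kappa$, are coupled. Inverting $\nabla^\kappa$ through Riesz-type factorizations composes the paraproducts with Riesz transforms, so $L^2$ boundedness of the individual top paraproducts does not follow.

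What works, and what the paper does in Lemma \ref{l:carleson}(ii), is to test the $L^2$ bound on a single localized monomial $f_Q=(\cdot-c(Q))^\gamma\omega^\infty_{4\sqrt d\w Q}$. By \eqref{e:actionsplit} and the H\"older kernel bound, $b^{\alpha,\gamma}_S=\psi_S(\partial^\alpha Tf_Q)+O((\ell(S)/\ell(Q))^\delta)$ for $S\subseteq Q$; the lower-order binomial terms drop out because the lower symbols vanish. Then $\|\partial^\alpha Tf_Q\|_2\lesssim C\|\nabla^\kappa f_Q\|_2\lesssim C|Q|^{1/2}$ gives the Carleson condition for each $\gamma$ at every scale. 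The same test applied to $T^*$ gives $b^{0,\star}\in\mathrm{BMO}$ when $\kappa=0$.
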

When $\kappa=0$, the $L^2$ boundedness in (b) is not implied by \eqref{e:Linfty}, see Example \ref{r:D}.
\begin{corollary}\label{c:hardy}
In the setting of Theorem \ref{t:HS} with $m=\kappa$, assume \emph{(i)}, and let $f\in \mathcal{C}^\infty_c(\R^d)$. For $0<p<\infty$ and $w\in A_\infty$, with a constant depending on $p$ and $w$,
\[
\left\|\partial^\alpha Tf\right\|_{L^p(w)}\lesssim\left\|\M^\tau_{\R^d}\left[\nabla^\kappa f\right]\right\|_{L^p(w)}.
\]
In particular, for $\frac d{d+\tau}<p\leq1$,
\[
\left\|\partial^\alpha Tf\right\|_{L^p(\R^d)}\lesssim\sum_{|\beta|=\kappa}\left\|\partial^\beta f\right\|_{H^p(\R^d)}.
\]
\end{corollary}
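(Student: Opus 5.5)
The plan is to derive both assertions from Theorem \ref{t:HS}(iii) with $m=\kappa$, combined with the comparability of Proposition \ref{c:hardynorm}. The quantitative $[w]_{A_\infty}$ dependence is deliberately given up, which the statement allows since the constant may depend on $p$ and $w$.

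Assume (i) with $m=\kappa$. Then the conditions on the symbols are exactly those of Theorem \ref{t:HS}(i) with $m-\kappa=0$. The implication (i)$\Rightarrow$(iii) of Theorem \ref{t:HS} therefore gives, for every $0<p<\infty$ and $w\in A_\infty$,
\[
\|\partial^\alpha Tf\|_{L^p(w)}\lesssim_p[w]_{A_\infty}\sum_{|\beta|=\kappa}\|\mathcal N_0(\partial^\beta f)\|_{L^p(w)}.
\]
Here the residual order is zero, so the pointwise bound \eqref{e:pwMN} does not help: it bounds $\M^\tau_{\R^d}$ by $\mathcal N_0$, which is the wrong direction. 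Instead I would use the converse inequality stated just before Proposition \ref{c:hardynorm}. By Lemma \ref{l:po}(ii) and Chebyshev, $\nat{g}_{0,Q}\le r^{-1/s}\langle(\M^\tau_{\R^d}g)^s\rangle_Q^{1/s}$, so $\mathcal N_0 g\le r^{-1/s}\big(\M[(\M^\tau_{\R^d}g)^s]\big)^{1/s}$. Choose $s>0$ small with $w\in A_{p/s}$, which is possible since $w\in A_\infty=\bigcup_q A_q$. Muckenhoupt's theorem on $L^{p/s}(w)$ then gives $\|\mathcal N_0 g\|_{L^p(w)}\lesssim_{p,w,r}\|\M^\tau_{\R^d}g\|_{L^p(w)}$. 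Apply this with $g=\partial^\beta f$ for each $|\beta|=\kappa$, and sum over $\beta$ using the vector convention \eqref{e:msigmavec}, which identifies $\sum_\beta\M^\tau_{\R^d}\partial^\beta f$ with $\M^\tau_{\R^d}[\nabla^\kappa f]$. This yields the first inequality. No compact-support or $L^2$ issue arises, because the derivatives of a $\mathcal{C}^\infty_c$ function are themselves in $\mathcal{C}^\infty_c$.

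For the unweighted statement, take $w\equiv1$, which lies in $A_\infty$ with $[w]_{A_\infty}\lesssim1$. The classical characterization of $H^p$ by the grand maximal function \cite{FS72}, recalled in \S\ref{ss:hardymain}, states $\|\M^\tau_{\R^d}g\|_{L^p}\sim\|g\|_{H^p}$ for $\frac d{d+\tau}<p\le1$. Apply it to each $g=\partial^\beta f$ and use the $p$-triangle inequality (or merely finiteness of the sum over $\beta$) to split the sum, which gives the second bound.

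The only delicate point is the second step, the reverse comparison $\mathcal N_0\lesssim\M^\tau_{\R^d}$ in weighted norm. It rests on the Hardy–Littlewood maximal bound in $L^{p/s}(w)$ with $p/s>1$, which is exactly why the constant depends on $w$ and not only on $[w]_{A_\infty}$. Everything else is a direct invocation of Theorem \ref{t:HS}.
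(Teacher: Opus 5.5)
Your proposal is correct and follows the paper's proof. You apply \eqref{e:PHS} with $m=\kappa$, then the weighted reverse comparison $\|\mathcal N_0 g\|_{L^p(w)}\lesssim_{p,w,r}\|\M^\tau_{\R^d}g\|_{L^p(w)}$ of \S\ref{ss:hardymain} (Chebyshev plus the maximal bound on $L^{p/s}(w)$, $w\in A_{p/s}$), and finally the $H^p$ equivalence from Proposition \ref{c:hardynorm} with $w\equiv1$. One cosmetic point: for $m=0$ you do not need Lemma \ref{l:po}(ii), since $\M^\tau_{3Q}\le\M^\tau_{\R^d}$ on $3Q$ already gives the percentile bound.
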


\subsection{Inhomogeneous sparse and weighted Hardy--Sobolev bounds} \label{ss:inhmain}
In the inhomogeneous setting, the sparse bounds involve the two extreme channels $f$ and $\nabla^\kappa f$, as in an inhomogeneous Sobolev norm: the lower symbols need not vanish, and the conditions on the top symbols are local.
 \begin{theorem}\label{t:jet}
Let $T$ be a Calder\'on--Zygmund operator of order $k$, $1\leq\kappa\leq k$, $|\alpha|=\kappa$, $0<\tau<\delta$, $0<r\leq r_0$, and let $m\geq\kappa$ be an integer. The following are equivalent.
\begin{enumerate}[label=(\roman*)]
\item The lower symbols $b^{\alpha,\gamma}$, $|\gamma|<\kappa$, belong to $L^\infty(\R^d)$, and the top symbols $b^{\alpha,\gamma}$, $|\gamma|=\kappa$, belong locally to $\mathrm{BMO}(\R^d)$ if $m=\kappa$, and locally to $L^\infty(\R^d)$ if $m>\kappa$.
\item For every $f\in \mathcal{C}^\infty_c(\R^d)$, the distribution $\partial^\alpha Tf$ is a locally integrable function, and there exists a sparse collection $\mathcal S$ of cubes of $\R^d$ such that
\begin{equation}\label{e:jet}
\left|\partial^\alpha Tf\right|\lesssim\sum_{S\in\mathcal S}\left(\nat{f}_{m,S}+\nat{\nabla^\kappa f}_{m-\kappa,S}\right)\ind_S\qquad\textrm{a.e.\ on }\R^d.
\end{equation}
\end{enumerate}
Quantitatively, if  $C$ is the infimum of the implied constants in \emph{(ii)}, then
\[
1+C\lesssim1+ \mathfrak B(T)\lesssim\frac{1+C}r.\]
\end{theorem}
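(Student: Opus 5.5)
The plan is to prove both implications through the wavelet resolution of Proposition \ref{p:rep}, reducing everything to the model sparse bound \eqref{e:intromodel} of Theorem \ref{t:model}, applied channel by channel.

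\emph{(i)$\Rightarrow$(ii).} Write
\[
\partial^\alpha Tf=\sum_{|\beta|=\kappa}V_{\alpha,\beta}(\partial^\beta f)+\sum_{|\gamma|\leq\kappa}\frac1{\gamma!}P^{\alpha,\gamma}(\partial^\gamma f).
\]
The cancellative models $V_{\alpha,\beta}$ have mean-zero input packets. They therefore satisfy the cutoff testing condition automatically, and Theorem \ref{t:model} at order $m-\kappa$ gives $\sum_S\nat{\nabla^\kappa f}_{m-\kappa,S}\ind_S$.

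For the top paraproducts, $|\gamma|=\kappa$, we split the symbol into its high frequency part and its low frequency part.
\begin{itemize}
\item The high frequency part is handled by Theorem \ref{t:model} at order $m-\kappa$. If $m=\kappa$, the local $\mathrm{BMO}$ condition makes it a bounded paraproduct, and order zero needs no testing. If $m>\kappa$, the local $L^\infty$ condition yields the cutoff testing via Lemma \ref{l:para}(ii).
\item The low frequency part, $\ell(S)>1$, is a paraproduct whose coefficients are bounded by \eqref{e:size}. Integrating by parts $\kappa$ times transfers the derivative from $\partial^\gamma f$ to the (smooth, large-scale) packets, at a gain of $\ell(S)^{-\kappa}$. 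This recasts it as an operator acting on the channel $f$ with a bounded symbol, so it is dominated at order $m$.
\end{itemize}

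For the lower channels, $|\gamma|<\kappa$, the same anti-integration argument moves $\partial^\gamma$ onto the packets. The result is a model operator acting on $f$ whose effective symbol lies in $L^\infty$, since $b^{\alpha,\gamma}\in L^\infty$. This is exactly the cutoff testing condition needed to apply Theorem \ref{t:model} at order $m$, which gives $\sum_S\nat{f}_{m,S}\ind_S$.

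The main obstacle is in this step, for the intermediate channels. A direct application would produce $\nat{\partial^\gamma f}_{m-|\gamma|,S}$, which does not appear in \eqref{e:jet}. I would first try the rewriting onto the channel $f$ just described. If it fails at small scales, I would instead bound the intermediate percentile oscillations by those of $f$ and $\nabla^\kappa f$ through the sandwich Lemma \ref{l:sandwichperc}, with the percentile Poincar\'e inequalities of Section \ref{s:poincare}, at the cost of lowering $r$ by an admissible factor.

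Finally, the finitely many sparse collections are merged using the union property of sparse families and the $3^d$-grid reduction with Lemma \ref{l:po}(v). This gives $1+C\lesssim1+\mathfrak B(T)$.

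\emph{(ii)$\Rightarrow$(i).} Test \eqref{e:jet} on localized polynomials $f=(\cdot-c(Q))^\gamma\omega^\infty_{AQ}$ paired against $\psi_S$, using \eqref{e:polyaction}. For the lower symbols with $m\geq\kappa>|\gamma|$, Lemma \ref{l:po}(i) kills both $\nat{f}_{m,S}$ and $\nat{\nabla^\kappa f}_{m-\kappa,S}$ on cubes where the cutoff equals one. Only cubes meeting the transition region of $\omega^\infty_{AQ}$ survive, and there Lemma \ref{l:po}(iii) together with sparseness controls the sum. This yields $\sup_{A,Q}\|\partial^\alpha T(f)\|_{L^\infty(Q)}\lesssim C/r$. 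Testing the wavelet coefficients then gives $b^{\alpha,\gamma}\in L^\infty$, with the constant ambiguity absorbed by Definition \ref{d:symbols}.

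For the top symbols with $m>\kappa$, the same argument at scales at most one gives local $L^\infty$. With $m=\kappa$, the sparse bound gives $\langle|\partial^\alpha Tf|\rangle_Q\lesssim C/r$ modulo constants on cubes with $\ell(Q)\leq1$, which is the $\mathrm{BMO}$ oscillation of the high frequency symbol; this is the usual $T(1)$ necessity argument. Together these give $1+\mathfrak B(T)\lesssim(1+C)/r$.
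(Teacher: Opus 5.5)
Your sufficiency direction works only through your fallback, and that fallback is exactly the paper's route. Lemma \ref{l:channelmodels} applies Theorem \ref{t:model} to every paraproduct $P_{\mathfrak b^{\alpha,\gamma}}(\partial^\gamma f)$ at order $m-|\gamma|$, which gives \eqref{e:jetfull}. Lemma \ref{l:sandwichperc}(i) on the cubes with $\ell(S)<1$, and (ii) on the doubles of the others, then removes the intermediate channels. The first attempt you describe cannot work. Moving $\partial^\gamma$, $0<|\gamma|<\kappa$, onto $\vartheta_S$ produces the coefficients $\ell(S)^{-|\gamma|}b^{\alpha,\gamma}_S$. These are unbounded as $\ell(S)\to0$ even when $b^{\alpha,\gamma}\in L^\infty(\R^d)$, so \eqref{e:L2} fails. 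That integration by parts is legitimate only at the scales $\ell(S)>1$, as in \eqref{e:Ydef}.

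The genuine gap is in the necessity of the $L^\infty$ conditions, where you transplant the homogeneous argument of Theorem \ref{t:HS}.
\begin{itemize}
\item In \eqref{e:jet} the channel $f$ itself appears. For $f_A=(\cdot-c(Q))^\gamma\omega^\infty_{AQ}$, the cubes $S\ni x\in Q$ that survive have $\ell(S)\gtrsim A\ell(Q)$. There Lemma \ref{l:po}(iii) only gives $\nat{f_A}_{m,S}\lesssim r^{-1}(A\ell(Q))^{|\gamma|}\min\{1,(A\ell(Q)/\ell(S))^d\}$.
\item By \eqref{e:band}, (ii) then yields $|\partial^\alpha Tf_A|\lesssim\frac Cr[(A\ell(Q))^{|\gamma|}+(A\ell(Q))^{|\gamma|-\kappa}]$ on $Q$. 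This is uniform in $A$ only for $\gamma=0$. It is unbounded as $A\to\infty$ whenever $|\gamma|\geq1$, that is, for every intermediate symbol and for the top symbols when $m>\kappa$.
\item So your bound $\sup_{A,Q}\|\partial^\alpha Tf_A\|_{L^\infty(Q)}\lesssim C/r$ is not available, and you cannot pass to the limit in \eqref{e:polyaction}.
\end{itemize}

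Two further ingredients are also missing. First, bounds on the individual $b_S$ only show that the symbol is a bounded sequence, not that it is the coefficient sequence of a bounded function. Second, $b^{\alpha,\gamma}_S$ is centered at $c(S)$, not at the center of the test, so recentering brings in the symbols $b^{\alpha,\gamma'}$, $\gamma'<\gamma$, which do not vanish here.

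Lemma \ref{l:tests}(a) supplies all of this:
\begin{itemize}
\item It tests at a fixed admissible scale $A_0$. There $\|f^\gamma_c\|_1+\|\nabla^\kappa f^\gamma_c\|_1\lesssim1$, and the surviving cubes contribute $\lesssim\frac Cr\ell(S)^{-d}$.
\item It recovers $\langle\partial^\alpha T[(\cdot-c)^\gamma],h\rangle$ for mean-zero finite wavelet sums $h$ of scales at most one, through \eqref{e:actionsplit}. The tail converges by the H\"older kernel bound precisely because $|\gamma|\leq\kappa$.
\item It controls the binomial recentering in $L^1$.
\item It reconstructs a bounded function as a weak-$*$ limit, via Lemma \ref{l:recon}.
\end{itemize}
This yields only local $L^\infty$ conditions. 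Lemma \ref{l:size}(ii) upgrades them to global ones for the lower symbols.

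Similarly, for $m=\kappa$ an average bound on $Q$ does not give a Carleson condition. The paper instead uses the global bound $\|\partial^\alpha Tf\|_2\lesssim\frac Cr\|f\|_{W^{\kappa,2}(\R^d)}$, from \eqref{e:jetcrude} and the $L^2$ bound for sparse operators. It combines this with the inductive Carleson estimate of Lemma \ref{l:carleson}(i), which handles the recentering there.
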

With $T$, $\kappa$, $\alpha$, $\tau$ and $m$ as in Theorem \ref{t:jet}, and $0<r\leq r_0$, we say that $T$ satisfies \eqref{e:czextreme} if, for every $0<p<\infty$, there exists $C_p(T)$ such that, for all $w\in A_\infty$ and $f\in \mathcal{C}^\infty_c(\R^d)$,
\begin{equation}\label{e:czextreme}
\left\|\partial^\alpha Tf\right\|_{L^p(w)}\leq C_p(T)[w]_{A_\infty}\bigg(\left\|\mathcal N_mf\right\|_{L^p(w)}+\sum_{|\beta|=\kappa}\left\|\mathcal N_{m-\kappa}\left(\partial^\beta f\right)\right\|_{L^p(w)}\bigg).
\end{equation}
\begin{theoremx}\phantomsection\makeatletter\def\@currentlabel{B$'$}\makeatother\label{t:HSjet}
Let $T$ be a Calder\'on--Zygmund operator of order $k$, $1\leq\kappa\leq k$, $|\alpha|=\kappa$, $0<\tau<\delta$, $0<r\leq r_0$, and let $m\geq\kappa$ be an integer. Assume that \begin{enumerate}[label=(\alph*)]
\item $b^{\alpha,0}$ belongs to $L^\infty(\R^d)$,\item the intermediate symbols $b^{\alpha,\gamma}$, $0<|\gamma|<\kappa$, belong to $\mathrm{BMO}(\R^d)$, \item the top symbols $b^{\alpha,\gamma}$, $|\gamma|=\kappa$, belong locally to $\mathrm{BMO}(\R^d)$ if $m=\kappa$, and locally to $L^\infty(\R^d)$ if $m>\kappa$.
\end{enumerate} Then $T$ satisfies \eqref{e:czextreme} with $C_p(T)\lesssim_p 1+ \mathfrak B(T)$, where the implied constants grow at most linearly in $p$ as $p\to\infty$.
\end{theoremx}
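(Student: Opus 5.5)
The plan is to split $\partial^\alpha T$ with the wavelet resolution of Proposition \ref{p:rep}:
\[
\partial^\alpha Tf=\sum_{|\beta|=\kappa}V_{\alpha,\beta}(\partial^\beta f)+\sum_{|\gamma|\le\kappa}\tfrac1{\gamma!}P^{\alpha,\gamma}(\partial^\gamma f).
\]
Each piece is then handled separately, and the pieces are reassembled at the level of weighted norms rather than pointwise. For the pieces acting on the two extreme channels $f$ and $\nabla^\kappa f$, the pointwise sparse machinery applies without change.

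\textbf{Extreme channels.} The cancellative operators $V_{\alpha,\beta}$ and the top paraproducts $P^{\alpha,\gamma}$, $|\gamma|=\kappa$, are applied to $\nabla^\kappa f$ at residual order $m-\kappa$. Under (c), Theorem \ref{t:model} gives sparse bounds by $\nat{\nabla^\kappa f}_{m-\kappa,S}$.
\begin{itemize}
\item For $m=\kappa$ no testing condition is needed.
\item For $m>\kappa$ the cutoff testing condition follows from local boundedness via Lemma \ref{l:para}(ii).
\item The low frequency part of the top symbols, with $\ell(S)>1$, is bounded by \eqref{e:size}; it is treated as a bounded symbol, or absorbed into the lower channel as in the proof of Theorem \ref{t:jet}.
\end{itemize}
The paraproduct $P^{\alpha,0}(f)$ with $b^{\alpha,0}\in L^\infty$ is handled by Theorem \ref{t:model} at order $m$, giving $\nat{f}_{m,S}$. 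All of this is exactly the sufficiency part of Theorem \ref{t:jet} restricted to these terms. The resulting sparse bounds are then converted into $L^p(w)$ bounds with linear $[w]_{A_\infty}$ dependence by the sparse-to-norm results of Section \ref{s:cons}, the same ones used for Theorem \ref{t:HS}(ii)$\Rightarrow$(iii). That conversion controls $\|\sum_S c_S\ind_S\|_{L^p(w)}$ by $[w]_{A_\infty}\|\sup_S c_S\ind_S\|_{L^p(w)}$, which is bounded by $\mathcal N_m f$ and $\mathcal N_{m-\kappa}(\partial^\beta f)$, with linear growth in $p$.

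\textbf{Intermediate channels.} These are the new terms: $P^{\alpha,\gamma}(\partial^\gamma f)$ with $0<|\gamma|<\kappa$ and symbol only in $\mathrm{BMO}$. A $\mathrm{BMO}$ symbol supports sparse domination only at residual order zero (Theorem \ref{t:model} with $m=0$ applied to the paraproduct). This gives
\[
|P^{\alpha,\gamma}(\partial^\gamma f)|\lesssim\sum_S\nat{\partial^\gamma f}_{m-|\gamma|,S}\ind_S ,
\]
since residual order $m-|\gamma|>0$ would need $L^\infty$. In fact I would use the version with order $0$ replaced by the order $m-|\gamma|$ coefficient where possible. Failing that, I would take the order-$0$ bound, which by \eqref{e:pwMN}-type comparisons leads to
\[
\|P^{\alpha,\gamma}(\partial^\gamma f)\|_{L^p(w)}\lesssim_p [w]_{A_\infty}\|\mathcal N_0(\partial^\gamma f)\|_{L^p(w)}.
\]
The remaining step is a norm-level interpolation, the weighted analogue of Lemma \ref{l:sandwichperc}:
\[
\|\mathcal N_{0}(\partial^\gamma f)\|_{L^p(w)}\lesssim \|\mathcal N_m f\|_{L^p(w)}+\|\mathcal N_{m-\kappa}(\nabla^\kappa f)\|_{L^p(w)} .
\]
This bound must carry no extra power of $[w]_{A_\infty}$. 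To get it, I would apply the pointwise sandwich lemma at each cube, controlling $\nat{\partial^\gamma f}_{m-|\gamma|,Q}$ by extreme-channel percentile oscillations at $Q$ and at nearby/ancestor cubes. I would then take suprema over $Q$ to obtain a pointwise inequality between the maximal functions $\mathcal N$, up to a change of percentile level via Lemma \ref{l:po}(v). A pointwise maximal inequality transfers to every $L^p(w)$ with no weight constant.

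\textbf{Main obstacle.} The main obstacle is the mismatch between the two steps above. The $\mathrm{BMO}$ paraproduct only sees order-zero oscillations of $\partial^\gamma f$, while the sandwich lemma controls order $m-|\gamma|$ oscillations. Closing the gap requires exploiting that the paraproduct annihilates, at each scale, the polynomial part of $\partial^\gamma f$ of degree $<m-|\gamma|$ only up to a term $b_S\cdot(\text{local Taylor coefficient})$. The sum of these terms over scales should be controlled by $\mathcal N$ of the extreme channels through a Carleson embedding for the $\mathrm{BMO}$ symbol. That Carleson embedding is where the linear $[w]_{A_\infty}$ must be preserved, and I would route it through the $A_\infty$ Carleson estimate used in proving Proposition \ref{c:hardynorm}, which is the reason the result is only a norm inequality and not a pointwise one.
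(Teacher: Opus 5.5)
The extreme channels are handled exactly as in the paper. The low-frequency part of the top symbols has to go through $Y_{\alpha,\gamma}$ of \eqref{e:Ydef}, which acts on $f$ with mean-zero packets. Your first option fails, because $\{b^{\alpha,\gamma}_S\ind_{\{\ell(S)>1\}}\}$ is only a bounded sequence and need not be the wavelet sequence of a $\mathrm{BMO}$ function.

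\textbf{The gap is in the intermediate channels.} The displayed bound by $\nat{\partial^\gamma f}_{m-|\gamma|,S}$ is not available for a $\mathrm{BMO}$ symbol; as you note, positive residual order needs \eqref{e:V1}. So you only have the order-zero sparse bound. Lemma \ref{l:Ainf} then reduces these channels to proving $\|\mathcal N_0(\partial^\gamma f)\|_{L^p(w)}\lesssim\|G\|_{L^p(w)}$ with no weight constant, $G$ as in \S\ref{ss:middle}. This is exactly the step you do not prove.
\begin{itemize}
\item Lemma \ref{l:sandwichperc} controls only $\nat{\partial^\gamma f}_{m-|\gamma|,Q}$. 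That quantity discards the local Taylor polynomial $\PM^{m-|\gamma|}_{3Q}\partial^\gamma f=\partial^\gamma\PM^m_{3Q}f$, so taking suprema over $Q$ does not bound $\mathcal N_0(\partial^\gamma f)$.
\item The remedy in your last paragraph is not an argument. Proposition \ref{c:hardynorm} involves no Carleson embedding; it follows from the sparse bound of Proposition \ref{p:grand} and Lemma \ref{l:Ainf}. You also do not say how a Carleson embedding would give $L^p(w)$ bounds, $p\le1$ included, that are linear in $[w]_{A_\infty}$.
\end{itemize}

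\textbf{What is missing, and how your route closes.} You need a bound across scales for that Taylor polynomial. Set $g\coloneqq\partial^\gamma f$, so that $\nat{g}_{0,Q}\le\nat{g}_{m-|\gamma|,Q}+\sup_{3Q}|\partial^\gamma\PM^m_{3Q}f|$. Telescope $\partial^\gamma\PM^m_{3L_j}f$ along $L_j\coloneqq2^jQ$.
\begin{itemize}
\item While $\ell(L_{j+1})\le1$: by \eqref{e:projdiff}, Lemma \ref{l:proj}(ii) and $|L_j|>r|L_{j+1}|$, the $j$-th difference on $3Q$ is at most a constant multiple of $\nat{g}_{m-|\gamma|,L_{j+1}}$. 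By Lemma \ref{l:sandwichperc}(i) this is $\lesssim\ell(L_{j+1})^{\kappa-|\gamma|}G(x)$ for $x\in Q$, up to an admissible change of level. The series is geometric because $\kappa-|\gamma|\ge1$.
\item From the unit scale on: \eqref{e:taylorsize}, which rests on the compact support of $f$, gives $\lesssim\mathcal N_mf(x)$.
\item The first term $\nat{g}_{m-|\gamma|,Q}$ is handled by Lemma \ref{l:sandwichperc}(i)--(ii).
\end{itemize}
Together these give $\mathcal N_0(\partial^\gamma f)\lesssim G$ pointwise, at a percentile level lowered by an admissible factor, which is absorbed by decreasing $r_0$. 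The bound on $\nat{g}_{0,S}$ runs through all cubes containing $S$, not through $S$ alone. That, and not any Carleson embedding, is why only a norm inequality results.

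\textbf{Comparison with the paper.} Completed this way, your proof differs from the paper's. Lemma \ref{l:middle} never passes through $\mathcal N_0(\partial^\gamma f)$. It telescopes $\vartheta_S(\partial^\gamma f)$ along dyadic ancestors, writing the small-scale paraproduct as $\sum_La_L\mathfrak b_L$ with $|a_L|\lesssim\varrho(\ell(L))\inf G$ by Lemma \ref{l:sandwich}. It then applies the weighted John--Nirenberg inequality of Lemma \ref{l:wJN} to each localized piece $\mathfrak b_L$. Your completed route is shorter and more modular, reusing Theorem \ref{t:model} at order zero and Lemma \ref{l:Ainf}. The paper's route isolates where the $\mathrm{BMO}$ hypothesis enters in a single weighted John--Nirenberg estimate.
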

In the proof of Theorem \ref{t:HSjet}, the intermediate derivatives are absent from the right hand side, as the corresponding paraproducts are controlled by the two extreme channels by integrating by parts, see Lemma \ref{l:sandwich}, while their symbols enter only through the John--Nirenberg inequality, see Lemma \ref{l:middle}. Therefore $\mathrm{BMO}$ conditions suffice for the intermediate symbols. If $m=\kappa$, Theorem \ref{t:HSjet} with $w\equiv1$ and Proposition \ref{c:hardynorm} give, for $\frac d{d+\tau}<p\leq1$,
\[
\left\|\partial^\alpha Tf\right\|_{L^p(\R^d)}\lesssim_p\|f\|_{H^p(\R^d)}+\sum_{|\beta|=\kappa}\left\|\partial^\beta f\right\|_{H^p(\R^d)},
\]
with no intermediate derivative of $f$. The converse information is collected in the next proposition.
\begin{proposition}\label{p:rigidity}
Let $1\leq\kappa\leq k$, $|\alpha|=\kappa$, $0<\tau<\delta$, $0<r\leq r_0$, and let $m\geq\kappa$ be an integer.
\begin{enumerate}[label=(\alph*)]
\item Let $T$ be a Calder\'on--Zygmund operator of order $k$. If $T$ satisfies \eqref{e:czextreme}, then $b^{\alpha,0}$ belongs to $L^\infty(\R^d)$, and the top symbols belong locally to $\mathrm{BMO}(\R^d)$, with norms $\lesssim(1+C_2(T))/r$.
\item Let $T$ be a Calder\'on--Zygmund operator of order $k$. If $\kappa=m=1$, then $T$ satisfies \eqref{e:czextreme} if and only if the conditions \emph{(i)} of Theorem \ref{t:jet} hold.
\item If $\kappa\geq2$ and $0<|\gamma|<\kappa$, there exists a Calder\'on--Zygmund operator $T$ of order $k$ such that $T$ satisfies \eqref{e:czextreme} for every $m\geq\kappa$, while $b^{\alpha,\gamma}$ does not belong to $L^\infty(\R^d)$; in particular, the conditions of Theorem \ref{t:jet} fail for every $m\geq\kappa$.
\end{enumerate}
\end{proposition}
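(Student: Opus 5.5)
The statement to prove is Proposition \ref{p:rigidity}. The plan treats its three parts separately, and all of them use testing on localized polynomials and on the logarithmic functions of \S\ref{ss:signedlog}.

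\emph{Part (a).} The plan is to test \eqref{e:czextreme} with $p=2$ and $w\equiv1$ on localized polynomials. For a dyadic $S$ with $\ell(S)\leq1$ and $|\gamma|\le\kappa$, take $f=(\cdot-c(S))^\gamma\omega^\infty_{AS}$. Pairing $\partial^\alpha Tf$ against $\psi_S$ over a family of cubes $S\subseteq Q$ recovers the high frequency part of $b^{\alpha,\gamma}$ on $Q$, up to tails controlled by Lemma \ref{l:cutoff}. The right hand side should be estimated as follows.
\begin{itemize}
\item Since $f$ is a polynomial near $AS$, Lemma \ref{l:po}(i),(iii) bounds $\mathcal N_m f$ and $\mathcal N_{m-\kappa}(\partial^\beta f)$ only through the transition region of the cutoff. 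This gives the size $r^{-1}|AS|^{1/2}\ell(S)^{|\gamma|}$ in $L^2$.
\item For the top symbols ($|\gamma|=\kappa$), take $f=(\cdot-c(Q))^\gamma\omega^\infty_{AQ}$, fix a cube $Q$ with $\ell(Q)\le1$, and sum the wavelet coefficients $S\subseteq Q$ by orthonormality. This yields the Carleson packing condition $\sum_{S\subseteq Q}|S||b^{\alpha,\gamma}_S|^2\lesssim r^{-2}(1+C_2)^2|Q|$, that is, local BMO.
\item For $b^{\alpha,0}$, $L^2$ testing alone gives only BMO. To obtain $L^\infty$, use the $[w]_{A_\infty}$-linear dependence, as in the converse of Theorem \ref{t:HS}: test on the logarithmic functions $\Omega_{x_0,N}$ and the power weights $w_{1/N}$, for which $[w]_{A_\infty}\sim N$ by Lemma \ref{l:power}. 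Since $\mathcal N_m\Omega\lesssim\mathcal N_1\Omega$ and $\nabla^\kappa\Omega$ is harmless, Lemma \ref{l:signedlog} bounds the right hand side by $N w(Q_N)^{1/2}$. The paraproduct $P^{\alpha,0}$ applied to $\Omega$, by contrast, accumulates a factor $N\cdot|\mathfrak b^{\alpha,0}$-oscillation$|$ over $N$ scales. Letting $N\to\infty$ forces $\mathfrak b^{\alpha,0}\in L^\infty$.
\end{itemize}
Because of this logarithmic testing, I would actually expect the constant to require $C_2$ and $C_{2d}$ as in Theorem \ref{t:HS}. I would follow that proof verbatim, localized to one extreme channel.

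\emph{Part (b).} For $\kappa=m=1$ there are no intermediate symbols. The conditions (a)--(c) of Theorem \ref{t:HSjet} then coincide with condition (i) of Theorem \ref{t:jet}: $b^{\alpha,0}\in L^\infty$, and the top symbols belong locally to BMO. Sufficiency is Theorem \ref{t:HSjet}, and necessity is part (a).

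\emph{Part (c).} The key step is to construct an example $T$ with intermediate symbols in $\mathrm{BMO}\setminus L^\infty$, and I expect this to be the main obstacle. The construction I would try first is the paraproduct-type operator $T=\sum_S |S|\,c_S\,\phi_S(\cdot)\ldots$ chosen so that $\partial^\alpha T$ equals $P^{\alpha,\gamma}(\partial^\gamma\cdot)$ with symbol $\mathfrak b=\log|x|$, smoothly truncated at scales $>1$. For example, set $Tf=\partial^{-\alpha}$-type integration of $\sum_{\ell(S)\le1}|S|\,b_S\,\psi_S\,\varphi_S(\partial^\gamma f)$. This can be realized by the paraproduct operator $T=\sum_{\ell(S)\le1}|S|\,\ell(S)^{\kappa-|\gamma|}b_S\,\tilde\psi_S\,\varphi_S(\partial^\gamma f)$ with $\partial^\alpha\tilde\psi_S=\ell(S)^{-\kappa}\psi_S$ and $b\in\mathrm{BMO}$. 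Three checks are needed:
\begin{itemize}
\item The kernel is of order $k$. This follows from \eqref{e:size}, since $|b_S|\lesssim1$ for bounded-scale BMO coefficients.
\item The other symbols of order $\kappa$ vanish, or are bounded.
\item $b^{\alpha,\gamma}=b\notin L^\infty$.
\end{itemize}
Then \eqref{e:czextreme} follows from the argument of Theorem \ref{t:HSjet}, whose hypothesis (b) only needs BMO. The failure of the conditions of Theorem \ref{t:jet} is immediate, because (i) there requires $L^\infty$.
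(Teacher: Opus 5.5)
Part (b) is the paper's argument. The local $\mathrm{BMO}$ half of (a) is Lemma~\ref{l:carleson}(i), once Lemma~\ref{l:po}(iii) gives $\|\partial^\alpha Tf\|_2\lesssim r^{-1}C_2(T)\|f\|_{W^{\kappa,2}}$. Note that your pairings recentred at $c(Q)$ also pick up the nonvanishing lower symbols through the binomial formula, which is why that lemma inducts on $|\gamma|$.

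\textbf{The gap in (a).} The gap is in the proof that $b^{\alpha,0}\in L^\infty(\R^d)$. Your premise that $L^2$ testing gives $b^{\alpha,0}\in\mathrm{BMO}$ is false. Lemma~\ref{l:carleson}(i) gives only a Carleson bound with the growing factor $\ell(Q)^{-2\kappa}$, and Example~\ref{r:DGW} with $d=1$, $p=2$ shows that an inhomogeneous $W^{1,2}$ bound does not force $\mathrm{BMO}$. So the argument of \S\ref{ss:Linfty} cannot be run ``verbatim'', for three reasons.
\begin{enumerate}
\item It needs an a priori $\mathrm{BMO}$ function: to apply Lemma~\ref{l:signedlog}(c), to take Lebesgue points, and to compare $A_Nb(x_0)$ with $A_Nb(x_0')$ via \eqref{e:Etb}.
\item Lemma~\ref{l:signedlog}(c) only covers the channel of the same order as the polynomial factor. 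Here the channels $\partial^\gamma\Omega$, $0<|\gamma|\le\kappa$, carry symbols not known to belong to $\mathrm{BMO}(\R^d)$.
\item Lemma~\ref{l:order0}, the source of $C_{2d}$, needs a homogeneous bound that \eqref{e:czextreme} does not give. This is why you could not reach the stated constant $(1+C_2(T))/r$.
\end{enumerate}

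\textbf{The missing idea.} For a lower symbol no oscillation has to be extracted. By \eqref{e:size}, $b^*_t\lesssim t^{|\gamma|-\kappa}$, and since $\kappa\ge1$ both $\sum_{t>c}b^*_t$ and $\sum_{t>c}b^*_t\,t^{-|\gamma|}$ are finite. Lemma~\ref{l:signedlog}(d), which uses only these size bounds, then gives on $B\coloneqq B(x_0,c)$:
\[
\partial^\alpha T\Omega-\sum_{|\beta|=\kappa}V_{\alpha,\beta}\left(\partial^\beta\Omega\right)=N\,\mathfrak b_{\leq c}+O(N),\qquad \mathfrak b_{\leq c}\coloneqq\sum_{\ell(S)\leq c}|S|\,b^{\alpha,0}_S\psi_S .
\]
Thus $\Omega$ sees $N$ times the \emph{value} of the high frequency part, not $b-A_Nb(x_0)$. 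The argument then runs as follows.
\begin{itemize}
\item The left side is in $L^2$, so $\mathfrak b_{\le c}$ is an $L^2_{\mathrm{loc}}$ function with the right wavelet coefficients.
\item Use \eqref{e:czextreme} with $p=2$ and $w=w_{1/N}$, together with Theorem~\ref{t:model} and Lemma~\ref{l:Ainf} for $V_{\alpha,\beta}$, and Lemmas~\ref{l:signedlog}(a)--(b) and~\ref{l:power}. These bound $\big(w(B)^{-1}\int_B|\mathfrak b_{\le c}|^2w\big)^{1/2}$ by $(1+C_2(T))/r$, uniformly in $N$.
\item Then \eqref{e:concentrate} gives $|\mathfrak b_{\le c}(x_0)|\lesssim(1+C_2(T))/r$ at almost every $x_0$, with no comparison of two points.
\item The scales $c<\ell(S)\le1$ are bounded by \eqref{e:size}, and Lemma~\ref{l:size}(ii) passes to $L^\infty(\R^d)$.
\end{itemize}

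\textbf{Part (c).} Reducing to Theorem~\ref{t:HSjet} is a legitimate alternative to the paper's route. The paper builds $T$ along one chain $Q_j$ with output packets $u_{Q_j}$, $u=\partial_1U$, so that $\partial^\alpha Tf=\sum_j|Q_j|\vartheta_{Q_j}(\partial^\gamma f)\upsilon_{Q_j}$. It then checks \eqref{e:czextreme} directly by telescoping, \eqref{e:sandwich1}--\eqref{e:sandwich2} and John--Nirenberg.

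Your route works if $\partial^\alpha T f$ equals exactly $\sum_{\ell(S)\le1}|S|b_S\vartheta_S(\partial^\gamma f)\psi_S$. This needs the factor $\ell(S)^{\kappa}$, not $\ell(S)^{\kappa-|\gamma|}$, in front of $\tilde\psi_S$. Orthonormality and \eqref{e:thetamom} then make $\gamma!\,b$ the only nonzero symbol of order $\kappa$, and Theorem~\ref{t:HSjet} applies for every $m\ge\kappa$.

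However, primitives with $\partial^\alpha\tilde\psi_S=\ell(S)^{-\kappa}\psi_S$ do not exist with compact support (or decay) for the tensor wavelets whose factor in some $x_i$ with $\alpha_i>0$ is the scaling function. Integrating in $x_i$ would force $\int\psi_S\,\d x_i=0$ for all other variables, and without such primitives $T$ fails the kernel bounds of order $k$. To fix this, restrict the sum to the wavelet type whose factors are all one-dimensional wavelets, since these have compactly supported iterated primitives. Then choose the coefficients, for instance equal to one along a suitable dyadic chain, so that the series is in $\mathrm{BMO}$ but unbounded.
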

The questions left open by Proposition \ref{p:rigidity} are discussed in Remark \ref{r:open}.

\begin{corollary}\label{c:sobjet}
Let $T$ be a Calder\'on--Zygmund operator of order $k\geq1$, $0<\tau<\delta$, $0<r\leq r_0$, and let $m\geq k$ be an integer. The following are equivalent.
\begin{enumerate}[label=(\roman*)]
\item The conditions \emph{(i)} of Theorem \ref{t:jet} hold for every $1\leq\kappa\leq k$ and $|\alpha|=\kappa$.
\item For every $f\in \mathcal{C}^\infty_c(\R^d)$, the distributions $\partial^\alpha Tf$, $1\leq|\alpha|\leq k$, are locally integrable functions, and there exists a sparse collection $\mathcal S$ of cubes of $\R^d$ such that
\begin{equation}\label{e:sobjet}
\sum_{1\leq|\alpha|\leq k}\left|\partial^\alpha Tf\right|\lesssim\sum_{S\in\mathcal S}\left(\nat{f}_{m,S}+\nat{\nabla^kf}_{m-k,S}\right)\ind_S\qquad\textrm{a.e.\ on }\R^d.
\end{equation}
\end{enumerate}
Quantitatively, if $C$ is the infimum of the implied constants in \emph{(ii)}, then $1+C\lesssim1+\mathfrak B(T)\lesssim(1+C)/r$, where $\mathfrak B(T)$ sums the symbol norms in \emph{(i)} over $1\leq|\alpha|\leq k$.
If, in addition, $T1\in L^\infty(\R^d)$ and $T^*1\in\mathrm{BMO}(\R^d)$, then \eqref{e:sobjet} holds with $|Tf|$ added to its left hand side.
\end{corollary}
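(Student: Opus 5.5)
The plan is to deduce Corollary \ref{c:sobjet} from Theorem \ref{t:jet}, applied for each $1\leq\kappa\leq k$ and $|\alpha|=\kappa$ with the same $m\geq k\geq\kappa$. The work is in putting the right hand sides of \eqref{e:jet} on a common form and in merging the sparse collections.

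For (i)$\Rightarrow$(ii), Theorem \ref{t:jet} gives for each $\alpha$ a sparse collection $\mathcal S_\alpha$ with
\[
|\partial^\alpha Tf|\lesssim\sum_{S\in\mathcal S_\alpha}\left(\nat{f}_{m,S}+\nat{\nabla^\kappa f}_{m-\kappa,S}\right)\ind_S .
\]
The middle channel $\nabla^\kappa f$, $\kappa<k$, must be controlled by the extreme channels $f$ and $\nabla^kf$ at residual order $m-\kappa$. I will use the percentile sandwich Lemma \ref{l:sandwichperc}, which bounds $\nat{\nabla^\kappa f}_{m-\kappa,S}$ by $\nat{f}_{m,S'}+\nat{\nabla^kf}_{m-k,S'}$ at a slightly lower percentile level and over a boundedly enlarged cube $S'$. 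Lemma \ref{l:po}(v) brings such enlargements back to $S$ at the cost of a smaller $r$, absorbed by the admissible choice of $r_0$. Since the lemma is taken as given, the only bookkeeping is that its hypotheses allow orders $m-\kappa\geq m-k\geq0$. The collections $\mathcal S_\alpha$ are finitely many, admissibly so, and the paper already notes that a union of boundedly many $\eta$-sparse families is $\eta/N$-sparse. Summing over $1\leq|\alpha|\leq k$ then gives \eqref{e:sobjet}. The dyadic grid property is restored by the $3^d$ trick mentioned in \S\ref{ss:classes}, again at the cost of a smaller percentile. The constant is $\lesssim 1+\mathfrak B(T)$.

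For (ii)$\Rightarrow$(i), the necessity half of Theorem \ref{t:jet} says that \eqref{e:jet} forces its conditions (i) with norms $\lesssim(1+C)/r$. It suffices to show that \eqref{e:sobjet} implies \eqref{e:jet} for every $\alpha$, or at least the Campanato version of \eqref{e:jet}, which the paper notes is all the necessity proofs use. By Lemma \ref{l:po}(ii), $\nat{\nabla^kf}_{m-k,S}$ is not obviously bounded by $\nat{\nabla^\kappa f}_{m-\kappa,S}$, since differentiation is not controlled in this direction. So I will instead inspect the necessity argument of Theorem \ref{t:jet}. It tests on localized polynomials and logarithmic-type inputs, for which the top channel $\nabla^kf$ at residual order $m-k$ behaves like $\nabla^\kappa f$ at residual order $m-\kappa$: for a localized polynomial of degree below $m$, every $\nabla^jf$ is a localized polynomial of degree below $m-j$, and all these oscillations vanish on cubes inside the region where the cutoff equals one. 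I expect this step to be the main obstacle: checking that each test function in the necessity proof yields a right hand side of \eqref{e:sobjet} no larger than that of \eqref{e:jet}. That requires revisiting the proof of Theorem \ref{t:jet} rather than quoting its statement alone. The fallback is to re-derive the lower-bound testing for each symbol directly from \eqref{e:sobjet}, using the same test functions.

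Finally, for the addendum, if $T1\in L^\infty$ and $T^*1\in\mathrm{BMO}$, then Theorem \ref{t:HS} with $\kappa=0$ and $m\geq1$ gives $|Tf|\lesssim\sum_{S}\nat{f}_{m,S}\ind_S$. For $m=1$ this is immediate. For $m>1$ the symbol conditions of Theorem \ref{t:HS}(i) require $T1\in L^\infty$ and $T^*1\in\mathrm{BMO}$, which is exactly what is assumed. Merging this collection into the others completes the argument.
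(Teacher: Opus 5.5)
Your overall architecture matches the paper's, but two steps need repair.

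\textbf{The direction of Lemma \ref{l:po}(v) in (i)$\Rightarrow$(ii).} The lemma runs the other way from how you use it. It bounds $\natl{g}{r}_{n,S}$ by the oscillation on a larger cube $P\supseteq S$ at a lower level. Nothing controls $\nat{g}_{n,2S}$ by $\nat{g}_{n,S}$: take a nonzero $g\geq0$ supported in $6S\setminus3S$, so that $\nat{g}_{n,S}=0<\nat{g}_{n,2S}$. So the enlarged cubes produced by Lemma \ref{l:sandwichperc}(ii) cannot be brought back to $S$; you must keep them in the sparse collection.
\begin{itemize}
\item On cubes with $\ell(S)<1$, use part (i) of Lemma \ref{l:sandwichperc}, with $k$ in place of $\kappa$. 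It gives $\nat{\nabla^\kappa f}_{m-\kappa,S}\lesssim\ell(S)^{k-\kappa}\nat{\nabla^kf}_{m-k,S}\leq\nat{\nabla^kf}_{m-k,S}$ at the same level.
\item On cubes with $\ell(S)\geq1$, use part (ii) at level $2^{d+1}r$, with $j=0$ and $j=\kappa$. It bounds both terms by those of \eqref{e:sobjet} on $2S$ at level $r$. Then replace $S$ by $2S$, using $\ind_S\leq\ind_{2S}$ and losing $2^d$ in the sparseness constant.
\end{itemize}
The paper does this once, starting from the full-jet bound \eqref{e:jetfull} of Lemma \ref{l:channelmodels} rather than from the output of Theorem \ref{t:jet}. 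Your route through Theorem \ref{t:jet} also works after the repair, at the price of doing the level and doubling bookkeeping twice.

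\textbf{The converse (ii)$\Rightarrow$(i).} You leave this as an unresolved obstacle, and the observation that settles it is missing. It is a case split.
\begin{itemize}
\item If $|\alpha|=k$, the summand $|\partial^\alpha Tf|$ of \eqref{e:sobjet} satisfies exactly \eqref{e:jet} with $\kappa=k$. Theorem \ref{t:jet} then applies as stated; there is nothing to compare.
\item If $|\alpha|=\kappa<k$, then $m\geq k>\kappa$. So the conditions (i) of Theorem \ref{t:jet} for $\alpha$ are purely of $L^\infty$ type, and Lemma \ref{l:tests}(a) alone yields them. The test $f^\gamma_c$, $|\gamma|\leq\kappa$, is a polynomial of degree less than $k\leq m$ on $\frac32Q_c$. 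Hence $\nat{f^\gamma_c}_{m,S}$ and $\nat{\nabla^kf^\gamma_c}_{m-k,S}$ both vanish when $3S\subseteq\frac32Q_c$; indeed $\nabla^kf^\gamma_c=0$ there. The remaining cubes meeting $Q'_c$ have $\ell(S)\gtrsim A_0$ and terms $\lesssim\frac Cr\ell(S)^{-d}$, by \eqref{e:jetcrude}. Then \eqref{e:band} gives $|\partial^\alpha Tf^\gamma_c|\lesssim C/r$ on $Q'_c$.
\end{itemize}
The split matters because the other ingredient of the necessity proof in Theorem \ref{t:jet} does not transfer directly. That ingredient is the $L^2$ test of Lemma \ref{l:carleson}(i), which gives local $\mathrm{BMO}$ when $m=\kappa$. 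From \eqref{e:sobjet} you only get $\|\partial^\alpha Tf\|_2\lesssim\frac Cr(\|f\|_2+\|\nabla^kf\|_2)$, not a $W^{\kappa,2}$ bound. That test is never needed for $\kappa<k$.

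Also, no logarithmic inputs enter the necessity of Theorem \ref{t:jet}. They are used only for the weighted converses in Theorem \ref{t:HS} and Proposition \ref{p:rigidity}(a).

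Your treatment of the addendum agrees with the paper.
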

 In terms of the symbols of positive order, the whole jet is thus characterized by the same conditions as each derivative separately. Corollary \ref{c:sobjet} is proved in \S\ref{ss:jetthm}.

\subsection{Relation with previous work}\label{ss:previous}

\subsubsection*{Cancellative sparse bounds}
The percentile sparse bound of \cite{CLR26} is the predecessor at order zero. In our setting, it is characterized by \(T1,T^*1\in\mathrm{BMO}(\R^d)\), by Theorem \ref{t:HS} with $\kappa=m=0$, which recovers the David--Journ\'e conditions. At the first positive order, the sparse bounds by mean oscillations of \cite{BB18} and \cite{lerner26}*{Theorem 1.1} assume \(T1=0\). For mean oscillations, the conditions
\[
T1\in L^\infty(\R^d),\qquad T^*1\in\mathrm{BMO}(\R^d)
\]
already suffice, by the decomposition \(Tf=\widetilde Tf+bf\), with \(\widetilde T1=0\) and \(b\in L^\infty(\R^d)\), of \cite{lerner26}*{Proposition 2.2}, and the sparse bound for the identity, cf.\ \cite{LLO22}*{Proposition 5.4}. For \(d\geq2\) they are also necessary, as such a sparse bound implies the \(L^2\) boundedness of \(T\) and, by the Poincar\'e inequality, \eqref{e:band} and Lemma \ref{l:sparsepot}, the pointwise Sobolev inequality \(|Tf|\lesssim I_1(|\nabla f|)\), which the same threshold characterizes \cite{lerner26}*{Theorem 1.2}. Theorem \ref{t:introI}, the case $\kappa=0$, $m=1$ of Theorem \ref{t:HS}, gives this characterization for the percentile oscillations, with the quantitative converse, and Theorems \ref{t:HS} and \ref{t:jet} extend the $\mathrm{BMO}$-to-$L^\infty$ distinction, channel by channel, to arbitrary derivatives and polynomial orders.

\subsubsection*{Pointwise Sobolev bounds}
The sparse Sobolev bound \eqref{e:D} is a local and cancellation-sensitive refinement of the pointwise estimates by Riesz potentials of \cites{HMP25a,HMP25b,HMP25c,lerner26}, valid for the derivatives of \(T\) and for arbitrary order. For \(0<m-\kappa<d\), the potential estimate \eqref{e:czpot} and its critical Lorentz endpoint \eqref{e:Linfty} characterize the same testing conditions as the sparse bound \eqref{e:Dfine}, with \(L^2\) boundedness also assumed when \(\kappa=0\), see Corollaries \ref{c:czpot} and \ref{c:endpoint}(b). The latter assumption is essential, as Example \ref{r:D} shows. For \(m\geq\kappa+2\), the finer bound \eqref{e:Dfine} also records the subtraction of polynomials of higher degree, whereas the potential estimate alone follows from the case \(m=\kappa+1\), the classical estimate \(|g|\lesssim I_1(|\nabla g|)\) applied to the subsequent derivatives, and the semigroup identity for Riesz potentials, provided \(m-\kappa<d\).

\subsubsection*{Testing and quantitative weighted inequalities} The polynomial testing symbols and the wavelet resolution used here are those of the global case of \cite{DGWdom}. These symbols characterize the pointwise sparse bounds and, in the homogeneous setting, the \(A_\infty\) Hardy--Sobolev inequalities, through conditions of \(\mathrm{BMO}\), \(L^\infty\), and vanishing type. Strong cancellation conditions, including the vanishing of lower-order testing data, also occur in the classical \(T1\) theory on Besov and Triebel--Lizorkin spaces; see, for instance, Han and Hofmann \cite{han-hofmann}, and \cite{BHYY26}*{Theorem 6.24} for matrix $A_\infty$ weights. In the homogeneous case, the present results recover and strengthen, at the sparse and Hardy levels, the sharp Sobolev \(A_p\) theory of \cite{DWW23}. Proposition \ref{c:czweighted} gives the sharp $A_p$ dependence under the present testing hypotheses; in the inhomogeneous setting, it assumes that the lower symbols belong to $\mathrm{BMO}(\R^d)$, a condition that boundedness at a single unweighted Sobolev exponent does not force, as Example \ref{r:DGW} shows.

With $\kappa=0$, the inequality \eqref{e:PHS} contains, at residual order zero, the Coifman--Fefferman inequality \cite{CF74}, with the linear \(A_\infty\) dependence of \cite{OPR13}*{Theorem 2.1}, and, at the first positive order and for the identity operator, the corresponding Fefferman--Stein sharp maximal estimate.

\section{Auxiliary lemmas}\label{s:aux}
This section contains the elementary facts used in the proofs, the properties of the smooth maximal functions and of the polynomial projections of Section \ref{s:po}, and the proof of Lemma \ref{l:po}.
Five elementary facts will be used repeatedly. First, if $R\subseteq S$ are cubes with $\ell(S)\leq A\ell(R)$, a change of variables shows that
\begin{equation}\label{e:change}
\Phi^{\sigma,1}(R) \subseteq C\Phi^{\sigma,1}(S),
\end{equation}
with $C$ depending on $A$ besides the admissible parameters; if moreover $\w R\subseteq S$, the elements of $\Phi^{\sigma,\Subset}(R)$ are supported in $S$. Second, for each $x\in \R^d$ and $i\in \Z$,
\begin{equation}\label{e:count}
\#\left\{R\in \mathcal D: \ell(R)=2^i,\, x\in \w R\right\}\leq (\w+1)^d.
\end{equation}
Third, if $P$ is a cube, $E\subseteq P$, and $0<s\leq \ell(P)$ is such that either $\diam E<s$ or $s=\ell(P)$, then there exists a cube $R$ with
\begin{equation}\label{e:enclosing}
E\subseteq R\subseteq P, \qquad \ell(R)=s.
\end{equation}
Fourth, if $\mathcal S$ is $\eta$-sparse, the cubes $S\in\mathcal S$ with $2^i\leq\ell(S)<2^{i+1}$ containing a given point lie in a cube of sidelength $2^{i+2}$, and their sets $G_S$ are pairwise disjoint, with $|G_S|\geq\eta2^{id}$, so that
\begin{equation}\label{e:band}
\#\left\{S\in\mathcal S:x\in S,\;2^i\leq\ell(S)<2^{i+1}\right\}\leq4^d\eta^{-1},\qquad x\in\R^d,\quad i\in\Z.
\end{equation}
Fifth, let $Q_*\coloneqq[-\frac12,\frac12)^d$. There is an admissible $c_0>0$ such that, for all cubes $Q$ and all nonzero $p\in\P^{m-1}$,
\begin{equation}\label{e:polylarge}
\left|\left\{x\in Q: |p(x)|\leq c_0\|p\|_{L^\infty(Q)}\right\}\right|\leq \tfrac14|Q|.
\end{equation}
By affine rescaling and homogeneity, it suffices to consider $Q=Q_*$ and $p$ in the compact set $\mathbb S\coloneqq\{p\in\P^{m-1}:\|p\|_{L^\infty(Q_*)}=1\}$. As the zero set of each $p\in\mathbb S$ is null, there is $c_p>0$ with $|\{x\in Q_*:|p(x)|\leq2c_p\}|<\frac14$, and $\{x\in Q_*:|q(x)|\leq c_p\}\subseteq\{x\in Q_*:|p(x)|\leq2c_p\}$ whenever $\|q-p\|_{L^\infty(Q_*)}<c_p$; a finite subcover of $\mathbb S$ by these balls gives $c_0$, the least of their radii.

\begin{lemma}\label{l:sparsepot}
Let $0<s<d$ and let $\mathcal R$ be a collection of cubes such that, for each $i\in\Z$, every point of $\R^d$ belongs to at most $N$ cubes $R\in\mathcal R$ with $2^i\leq\ell(R)<2^{i+1}$. Then, for $g\in L^1_{\mathrm{loc}}(\R^d)$,
\[
\sum_{R\in\mathcal R}\ell(R)^s\left\langle|g|\right\rangle_{3R}\ind_R(x)\lesssim_{N,s}\int_{\R^d}\frac{|g(y)|}{|x-y|^{d-s}}\,\d y.
\]
\end{lemma}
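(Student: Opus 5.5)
The plan is to bound the sum pointwise: fix $x$, compare each summand with the part of the Riesz potential coming from $y$ at distance comparable to $\ell(R)$, and sum over scales using the bounded overlap at each scale. Since $x\in R$, every $y\in 3R$ satisfies $|x-y|\leq 2\ell(R)$ in the maximum norm, and so $|x-y|\lesssim\ell(R)$ in the Euclidean norm. Hence, for $R\in\mathcal R$ with $x\in R$ and $2^i\leq\ell(R)<2^{i+1}$,
\[
\ell(R)^s\langle|g|\rangle_{3R}\lesssim 2^{-i(d-s)}\int_{3R}|g(y)|\,\d y\leq 2^{-i(d-s)}\int_{|x-y|\leq C2^{i}}|g(y)|\,\d y ,
\]
with $C$ depending only on $d$. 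By hypothesis there are at most $N$ such cubes $R$ at each scale $i$, so
\[
\sum_{R\in\mathcal R}\ell(R)^s\langle|g|\rangle_{3R}\ind_R(x)\lesssim N\sum_{i\in\Z}2^{-i(d-s)}\int_{|x-y|\leq C2^i}|g(y)|\,\d y .
\]

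Next, exchange the order of summation and integration. For fixed $y\neq x$, only the indices $i$ with $C2^i\geq|x-y|$ contribute. Because $d-s>0$, the geometric sum over these $i$ is dominated by its first term:
\[
\sum_{i:\,C2^i\geq|x-y|}2^{-i(d-s)}\lesssim_s |x-y|^{-(d-s)} .
\]
The diagonal $y=x$ is a null set and can be ignored. Tonelli's theorem applies since everything is nonnegative, and the claimed bound follows with constant $\lesssim_{N,s}$.

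There is no real obstacle here. The only points needing care are that the geometric series converges because $s<d$, and that the maximum norm used for cubes is comparable to the Euclidean norm, which only affects the constant. The proof needs no control on the number of scales, since the decay of the geometric series absorbs the sum, and it is valid for arbitrary $g\in L^1_{\mathrm{loc}}$ with both sides possibly infinite.
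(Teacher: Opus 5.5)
Your proof is correct and follows essentially the same route as the paper: interchange the sum and the integral, use $\ell(R)\gtrsim|x-y|$ whenever $x\in R$ and $y\in 3R$ together with the bound of $N$ cubes per dyadic scale, and sum the geometric series, which converges because $s<d$. The only cosmetic difference is that you replace $\ell(R)^{s-d}$ by $2^{-i(d-s)}$ before interchanging, while the paper sums $\ell(R)^{s-d}$ over the contributing cubes directly.
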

\begin{proof}
The left hand side equals
\[
3^{-d}\int|g(y)|\sum_{R\in\mathcal R}\ell(R)^{s-d}\ind_R(x)\ind_{3R}(y)\,\d y.
\]
If $x\in R$ and $y\in3R$ then $\ell(R)\gtrsim|x-y|$, and at most $N$ cubes with sidelength in each interval $[2^i,2^{i+1})$ contribute, so that, as $s<d$, the sum over $R$ is $\lesssim|x-y|^{s-d}$.
\end{proof}

\begin{lemma}\label{l:diff}
For every cube $Q$ and every $f\in L^1_{\mathrm{loc}}(\R^d)$, $|f|\lesssim \M^\tau_Q f$ a.e.\ on  $Q$.
\end{lemma}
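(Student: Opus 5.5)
The plan is to prove the bound at Lebesgue points of $f$ by testing the supremum in \eqref{e:max} against a single smooth normalized bump on small cubes. Fix a cube $Q$ and $f\in L^1_{\mathrm{loc}}(\R^d)$. Choose once and for all a smooth nonnegative function $\chi$, supported in $[-\frac12,\frac12)^d$, with $\int\chi=1$. After multiplication by an admissible constant $c>0$, the rescalings $c\chi_R$ of \eqref{e:norm1} belong to $\Phi^{\tau,1}(R)$ and are supported in $R$. Because the class in \eqref{e:max} is invariant under multiplication by $-1$ and, for complex valued $f$, we take absolute values, every $x\in R\subseteq Q$ satisfies
\[
c\left|\int \chi_R f\right|\leq \M^\tau_Q f(x).
\]

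The next step is to let $R$ shrink to a point. Let $x\in Q^\circ$ be a Lebesgue point of $f$; almost every point of $Q$ is of this kind, since $\partial Q$ is null. For all sufficiently small $\rho>0$, the cube $R_\rho\coloneqq x+[-\rho/2,\rho/2)^d$ lies inside $Q$ and contains $x$. We then have
\[
\left|\int\chi_{R_\rho}f - f(x)\right| \leq \|\chi\|_\infty\,\rho^{-d}\int_{R_\rho}|f(y)-f(x)|\,\d y\longrightarrow 0,
\]
where we used $\int\chi_{R_\rho}=1$ and $|\chi_{R_\rho}|\leq\|\chi\|_\infty|R_\rho|^{-1}\ind_{R_\rho}$. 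Letting $\rho\to0$ in the inequality of the previous paragraph gives $c|f(x)|\leq\M^\tau_Qf(x)$, which is the claim, with constant $c^{-1}$ depending only on $d$ and $\tau$.

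The only step that needs care is confirming that $c\chi_R\in\Phi^{\tau,1}(R)$ with an admissible $c$. This comes down to the norm $\|\partial^\beta\chi\|_{\{\tau\},\tau+1}$ for $|\beta|\leq\lfloor\tau\rfloor$. These quantities are finite: $\chi$ is smooth and compactly supported, so the weight $(1+|x|)^{d+\tau+1}$ is harmless, and the Hölder seminorm over increments $|h|\leq1$ is controlled by $\|\nabla\partial^\beta\chi\|_\infty$ on the $1$-neighborhood of the support. Taking $c$ to be the reciprocal of the maximum of these norms therefore suffices. There is no real obstacle here; the argument is a standard Lebesgue differentiation.
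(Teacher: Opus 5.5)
Your proof is correct and is essentially the paper's argument: at each Lebesgue point $x\in Q^\circ$ you test \eqref{e:max} with $L^1$-normalized rescalings of a fixed nonnegative bump on cubes centered at $x$, and let the cubes shrink. The remaining differences are cosmetic: your bump is supported in $[-\frac12,\frac12)^d$ rather than $[-\frac14,\frac14]^d$, your sidelengths shrink continuously rather than dyadically, and you check explicitly that $c\chi_R\in\Phi^{\tau,1}(R)$, which the paper leaves implicit.
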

\begin{proof}
Let $\phi\geq0$ be smooth, with $\int\phi=1$ and support in $[-\frac14,\frac14]^d$. At each Lebesgue point $x\in Q^\circ$ of $f$, test \eqref{e:max} with $\phi_{R_k}$, where $R_k\subseteq Q$ is the cube of center $x$ and sidelength $2^{-k}$, $k$ large, and let $k\to\infty$.
\end{proof}

The polynomial projections \eqref{e:proj} have the following properties.
\begin{lemma}\label{l:proj}
Let $m\geq 1$ and $Q$ be a cube.
\begin{itemize}
\item[\emph{(i)}] $\PM^m_Q$ is a linear projection of $L^1_{\mathrm{loc}}(\R^d)$ onto $\P^{m-1}$. In particular,
\begin{equation}\label{e:projdiff}
\PM^m_L f-\PM^m_Qf=\PM^m_L\left(f-\PM^m_Qf\right)
\end{equation}
for all cubes $L,Q$ and all $f\in L^1_{\mathrm{loc}}(\R^d)$.
\item[\emph{(ii)}] For all $f\in L^1_{\mathrm{loc}}(\R^d)$,
\[
\sup_{x\in 3Q}\left|\PM^m_Qf(x)\right|\lesssim\inf_{y\in Q}\M^\tau_Qf(y).
\]
\item[\emph{(iii)}] For all $f\in W^{m,1}_{\mathrm{loc}}(\R^d)$ and $|\alpha|\leq m$, $\partial^\alpha\PM^m_Qf=\PM^{m-|\alpha|}_Q\partial^\alpha f$.
\end{itemize}
\end{lemma}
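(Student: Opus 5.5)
For (i), I will first check that $\phi_{Q,x}$ reproduces polynomials. Write $\PM^m_Qf(x)=\sum_{|\alpha|\le m-1}\frac1{\alpha!}\int\theta_Q(y)\partial^\alpha f(y)(x-y)^\alpha\,\d y$. For $f\in\P^{m-1}$ and fixed $y$, the inner sum over $\alpha$ is exactly the Taylor expansion of $f$ at $y$, evaluated at $x$. Since $f$ has degree at most $m-1$, this expansion is exact and equals $f(x)$. Integrating against $\theta_Q$, which has unit mass, gives $\PM^m_Qf=f$.

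The range of $\PM^m_Q$ lies in $\P^{m-1}$. Indeed, $\phi_{Q,x}(y)$ is a polynomial in $x$ of degree at most $m-1$, with coefficients that are smooth compactly supported functions of $y$. So $\PM^m_Q$ is a linear projection onto $\P^{m-1}$. Identity \eqref{e:projdiff} then follows from linearity and $\PM^m_L\PM^m_Qf=\PM^m_Qf$.

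For (ii), I will use the distributional form $\PM^m_Qf(x)=\phi_{Q,x}(f)$. Expand the coefficients via $(x-y)^\alpha=\sum_{\beta\le\alpha}\binom{\alpha}{\beta}(x-c(Q))^{\beta}(c(Q)-y)^{\alpha-\beta}$. This writes $\phi_{Q,x}$ as a finite sum of terms $(x-c(Q))^\beta\,\ell(Q)^{-|\beta|}\,\chi_{Q}$, where each $\chi_Q=(\chi)_Q$ is the $L^1$-rescaling \eqref{e:norm1} of a fixed smooth function. These $\chi$ are combinations of derivatives of $\theta$ times monomials, supported in $\frac12[-\frac14,\frac14)^d$. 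The factors $\ell(Q)^{-|\alpha|}$ coming from $\partial^\alpha_y\theta_Q$ combine with $(c(Q)-y)^{\alpha-\beta}$ to leave exactly $\ell(Q)^{-|\beta|}$.

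For $x\in3Q$ we have $|x-c(Q)|^{|\beta|}\ell(Q)^{-|\beta|}\lesssim1$. Hence $\sup_{3Q}|\PM^m_Qf|\lesssim\max_\chi|\chi_Q(f)|$, with finitely many admissible $\chi$. Each $\chi_Q$ is an admissible multiple of an element of $\Phi^{\tau,1}(R)$ supported in $R\coloneqq Q$ itself, since its support is a small cube centered at $c(Q)$. Testing \eqref{e:max} with $R=Q$, which contains every $y\in Q$, gives $|\chi_Q(f)|\lesssim\M^\tau_Qf(y)$ for all $y\in Q$. Taking the infimum over $y\in Q$ proves (ii).

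For (iii), I will differentiate the defining formula in $x$. For $f\in W^{m,1}_{\mathrm{loc}}$, take $\partial_{x_j}$ of $\sum_{|\alpha|\le m-1}\frac1{\alpha!}\int\theta_Q\,\partial^\alpha f\,(x-y)^\alpha$. This yields $\sum_{\alpha_j\ge1}\frac{1}{(\alpha-e_j)!}\int\theta_Q\,\partial^{\alpha}f\,(x-y)^{\alpha-e_j}$. After reindexing $\alpha'=\alpha-e_j$, with $|\alpha'|\le m-2$, this is $\PM^{m-1}_Q\partial_jf$. Iterating gives the claim, with the convention $\PM^0_Q=0$ covering the case $|\alpha|=m$.

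The only step requiring care is the bookkeeping in (ii). One must verify that the rescaled pieces genuinely lie, up to admissible constants, in $\Phi^{\tau,1}(Q)$ with support inside $Q$; the support of $\theta$ was chosen precisely so that this holds.
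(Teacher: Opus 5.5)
Your proposal is correct and follows the paper's proof: (i) via exact Taylor reproduction of $\P^{m-1}$ together with $\int\theta_Q=1$, (iii) by differentiating the first expression in \eqref{e:proj} in $x$, and (ii) by testing \eqref{e:max} with the cube $Q$ itself at an arbitrary point of $Q$. The only difference is in the bookkeeping for (ii): you expand $\phi_{Q,x}$ about $c(Q)$ into finitely many fixed, $x$-independent bumps $\chi_Q$ with coefficients $(x-c(Q))^\beta\ell(Q)^{-|\beta|}$ that are bounded on $3Q$, whereas the paper bounds $|\partial^\beta\phi_{Q,x}|\lesssim|Q|^{-1}\ell(Q)^{-|\beta|}$ uniformly in $x\in3Q$ and concludes $\phi_{Q,x}\in C\Phi^{\tau,1}(Q)$ directly.
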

\begin{proof}
Linearity is obvious, and $\PM^m_Qf\in \P^{m-1}$ by \eqref{e:proj}. If $\pi\in \P^{m-1}$, then $\PM^m_Q\pi=\pi$, by Taylor's formula at each point $y$ and as $\int\theta_Q=1$. In particular $\PM^m_L\PM^m_Q f=\PM^m_Qf$, which is \eqref{e:projdiff}. Turning to (ii), if $x\in 3Q$ then $\dist(x,y)\leq 2\ell(Q)$ for all $y\in \supp\theta_Q$, so that $|\partial^\beta\phi_{Q,x}|\lesssim |Q|^{-1}\ell(Q)^{-|\beta|}$ for all multi-indices $\beta$. Thus $\phi_{Q,x}\in C\Phi^{\tau,1}(Q)$ and $\supp\phi_{Q,x}\subseteq Q$. Testing \eqref{e:max} with the cube $Q$ itself at an arbitrary point $y\in Q$ proves (ii). Finally, let $f\in W^{m,1}_{\mathrm{loc}}(\R^d)$ and $|\alpha|\leq m$. Differentiating the first expression in \eqref{e:proj} in $x$, only the terms of index $\beta\geq\alpha$ survive, and writing $\beta=\alpha+\mu$ gives
\[
\partial^\alpha\PM^m_Qf(x)=\sum_{|\mu|\leq m-1-|\alpha|}\frac{1}{\mu!}\int_{\R^d}\theta_Q(y)\,\partial^\mu\partial^\alpha f(y)\,(x-y)^\mu\,\d y=\PM^{m-|\alpha|}_Q\partial^\alpha f(x),
\]
which is (iii).
\end{proof}

\begin{proof}[Proof of Lemma \ref{l:po}]
There is nothing to prove in (i), and in the first claim of (iv), when $m=0$, so assume $m\geq 1$ there. Assertion (i) follows from $\PM^m_{3Q}(f+\pi)=\PM^m_{3Q}f+\pi$, cf.\ Lemma \ref{l:proj}(i). For (ii), let $\pi\in \P^{m-1}$ and $g\coloneqq f-\pi$. Then $f-\PM^m_{3Q}f=g-\PM^m_{3Q}g$ and, by Lemma \ref{l:proj}(ii) applied to the cube $3Q$,
\[
\M^\tau_{3Q}\left[\PM^m_{3Q}g\right]\leq \left\|\PM^m_{3Q}g\right\|_{L^\infty(3Q)}\lesssim\inf_{3Q}\M^\tau_{3Q}g
\]
pointwise on $3Q$. Hence $\M^\tau_{3Q}[f-\PM^m_{3Q}f]\lesssim\M^\tau_{3Q}g$ on $3Q$, and the first claim of (ii) follows from the monotonicity of the rearrangement. The second claim is obtained by choosing $\pi=\PM^m_{3Q}f\in \P^{m-1}\subseteq\P^{n-1}$ in the first claim with $n$ in place of $m$. For (iii), use $\M^\tau_{3Q}g\leq \M(g\ind_{3Q})$ and the weak-type $(1,1)$ inequality for $\M$ to obtain $(\M^\tau_{3Q}g)^{*,Q}(r)\lesssim r^{-1}\langle g\rangle_{3Q}$, and combine with (ii).
We turn to (iv). Let $p\coloneqq\PM^m_{3Q}f$ and $h\coloneqq f-p$; if $p=0$, the conclusion is immediate. As $h=-p$ on $Q\setminus \supp f$, Lemma \ref{l:diff} yields $\M^\tau_{3Q}h\gtrsim\|p\|_{L^\infty(Q)}$ almost everywhere on the set of those $x\in Q\setminus\supp f$ with $|p(x)|>c_0\|p\|_{L^\infty(Q)}$, whose measure is at least $\frac12|Q|>r|Q|$, by \eqref{e:polylarge}. Therefore $\nat{f}_{m,Q}\gtrsim\|p\|_{L^\infty(Q)}$, and the proof is completed by the equivalence of norms $\|p\|_{L^\infty(3Q)}\lesssim\|p\|_{L^\infty(Q)}$ on $\P^{m-1}$, whose constant is independent of $Q$ by scaling. The second claim of (iv) follows, for all $m\geq0$: testing \eqref{e:max} with the cube $Q$ bounds $|\varphi(f-\PM^m_{3Q}f)|$ by $\inf_Q\M^\tau_{3Q}[f-\PM^m_{3Q}f]\leq\nat{f}_{m,Q}$, while $|\varphi(\PM^m_{3Q}f)|\leq\|\PM^m_{3Q}f\|_{L^\infty(Q)}$.

For (v), as $3S\subseteq3P$, the tests admissible in \eqref{e:max} for $\M^\tau_{3S}$ are admissible for $\M^\tau_{3P}$, and, as $|S|\geq A^{-d}|P|$, the upper $r$-percentile on $S$ of a function is at most its upper $A^{-d}r$-percentile on $P$, by \eqref{e:rearr}. Thus (ii), with $\pi\coloneqq\PM^m_{3P}f$, gives
\[
\natl{f}{r}_{m,S}\lesssim\left(\M^\tau_{3S}\left[f-\PM^m_{3P}f\right]\right)^{*,S}(r)\leq\left(\M^\tau_{3P}\left[f-\PM^m_{3P}f\right]\right)^{*,P}(A^{-d}r)=\natl{f}{A^{-d}r}_{m,P}.
\]

For vector valued $f=(f_1,\dots,f_N)$, the pointwise bounds proving (i) and (ii) hold for each component, and are summed before the rearrangement; for (iii), $\M^\tau_{3Q}g_i\leq\M[|g|\ind_{3Q}]$ on $3Q$ for each component $g_i$ of $g\coloneqq f-\pi$; and (iv) follows from the scalar case, as $\nat{f_i}_{m,Q}\leq\nat{f}_{m,Q}$ for each $i$, while the proof of (v) applies verbatim.
\end{proof}

\section{Local weak type implies sparse domination}\label{s:principle}
The next proposition is an operator-free version of the pointwise sparse domination argument of \cite{DPZyg}*{\S5.3}. It concerns families of functions indexed by the dyadic subcubes of a fixed cube, and relies on a local weak-type estimate for each member $F_Q$ of the family, \eqref{e:P1}--\eqref{e:P2}, and on an estimate for the oscillation of its differences with the members indexed by smaller cubes, tested from the points outside the exceptional set of the weak-type estimate, \eqref{e:P3}.
\begin{proposition}\label{p:principle}
Let $Q_0$ be a cube and $K\geq 1$, and set $\eps_d\coloneqq 2^{-d-4}$. For each $Q\in \mathcal D(Q_0)$, let $F_Q:Q\to \C$ and $E_Q\subseteq Q$ be measurable, and $\lambda_Q\geq 0$. Assume that, for all $Q\in \mathcal D(Q_0)$,
\begin{align}
\label{e:P1} & |E_Q|\leq \eps_d|Q|,
\\ \label{e:P2} & |F_Q(x)|\leq K\lambda_Q, \qquad x\in Q\setminus E_Q,
\\ \label{e:P3} & \left|[F_Q-F_L](x)-[F_Q-F_L](y)\right|\leq K\lambda_Q, \qquad L\in \mathcal D(Q)\setminus\{Q\}, \; y\in L\setminus E_Q,\; \textrm{a.e.\ }x\in L.
\end{align}
In \eqref{e:P3}, the exceptional null set in $x$ may depend on $L$ and $y$.
Then there exists a $\frac34$-sparse collection $\mathcal S\subseteq\mathcal D(Q_0)$ with $Q_0\in \mathcal S$, depending on the sets $E_Q$ alone, such that
\begin{align}
\label{e:principle} & |F_{Q_0}|\leq 3K\sum_{S\in \mathcal S}\lambda_S\ind_S \qquad \textrm{a.e.\ on } Q_0,
\\ \label{e:children} & |\mathsf{sh}(\{P\in\mathcal S:P\subsetneq S\})|\leq\tfrac14|S|, \qquad |S'\cap E_S|\leq\tfrac14|S'|, \qquad S\in\mathcal S,
\end{align}
where $S'$ ranges over the maximal elements of $\{P\in\mathcal S: P\subsetneq S\}$.
\end{proposition}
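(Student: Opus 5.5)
We build $\mathcal S$ by the standard stopping-time construction driven only by the sets $E_Q$, and then telescope along the resulting chain of cubes.

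\emph{Stopping cubes.} Start with $\mathcal S_0\coloneqq\{Q_0\}$. Given $S\in\mathcal S$, consider the exceptional region
\[
\Omega_S\coloneqq\left\{x\in S:\M_{\mathcal D(S)}\ind_{E_S}(x)>2^{-d-2}\right\},
\]
where $\M_{\mathcal D(S)}$ is the dyadic maximal operator on $\mathcal D(S)$. Let the children $\mathrm{ch}(S)$ be the maximal dyadic cubes $S'\subseteq\Omega_S$; they are proper subcubes because $|E_S|\leq\eps_d|S|<2^{-d-2}|S|$. The weak $(1,1)$ bound for the dyadic maximal function gives
\[
|\Omega_S|\leq 2^{d+2}|E_S|\leq 2^{d+2}\eps_d|S|=\tfrac14|S|.
\]
Iterating, the collection $\mathcal S$ of all stopping cubes is nested, which gives the first part of \eqref{e:children}. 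Setting $G_S\coloneqq S\setminus\Omega_S$ produces pairwise disjoint sets with $|G_S|\geq\frac34|S|$, so $\mathcal S$ is $\frac34$-sparse. For the second part of \eqref{e:children}, use maximality: the dyadic parent $\widehat{S'}$ is not contained in $\Omega_S$. Hence some point of $\widehat{S'}$ has maximal average at most $2^{-d-2}$, and in particular the average over $\widehat{S'}$ itself is at most $2^{-d-2}$. Therefore
\[
|S'\cap E_S|\leq|\widehat{S'}\cap E_S|\leq 2^{-d-2}\,2^d|S'|=\tfrac14|S'|.
\]
Thus $\mathcal S$ depends on the sets $E_Q$ alone.

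\emph{Pointwise bound.} Fix $S\in\mathcal S$. On $S\setminus\Omega_S$, the Lebesgue differentiation theorem gives $\ind_{E_S}\leq 2^{-d-2}<1$ a.e., so a.e.\ point there lies outside $E_S$. Then \eqref{e:P2} gives $|F_S|\leq K\lambda_S$ there. On a child $S'$, pick any $y\in S'\setminus E_S$, which exists since $|S'\cap E_S|\leq\frac14|S'|$. For a.e.\ $x\in S'$, apply \eqref{e:P3} with $L=S'$:
\[
F_S(x)=F_{S'}(x)+[F_S-F_{S'}](y)+O(K\lambda_S).
\]
To bound $[F_S-F_{S'}](y)$, I would choose $y$ outside both $E_S$ and $E_{S'}$. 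This is possible because
\[
|E_{S'}|+|S'\cap E_S|\leq\left(\eps_d+\tfrac14\right)|S'|<|S'|.
\]
Then $|F_S(y)|\leq K\lambda_S$ and $|F_{S'}(y)|\leq K\lambda_{S'}$, so
\[
|F_S(x)|\leq|F_{S'}(x)|+2K\lambda_S+K\lambda_{S'}+K\lambda_S.
\]
The stray term $K\lambda_{S'}$ is what threatens the constant. To avoid it I would telescope more carefully. Write $F_{Q_0}(x)$ as the sum of the differences $[F_S-F_{S'}](x)$ along the chain of stopping cubes containing $x$, plus the final term $F_{S_{\mathrm{last}}}(x)$ at a point of $G_{S_{\mathrm{last}}}$. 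For each difference use the bound
\[
|[F_S-F_{S'}](x)|\leq|[F_S-F_{S'}](y)|+K\lambda_S,
\]
and bound $|[F_S-F_{S'}](y)|$ by $K\lambda_S+K\lambda_{S'}$. Each $\lambda_{S'}$ then appears at most once from its parent step, and the final term contributes once. In total each $\lambda_S$ carries a coefficient of at most $3K$.

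\emph{Termination and a.e.\ issues.} Points lying in infinitely many stopping cubes form a null set, because the shadows shrink by a factor $\frac14$ at each generation. So a.e.\ $x$ lies in a finite chain of stopping cubes and ends in some $G_S$. The countably many null exceptional sets from \eqref{e:P3}, one for each pair $(S',y)$ once $y$ is fixed per child, are unioned and discarded.

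I expect the main obstacle to be the bookkeeping that yields exactly the constant $3K$: one must make sure each $\lambda_S$ collects at most three contributions, namely one from \eqref{e:P3}, one from $|F_S(y)|$, and one either from the term $|F_{S}(y)|$ at its parent step or from the terminal \eqref{e:P2}.
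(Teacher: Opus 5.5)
Your proposal is correct and follows the paper's proof: your children $\mathrm{ch}(S)$ are exactly the paper's $\mathcal L(S)$, the maximal proper dyadic subcubes $L$ with $|L\cap E_S|>2^{-d-2}|L|$. The key step is also the paper's: choose a good point $y\in S'\setminus(E_S\cup E_{S'})$ to get $|F_S-F_{S'}|\leq 2K\lambda_S+K\lambda_{S'}$ a.e.\ on $S'$, then telescope along the finite chain. The only blemish is your intermediate display with an extra $K\lambda_S$, which double counts. The step bound $2K\lambda_S+K\lambda_{S'}$ already yields the constant $3K$, so the ``stray term'' $K\lambda_{S'}$ is not actually a threat.
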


The stopping cubes are those where the exceptional set of the parent weak-type estimate becomes dense. The difference $F_Q-F_L$ is then estimated from a single good point of $L$, outside the exceptional sets of both $Q$ and $L$.
\begin{proof}
\emph{One generation.} For $Q\in \mathcal D(Q_0)$, let
\[\mathcal L(Q)\coloneqq \textrm{maximal elements } L \textrm{ of } \mathcal D(Q)\setminus\{Q\} \textrm{ such that } |L\cap E_Q|>2^{-d-2}|L|,\]
which depends on $E_Q$ alone. We first show that
\begin{align}
\label{e:iter1} & |\mathsf{sh}(\mathcal L(Q))|\leq \tfrac14|Q|,
\\ \label{e:iter2} & |F_Q(x)-F_L(x)|\leq K(2\lambda_Q+\lambda_L), \qquad \textrm{a.e.\ }x\in L\in\mathcal L(Q),
\\ \label{e:iter3} & |E_Q\setminus\mathsf{sh}(\mathcal L(Q))|=0,
\\ \label{e:iter4} & |L\cap E_Q|\leq\tfrac14|L|, \qquad L\in\mathcal L(Q).
\end{align}
The elements of $\mathcal L(Q)$ are pairwise disjoint, whence, by \eqref{e:P1},
\[
\sum_{L\in\mathcal L(Q)}|L|\leq 2^{d+2}\sum_{L\in\mathcal L(Q)}|L\cap E_Q|\leq 2^{d+2}|E_Q|\leq \tfrac14|Q|,
\]
which is \eqref{e:iter1}. If $L\in\mathcal L(Q)$, either $\widehat L=Q$, and then $|\widehat L\cap E_Q|\leq \eps_d|Q|\leq 2^{-d-2}|\widehat L|$, or $|\widehat L\cap E_Q|\leq 2^{-d-2}|\widehat L|$ by maximality. In both cases $|L\cap E_Q|\leq \frac14|L|$, which is \eqref{e:iter4}, and as $|E_L|\leq \frac14|L|$ by \eqref{e:P1}, we may pick a point
\[
y_L\in L\setminus(E_Q\cup E_L).
\]
Then, for almost every $x\in L$, using \eqref{e:P3} with $y=y_L$, and \eqref{e:P2} at the point $y_L$ for both $Q$ and $L$,
\[
|F_Q(x)-F_L(x)|\leq \left|[F_Q-F_L](x)-[F_Q-F_L](y_L)\right|+|F_Q(y_L)|+|F_L(y_L)|\leq 2K\lambda_Q+K\lambda_L,
\]
which is \eqref{e:iter2}. Finally, almost every $x\in E_Q$ is a point of density of $E_Q$, so that $|L\cap E_Q|>2^{-d-2}|L|$ for some $L\in \mathcal D(Q)\setminus\{Q\}$ containing $x$, and $x$ belongs to some element of $\mathcal L(Q)$. Therefore, almost every $x\in Q\setminus\mathsf{sh}(\mathcal L(Q))$ lies outside $E_Q$, which is \eqref{e:iter3}.

\emph{Iteration.} Set $\mathcal S^{(0)}\coloneqq\{Q_0\}$ and, inductively,
\[
\mathcal S^{(n+1)}\coloneqq\bigcup_{Q\in \mathcal S^{(n)}}\mathcal L(Q), \qquad n\geq 0,
\]
and let $\mathcal S\coloneqq\bigcup_{n\geq 0}\mathcal S^{(n)}$, which depends on the sets $E_Q$ alone, as the families $\mathcal L(Q)$ do. For $S\in\mathcal S$, the maximal elements of $\mathcal S$ strictly contained in $S$ are those of $\mathcal L(S)$, so that \eqref{e:children} is \eqref{e:iter1} together with \eqref{e:iter4}. The elements of each $\mathcal S^{(n)}$ are pairwise disjoint and, by \eqref{e:iter1}, the pairwise disjoint sets
\[
G_S\coloneqq S\setminus \mathsf{sh}(\mathcal L(S)), \qquad S\in \mathcal S,
\]
satisfy $|G_S|\geq\frac34|S|$; thus $\mathcal S$ is $\frac34$-sparse. Also,
\[
\sum_{S\in \mathcal S^{(n)}}|S|\leq 4^{-n}|Q_0|,\qquad n\geq0,
\]
so that almost every $x\in Q_0$ belongs to finitely many elements of $\mathcal S$. These form a chain $Q_0=I_0\supsetneq I_1\supsetneq\cdots\supsetneq I_N$ with $I_n\in \mathcal S^{(n)}$, and $x\in G_{I_N}$. Telescoping, and applying \eqref{e:iter2}, then \eqref{e:iter3} and \eqref{e:P2}, we obtain for almost every such $x$
\[
|F_{Q_0}(x)|\leq \sum_{n=0}^{N-1}\left|F_{I_n}(x)-F_{I_{n+1}}(x)\right|+|F_{I_N}(x)|\leq K\sum_{n=0}^{N-1}\left(2\lambda_{I_n}+\lambda_{I_{n+1}}\right)+K\lambda_{I_N}\leq 3K\sum_{n=0}^N\lambda_{I_n}.
\]
The rightmost side equals the right hand side of \eqref{e:principle} at $x$, and the proof is complete.
\end{proof}

\begin{remark}\label{r:principle}
Enlarging the sets $E_Q$ preserves \eqref{e:P2} and \eqref{e:P3}: thus $E_Q$ may contain any further exceptional set, as long as \eqref{e:P1} is preserved, and, as the stopping cubes depend on the sets $E_Q$ alone, one collection serves several families, by taking the union of their exceptional sets. The proof only uses the triangle inequality, and not the measurability of $F_Q$, so that Proposition \ref{p:principle} holds verbatim for families with values in a vector space endowed with an extended norm, taking values in $[0,\infty]$, as in the proof of Proposition \ref{p:grand}. The bound \eqref{e:principle} may also be derived from the operator-free principle of Lerner, Lorist and Ombrosi \cite{LLO22}*{Theorem 3.2}, the route of \cite{CLR26}.
\end{remark}

\section{Percentile Poincar\'e inequalities}\label{s:poincare}

This section bounds the percentile oscillations of a function by those of its derivatives, Proposition \ref{p:poincare}, through the anti-integration by parts of \cite{DGWdom}.

\subsection{Dual projectors}\label{ss:dual}

For an integer $m\geq1$ and a cube $S$, let
\begin{equation}\label{e:dual}
\Pi^m_S\varphi\coloneqq\int_{\R^d}\varphi(x)\phi_{S,x}\,\d x,
\end{equation}
\noeqref{e:dual}
with $\phi_{S,x}$ as in \eqref{e:proj}, so that $\varphi(\PM^m_Sf)=\left(\Pi^m_S\varphi\right)(f)$ for all $f\in L^1_{\mathrm{loc}}(\R^d)$ and all bounded, compactly supported $\varphi$.
\begin{lemma}\label{l:dual}
Let $m\geq1$, let $R\subseteq S$ be cubes, and let $\varphi\in\Phi^{\tau,1}(R)$ be supported in $R$. Then
\begin{align}
\label{e:dual1} & \varphi\left(f-\PM^m_Sf\right)=\left(\varphi-\Pi^m_S\varphi\right)(f),\qquad f\in L^1_{\mathrm{loc}}(\R^d),
\\ \label{e:dual2} & \Pi^m_S\varphi\in C\Phi^{\tau,1}(S), \qquad \supp\Pi^m_S\varphi\subseteq S,
\end{align}
and both $\varphi-\Pi^m_S\varphi$ and $\Pi^m_S\varphi-\Pi^m_{S'}\varphi$, $S'$ a further cube containing $R$, annihilate $\P^{m-1}$.
\end{lemma}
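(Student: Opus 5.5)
The plan is to reduce everything to the explicit kernel $\phi_{S,x}$ of \eqref{e:proj} and to the estimates already obtained for it in the proof of Lemma \ref{l:proj}(ii). For \eqref{e:dual1}, I would use Fubini: since $\varphi$ is bounded with compact support and $(x,y)\mapsto\varphi(x)\phi_{S,x}(y)$ is bounded with compact support in $\R^d\times\R^d$ (the $y$-support lies in $\supp\theta_S\subseteq S$ and the dependence on $x$ is polynomial), we get
\[
\varphi(\PM^m_Sf)=\int\varphi(x)\int\phi_{S,x}(y)f(y)\,\d y\,\d x=\int\Big(\int\varphi(x)\phi_{S,x}(y)\,\d x\Big)f(y)\,\d y=(\Pi^m_S\varphi)(f).
\]
Subtracting this from $\varphi(f)$ gives \eqref{e:dual1}.

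For \eqref{e:dual2}, note that $\Pi^m_S\varphi(y)=\sum_{|\alpha|\leq m-1}\frac{(-1)^{|\alpha|}}{\alpha!}\partial^\alpha_y\big[\theta_S(y)\int\varphi(x)(x-y)^\alpha\,\d x\big]$ is $\theta_S$ times a polynomial in $y$, plus derivatives. Hence it is smooth and supported in $\supp\theta_S\subseteq S$. For the size and smoothness bounds, I would observe that $x\in R\subseteq S\subseteq3S$ and $y\in\supp\theta_S$ give $|x-y|\lesssim\ell(S)$. The argument of Lemma \ref{l:proj}(ii) then yields $|\partial^\beta_y\phi_{S,x}(y)|\lesssim|S|^{-1}\ell(S)^{-|\beta|}$ uniformly in $x\in S$, for every $\beta$. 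Integrating against $|\varphi|$, with $\|\varphi\|_1\leq1$, gives $|\partial^\beta\Pi^m_S\varphi|\lesssim|S|^{-1}\ell(S)^{-|\beta|}$. Combined with the support in $S$, this gives membership in $C\Phi^{\tau,1}(S)$ for every $\tau$: after rescaling by $\Sy_S$ the function is smooth, supported in the unit cube, with bounded derivatives of all orders, so the weighted H\"older norms of order $\{\tau\}$ and decay $\tau+1$ are bounded.

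For the annihilation claims, take $\pi\in\P^{m-1}$. By Lemma \ref{l:proj}(i), $\PM^m_S\pi=\pi$, so \eqref{e:dual1} with $f=\pi$ gives $(\varphi-\Pi^m_S\varphi)(\pi)=\varphi(\pi)-\varphi(\PM^m_S\pi)=0$. Similarly $(\Pi^m_S\varphi-\Pi^m_{S'}\varphi)(\pi)=\varphi(\PM^m_S\pi)-\varphi(\PM^m_{S'}\pi)=\varphi(\pi)-\varphi(\pi)=0$. Both identities are legitimate because $\pi$ is locally integrable and the test functions are bounded with compact support.

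The only step needing care is the uniform derivative bound in \eqref{e:dual2}. The point is that the relevant quantity is $\ell(S)$ and not $\ell(R)$. Because $\varphi$ enters only through $\|\varphi\|_1\leq1$ and the constraint $x\in S$, the smallness of $R$ does no harm, so the constant is admissible and independent of the ratio $\ell(S)/\ell(R)$.
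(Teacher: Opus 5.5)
Your proposal is correct and follows the paper's proof: Fubini's theorem for \eqref{e:dual1}, the identity $\PM^m_S\pi=\pi$ for $\pi\in\P^{m-1}$ (Lemma \ref{l:proj}(i)) for both annihilation claims, and the bounds $|\partial^\beta\Pi^m_S\varphi|\lesssim|S|^{-1}\ell(S)^{-|\beta|}$ together with $\supp\theta_S\subseteq S$ for \eqref{e:dual2}. The only difference is cosmetic and amounts to the same estimate. You bound $\partial^\beta_y\phi_{S,x}(y)$ uniformly in $x\in S$, reusing the argument of Lemma \ref{l:proj}(ii), and then integrate against $|\varphi|$. The paper instead moves the $x$-integral inside the bracket, bounds the moments $\int\varphi(x)(x-y)^\alpha\,\d x$ using $\|\varphi\|_1\leq1$, and then applies the Leibniz rule.
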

\begin{proof}
Identity \eqref{e:dual1} is Fubini's theorem, as $\PM^m_Sf(x)=\phi_{S,x}(f)$ by \eqref{e:proj}. If $\pi\in\P^{m-1}$, then $(\Pi^m_S\varphi)(\pi)=\varphi(\PM^m_S\pi)=\varphi(\pi)$ by Lemma \ref{l:proj}(i), which does not depend on $S$; this gives both annihilation claims. Finally, by \eqref{e:proj},
\[
\Pi^m_S\varphi(y)=\sum_{|\alpha|\leq m-1}\frac{(-1)^{|\alpha|}}{\alpha!}\,\partial^\alpha_y\left[\theta_S(y)\int_{\R^d}\varphi(x)(x-y)^\alpha\,\d x\right],
\]
which is supported in $\supp\theta_S\subseteq S$. As $\|\varphi\|_1\leq1$ and $\varphi$ is supported in $R\subseteq S$, the inner integral and each of its derivatives in $y$ of order $\beta$ are bounded by $C\ell(S)^{|\alpha|-|\beta|}$ for $y\in S$, while $|\partial^\gamma\theta_S|\lesssim|S|^{-1}\ell(S)^{-|\gamma|}$. The Leibniz rule gives $|\partial^\beta\Pi^m_S\varphi|\lesssim|S|^{-1}\ell(S)^{-|\beta|}$ for every $\beta$, which is \eqref{e:dual2}.
\end{proof}

\subsection{Anti-integration}\label{ss:anti}
The next lemma is the smoothness-preserving, compactly supported case of the anti-integration by parts of \cite{DGWdom}. There, the primitives are only required to be bounded, and the moments are subtracted through $\PM^m_S$; here they are already absent, and the primitives inherit the regularity of $\psi$, as required by the test class of \eqref{e:max}.
\begin{lemma}\label{l:anti}
Let $m\geq1$, $\sigma>0$, and let $\psi\in\Phi^{\sigma,1}(S)$ be supported in the cube $S$ and annihilate $\P^{m-1}$. There exist $\psi_\alpha\in C\Phi^{\sigma,1}(S)$, $|\alpha|=m$, supported in $S$, such that
\begin{equation}\label{e:anti}
\psi=\ell(S)^m\sum_{|\alpha|=m}\partial^\alpha\psi_\alpha
\end{equation}
in the sense of distributions. If $\psi$ annihilates $\P^m$, each $\psi_\alpha$ has vanishing mean. Consequently, for all $f\in W^{m,1}_{\mathrm{loc}}(\R^d)$,
\begin{equation}\label{e:antipair}
\psi(f)=(-1)^m\ell(S)^m\sum_{|\alpha|=m}\psi_\alpha\left(\partial^\alpha f\right),\qquad\textrm{so that}\qquad|\psi(f)|\leq\ell(S)^m\sum_{|\alpha|=m}\left|\psi_\alpha\left(\partial^\alpha f\right)\right|.
\end{equation}
\end{lemma}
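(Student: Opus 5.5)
The plan is to rescale to the unit cube and then prove \eqref{e:anti} by a constructive induction on the dimension $d$, in which the order $m$ is redistributed between the first variable and the remaining ones. Since $\psi=\Sy_S\varphi/|S|$ and $\partial^\alpha(\Sy_S g)=\ell(S)^{-|\alpha|}\Sy_S(\partial^\alpha g)$, the factor $\ell(S)^m$ in \eqref{e:anti} is exactly the scaling factor. So it suffices to treat $S=Q_*=[-\frac12,\frac12)^d$ and $\varphi$ with $\sup_{|\beta|\le\lfloor\sigma\rfloor}\|\partial^\beta\varphi\|_{\{\sigma\},\sigma+1}\le1$, supported in $Q_*$ and annihilating $\P^{m-1}$. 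Because of the compact support, membership in $C\Phi^{\sigma,1}(Q_*)$ then amounts to bounds on $W^{\lfloor\sigma\rfloor,\infty}$ and on the $\{\sigma\}$-H\"older seminorms of the derivatives. I will construct the primitives $\varphi_\alpha$ by linear operations that preserve such bounds and supports.

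\emph{One-variable step.} Write $x=(t,x')$ with $t\in\R$ and $x'\in\R^{d-1}$. Fix $\theta_1\in\mathcal C^\infty_c((-\frac12,\frac12))$ with $\int\theta_1=1$. For $0\le j,k<m$ one has $\int t^j\theta_1^{(k)}(t)\,\d t=0$ if $j<k$ and $=(-1)^kk!$ if $j=k$. Hence the moment matrix is triangular with nonzero diagonal, and there are unique functions $g_k(x')$, $0\le k<m$, each a fixed linear combination of the partial moments $\int t^j\varphi(t,x')\,\d t$ with $j\le k$, such that
\[
R\varphi\coloneqq\varphi-\sum_{k<m}\theta_1^{(k)}\otimes g_k
\]
has vanishing $t$-moments of orders $<m$ for every $x'$. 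Each $g_k$ is supported in $[-\frac12,\frac12)^{d-1}$ and inherits the $\sigma$-regularity of $\varphi$, since it is an integral over a bounded $t$-interval. Therefore $R\varphi$ has a compactly supported $m$-fold $t$-primitive
\[
\Phi_m(t,x')=\int_{-\infty}^t\frac{(t-s)^{m-1}}{(m-1)!}R\varphi(s,x')\,\d s,
\]
which vanishes for $t\ge\frac12$ because of the vanishing moments. Integration in $t$ over a bounded interval preserves the $L^\infty$ and H\"older bounds of all derivatives up to order $\lfloor\sigma\rfloor$, so $R\varphi=\partial_1^m\Phi_m$ with $\Phi_m\in C\Phi^{\sigma,1}(Q_*)$ supported in $Q_*$.

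\emph{Induction.} Each remaining term is $\theta_1^{(k)}\otimes g_k=\partial_1^k[\theta_1\otimes g_k]$. Testing $\varphi$ against $t^k\pi(x')$ with $\deg\pi\le m-1-k$, and using the triangular structure, I expect $g_k$ to annihilate $\P^{m-1-k}$ on $\R^{d-1}$. This is the point I would check most carefully: one proceeds inductively in $k$, using that $\int t^j\theta_1^{(k)}\,\d t=0$ for $j<k$. By the inductive hypothesis in dimension $d-1$ at order $m-k$, write
\[
g_k=\sum_{|\alpha'|=m-k}\partial^{\alpha'}g_{k,\alpha'}
\]
(for $m-k=0$ there is nothing to do, but $k<m$ always). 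Then
\[
\theta_1^{(k)}\otimes g_k=\sum_{|\alpha'|=m-k}\partial^{(k,\alpha')}\bigl[\theta_1\otimes g_{k,\alpha'}\bigr],
\]
each of total order $m$. The base case $d=1$ is the one-variable step alone, in which the constants $g_k$ vanish. Collecting terms gives \eqref{e:anti}. The regularity of a tensor $\theta_1\otimes g$ follows from that of its factors, and the number of terms is admissible.

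\emph{Vanishing means and \eqref{e:antipair}.} If $\psi$ annihilates $\P^m$, apply the construction at order $m+1$ to get $\psi=\ell(S)^{m+1}\sum_{|\beta|=m+1}\partial^\beta\chi_\beta$. Write each $\beta=\alpha+e_{j(\beta)}$ with $|\alpha|=m$ and a fixed choice of $j(\beta)$, and set
\[
\psi_\alpha\coloneqq\ell(S)\sum_{\beta:\,\beta-e_{j(\beta)}=\alpha}\partial_{j(\beta)}\chi_\beta.
\]
These are derivatives of compactly supported functions, hence have vanishing mean. They lie in $C\Phi^{\sigma,1}(S)$ provided the construction gives one extra derivative of room. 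If not, I would instead keep the order-$m$ construction and correct each $\psi_\alpha$ by $\ell(S)^{-m}\bigl(\int\psi_\alpha\bigr)$ times a fixed bump, checking that the corrections sum to zero in \eqref{e:anti} thanks to the order-$m$ moments of $\psi$. I expect this regularity bookkeeping, together with the moment-transfer claim for the $g_k$, to be the main obstacle. Finally, \eqref{e:antipair} follows from \eqref{e:anti} by integrating by parts $m$ times against $f\in W^{m,1}_{\mathrm{loc}}$, which is legitimate since the $\psi_\alpha$ are compactly supported and Lipschitz up to order $\lfloor\sigma\rfloor$. The inequality is then the triangle inequality.
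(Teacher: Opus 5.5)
Your proof is correct, and your construction of \eqref{e:anti} is the paper's. You rescale, induct on the dimension, subtract tensor products in the first variable to kill the partial moments of order less than $m$, take the parametric $m$-fold primitive, and apply the induction hypothesis at order $m-k$ to the moment functions. The paper uses the biorthogonal system $\xi_j=\frac{(-1)^j}{j!}\xi^{(j)}$, where $\xi$ has vanishing moments of orders $1$ to $m-1$, so that the coefficients are exactly the partial moments $\mu_j$. With your triangular system, the moment-transfer point you flagged is immediate: $g_k$ is a combination of the $\mu_j$ with $j\leq k$, and each of these annihilates $\P^{m-1-j}\supseteq\P^{m-1-k}$.

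The genuine difference is the vanishing-mean claim. Your order-$(m+1)$ route works for every admissible choice of $j(\beta)$. Unwinding the recursion, each $\chi_\beta$ is a single product $\theta_1(y_1)\cdots\theta_1(y_{i-1})\Phi(y_i,\dots,y_d)$, with $\beta$ supported in $\{1,\dots,i\}$ and $\Phi$ a $\beta_i$-fold primitive in $y_i$ of a function with the regularity of $\varphi$. So $\partial_j$ with $\beta_j>0$ either hits a smooth factor or lowers the order of that primitive, and no derivative is lost.

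The paper instead keeps the order-$m$ family. Pairing \eqref{e:anti} with $x^\beta$, $|\beta|=m$, and integrating by parts gives
\[
\int_{\R^d}x^\beta\psi(x)\,\d x=(-1)^m\ell(S)^m\beta!\int_{\R^d}\psi_\beta(x)\,\d x,
\]
since $\partial^\alpha x^\beta=\beta!\,\delta_{\alpha\beta}$ for $|\alpha|=|\beta|=m$. This needs no second construction and no regularity bookkeeping. It also shows more: any compactly supported family satisfying \eqref{e:anti} has vanishing means if and only if $\psi$ annihilates $\P^m$. In particular your fallback correction is vacuous, because every coefficient $\int\psi_\alpha$ is already zero.
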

\begin{proof}
By rescaling, let $S=[0,1)^d$. If $d=1$, the $m$-th iterated primitive
\[
\psi_m(y)\coloneqq\int_{-\infty}^y\frac{(y-t)^{m-1}}{(m-1)!}\,\psi(t)\,\d t
\]
satisfies $\psi_m^{(m)}=\psi$ and, by the vanishing moments of $\psi$, is supported in $S$; its regularity is that of $\psi$. If $d\geq2$, write $y=(y_1,y')$, fix $\xi\in\mathcal{C}^\infty_c((0,1))$ with $\int\xi=1$ and $\int t^n\xi(t)\,\d t=0$ for $1\leq n<m$, and set $\xi_j\coloneqq\frac{(-1)^j}{j!}\xi^{(j)}$, so that $\int t^i\xi_j=\delta_{ij}$ for $0\leq i,j<m$. With $\mu_j(y')\coloneqq\int y_1^j\psi(y_1,y')\,\d y_1$, the function
\[
\psi-\sum_{j<m}\xi_j\otimes\mu_j
\]
has vanishing $y_1$-moments of order less than $m$ for each $y'$, hence is $\partial_1^m$ of a function supported in $S$, by the one dimensional case applied with parameter $y'$. Each $\mu_j$ annihilates the polynomials of degree less than $m-j$ in $y'$, so that induction on the dimension gives
\[
\mu_j=\sum_{|\alpha'|=m-j}\partial^{\alpha'}\mu_{j,\alpha'},\qquad \xi_j\otimes\mu_j=\frac{(-1)^j}{j!}\sum_{|\alpha'|=m-j}\partial_1^j\partial^{\alpha'}\left(\xi\otimes\mu_{j,\alpha'}\right).
\]
All the functions constructed are supported in $S$ and inherit the bounds of $\psi$, up to admissible constants. The identity in \eqref{e:antipair} follows from \eqref{e:anti} by integration by parts. Finally, if $\psi$ annihilates $\P^m$, testing \eqref{e:anti} against $x^\beta$, $|\beta|=m$, gives $0=\int x^\beta\psi=(-1)^m\ell(S)^m\beta!\int\psi_\beta$.
\end{proof}

\begin{proposition}\label{p:poincare}
Let $1\leq n\leq m$ be integers, let $Q$ be a cube, $f\in W^{m,1}_{\mathrm{loc}}(\R^d)$ and $h\coloneqq f-\PM^m_{3Q}f$. Then
\begin{equation}\label{e:poincare}
\M^\tau_{3Q}h(x)\lesssim\ell(Q)^n\,\M^\tau_{3Q}\left[\nabla^nh\right](x),\qquad x\in 3Q,
\end{equation}
and consequently, for $0<r<1$,
\begin{equation}\label{e:poincareperc}
\nat{f}_{m,Q}\lesssim\ell(Q)^n\nat{\nabla^nf}_{m-n,Q},\qquad \nat{f}_{m,Q}\lesssim\ell(Q)^m\left(\M^\tau_{3Q}\left[\nabla^mf\right]\right)^{*,Q}(r)\lesssim\frac{\ell(Q)^m}{r}\left\langle\left|\nabla^mf\right|\right\rangle_{3Q}.
\end{equation}
\end{proposition}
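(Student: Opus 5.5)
The plan is to prove the pointwise estimate \eqref{e:poincare} test function by test function, using the dual projectors of Lemma \ref{l:dual} together with the anti-integration of Lemma \ref{l:anti}. The percentile inequalities \eqref{e:poincareperc} then follow by identifying $\nabla^nh$ with the renormalized derivative appearing in $\nat{\nabla^nf}_{m-n,Q}$.

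\emph{The pointwise bound.} Fix $x\in 3Q$, a cube $R$ with $x\in R\subseteq 3Q$, and $\varphi\in\Phi^{\tau,1}(R)$ supported in $R$. Since $\PM^m_{3Q}$ is a projection onto $\P^{m-1}$, we have $h=h-\PM^m_{3Q}h$. By \eqref{e:dual1}, applied with $S=3Q$, this gives
\[
\varphi(h)=(\varphi-\Pi^m_{3Q}\varphi)(h).
\]
Write $\psi\coloneqq\varphi-\Pi^m_{3Q}\varphi$. It annihilates $\P^{m-1}$, hence $\P^{n-1}$, and by \eqref{e:dual2} it lies in $C\Phi^{\tau,1}(3Q)$ with support in $3Q$.

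\emph{The obstacle.} Applying Lemma \ref{l:anti} directly on the cube $3Q$ yields $\psi_\alpha\in C\Phi^{\tau,1}(3Q)$ supported in $3Q$. These are admissible tests for $\M^\tau_{3Q}$ at the scale $3Q$, which contains $x$. This gives
\[
|\varphi(h)|\lesssim\ell(Q)^n\sum_{|\alpha|=n}|\psi_\alpha(\partial^\alpha h)|\lesssim\ell(Q)^n\,\M^\tau_{3Q}[\nabla^nh](x).
\]
The subtlety is that this route discards the small scale $R$. That is harmless here, because the right hand side of \eqref{e:poincare} carries the factor $\ell(Q)^n$ rather than $\ell(R)^n$, and the test at scale $3Q$ is admissible for every $x\in 3Q$. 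Taking the supremum over $\varphi$ and $R$ proves \eqref{e:poincare}. If one prefers to keep the information at scale $R$, one can split $\psi=(\varphi-\Pi^m_R\varphi)+(\Pi^m_R\varphi-\Pi^m_{3Q}\varphi)$ and anti-integrate each piece on $R$ and $3Q$ respectively, using $\ell(R)\leq 3\ell(Q)$. This is not needed.

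\emph{The percentile bounds.} By Lemma \ref{l:proj}(iii),
\[
\nabla^nh=\nabla^nf-\PM^{m-n}_{3Q}\nabla^nf,
\]
with the convention $\PM^0=0$ when $n=m$. Taking the upper $r$-percentile on $Q$ of both sides of \eqref{e:poincare}, using the monotonicity of the rearrangement, gives the first inequality in \eqref{e:poincareperc}. For the second, take $n=m$: then $\nabla^mh=\nabla^mf$, and \eqref{e:poincare} yields
\[
\nat{f}_{m,Q}\lesssim\ell(Q)^m\left(\M^\tau_{3Q}[\nabla^mf]\right)^{*,Q}(r).
\]
The final inequality is Lemma \ref{l:po}(iii) applied with order $0$ to the vector valued function $\nabla^mf$.

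The only points requiring care are the admissibility of the anti-integrated functions, namely that their support and regularity are preserved by Lemma \ref{l:anti}, and the identification of $\nabla^nh$ via Lemma \ref{l:proj}(iii), which holds for $f\in W^{m,1}_{\mathrm{loc}}(\R^d)$.
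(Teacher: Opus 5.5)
Your passage from \eqref{e:poincare} to \eqref{e:poincareperc} is correct and matches the paper. The proof of \eqref{e:poincare} itself, however, has a genuine gap.

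\textbf{The false step.} You claim that $\psi=\varphi-\Pi^m_{3Q}\varphi$ lies in $C\Phi^{\tau,1}(3Q)$ ``by \eqref{e:dual2}''. This fails when $\ell(R)\ll\ell(Q)$. The estimate \eqref{e:dual2} concerns only $\Pi^m_{3Q}\varphi$. The test $\varphi\in\Phi^{\tau,1}(R)$ itself may have height $|R|^{-1}$ and regularity only at scale $\ell(R)$. By \eqref{e:change}, it belongs to $C\Phi^{\tau,1}(3Q)$ only with $C$ growing like a power of $\ell(Q)/\ell(R)$. Lemma \ref{l:anti} on $3Q$ then produces functions $\psi_\alpha$ that inherit these bounds and are not admissible tests at scale $3Q$. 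For instance, with $d=n=1$ and $\ell(3Q)=1$, a primitive of a bump of mass one on $R$ rises by an amount of order one across $R$.

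\textbf{The intermediate inequality is false.} This is not merely a citation problem. Your route would give
\[
|\varphi(h)|\lesssim\ell(Q)^n\sup_\psi|\psi(\nabla^nh)|,
\]
with $\psi$ ranging over the tests at the single scale $3Q$. Take $d=n=m=1$, $0<\tau<1$ and $\ell(3Q)=1$. Let $u\coloneqq\varepsilon^{-\tau}\rho((\cdot-c(Q))/\varepsilon)$, with $\rho$ a bump equal to one on $[-1,1]$, and set $h\coloneqq u-\PM^1_{3Q}u$.
\begin{itemize}
\item Since $h'=u'$ has vanishing mean and $\|h'\|_1\lesssim\varepsilon^{-\tau}$, every test $\psi$ at scale $3Q$ gives $|\psi(h')|=|\int(\psi-\psi(c(Q)))h'|\lesssim1$.
\item A nonnegative normalized bump $\varphi$ at scale $\varepsilon$ centered at $c(Q)$ gives $\varphi(h)=\varepsilon^{-\tau}+O(\varepsilon^{1-\tau})$.
\end{itemize}
So discarding the small scales is not harmless. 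The right hand side of \eqref{e:poincare} must be tested at every scale between $\ell(R)$ and $\ell(Q)$.

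\textbf{The missing idea: multiscale telescoping.} Using \eqref{e:enclosing}, choose cubes $R=S_0\subseteq S_1\subseteq\cdots\subseteq S_N=3Q$ with $\ell(S_i)=\min\{2^i\ell(R),3\ell(Q)\}$, and write
\[
\varphi-\Pi^m_{3Q}\varphi=\left(\varphi-\Pi^m_{S_0}\varphi\right)+\sum_{i=0}^{N-1}\left(\Pi^m_{S_i}\varphi-\Pi^m_{S_{i+1}}\varphi\right).
\]
\begin{itemize}
\item Each piece annihilates $\P^{m-1}$, by Lemma \ref{l:dual}.
\item Since $\ell(S_{i+1})\leq2\ell(S_i)$, \eqref{e:dual2} and \eqref{e:change} place the $i$-th difference in $C\Phi^{\tau,1}(S_{i+1})$ with support in $S_{i+1}$. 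The first piece lies in $C\Phi^{\tau,1}(S_0)$ with support in $S_0$.
\item Apply Lemma \ref{l:anti} of order $n$ to each piece on its own cube, and test at $x\in S_0\subseteq S_{i+1}\subseteq3Q$. This bounds the $i$-th piece paired with $h$ by $\ell(S_{i+1})^n\M^\tau_{3Q}[\nabla^nh](x)$, and the first by $\ell(S_0)^n\M^\tau_{3Q}[\nabla^nh](x)$.
\item Finally, $\sum_i\ell(S_i)^n\lesssim\ell(Q)^n$ because the sum is geometric. This is exactly where $n\geq1$ is used, a hypothesis your argument never invokes.
\end{itemize}

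The two-piece split through $\Pi^m_R\varphi$ that you mention does not repair the gap. The function $\Pi^m_R\varphi$ also lives at scale $R$, so $\Pi^m_R\varphi-\Pi^m_{3Q}\varphi$ is no better adapted to $3Q$ than $\psi$. All the intermediate scales are needed.
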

\begin{proof}
Let $\varphi\in\Phi^{\tau,1}(R)$ be supported in $R$, with $x\in R\subseteq3Q$. Using \eqref{e:enclosing} repeatedly, with $P=3Q$ and $E$ the previously constructed cube, choose
\[
R=S_0\subseteq S_1\subseteq\cdots\subseteq S_N=3Q,\qquad \ell(S_i)=\min\left\{2^i\ell(R),3\ell(Q)\right\},
\]
which is legitimate as $\diam S_i=\ell(S_i)<\ell(S_{i+1})$ unless $\ell(S_{i+1})=\ell(3Q)$. By \eqref{e:dual1} and telescoping,
\[
\varphi\left(f-\PM^m_{3Q}f\right)=\left(\varphi-\Pi^m_{S_0}\varphi\right)(f)+\sum_{i=0}^{N-1}\left(\Pi^m_{S_i}\varphi-\Pi^m_{S_{i+1}}\varphi\right)(f).
\]
By Lemma \ref{l:dual}, each function paired with $f$ on the right annihilates $\P^{m-1}$, which contains $\PM^m_{3Q}f$ and $\P^{n-1}$, so that $f$ may be replaced by $h$ in each pairing; and, as $\ell(S_{i+1})\leq2\ell(S_i)$, by \eqref{e:change}, it belongs to $C\Phi^{\tau,1}(S_{i+1})$ and is supported in $S_{i+1}$, respectively to $C\Phi^{\tau,1}(S_0)$ and is supported in $S_0$. Lemma \ref{l:anti} of order $n$, applied on the cube $S_{i+1}$, and \eqref{e:max}, tested at $x\in S_{i+1}\subseteq3Q$, bound the $i$-th term by a constant multiple of
\[
\ell(S_{i+1})^n\,\M^\tau_{3Q}\left[\nabla^nh\right](x),
\]
and likewise for the first term, with $S_0$ in place of $S_{i+1}$. As $n\geq1$,
\[
\sum_{i\leq N}\ell(S_i)^n\lesssim\ell(Q)^n,
\]
and \eqref{e:poincare} follows. By Lemma \ref{l:proj}(iii), $\nabla^nh=\nabla^nf-\PM^{m-n}_{3Q}\nabla^nf$, so that \eqref{e:poincare} and \eqref{e:rearr} give the first inequality in \eqref{e:poincareperc}. For $n=m$, $\nabla^mh=\nabla^mf$, which gives the second, and the third is Lemma \ref{l:po}(iii), with $\nabla^mf$ and $0$ in place of $f$ and $m$.
\end{proof}

The next lemma compares, cube by cube, the percentile oscillations of the derivatives of $f$ of order at most $\kappa$ with those of $f$ and of $\nabla^\kappa f$, with the convention of \S\ref{ss:perc} for vector valued functions. It is used in the proofs of Theorem \ref{t:jet} and Corollary \ref{c:sobjet}.
\begin{lemma}\label{l:sandwichperc}
Let $1\leq\kappa\leq m$ be integers and $f\in\mathcal{C}^\infty(\R^d)$. For every cube $S$ the following hold.
\begin{enumerate}[label=(\roman*)]
\item If $0<j<\kappa$, then
\begin{equation}\label{e:sandwichsmall}
\nat{\nabla^jf}_{m-j,S}\lesssim\ell(S)^{\kappa-j}\nat{\nabla^\kappa f}_{m-\kappa,S}.
\end{equation}
\item If $\ell(S)\geq1$ and $0\leq j\leq\kappa$, then
\begin{equation}\label{e:sandwichlarge}
\natl{\nabla^jf}{r}_{m-j,S}\lesssim\natl{f}{2^{-d-1}r}_{m,2S}+\natl{\nabla^\kappa f}{2^{-d-1}r}_{m-\kappa,2S}.
\end{equation}
\end{enumerate}
\end{lemma}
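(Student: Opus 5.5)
For (i), I would apply the first inequality of \eqref{e:poincareperc} in Proposition \ref{p:poincare} to the vector valued function $g\coloneqq\nabla^jf$, with the integers $m-j$ and $\kappa-j$ in place of $m$ and $n$. This is legitimate since $1\leq\kappa-j\leq m-j$, and $f$ is smooth, so $g\in W^{m-j,1}_{\mathrm{loc}}$. The proposition holds componentwise, and the vector valued convention of \S\ref{ss:perc} sums the maximal functions over the components. Since $\nabla^{\kappa-j}\nabla^jf$ has the same components as $\nabla^\kappa f$, up to repetition with admissible multiplicity, this gives $\nat{\nabla^jf}_{m-j,S}\lesssim\ell(S)^{\kappa-j}\nat{\nabla^\kappa f}_{m-\kappa,S}$ directly.

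For (ii), the cases $j=0$ and $j=\kappa$ follow from Lemma \ref{l:po}(v), applied with $P=2S$, $A=2$ and level $r$. Indeed, $\natl{g}{r}_{n,S}\lesssim\natl{g}{2^{-d}r}_{n,2S}\leq\natl{g}{2^{-d-1}r}_{n,2S}$, by monotonicity of the percentile in the level. For $0<j<\kappa$, I would use an interpolation (Gagliardo--Nirenberg type) argument at the level of test functions.

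\emph{Step 1.} Fix $x\in S$ and a test $\varphi\in\Phi^{\tau,1}(R)$ supported in $R$, with $x\in R\subseteq 3S$. I need to bound $\varphi(\partial^\beta h_j)$, where $|\beta|=j$ and $h_j\coloneqq\nabla^jf-\PM^{m-j}_{3S}\nabla^jf$.

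\emph{Step 2 (small cubes).} If $\ell(R)\leq1$, the telescoping chain argument of Proposition \ref{p:poincare} would go up to a cube of sidelength about $1$ rather than $3\ell(S)$. Anti-integrating $\kappa-j$ times on the small cubes yields $\nabla^\kappa f$ terms, whose sum is weighted by $\sum\ell(S_i)^{\kappa-j}\lesssim1$.

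\emph{Step 3 (large cubes).} At the unit cube where the chain stops, and for tests with $\ell(R)\geq1$, I would integrate by parts $j$ times instead: write $\varphi(\partial^\beta g)=(-1)^j(\partial^\beta\varphi)(g)$. Since $\ell(R)\geq1$, the function $\partial^\beta\varphi$ is $\ell(R)^{-j}\lesssim1$ times an element of $C\Phi^{\tau,1}(R)$. This gives terms controlled by $\M^\tau[f-\PM^m f]$. To run this uniformly, I would enlarge $3S$ to $6S=3(2S)$ and compare projections via \eqref{e:projdiff}. The cost is passing to $2S$ and halving the level, which accounts for $2^{-d-1}r$.

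\emph{Step 4.} This gives the pointwise bound
\[
\M^\tau_{3S}h_j\lesssim\M^\tau_{6S}\left[f-\PM^m_{6S}f\right]+\M^\tau_{6S}\left[\nabla^\kappa f-\PM^{m-\kappa}_{6S}\nabla^\kappa f\right]
\]
on $S$. Then $(a+b)^{*,S}(r)\leq a^{*,S}(r/2)+b^{*,S}(r/2)$ together with $S\subseteq 2S$, $|S|=2^{-d}|2S|$ yields \eqref{e:sandwichlarge}.

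The main obstacle is matching the polynomial projections. The terms $\partial^\beta\PM^m_{6S}f$ and $\PM^{m-j}_{3S}\nabla^jf$ differ by a polynomial, by Lemma \ref{l:proj}(iii). The tests $\varphi-\Pi\varphi$ must annihilate the right polynomial spaces, so that both $f$ and $\nabla^\kappa f$ can be replaced by their renormalized versions. I would handle this using Lemma \ref{l:dual} on each link of the chain, so that every paired function annihilates $\P^{m-j-1}$. After anti-integration or integration by parts, the resulting functions then annihilate $\P^{m-\kappa-1}$, respectively $\P^{m-1}$. If this is awkward, a fallback is to reduce to the case of a single unit scale first and then use \eqref{e:projdiff}.
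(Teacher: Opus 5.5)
Part (i), the cases $j\in\{0,\kappa\}$ of (ii) via Lemma \ref{l:po}(v), and the percentile step of Step 4 are correct. Step 2 also works for $\ell(R)\leq1$, because the top function of the chain is $\Pi^{m-j}_{S_N}\varphi$, which is smooth by the formula in the proof of Lemma \ref{l:dual}.

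The polynomial matching you single out is not the real difficulty. Set $h\coloneqq f-\PM^m_{3S}f$ and $g\coloneqq f-\PM^m_{6S}f$. Then \eqref{e:projdiff} and Lemma \ref{l:proj}(ii) give $\M^\tau_{4S}h\lesssim\M^\tau_{6S}g$ on $3S$, and likewise for $\nabla^\kappa h$. So everything reduces to the interpolation bound $\M^\tau_{3S}[\nabla^jh]\lesssim\M^\tau_{4S}h+\M^\tau_{4S}[\nabla^\kappa h]$ on $3S$.

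The gap is Step 3, which is exactly what this bound needs for $\ell(R)\geq1$. It is false that $\ell(R)^{j}\partial^\beta\varphi\in C\Phi^{\tau,1}(R)$. The class controls only $\lfloor\tau\rfloor$ derivatives and a $\{\tau\}$-H\"older quotient, so differentiation takes you out of it. In the range $0<\tau<\delta\leq1$ where the lemma is used, $\varphi$ is merely H\"older and $\partial^\beta\varphi$ is not even a function. Hence $(\partial^\beta\varphi)(h)$ is not controlled by $\M^\tau_{3S}h(x)$. The natural repair within your scheme is $\varphi=\Pi_R\varphi+(\varphi-\Pi_R\varphi)$, with the remainder anti-integrated as in Step 2. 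This costs a factor $\ell(R)^{\kappa-j}$, which is unbounded when $\ell(R)\sim\ell(S)$.

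The missing idea is to split the test in frequency at the unit scale, and to anti-integrate the mollifier rather than $\varphi$. The paper fixes $\xi\in\mathcal{C}^\infty_c(B)$, with $B$ the cube of sidelength $\frac18$ centered at the origin, $\int\xi=1$, and vanishing moments of orders $1$ to $n-1$, where $n\coloneqq\kappa-j$. It then writes
\[
\varphi=\varphi*\xi+\sum_{i\geq0}\varphi*\left(\xi_{2^{-i-1}}-\xi_{2^{-i}}\right),
\]
with all terms supported in $R+B\subseteq4S$, as $\ell(S)\geq1$.

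In the first term the $j$ derivatives fall on $\xi$. Since $\varphi*\partial^\gamma\xi\in C\Phi^{\tau,1}(R+B)$, this term is bounded by $\M^\tau_{4S}h(x)$. In the other terms, $\xi_{s/2}-\xi_s$ annihilates $\P^{n-1}$, so Lemma \ref{l:anti} writes it as $s^n\sum_{|\mu|=n}\partial^\mu\zeta_\mu$ with $\zeta_\mu\in C\Phi^{\tau,1}(sB)$. Then $\varphi*\zeta_\mu\in C\Phi^{\tau,1}(R+sB)$ is still, up to a constant, a test on a cube containing $x$ and contained in $4S$. So the $i$-th term is $\lesssim2^{-in}\M^\tau_{4S}[\nabla^\kappa h](x)$, which is summable in $i$. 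This treats every $R\subseteq3S$ at once, with no small/large case split and no chain.
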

\begin{proof}
Let $h\coloneqq f-\PM^m_{3S}f$. By Lemma \ref{l:proj}(iii), $\nabla^jh=\nabla^jf-\PM^{m-j}_{3S}\nabla^jf$, whence $\nat{\nabla^jf}_{m-j,S}=(\M^\tau_{3S}[\nabla^jh])^{*,S}(r)$ for $0\leq j\leq m$.

We prove (i). For $|\gamma|=j$, Proposition \ref{p:poincare}, with $\partial^\gamma f$, $m-j$, $\kappa-j$ and $S$ in place of $f$, $m$, $n$ and $Q$, gives $\M^\tau_{3S}[\partial^\gamma h]\lesssim\ell(S)^{\kappa-j}\M^\tau_{3S}[\nabla^\kappa h]$ on $3S$. Sum over $|\gamma|=j$ and take the percentiles on $S$.

We turn to (ii), and first prove the interpolation estimate \eqref{e:sandwichmax} below. Let $0<j<\kappa$, $|\gamma|=j$ and $n\coloneqq\kappa-j$. Let $B$ be the cube of sidelength $\frac18$ centered at the origin, $\xi\in\mathcal{C}^\infty_c(B)$ with $\int\xi=1$ and vanishing moments of orders $1$ to $n-1$, and $\xi_s\coloneqq s^{-d}\xi(s^{-1}\cdot)$. For $\varphi\in\Phi^{\tau,1}(R)$ supported in $R$, $x\in R\subseteq3S$,
\[
\varphi=\varphi*\xi+\sum_{i\geq0}\varphi*\left(\xi_{2^{-i-1}}-\xi_{2^{-i}}\right)
\]
uniformly, all terms being supported in $R+B\subseteq4S$, as $\ell(S)\geq1$. First, $(\varphi*\xi)(\partial^\gamma h)=(-1)^j(\varphi*\partial^\gamma\xi)(h)$, with $\varphi*\partial^\gamma\xi\in C\Phi^{\tau,1}(R+B)$, as a convolution inherits the bounds of its factor at the larger scale, the other factor having $L^1$ norm $\lesssim1$. Second, $\xi_{s/2}-\xi_s$ is supported in $sB$ and annihilates $\P^{n-1}$, so that Lemma \ref{l:anti} writes it as
\[
\xi_{s/2}-\xi_s=s^n\sum_{|\mu|=n}\partial^\mu\zeta_\mu,
\]
with $\zeta_\mu\in C\Phi^{\tau,1}(sB)$ supported in $sB$; as, likewise, $\varphi*\zeta_\mu\in C\Phi^{\tau,1}(R+sB)$, the term of index $i$ paired with $\partial^\gamma h$ is $\lesssim2^{-in}\M^\tau_{4S}[\nabla^\kappa h](x)$. Summing over $i$ and $|\gamma|=j$,
\begin{equation}\label{e:sandwichmax}
\M^\tau_{3S}\left[\nabla^jh\right]\lesssim\M^\tau_{4S}h+\M^\tau_{4S}\left[\nabla^\kappa h\right]\qquad\textrm{on }3S,
\end{equation}
which is trivial for $j\in\{0,\kappa\}$. We next compare the polynomial projections. Let $g\coloneqq f-\PM^m_{6S}f$, so that $h=g-\PM^m_{3S}g$, by \eqref{e:projdiff}. By Lemma \ref{l:proj}(ii) on $3S$, and as $|S|>r|2S|$,
\[
\sup_{9S}\left|\PM^m_{3S}g\right|\lesssim\inf_S\M^\tau_{6S}g\leq\natl{f}{r}_{m,2S},
\]
and likewise for $\nabla^\kappa f$ and $m-\kappa$ if $m>\kappa$. With \eqref{e:sandwichmax}, as $\M^\tau_{4S}\pi\leq\sup_{4S}|\pi|$ for polynomials $\pi$ and $\M^\tau_{4S}\leq\M^\tau_{6S}$ on $4S$, on $S$,
\[
\M^\tau_{3S}\left[\nabla^jh\right]\lesssim\M^\tau_{6S}g+\M^\tau_{6S}\left[\nabla^\kappa f-\PM^{m-\kappa}_{6S}\nabla^\kappa f\right]+\natl{f}{r}_{m,2S}+\natl{\nabla^\kappa f}{r}_{m-\kappa,2S}.
\]
Finally, we pass to the percentiles and to the enlarged cube. As $(F+G)^{*,S}(r)\leq F^{*,S}(\frac r2)+G^{*,S}(\frac r2)$, $F^{*,S}(t)\leq F^{*,2S}(2^{-d}t)$, and the percentiles do not increase with the level, \eqref{e:sandwichlarge} follows.
\end{proof}

\section{Model wavelet operators}\label{s:models}
This section introduces the model wavelet operators, the paraproducts and the wavelet representation of Calder\'on--Zygmund operators, Proposition \ref{p:rep}, and contains the kernel estimates of the models, the cutoff testing condition for mean zero input packets, and the bounded action of the models on localized polynomials under that condition.

\subsection{Model operators}\label{ss:modeldef}
The model operators below are the counterparts of the wavelet forms of \cite{DGWdom}*{Definition 2.7}, in which the requirement that the packets belong to cancellative classes is replaced by an $L^2$ bound, uniform over truncations.
Fix smoothness parameters $\sigma,\sigma'>0$, decay parameters $\rho,\rho'\geq 0$, and two families of packets
\begin{equation}\label{e:packets}
\left\{\varphi_R\in \Phi^{\sigma,\rho}(R): R\in \mathcal D\right\}, \qquad \left\{\psi_R\in \Phi^{\sigma',\rho'}(R): R\in \mathcal D\right\}.
\end{equation}
In general, these are \emph{not} assumed to be rescalings of fixed functions. To each finite collection $\mathcal R\subset \mathcal D$, associate the operator
\begin{equation}\label{e:model}
V_{\mathcal R}f\coloneqq \sum_{R\in \mathcal R}|R|\varphi_R(f)\psi_R.
\end{equation}
\begin{definition}\label{d:model}
The collection $V=\{V_{\mathcal R}: \mathcal R\subset\mathcal D \textrm{ finite}\}$ is a \emph{$(\sigma,\sigma';\rho,\rho')$-model wavelet operator} if
\begin{equation}\label{e:L2}
\sup_{\substack{\mathcal R\subset \mathcal D\\ \mathcal R \textrm{ finite}}}\left\|V_{\mathcal R}\right\|_{L^2(\R^d)\to L^2(\R^d)}\leq 1.
\end{equation}
Its operator is
\begin{equation}\label{e:modelv}
Vf\coloneqq \sum_{R\in \mathcal D}|R|\varphi_R(f)\psi_R,\qquad f\in L^2(\R^d),
\end{equation}
the series converging unconditionally in $L^2(\R^d)$, see below. As usual, fixed constants in the packet bounds and in \eqref{e:L2} may be absorbed into an overall normalization of the operator.
\end{definition}
\begin{definition}\label{d:ctc}
Say that the $(\sigma,\sigma';\rho,\rho')$-model wavelet operator $V $ satisfies the \emph{cutoff testing condition} if
\begin{equation}\label{e:V1}
\sup_{Q \textrm{ cube}} \left\|V \left(\omega^\infty_Q\right)\right\|_{L^\infty(\R^d)}\leq1,
\end{equation}
where $\omega^\infty_Q$ are the cutoffs of \S\ref{ss:wavelets}, equal to one on $\frac32Q$ and supported in $2Q$. This is the counterpart for models of the condition $T1\in L^\infty(\R^d)$, and holds automatically, for every $\mathcal R$, when the input packets have mean zero, see Lemma \ref{l:V1mean}. Unlike \eqref{e:L2}, it is not inherited by subcollections, as the wavelet expansion of a bounded function, restricted to a subcollection of $\mathcal D$, need not be bounded, cf.\ the canonical paraproducts of Remark \ref{r:models}(b).
\end{definition}
By \eqref{e:L2}, for each $f\in L^2(\R^d)$, the finite subsums of the series \eqref{e:modelv} are uniformly bounded in $L^2(\R^d)$, whence the series converges unconditionally in $L^2(\R^d)$, by the Bessaga--Pe{\l}czy\'nski theorem \cite{BP58}, as $L^2(\R^d)$ contains no isomorphic copy of $c_0$. So do its subseries, which define $V_{\mathcal R}$ for arbitrary $\mathcal R\subseteq\mathcal D$, with $\|V_{\mathcal R}\|_{2\to2}\leq 1$. Each $V_{\mathcal R}$ is itself the operator $V$ of a model with the same parameters, namely the one obtained by replacing $\varphi_R$ with $0$ for $R\notin\mathcal R$. The kernel estimates of \S\ref{ss:kernel} are uniform over finite collections, and are inherited by $V_{\mathcal R}$ for arbitrary $\mathcal R\subseteq\mathcal D$.

\begin{remark}\label{r:models}
No cancellation is required of either family of packets, and the definition is symmetric, in the sense that the adjoint of a $(\sigma,\sigma';\rho,\rho')$-model is a $(\sigma',\sigma;\rho',\rho)$-model, and \eqref{e:L2} holds for both. Thus, up to adjoints, the models used in this article are of two types.
\begin{enumerate}[label=(\alph*)]
\item If all packets have mean zero, \eqref{e:L2} holds up to a constant depending on the smoothness and decay parameters and on $d$ only, by almost orthogonality \cite{DGWdom}*{Proposition 3.4 and Remark 3.5}. Examples are wavelet multipliers \cite{DWW23} and the operators $V_{\alpha,\beta}$ of Proposition \ref{p:rep}.
\item The paraproducts
\[
f\mapsto\sum_{R\in\mathcal R}|R|b_R\vartheta_R(f)\psi_R,\qquad \vartheta_R\in \Phi^{\sigma,\Subset}(R),
\]
where $\psi_R$ are the wavelets of \S\ref{ss:wavelets}, have noncancellative input packets. They satisfy \eqref{e:L2} when $\{b_R\}$ is a Carleson sequence, up to a constant depending on its Carleson norm, by the Carleson embedding theorem. The canonical paraproducts \eqref{e:canpara} are of this form, with $b_R=\psi_R(b)$ and $\mathcal R=\mathcal D$. They satisfy \eqref{e:L2} when $b\in\mathrm{BMO}(\R^d)$ and, under the stronger assumption $b\in L^\infty(\R^d)$, also \eqref{e:V1}, see Lemma \ref{l:para}, while their adjoints, having mean zero input packets, satisfy \eqref{e:V1} for every $b\in\mathrm{BMO}(\R^d)$, by Lemma \ref{l:V1mean}.
\end{enumerate}
Both types appear in the wavelet representation of Calder\'on--Zygmund operators, from the classical \cite{BCR91} and \cite{MC97}*{Chapter 8} to the representation theorems of \cites{DWW23,DGW23,HL22} and Proposition \ref{p:rep}. Their Haar counterparts, the dyadic shifts and paraproducts, are the building blocks of the dyadic representation theorem \cite{hytonen12}.
\end{remark}

\subsection{Paraproducts and representation}\label{ss:pararep}

Fix $\vartheta\in \mathcal{C}^\infty_c(\R^d)$  satisfying
\[\supp \vartheta \Subset \left({\textstyle-\frac12,\frac12}\right)^d, \qquad \int\vartheta=1, \qquad \int x^\mu\vartheta(x)\,\d x=0,\quad 1\leq|\mu|\leq k,\] and let $\vartheta_Q$ be its $L^1$-normalized rescaling \eqref{e:norm1}, so that $\vartheta_Q\in C\Phi^{\sigma,\Subset}(Q)$ for every $\sigma>0$ and
\begin{equation}\label{e:thetamom}
\vartheta_Q\left((\cdot-c(Q))^\mu\right)=\delta_{\mu0},\qquad |\mu|\leq k.
\end{equation}
For a symbol $b=\{b_S:S\in\mathcal D\}$ with $\sup_S\ell(S)^{n}|b_S|<\infty$ for some $0\leq n\leq k$, the \emph{paraproduct form} with symbol $b$ is
\begin{equation}\label{e:paraform}
\Pi_b(h,g)\coloneqq\sum_{S\in\mathcal D}|S|b_S\vartheta_S(h)\psi_S(g),\qquad h,g\in \mathcal{C}^\infty_c(\R^d),
\end{equation}
the series converging absolutely, as $|\psi_S(g)|\lesssim\ell(S)^{k+2}$ at small scales, by the vanishing moments of $\psi_S$, and $|\vartheta_S(h)\psi_S(g)|\lesssim|S|^{-2}$ at large ones. It is the paraproduct form \cite{DGWdom}*{(4.2)} of the coefficients $b_S$, with the averaging functions $\vartheta_S$. For $b\in\mathrm{BMO}(\R^d)$, the \emph{canonical paraproduct} with symbol $b$ and its adjoint are
\begin{equation}\label{e:canpara}
P_bh\coloneqq\sum_{Q\in\mathcal D}|Q|\psi_Q(b)\vartheta_Q(h)\psi_Q,\qquad P^*_bh\coloneqq\sum_{Q\in\mathcal D}|Q|\psi_Q(b)\psi_Q(h)\vartheta_Q,
\end{equation}
so that $\Pi_{\{\psi_S(b)\}}(h,g)=\langle P_bh,g\rangle$. The coefficients $\psi_Q(b)$ are well defined, as $\psi_Q$ has vanishing mean and compact support, and $|\psi_Q(b)|\lesssim\|b\|_{\mathrm{BMO}}$. The properties of $P_b$, $P_b^*$ and $\Pi_b$ needed below are collected in Lemma \ref{l:para}.

The next proposition recalls the global case $\Omega=\R^d$ of \cite{DGWdom}*{Theorem A}, at every order $\kappa\leq k$, in a normalized form in which the paraproducts are the forms \eqref{e:paraform}, the input packets of the wavelet part have vanishing mean, and the regularity of all packets is recorded. The paraproduct forms of the symbols are well defined by Lemma \ref{l:size}(i).
\begin{proposition}\label{p:rep}
Referring to Definition \ref{d:czo}, let $T$ be a Calder\'on--Zygmund operator of order $k$, $0\leq\kappa\leq k$ and $|\alpha|=\kappa$. For every $0<\delta_1<\delta$, there are packets $\varphi^{\alpha,\beta}_R\in C\Psi^{\delta_1,0}(R)$, $|\beta|=\kappa$, $R\in\mathcal D$, with $C$ depending on $\delta_1$, such that, for all $f,g\in \mathcal{C}^\infty_c(\R^d)$,
\begin{equation}\label{e:rep}
\left\langle\partial^\alpha Tf,g\right\rangle=\sum_{|\beta|=\kappa}\left\langle V_{\alpha,\beta}\left(\partial^\beta f\right),g\right\rangle+\sum_{|\gamma|\leq\kappa}\frac1{\gamma!}\Pi_{b^{\alpha,\gamma}}\left(\partial^\gamma f,g\right)+\ind_{\{\kappa=0\}}\Pi_{b^{0,\star}}(g,f),
\end{equation}
where $V_{\alpha,\beta}$ is the cancellative model sum \eqref{e:modelv} with input packets $\varphi^{\alpha,\beta}_R$ and output packets $\psi_R$. The latter, up to a constant depending on $\delta_1$, is a $(\delta_1,k+1;0,1)$-model wavelet operator, by Remark \emph{\ref{r:models}(a)}.
\end{proposition}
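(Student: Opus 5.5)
The plan is to follow the proof of \cite{DGWdom}*{Theorem A} with $\Omega=\R^d$, organized around the output wavelets. For $f,g\in\mathcal{C}^\infty_c(\R^d)$ expand $g$ by \eqref{e:onb}, so that
\[
\left\langle\partial^\alpha Tf,g\right\rangle=\sum_{S\in\mathcal D}|S|\,\psi_S(g)\left\langle\partial^\alpha Tf,\psi_S\right\rangle .
\]
The interchange is justified by the continuity of $T$ on $\mathcal U_0$ and by $g\in\mathcal U_\kappa$, together with the decay of $\psi_S(g)$ at small scales coming from the vanishing moments of $\psi_S$. For each fixed $S$, split $f$ as a localized Taylor polynomial plus a remainder:
\[
f=\sum_{|\gamma|\leq\kappa}\frac1{\gamma!}\vartheta_S\left(\partial^\gamma f\right)(\cdot-c(S))^\gamma+\mathcal R_Sf .
\]
By \eqref{e:thetamom}, the operator $\mathcal R_Sf$ annihilates $\P^\kappa$. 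Pairing the polynomial part against $\psi_S$ through the limits \eqref{e:polyaction} produces exactly $\sum_\gamma\frac1{\gamma!}b^{\alpha,\gamma}_S\vartheta_S(\partial^\gamma f)$. These limits exist by Lemma \ref{l:cutoff}. After summing in $S$, the polynomial part therefore gives the paraproduct forms $\Pi_{b^{\alpha,\gamma}}(\partial^\gamma f,g)$.

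For the remainder, write $\mathcal R_Sf$ through an integral Taylor formula of order $\kappa$. Then
\[
\langle\partial^\alpha T\mathcal R_Sf,\psi_S\rangle=\sum_{|\beta|=\kappa}|S|^{-1}\cdot|S|\,\varphi^{\alpha,\beta}_S(\partial^\beta f)
\]
for functions $\varphi^{\alpha,\beta}_S$ obtained by transposing $\partial^\alpha T$ against $\psi_S$ and integrating the Taylor kernel. Because $\mathcal R_S$ also removes the degree-$\kappa$ term, each $\varphi^{\alpha,\beta}_S$ has vanishing mean. This is the dual of the anti-integration of Lemma \ref{l:anti}, applied to the degree-$\kappa$ moment.

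The case $\kappa=0$ needs a second cancellation, on the input side. Here $f$ is also expanded in wavelets, and the double sum over $(R,S)$ is split into $\ell(R)\leq\ell(S)$ and $\ell(R)>\ell(S)$. In the second half, the roles of $f$ and $g$ are exchanged, and subtracting $\vartheta_R(g)$ yields the adjoint form $\Pi_{b^{0,\star}}(g,f)$ with $b^{0,\star}_R=\langle T\psi_R,\mathbf 1\rangle$.

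The main obstacle will be the estimate $\varphi^{\alpha,\beta}_R\in C\Psi^{\delta_1,0}(R)$. For this I would split each packet into a near part and a far part. The near part, supported in a fixed dilate $\w'R$, is controlled by the weak boundedness property \eqref{e:wbp} after rescaling, using that $\partial^\alpha T$ is of order $(\kappa,\kappa)$ by Lemma \ref{l:reduce}(i). The far part is controlled by the kernel bounds \eqref{e:czkernel}, Taylor expanded in the output variable against the $k+1$ vanishing moments of $\psi_R$, which gives decay $(1+|x|/\ell(R))^{-d-\delta}$. The Hölder regularity of order $\delta_1<\delta$ then comes from interpolating the $\delta$-Hölder kernel bound with the size bound; the loss $\delta\to\delta_1$ absorbs the logarithmic overlap between the near and far regions, which is why $C$ depends on $\delta_1$. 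Once these packet bounds are in place, the $L^2$ bound \eqref{e:L2} for $V_{\alpha,\beta}$ follows from almost orthogonality of cancellative packets, Remark \ref{r:models}(a). Finally, absolute convergence of the paraproduct series, by Lemma \ref{l:size}(i), licenses the rearrangements made above.
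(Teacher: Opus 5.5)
Your algebra is sound. For $\kappa\geq1$ it is also closer to the paper than it looks: since $\psi_{S'}(\psi_S)=|S|^{-1}\delta_{SS'}$ and $b^{\alpha,\star}$ vanishes (Lemma \ref{l:cutoff}(iii)), your per-cube functional $f\mapsto\langle\partial^\alpha T\mathcal R_Sf,\psi_S\rangle$ is exactly $\langle\mathfrak Lf,\psi_S\rangle$. Here $\mathfrak L$ is $\partial^\alpha T$ minus all paraproduct forms, as in the paper.

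The gap is the step you flag as the main obstacle, which is the whole content of the proposition: your mechanism does not prove $\varphi^{\alpha,\beta}_R\in C\Psi^{\delta_1,0}(R)$. The weak boundedness property bounds a single pairing of two bumps at the scale of $R$. It gives neither the pointwise bound $|R|^{-1}$ nor $\delta_1$-H\"older regularity of a packet on $\w'R$. Those require testing the packet against bumps at every scale $t<\ell(R)$, with a gain $(t/\ell(R))^{\delta_1}$ on mean-zero ones. That gain comes from cancellation of the operator at all those scales, not from \eqref{e:wbp} and \eqref{e:czkernel} plus interpolation.

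Your own $\kappa=0$ case shows this. $T$ satisfies \eqref{e:wbp} and \eqref{e:czkernel}, yet the single-cube packet $T^*\psi_S-b^{0,0}_S\vartheta_S$, tested on a mean-zero bump $\xi_P$ at a small cube $P\subset S$, is essentially $\psi_S(c(P))\langle T\xi_P,\mathbf 1\rangle$. This is not small as $\ell(P)/\ell(S)\to0$ unless the adjoint testing data vanish.

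The paper makes the operator balanced globally. It subtracts every paraproduct form, so that $\mathfrak L$ satisfies \eqref{e:balanced} at every cube. It then proves the almost diagonal bound \eqref{e:matrix} for the whole matrix $\langle\mathfrak L\psi_S,\psi_R\rangle$; the Taylor subtractions of \cite{DGWdom}*{(5.6)} cost nothing precisely because of \eqref{e:balanced}. Next it anti-integrates the compactly supported input wavelets $\psi_S$ by Lemma \ref{l:anti}, and sets $\varphi^{\alpha,\beta}_R=(-1)^\kappa\sum_S|S|\ell(S)^\kappa\langle\mathfrak L\psi_S,\psi_R\rangle\psi_{S,\beta}$. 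Class membership then follows from the summation lemma \cite{DGWdom}*{Lemma 2.22}, with the losses $\delta\to\delta_2\to\delta_1$. For $\kappa\geq1$ one might attempt a direct multiscale argument instead, using that $\partial^\alpha T$ annihilates polynomials of degree below $\kappa$ in the output variable, but nothing of that kind is in your proposal.

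There are two further problems. First, defining the packets through an integral Taylor formula is not legitimate as stated. The pieces $\mathcal K_{S,\beta}(\partial^\beta f)$ of $\mathcal R_Sf$ are neither compactly supported nor cut-off polynomials, and the Taylor kernels are singular on the diagonal. So $\langle\partial^\alpha T\mathcal K_{S,\beta}h,\psi_S\rangle$ is not defined by Definition \ref{d:czo} or \eqref{e:polyaction}. Anti-integrating the wavelets $\psi_S$ rather than $f$ is precisely what avoids this.

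Second, for $\kappa=0$ your two halves are swapped. Subtracting $\vartheta_R(g)$ helps where $g$ is smooth at the scale of the input wavelet, that is for $\ell(R)<\ell(S)$, which is the adjoint paraproduct region. Moreover, once roles are exchanged, that half comes out with wavelet inputs and non-wavelet outputs, whereas the statement requires the output packets to be exactly $\psi_R$. The paraproduct forms also run over all scales, so the cross terms between the halves must be absorbed. Both issues send you back to re-expanding in wavelets, hence to the almost diagonal estimate and summation lemma above.
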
 The proof of the proposition is given in \S\ref{ss:rep}.
The symbols enter \eqref{e:rep} as sequences, through the forms \eqref{e:paraform}. When they belong to $\mathrm{BMO}(\R^d)$, the paraproduct forms in \eqref{e:rep} are given by the canonical paraproducts $P_{\mathfrak b^{\alpha,\gamma}}$ and $P^*_{\mathfrak b^{0,\star}}$, and, up to constant multiples, all operators on the right hand side of \eqref{e:rep} are model wavelet operators, see Definition \ref{d:model} and Remark \ref{r:models}. The input packets of $V_{\alpha,\beta}$ and $P^*_{\mathfrak b^{0,\star}}$ have vanishing mean, while those of the paraproducts $P_{\mathfrak b^{\alpha,\gamma}}$ do not.

\subsection{Kernel estimates}\label{ss:kernel}

For finite $\mathcal R$, the integral kernel of $V_{\mathcal R}$ is
\begin{equation}\label{e:kernel}
K_{\mathcal R}(x,z)\coloneqq \sum_{R\in\mathcal R}|R|\psi_R(x)\varphi_R(z),
\end{equation}
so that $V_{\mathcal R}f(x)=\int K_{\mathcal R}(x,z)f(z)\,\d z$ for all $x\in \R^d$. Fix a non-integer smoothness index $\tau$ with
\begin{equation}\label{e:range}
0<\tau\leq\sigma,\qquad \tau<b_0\coloneqq\min\{\sigma+\rho,\sigma'+\rho'\},
\end{equation}
and write $\nu\coloneqq\frac12\min\{1,\sigma',b_0-\tau\}$.
\begin{lemma}\label{l:kernel}
Let $x,z,h,u\in\R^d$ with $x\neq z$ and $D\coloneqq|x-z|\geq 4(|h|+|u|)$, and let $\alpha,\beta$ be multi-indices with $|\alpha|\leq\lfloor\tau\rfloor$ and $|\beta|=\lfloor\tau\rfloor$. Uniformly over finite collections $\mathcal R\subset \mathcal D$, the kernel $K=K_{\mathcal R}$ satisfies
\begin{align}
\label{e:ksize} \left|\partial_z^\alpha K(x,z)\right| &\lesssim D^{-(d+|\alpha|)},
\\ \label{e:kholder} \left|\partial_z^\beta K(x,z+u)-\partial_z^\beta K(x,z)\right| &\lesssim |u|^{\{\tau\}} D^{-(d+\tau)},
\\ \label{e:kx} \left|\partial_z^\alpha K(x+h,z)-\partial_z^\alpha K(x,z)\right| &\lesssim |h|^\nu D^{-(d+|\alpha|+\nu)},
\\ \label{e:kxz} \left|\partial_z^\beta\left[K(x+h,z+u)-K(x+h,z)-K(x,z+u)+K(x,z)\right]\right| &\lesssim |h|^\nu|u|^{\{\tau\}} D^{-(d+\tau+\nu)}.
\end{align}
\end{lemma}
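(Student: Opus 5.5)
The plan is to prove the four estimates termwise from \eqref{e:kernel}, summing the packet bounds over the dyadic scales of $\mathcal R$. By \eqref{e:count}, for each scale $2^i$ only boundedly many $R$ have $x$ or $z$ near $R$ in the relevant sense. Hence each sum reduces to a geometric series in $i$, split at the scale $2^i\sim D$. Uniformity over finite $\mathcal R$ is then automatic, since every bound is obtained by summing absolute values. For $R$ with $\ell(R)=2^i$, write $\psi_R=|R|^{-1}\psi(\frac{\cdot-c(R)}{\ell(R)})$. Then
\[
|\psi_R(x)|\lesssim 2^{-id}\bigl(1+2^{-i}|x-c(R)|\bigr)^{-d-\sigma'-\rho'},
\]
and $\partial^\alpha\varphi_R$ obeys the analogous bound with an extra factor $2^{-i|\alpha|}$ and decay exponent $d+\sigma+\rho$, for $|\alpha|\leq\lfloor\sigma\rfloor$. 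This range covers $|\alpha|\leq\lfloor\tau\rfloor$ because $\tau\leq\sigma$. The H\"older seminorms of order $\{\sigma\}$, respectively $\{\sigma'\}$ capped at $1$, hold at unit scale and rescale by $2^{-i\{\cdot\}}$.

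For the size estimate \eqref{e:ksize}, fix $i$. Since $|x-z|=D$, either $|x-c(R)|\geq D/2$ or $|z-c(R)|\geq D/2$. The decay of the corresponding packet gives a factor $(1+2^{-i}D)^{-d-b}$, with $b=\sigma'+\rho'$ or $b=\sigma+\rho$. The other packet is summed over $R$ of that scale, which gives an $L^1$-type count. Concretely,
\[
\sum_{\ell(R)=2^i}|R|\,|\psi_R(x)|\,2^{-i|\alpha|}|R|^{-1}\bigl(1+2^{-i}|z-c(R)|\bigr)^{-d-\sigma-\rho}\lesssim 2^{-i(d+|\alpha|)}\bigl(1+2^{-i}D\bigr)^{-d-b_0}.
\]
Summing over $i\in\Z$, the scales $2^i\leq D$ contribute $\sum 2^{-i(d+|\alpha|)}(2^{i}/D)^{d+b_0}\lesssim D^{-d-|\alpha|}$. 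This needs $b_0>|\alpha|$, which holds because $|\alpha|\leq\tau<b_0$. The scales $2^i>D$ contribute $\lesssim D^{-d-|\alpha|}$ directly.

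For \eqref{e:kholder} and \eqref{e:kx}, use the same scheme with the difference bounds. When $2^i\geq|u|$, the H\"older bound on $\partial^\beta\varphi_R$ gives the extra factor $(|u|/2^i)^{\{\tau\}}$. This needs $\{\tau\}\leq\{\sigma\}$ when $\lfloor\tau\rfloor=\lfloor\sigma\rfloor$; if $\lfloor\tau\rfloor<\lfloor\sigma\rfloor$, use the mean value theorem with one more derivative. When $2^i<|u|$, apply the size bound to each of the two terms. Since $|u|\leq D/4$, the decay at distance $\sim D$ is preserved for the shifted point. The small scales then sum to $\lesssim|u|^{\{\tau\}}D^{-d-\tau}$, using $b_0>\tau$. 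The large scales sum to $D^{-d-\tau}|u|^{\{\tau\}}$ because of the $2^{-i\{\tau\}}$ gain. For \eqref{e:kx}, the $x$-difference of $\psi_R$ gives $(|h|/2^i)^{\min(1,\sigma')}$, and interpolation with the size bound lowers this to exponent $\nu$. The choice $\nu\leq\frac12(b_0-\tau)$ leaves enough decay, $d+b_0-\nu>d+|\alpha|+\nu$, for the small scales to converge.

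The mixed estimate \eqref{e:kxz} factorizes termwise, because each summand is $\psi_R(x)\varphi_R(z)$. The double difference equals
\[
|R|\,\bigl[\psi_R(x+h)-\psi_R(x)\bigr]\,\bigl[\partial^\beta\varphi_R(z+u)-\partial^\beta\varphi_R(z)\bigr],
\]
so the two one-variable difference bounds multiply. The main obstacle is the bookkeeping of decay in this step. The gains $|h|^\nu|u|^{\{\tau\}}$ must coexist with enough decay $D^{-d-b_0}$ at small scales. The total exponent loss is $\tau+\nu<b_0$, which is exactly the condition the definition of $\nu$ guarantees. The limiting regimes $2^i<|h|$ or $2^i<|u|$ are handled by replacing the corresponding difference with the sum of sizes, as in the previous paragraph.
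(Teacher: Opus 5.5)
Your proposal is correct and is essentially the paper's proof: a per-scale bound $q^{-d-|\alpha|}(1+D/q)^{-d-b_0}$ from a lattice sum in which the packet whose center is far from $x$ or $z$ is evaluated at distance $D/2$, the extra factors $(|u|/q)^{\{\tau\}}$ and $(|h|/q)^\nu$ coming from packet regularity when the increment is at most $q$ and from the sizes at both endpoints otherwise, and geometric summation split at $q\sim D$ using $b_0>\tau+\nu$. The only slip is the appeal to \eqref{e:count} in your overview: the packets in $\Phi^{\sigma,\rho}(R)$ need not be compactly supported, so each scale is controlled by the decaying lattice sum, which you do carry out in the size estimate, and not by a bounded count of cubes.
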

\begin{proof}
Write $A\coloneqq\sigma+\rho$ and $B\coloneqq\sigma'+\rho'$. At a fixed sidelength $q\in 2^{\Z}$, the rescaled centers $c(R)/q$ form a translate of $\Z^d$, and
\begin{equation}\label{e:lattice}
\sum_{\ell(R)=q}\left(1+\frac{|x-c(R)|}{q}\right)^{-d-B}\left(1+\frac{|z-c(R)|}{q}\right)^{-d-A}\lesssim\left(1+\frac{D}{q}\right)^{-d-b_0}.
\end{equation}
In each summand at least one of $|x-c(R)|$, $|z-c(R)|$ is at least $\frac D2$; bound that factor by its value at $D/2$, and sum the other factor over the lattice, which is possible as $A,B>0$. By definition of the classes in \eqref{e:packets} and by \eqref{e:lattice}, the contribution of the cubes of sidelength $q$ to the left hand side of \eqref{e:ksize} is at most a constant multiple of $q^{-d-|\alpha|}(1+D/q)^{-d-b_0}$. The contributions to \eqref{e:kholder}, \eqref{e:kx} and \eqref{e:kxz} carry the additional factors $(|u|/q)^{\{\tau\}}$, $(|h|/q)^\nu$ and their product, respectively. When an increment is at most $q$, this follows from the regularity of the relevant packet, which is available as $\tau\leq\sigma$ and $\nu<\min\{1,\sigma'\}$; when it exceeds $q$, use the size of the packet at the two endpoints, the additional factor being at least one. The separation assumption keeps all relevant distances comparable to $D$. It remains to sum over $q$; for instance, for \eqref{e:kxz},
\[
|h|^\nu|u|^{\{\tau\}}\sum_{q\in 2^{\Z}}q^{-d-\tau-\nu}\left(1+\frac Dq\right)^{-d-b_0}\lesssim |h|^\nu|u|^{\{\tau\}} D^{-d-\tau-\nu},
\]
as the sum over $q\leq D$ is geometric with exponent $b_0-\tau-\nu>0$, and the sum over $q>D$ is geometric with exponent $d+\tau+\nu$. The other estimates follow from the same calculation with a lower differentiation order.
\end{proof}

The proof applies verbatim to the kernel $K_{\mathcal R}$ of $V_{\mathcal R}$ for infinite $\mathcal R\subseteq\mathcal D$, the series defining it converging absolutely away from the diagonal, and in particular to the kernel $K\coloneqq K_{\mathcal D}$ of $V$. As the proof bounds the series of the absolute values of the terms, dominated convergence along finite subcollections $\mathcal R_n$ increasing to $\mathcal R$, together with the convergence of $V_{\mathcal R_n}g$ to $V_{\mathcal R}g$ in $L^2(\R^d)$, and thus almost everywhere along a subsequence, gives, for all $\mathcal R\subseteq\mathcal D$ and $g\in L^2(\R^d)$,
\begin{equation}\label{e:kernelrep}
V_{\mathcal R}g(x)=\int_{\R^d}K_{\mathcal R}(x,z)g(z)\,\d z\qquad\textrm{for almost every }x\textrm{ with }\dist(x,\supp g)>0,
\end{equation}
the integral converging absolutely.

In using these bounds for the full H\"older seminorm of a localized test, increments outside the separated regime are estimated by the size bounds at the two endpoints, or by \eqref{e:kx} for an output difference. The increment is then comparable to the separation, so these estimates give the required quotient bound as well.

\subsection{Polynomials}\label{ss:poly}
A polynomial need not be integrable against the input packets, and acts through $V$ only once it is cut off. The next two lemmas show that the cutoff testing condition \eqref{e:V1} holds when the input packets have vanishing mean, and that it suffices for polynomials of any degree to act boundedly through $V$, once they are cut off.
\begin{lemma}\label{l:V1mean}
If the input packets have vanishing mean, then $V_{\mathcal R}$ satisfies \eqref{e:V1} up to an admissible constant, uniformly over $\mathcal R\subseteq\mathcal D$.
\end{lemma}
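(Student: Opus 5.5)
We aim to show that $\|V_{\mathcal R}(\omega^\infty_Q)\|_{L^\infty(\R^d)}\lesssim1$ uniformly in the cube $Q$ and in $\mathcal R\subseteq\mathcal D$. By dilation and translation we may normalize $Q$ to be the unit cube centered at the origin. The classes $\Phi^{\sigma,\rho}(R)$ are invariant under affine rescaling, up to replacing the dyadic grid by a rescaled translate. The only property of the grid used below is the lattice count \eqref{e:lattice} at each scale, so this normalization is harmless. We then write
\[
V_{\mathcal R}\omega^\infty_Q(x)=\sum_{R\in\mathcal R}|R|\varphi_R(\omega^\infty_Q)\psi_R(x),
\]
and bound the series of absolute values pointwise, splitting the scales into $\ell(R)\leq1$ and $\ell(R)>1$. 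Since the bound is on the absolute series, it is uniform over $\mathcal R$, and it identifies $V_{\mathcal R}\omega^\infty_Q$ a.e.\ through the $L^2$ convergence of the subsums.

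For small scales, $\ell(R)=q\leq1$, we use the vanishing mean of $\varphi_R$ together with the smoothness of $\omega^\infty_Q$:
\[
|\varphi_R(\omega^\infty_Q)|=\Big|\int\varphi_R(z)\big(\omega^\infty_Q(z)-\omega^\infty_Q(c(R))\big)\,\d z\Big|\lesssim\int|\varphi_R(z)|\min\{1,|z-c(R)|\}\,\d z\lesssim q^{\min\{1,\rho\}'},
\]
where the decay $(1+|z-c(R)|/q)^{-d-\sigma-\rho}$ of $\varphi_R$ makes the last integral $\lesssim q^{\epsilon}$ for some $\epsilon>0$ depending on $\sigma+\rho>0$. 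In addition, this coefficient is only nonnegligible when $R$ is near $2Q$. Away from $2Q$, the difference $\omega^\infty_Q(z)-\omega^\infty_Q(c(R))$ vanishes unless $z$ is far from $c(R)$, which gives the extra factor $(1+\dist(c(R),2Q)/q)^{-\sigma-\rho}$. Combining this with $|R||\psi_R(x)|\lesssim(1+|x-c(R)|/q)^{-d-\rho'}$ and summing over the lattice at scale $q$ via \eqref{e:lattice}, the contribution of scale $q$ is $\lesssim q^{\epsilon}$. Summing the geometric series over $q\leq1$ gives $O(1)$.

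For large scales, $q>1$, we use instead the smoothness of $\varphi_R$ against the mean zero. Since $\omega^\infty_Q$ is supported in a set of diameter $\lesssim1$ and $\int\varphi_R=0$,
\[
|\varphi_R(\omega^\infty_Q)|=\Big|\int\big(\varphi_R(z)-\varphi_R(0)\big)\omega^\infty_Q(z)\,\d z\Big|\lesssim|R|^{-1}q^{-s}\big(1+|c(R)|/q\big)^{-d-\sigma-\rho},
\]
where $s=\min\{\sigma,1\}$ if $\sigma<1$ and $s=1$ otherwise. Alternatively one may use the size bound when the Hölder regularity is weak, but some positive power $q^{-s}$ is available because $\sigma>0$. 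Multiplying by $|R||\psi_R(x)|$ and summing over the lattice at scale $q$ with \eqref{e:lattice} gives a contribution $\lesssim q^{-s}$. This is summable over $q>1$.

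The main obstacle is the small-scale regime near the boundary of the support of $\omega^\infty_Q$, where the packets $\varphi_R$ are not compactly supported. There we must check that the tails of $\varphi_R$ produce a factor decaying both in $q$ and in $\dist(c(R),2Q)/q$, so that the lattice sum converges with at most an $O(1)$ count of cubes at each scale. If the decay parameter $\rho$ is zero, the tail estimate must use $\sigma>0$, via $\sigma+\rho>0$. I would handle this by splitting the integral into $|z-c(R)|\leq q^{1/2}$, where the Lipschitz bound on $\omega^\infty_Q$ gives $q^{1/2}$, and its complement, where the decay of $\varphi_R$ gives $q^{(\sigma+\rho)/2}$.
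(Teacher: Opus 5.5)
Your overall scheme matches the paper's. You normalize $Q$ to the unit cube and dominate $V_{\mathcal R}\omega^\infty_Q(x)$ by the absolute series $\sum_R|R||\varphi_R(\omega^\infty_Q)||\psi_R(x)|$, which gives uniformity in $\mathcal R$ and identifies the $L^2$ limit almost everywhere. At scales $q\le1$ you use $\int\varphi_R=0$ to replace $\omega^\infty_Q$ by $\omega^\infty_Q-\omega^\infty_Q(c(R))$, which yields $|\varphi_R(\omega^\infty_Q)|\lesssim q^{\epsilon}$ for $0<\epsilon<\min\{1,\sigma+\rho\}$.

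\textbf{The large-scale step is wrong as written.} The identity
\[
\int\varphi_R\,\omega^\infty_Q=\int\big(\varphi_R(z)-\varphi_R(0)\big)\,\omega^\infty_Q(z)\,\d z
\]
would require $\int\omega^\infty_Q=0$. The vanishing mean of $\varphi_R$ lets you subtract a constant from $\omega^\infty_Q$, not from $\varphi_R$. The bound $|R|^{-1}q^{-s}$ you deduce from it is false in general. Writing $\varphi_R=|R|^{-1}\varphi((\cdot-c(R))/q)$, one has
\[
|R|\,\varphi_R(\omega^\infty_Q)=\varphi(-c(R)/q)\int\omega^\infty_Q+O(q^{-s}),
\]
which is of order one whenever $\varphi(-c(R)/q)\neq0$.

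The repair is the size bound, which you mention only in passing and seem to regard as insufficient. From $\|\varphi_R\|_\infty\le|R|^{-1}$ you get $|\varphi_R(\omega^\infty_Q)|\le q^{-d}\|\omega^\infty_Q\|_1\lesssim q^{-d}$, which is summable over $q>1$. With this replacement your proof becomes the paper's. The paper proves $|\varphi_R(\omega^\infty_Q)|\lesssim\min\{q^\iota,q^{-d}\}$ and uses neither cancellation nor smoothness of $\varphi_R$ at large scales.

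Two further corrections simplify the rest.
\begin{enumerate}
\item The output packets decay like $(1+|x-c(R)|/q)^{-d-\sigma'-\rho'}$, not $(1+|x-c(R)|/q)^{-d-\rho'}$. With your exponent and $\rho'=0$, the lattice sum at a fixed scale would diverge. Since $\sigma'>0$, the correct exponent gives $\sup_x\sum_{\ell(R)=q}|R||\psi_R(x)|\lesssim1$ uniformly in $q$. The uniform coefficient bounds $q^\epsilon$ and $q^{-d}$ then suffice by themselves. The localization factor $(1+\dist(c(R),2Q)/q)^{-\sigma-\rho}$ and the appeal to \eqref{e:lattice} are unnecessary.
\item The obstacle in your last paragraph does not arise. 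The input packets decay with exponent $d+\sigma+\rho$, which already contains $\sigma$. Hence $\int|\varphi_R(z)||z-c(R)|^\epsilon\,\d z\lesssim q^\epsilon$ for $\epsilon<\sigma+\rho$ even when $\rho=0$, with no splitting at $|z-c(R)|=q^{1/2}$.
\end{enumerate}
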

\begin{lemma}\label{l:poly}
Let $m\geq1$, and assume that $V$ satisfies \eqref{e:V1}. Uniformly over cubes $Q$ and polynomials $\pi\in\P^{m-1}$,
\begin{equation}\label{e:poly}
\left\|V\left(\pi\omega^\infty_Q\right)\right\|_{L^\infty(\R^d)}\lesssim \|\pi\|_{L^\infty(3Q)}.
\end{equation}
\end{lemma}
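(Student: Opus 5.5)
The plan is to reduce \eqref{e:poly} to the cutoff testing condition \eqref{e:V1} at a single point, using only the size bound \eqref{e:ksize} away from that point. First normalize the polynomial. Write $\pi(x)=\sum_{|\gamma|\leq m-1}c_\gamma\big((x-c(Q))/\ell(Q)\big)^\gamma$. By equivalence of norms on the finite dimensional space $\P^{m-1}$ and scaling, $\sum_\gamma|c_\gamma|\lesssim\|\pi\|_{L^\infty(3Q)}$. It therefore suffices to bound $\|Vg\|_{L^\infty(\R^d)}\lesssim1$ for every function of the form $g=\mu\,\omega^\infty_Q$, where $\mu(x)=((x-c(Q))/\ell(Q))^\gamma$. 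Each such $g$ is supported in $2Q$ and satisfies $\|g\|_\infty\lesssim1$ and $\|\nabla g\|_\infty\lesssim\ell(Q)^{-1}$. Only these three properties of $g$ will be used.

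Far from the support the bound is direct. If $\dist(x,2Q)\geq\ell(Q)$, then \eqref{e:kernelrep} applies. Combined with \eqref{e:ksize} for $\alpha=0$, it gives $|Vg(x)|\lesssim\int_{2Q}|x-z|^{-d}\,\d z\lesssim1$.

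Near the support, fix a point $x$ with $\dist(x,2Q)<\ell(Q)$ and let $Q_x$ be the cube of center $x$ and sidelength $\ell(Q)$. Decompose
\[
g=g(x)\,\omega^\infty_{Q_x}+\big(g-g(x)\big)\big(\omega^\infty_{Q_x}-\omega^\infty_{2^{-N}Q_x}\big)+\big(g-g(x)\big)\big(\omega^\infty_{2^{-N}Q_x}\big)+g\big(1-\omega^\infty_{Q_x}\big),
\]
and telescope the second term into dyadic annular pieces $(g-g(x))(\omega^\infty_{2^{-i}Q_x}-\omega^\infty_{2^{-i-1}Q_x})$, $0\leq i<N$. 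The four contributions are handled as follows.
\begin{enumerate}[label=(\alph*)]
\item The first term contributes at most $|g(x)|\lesssim1$, by \eqref{e:V1}.
\item The last term is supported in $2Q\setminus\frac32Q_x$. This set is separated from $x$ at distance $\sim\ell(Q)$ and has measure $\lesssim\ell(Q)^d$, so \eqref{e:kernelrep} and \eqref{e:ksize} bound its contribution by $\lesssim1$.
\item The $i$-th annular piece vanishes on $\frac32\cdot2^{-i-1}Q_x$, so it is separated from $x$. It is supported where $|z-x|\sim2^{-i}\ell(Q)$ and has size $\lesssim2^{-i}$, by the Lipschitz bound for $g$. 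Hence \eqref{e:kernelrep} and \eqref{e:ksize} give a contribution $\lesssim2^{-i}$, and these sum geometrically.
\item The remainder $(g-g(x))\omega^\infty_{2^{-N}Q_x}$ tends to zero in $L^2(\R^d)$ as $N\to\infty$, so its image under $V$ does too, by \eqref{e:L2}.
\end{enumerate}

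The step I expect to be the main obstacle is making this pointwise in $x$ while $V$ is only defined on $L^2$ and \eqref{e:kernelrep} is an almost everywhere statement whose exceptional set depends on the function. The decomposition above is adapted to $x$, so infinitely many functions are involved. I would handle this by a bounded discretization. Replace $x$ by the points $y$ of a fine lattice of mesh $2^{-M}\ell(Q)$, with the cubes $Q_y$ and the dyadic scales chosen uniformly over $x$ in the mesh cell of $y$. Then apply the kernel estimates, which hold at every point once the support is separated, to the countably many resulting functions. Finally let $N$ and $M$ tend to infinity along a subsequence giving almost everywhere convergence. The cell error $|g(x)-g(y)|\lesssim2^{-M}$ is absorbed in the same way as in (c).
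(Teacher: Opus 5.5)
Your proof is correct, but it takes a different route from the paper's: you subtract $g(x)$ inside a kernel argument, while the paper subtracts it coefficient by coefficient in the packet series.

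\emph{The paper's route.} It rescales to $Q=Q_*$ and, for $x\in3Q_*$, writes $g=\pi(x)\omega+g_x$ with the fixed cutoff $\omega=\omega^\infty_{Q_*}$. Since $|g_x(y)|\lesssim\min\{1,|y-x|\}$, one gets $|\varphi_R(g_x)|\lesssim(1+|x-c(R)|/q)^\iota\min\{q^\iota,q^{-d}\}$ for $\ell(R)=q$. The decay of the output packets then makes $\sum_R|R|\varphi_R(g_x)\psi_R(x)$ absolutely convergent, uniformly in $x$. The finite partial sums $V_{\mathcal R}g$ are defined at every point, so the identity $Vg(x)=\pi(x)V\omega(x)+\sum_R|R|\varphi_R(g_x)\psi_R(x)$ holds a.e.\ after passing to an a.e.\ convergent subsequence. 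The $x$-dependence of the splitting therefore never becomes a measure-theoretic issue. The same computation, with cancellation, also gives Lemma~\ref{l:V1mean}.

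\emph{What your route buys.} It never looks at the packets. It uses only \eqref{e:L2}, the size bound \eqref{e:ksize} and \eqref{e:V1}, so it proves the statement for any $L^2$-bounded operator with a standard size kernel satisfying the cutoff testing condition. This is the classical $T1\in L^\infty$ argument for compactly supported Lipschitz inputs.

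\emph{What it costs, and what to make explicit.} The price is the discretization step, where you should spell out why the limit works. Once the annuli around a lattice point $y$ are stopped at a scale comparable to the mesh $2^{-M}\ell(Q)$, the remainder $r_y\coloneqq(g-g(y))\,\omega^\infty_{2^{-N}Q_y}$, with $2^{-N}\sim2^{-M}$, no longer tends to zero cell by cell. What you do have is the following. On each cell $C_y$, $|Vg|\leq C+|Vr_y|$ almost everywhere, since only finitely many fixed functions are involved there. Moreover, the Lipschitz gain $\|r_y\|_\infty\lesssim2^{-M}$ and the $\lesssim2^{Md}$ relevant cells give
\[
\sum_y\|r_y\|_{L^2}^2\lesssim2^{-2M}|Q|.
\]
Hence, by \eqref{e:L2}, $\|(|Vg|-C)_+\|_{L^2}\to0$ as $M\to\infty$. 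Without the Lipschitz gain this sum would stay of size $|Q|$, so that gain is the essential point of the limiting step.
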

\begin{proof}[Proof of Lemmas \ref{l:V1mean} and \ref{l:poly}]
Let $Q_*$ stand for the unit cube $[-\frac12,\frac12)^d$, so that $\omega^\infty_{Q_*}=\omega$. Rescaling the packets and the grid by the affine map sending $Q$ to $Q_*$, we may assume $Q=Q_*$: the rescaled grid still has sidelengths in a geometric sequence of ratio $2$ and, at each sidelength $q$, centers in a translate of $q\Z^d$, which are the only properties of the grid used here and in Lemma \ref{l:kernel}. By homogeneity, we may also assume $\|\pi\|_{L^\infty(3Q_*)}=1$, with $\pi=1$ for Lemma \ref{l:V1mean}. Let $g\coloneqq\pi\omega$ and $\iota\coloneqq\frac12\min\{1,\sigma+\rho,\sigma'+\rho'\}$. The decay of the output packets gives, as $\iota<\sigma'+\rho'$,
\[
\sup_{q}\sup_{x\in\R^d}\sum_{\ell(R)=q}|R||\psi_R(x)|\left(1+\frac{|x-c(R)|}{q}\right)^\iota\lesssim1,
\]
the supremum being taken over the sidelengths $q$ of the grid, which form a geometric sequence of ratio $2$, so that $\min\{q^\iota,q^{-d}\}$ is summable over $q$, with sum $\lesssim1$. The function $g$ is supported in $[-1,1]^d$ and, by the equivalence of norms on $\P^{m-1}$, satisfies $\|g\|_\infty+\|\nabla g\|_\infty\lesssim1$.

For Lemma \ref{l:V1mean}, it suffices to prove that
\begin{equation}\label{e:polyR}
|\varphi_R(g)|\lesssim\min\{q^\iota,q^{-d}\},\qquad \ell(R)=q,
\end{equation}
for each $R\in\mathcal D$ and each $\varphi_R\in\Phi^{\sigma,\rho}(R)$ with $\int\varphi_R=0$. Indeed, \eqref{e:polyR} and the first display give, for all $x\in\R^d$,
\[
\sum_{R\in\mathcal D}|R||\varphi_R(g)||\psi_R(x)|\leq\sum_q\min\{q^\iota,q^{-d}\}\sum_{\ell(R)=q}|R||\psi_R(x)|\lesssim\sum_q\min\{q^\iota,q^{-d}\}\lesssim1.
\]
Hence, for each $\mathcal R\subseteq\mathcal D$, the series defining $V_{\mathcal R}g$ converges absolutely at every point, its terms being dominated by those on the left hand side, and its sum is bounded independently of $\mathcal R$; it coincides almost everywhere with $V_{\mathcal R}g$, the $L^2$ limit of the finite subsums. For the first estimate in \eqref{e:polyR}, observe that
\[
|g(y)-g(c(R))|\lesssim\min\{1,|y-c(R)|\}\leq|y-c(R)|^\iota,\qquad y\in\R^d.
\]
The decay of $\varphi_R\in\Phi^{\sigma,\rho}(R)$ and the change of variables $y=c(R)+qw$ give
\[
\int_{\R^d}|\varphi_R(y)||y-c(R)|^\iota\,\d y\leq q^\iota\int_{\R^d}\frac{|w|^\iota\,\d w}{(1+|w|)^{d+\sigma+\rho}}\lesssim q^\iota,
\]
the last integral being finite as $\iota<\sigma+\rho$. As $\varphi_R$ has vanishing mean, it follows that
\[
|\varphi_R(g)|=\left|\int_{\R^d}\varphi_R(y)\left[g(y)-g(c(R))\right]\d y\right|\lesssim q^\iota.
\]
For the second estimate, the size of the input packet gives $|\varphi_R(g)|\leq q^{-d}\|g\|_1\lesssim q^{-d}$.

For Lemma \ref{l:poly}, let first $x\notin3Q_*$. As $\dist(x,\supp g)\geq\frac12$, the representation \eqref{e:kernelrep} and the size estimate \eqref{e:ksize} of the kernel give $|Vg(x)|\lesssim\|g\|_1\lesssim1$ for almost every such $x$. For $x\in3Q_*$, write $g=\pi(x)\omega+g_x$, with $g_x\coloneqq(\pi-\pi(x))\omega$. As $|g_x(y)|\lesssim\min\{1,|y-x|\}\leq|y-c(R)|^\iota+|x-c(R)|^\iota$, the argument for \eqref{e:polyR} gives, with no use of cancellation,
\[
|\varphi_R(g_x)|\lesssim\left(1+\frac{|x-c(R)|}{q}\right)^\iota\min\{q^\iota,q^{-d}\},\qquad \ell(R)=q,
\]
so that, by the first display, the series in the next display converges absolutely, with sum bounded uniformly in $x\in3Q_*$. Passing to the limit over finite subcollections,
\[
Vg(x)=\pi(x)V\omega(x)+\sum_{R\in\mathcal D}|R|\varphi_R(g_x)\psi_R(x)\qquad\textrm{for almost every }x\in3Q_*,
\]
and the first summand is bounded by \eqref{e:V1}, as $|\pi(x)|\leq1$.
\end{proof}

\section{Percentile sparse domination of model wavelet operators}\label{s:A}

In this section, $V$ is a fixed $(\sigma,\sigma';\rho,\rho')$-model wavelet operator, $\tau$ is as in \eqref{e:range}, and the percentile oscillations \eqref{e:po} are computed with smoothness parameter $\tau$.
Throughout the section, the constants implicit in the notation $\lesssim$ are admissible in the sense of \S\ref{ss:classes}. In particular, they are independent of the percentile level $0<r\leq r_0$, where $r_0\leq\frac14\eps_d$, $\eps_d$ being as in Proposition \ref{p:principle}, and of the model $V$ itself, the normalization \eqref{e:V1} being in force whenever it is assumed.
\begin{theorem}\label{t:model}
Let $m\geq 0$ be an integer and $0<r\leq r_0$, and assume that $V$ satisfies the cutoff testing condition \eqref{e:V1}. For each $f\in L^2(\R^d)$ with compact support there exists a sparse collection $\mathcal S$ of cubes of $\R^d$ such that
\[
|Vf|\lesssim\sum_{S\in \mathcal S}\nat{f}_{m,S}\ind_S \qquad \textrm{a.e.\ on } \R^d.
\]
If $m=0$, the assumption \eqref{e:V1} is not needed.
\end{theorem}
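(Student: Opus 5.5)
The plan is to apply Proposition \ref{p:principle} on a large dyadic cube $Q_0$, of some translate of the dyadic grid, containing a large neighborhood of $\supp f$, with the family
\[
F_Q\coloneqq V\left[(f-\PM^m_{3Q}f)\,\omega^\infty_{3Q}\right]\quad\textrm{restricted to } Q,\qquad \lambda_Q\coloneqq\nat{f}_{m,Q}.
\]
Off $Q_0$ I will treat $Vf$ separately. There I use \eqref{e:kernelrep} and the kernel size and H\"older estimates of Lemma \ref{l:kernel} together with Lemma \ref{l:po}(iv) on an expanding chain of cubes $Q_j=2^jQ_0$, each meeting $\supp f$ in at most a quarter of its measure. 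The tail $|Vf|$ on $Q_j\setminus Q_{j-1}$ is dominated by $\nat{f}_{m,Q_j}$ plus the polynomial part controlled by (iv), and $\{Q_j\}$ is sparse. I also need to relate $F_{Q_0}$ to $Vf$ on $Q_0$: for $Q_0$ large the cutoff equals one on $\supp f$, so $Vf-F_{Q_0}=V[\PM^m_{3Q_0}f\,\omega^\infty_{3Q_0}]$. By Lemma \ref{l:poly} and (iv) this is $\lesssim\nat{f}_{m,Q_0}$. When $m=0$ this term is absent, which is exactly why \eqref{e:V1} is not needed in that case.

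\emph{Weak type \eqref{e:P1}--\eqref{e:P2}.} Set $h_Q\coloneqq(f-\PM^m_{3Q}f)\omega^\infty_{3Q}$. I plan a localized weak $(1,1)$-type estimate for $V$ measured in the grand maximal function: split $h_Q$ by a Calder\'on--Zygmund/atomic decomposition at height $\lambda_Q$ relative to $\M^\tau_{3Q}[f-\PM^m_{3Q}f]$. The level set $\{\M^\tau_{3Q}[\cdot]>\lambda_Q\}\cap Q$ has measure $\le r|Q|$ by the definition of the percentile. This is the mechanism of \cite{CLR26}. The good part is bounded pointwise by $\lambda_Q$ using $L^2$ boundedness \eqref{e:L2} plus Chebyshev; the bad part has smooth atoms with vanishing mean, and the kernel regularity \eqref{e:kholder} handles them off a dilate of the bad set. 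I define $E_Q$ as the union of that dilated exceptional set with the $L^2$ Chebyshev set; choosing $r\le r_0$ small gives $|E_Q|\le\eps_d|Q|$.

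\emph{Oscillation \eqref{e:P3}.} This is the main obstacle. For $L\subsetneq Q$ dyadic and $x,y\in L$, I write
\[
h_Q-h_L=\big(f-\PM^m_{3Q}f\big)\big(\omega^\infty_{3Q}-\omega^\infty_{3L}\big)+\big(\PM^m_{3L}f-\PM^m_{3Q}f\big)\omega^\infty_{3L}.
\]
The first piece vanishes near $L$. By \eqref{e:kx} its image has $x$–$y$ oscillation bounded by a sum over the intermediate annuli $2^iL$, $L\subseteq 2^iL\subseteq 3Q$, each contributing $2^{-i\nu}$ times a test-function pairing. I control each pairing by $\M^\tau_{3Q}[f-\PM^m_{3Q}f](y)$, using \eqref{e:kxz} and \eqref{e:kholder} to show that the localized kernel times cutoff lies in $C\Phi^{\tau,1}(2^iL)$. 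That quantity is $\le\lambda_Q$ because $y\notin E_Q$, provided I include in $E_Q$ the set where $\M^\tau_{3Q}[\cdot]>\lambda_Q$.

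The second piece is a polynomial, equal to $\PM^m_{3L}(f-\PM^m_{3Q}f)$ by \eqref{e:projdiff}, and it is cut off. By Lemma \ref{l:proj}(ii) its sup on $3\cdot3L$ is $\lesssim\inf_{3L}\M^\tau_{3L}[f-\PM^m_{3Q}f]\le\M^\tau_{3Q}[\cdot](y)\le\lambda_Q$. Lemma \ref{l:poly} then bounds its image in $L^\infty$, which makes the $x$–$y$ difference $\lesssim\lambda_Q$. This is the step that uses \eqref{e:V1}, and it is the delicate point. One care point is that $y$ must avoid the good-point exceptional set in the pointwise sense; this is why \eqref{e:P3} only requires $y\notin E_Q$. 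Proposition \ref{p:principle} then yields \eqref{e:principle}, and combining it with the exterior chain gives the claim. For sparseness I take the union of boundedly many sparse families, using \eqref{e:band} and the remark after the sparse definition.
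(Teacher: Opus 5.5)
Your architecture is the paper's: Proposition \ref{p:principle} on $\mathcal D(Q_0)$ with $\lambda_Q=\nat{f}_{m,Q}$, a weak-type estimate of \cite{CLR26} type, the oscillation split into an annular part and a cut-off polynomial handled by Lemma \ref{l:proj}(ii) and Lemma \ref{l:poly}, and the exterior chain $2^jQ_0$ with Lemma \ref{l:po}(iv). However, your localization breaks \eqref{e:P2}.

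The cutoff $\omega^\infty_{3Q}$ equals one on $\frac92Q$ and is supported in $6Q$. So $h_Q=(f-\PM^m_{3Q}f)\,\omega^\infty_{3Q}$ carries the values of $f-\PM^m_{3Q}f$ on $6Q\setminus3Q$, which $\M^\tau_{3Q}$, and hence $\lambda_Q$, never see. Concretely, suppose $f$ vanishes on $3Q$ but not on $6Q\setminus3Q$. Then $\PM^m_{3Q}f=0$, since the functionals $\phi_{3Q,x}$ are supported in $3Q$, and $\M^\tau_{3Q}f=0$ on $3Q$, so $\lambda_Q=0$. Yet $F_Q(x)=\int K(x,z)f(z)\omega^\infty_{3Q}(z)\,\d z$, $x\in Q$, has no reason to vanish outside a set of measure $\eps_d|Q|$, so \eqref{e:P2} fails. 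Your Calder\'on--Zygmund decomposition relative to $\M^\tau_{3Q}$ does not reach this part of $h_Q$. The same defect appears in the oscillation step: the annuli needed for $\omega^\infty_{3Q}-\omega^\infty_{3L}$ extend to $6Q$, contradicting the constraint $2^iL\subseteq3Q$ under which you test $\M^\tau_{3Q}$ at $y$.

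The remedy is to localize strictly inside $3Q$, as in \eqref{e:FQ}: take $F_Q=\ind_QV(\omega^\infty_Qh_Q)$ with $h_Q=f-\PM^m_{3Q}f$ and $\supp\omega^\infty_Q\Subset2Q$. Then the pieces fall into place.
\begin{itemize}
\item $\omega^\infty_Q-\omega^\infty_L$ vanishes on $\frac32L$, and the annuli stay in $2Q$, with the top one tested on $3Q$ (Lemma \ref{l:osc}).
\item The polynomial term is $V[\omega^\infty_L\pi_L]$, with $\|\pi_L\|_{L^\infty(3L)}$ bounded by Lemma \ref{l:proj}(ii) on $3L$.
\item With $Q_0=4Q'$ one still has $\omega^\infty_{Q_0}=1$ on $\supp f$.
\end{itemize}
With this change your argument is the paper's.

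There are also three smaller points in your weak-type sketch.
\begin{itemize}
\item For the level set $U\subseteq3Q$, only $|U\cap Q|\le r|Q|$ is small, not $|U|$. So any dilate of the bad set that you discard must stay inside $U$; this holds for $2W$ if the Whitney cubes satisfy $\Lambda W\subseteq U$. Alternatively, as in Proposition \ref{p:wt}, integrate the bad part over $Q\setminus U$, which gives only a bound $\lesssim\lambda_Q|Q|$.
\item The Chebyshev sets for the good and bad parts become small by taking the threshold $K\sim\eps_d^{-1}$ large, not by shrinking $r$.
\item The pieces $b_W$ have no $L^1$ control, only test-function control from good points of an enlarged cube. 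You therefore need the full order-$\tau$ regularity \eqref{e:ksize}--\eqref{e:kholder} in the input variable, not just a first difference, to make ${\eta_W^\infty}[K(x,\cdot)-K(x,c(W))]$ a damped element of $\Phi^{\tau,1}(W)$.
\end{itemize}
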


By Lemma \ref{l:V1mean}, Theorem \ref{t:model} applies to every model with mean zero input packets. By Lemma \ref{l:para}, it also applies to the canonical paraproducts with symbol in $L^\infty(\R^d)$, and, for $m=0$, to those with symbol in $\mathrm{BMO}(\R^d)$. The cutoff testing condition is necessary for the conclusion of Theorem \ref{t:model} with $m\geq1$, see Remark \ref{r:V1}.

With $m$ as in Theorem \ref{t:model}, associate to each cube $Q$ and each $f\in L^2(\R^d)$ with compact support the quantities
\begin{equation}\label{e:FQ}
h_Q\coloneqq f-\PM^m_{3Q}f,\qquad F_Q\coloneqq\ind_Q V(\omega^\infty_Qh_Q),\qquad \lambda_Q\coloneqq\nat{f}_{m,Q}.
\end{equation}
Choose a representative of each $F_Q$.
The proof of Theorem \ref{t:model} verifies the assumptions of Proposition \ref{p:principle} for the family \eqref{e:FQ}, indexed by the dyadic subcubes of a large cube $Q_0$, and estimates $Vf$ outside $Q_0$ by a single term at each point: the local weak-type estimate is proved in \S\ref{ss:wt}, the oscillation estimate in \S\ref{ss:osc}, and the proof is completed in \S\ref{ss:pfA}.

\subsection{The local weak-type estimate}\label{ss:wt}
The next proposition uses neither the cancellation of the packets nor any structure of the function $h$. Its proof follows the level set estimate of \cite{CLR26}*{Proposition 3.5}: the bad parts of a Calder\'on--Zygmund decomposition at the level of the smooth maximal function have vanishing mean, and are paired with first differences of the kernel, which are tested from good points. We include the argument, as it only uses the regularity \eqref{e:ksize}--\eqref{e:kholder} of the kernel in the input variable.
\begin{proposition}\label{p:wt}
For every $0<\eps\leq 1$ there exists $C_\eps\lesssim\eps^{-1}$ such that
\[
\left|\left\{x\in Q: \left|V(\omega^\infty_Qh)(x)\right|>C_\eps\left(\M^\tau_{3Q}h\right)^{*,Q}(r)\right\}\right|\leq \eps|Q|
\]
for all cubes $Q$, all $0<r\leq \frac\eps2$, and all $h\in L^1_{\mathrm{loc}}(\R^d)$ with $h\ind_{3Q}\in L^2(\R^d)$.
\end{proposition}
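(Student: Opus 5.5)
Set $g\coloneqq\omega^\infty_Qh$ and $\lambda\coloneqq(\M^\tau_{3Q}h)^{*,Q}(r)$. The exceptional set $\Omega\coloneqq\{x\in Q:\M^\tau_{3Q}h(x)>\lambda\}$ has $|\Omega|\leq r|Q|\leq\frac\eps2|Q|$ by \eqref{e:rearr}, and $\M^\tau_{3Q}h$ is lower semicontinuous on $(3Q)^\circ$, so $\Omega$ is relatively open in $Q$. The plan is a Calder\'on--Zygmund decomposition $g=\mathfrak g+\sum_j\mathfrak b_j$ adapted to $\Omega$, followed by bounding $|\{|V\mathfrak g|>C\lambda\}|$ via \eqref{e:L2} and Chebyshev and $\sum_j V\mathfrak b_j$ via the kernel estimates of Lemma \ref{l:kernel}. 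This is the route of \cite{CLR26}*{Proposition 3.5}.

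First we deal with the part of $g$ away from $Q$. Points of $\Omega^c\cap Q$ give no control on $h$ near $\partial(2Q)$, but for $x\in Q$ the set $2Q\setminus\frac32Q$ is separated from $x$ at scale $\ell(Q)$. Split $g=\omega^\infty_{Q'}h+(1-\omega^\infty_{Q'})\omega^\infty_Qh$ with $Q'=\frac54 Q$ or similar. The second piece is supported at distance $\gtrsim\ell(Q)$ from $Q$. Cover its support by boundedly many cubes $R\subseteq3Q$ of sidelength $\sim\ell(Q)$, each meeting $Q\setminus\Omega$, and apply a smooth partition of unity. By \eqref{e:kernelrep}, for $x\in Q$ each piece yields $V(\chi_R h)(x)=\int K(x,z)\chi_R(z)h(z)\,\d z$, and by \eqref{e:ksize}--\eqref{e:kholder} the function $z\mapsto|Q|K(x,z)\chi_R(z)$ lies in $C\Phi^{\tau,1}(R)$ and is supported in $R$. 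Testing \eqref{e:max} at a point of $R\cap Q\setminus\Omega$ gives $\lesssim\lambda$ pointwise on $Q$. It is here that $\tau<b_0$ and the non-integer choice of $\tau$ are used.

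For the local piece $g_1=\omega^\infty_{Q'}h$, take a Whitney decomposition of $\Omega$ (inside $Q$, enlarged slightly so that $\omega^\infty_{Q'}$ is covered) into cubes $W_j$ with $\dist(W_j,\Omega^c)\sim\ell(W_j)$, and a smooth partition of unity $\{\chi_j\}$ subordinate to $\{\frac98W_j\}$. Define $\mathfrak b_j\coloneqq\chi_jg_1-\sum_{|\mu|\leq\lfloor\tau\rfloor}c_{j,\mu}\eta_{j,\mu}$, with the correction subtracting the moments of order $\leq\lfloor\tau\rfloor$, and set $\mathfrak g\coloneqq g_1-\sum_j\mathfrak b_j$. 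The coefficients $c_{j,\mu}$ are pairings of $h$ with $C\Phi^{\tau,1}$ tests supported in a cube of size $\sim\ell(W_j)$ containing a point of $\Omega^c$, so $|c_{j,\mu}|\lesssim\lambda$. The same testing gives $|\mathfrak g|\lesssim\lambda$ on $\Omega$, and $|\mathfrak g|=|g_1|\lesssim\M^\tau_{3Q}h\leq\lambda$ a.e.\ off $\Omega$ by Lemma \ref{l:diff}. Hence $\|\mathfrak g\|_2^2\lesssim\lambda^2|Q|$, and by \eqref{e:L2} and Chebyshev, $|\{|V\mathfrak g|>C\lambda/\sqrt\eps\}|\leq\frac\eps4|Q|$. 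This step only gives $\eps^{-1/2}$, which is better than needed.

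For the bad parts, discard $\Omega^*\coloneqq\bigcup_j 2W_j$, which has measure $\lesssim r|Q|$ and so is admissible for $r\leq\eps/2$ up to absorbing constants in $r_0$; if necessary the percentile level is shrunk by an admissible factor. For $x\notin 2W_j$, write $V\mathfrak b_j(x)=\int[K(x,z)-T_{j,x}(z)]\mathfrak b_j(z)\,\d z$, where $T_{j,x}$ is the Taylor polynomial of $K(x,\cdot)$ at $c(W_j)$ of degree $\lfloor\tau\rfloor$, using the vanishing moments of $\mathfrak b_j$. We then want to avoid needing $\|\mathfrak b_j\|_1\lesssim\lambda|W_j|$, which fails since $h$ is only controlled in the $\M^\tau$ sense. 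Instead, the function $z\mapsto\ell(W_j)^{-\tau}\dist(x,W_j)^{d+\tau}[K(x,z)-T_{j,x}(z)]\chi_j(z)$ is, by \eqref{e:kholder}, a $C\Phi^{\tau,1}(\frac98W_j)$ test supported near $W_j$. Testing at a point of $\Omega^c$ near $W_j$ gives
\[
|V\mathfrak b_j(x)|\lesssim\lambda\,\ell(W_j)^{d+\tau}\dist(x,W_j)^{-d-\tau}.
\]
The moment-correction terms are handled the same way. Integrating over $Q\setminus\Omega^*$ and summing over $j$ gives $\int_{Q\setminus\Omega^*}|\sum_j V\mathfrak b_j|\lesssim\lambda\sum_j|W_j|\lesssim\lambda r|Q|$. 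Chebyshev at level $C\lambda/\eps$ then yields measure $\lesssim r|Q|/\ldots\leq\frac\eps4|Q|$; in fact a level $\lambda r/\eps$ already suffices, which produces $C_\eps\lesssim\eps^{-1}$. Collecting the three exceptional sets proves the claim.

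The main obstacle is the rigorous bad-part estimate: verifying that $[K(x,\cdot)-T_{j,x}]\chi_j$ has the full $\Phi^{\tau,1}$ norm with the correct scaling, including the H\"older seminorm for increments that are not small (handled as in the remark after \eqref{e:kernelrep}). A further difficulty is making the Calder\'on--Zygmund decomposition legitimate when $h$ is merely $L^2$ on $3Q$, where the sums converge in $L^2$ and the identity $V\mathfrak b_j(x)=\int K\mathfrak b_j$ holds by \eqref{e:kernelrep}.
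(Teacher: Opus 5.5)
Your strategy is the paper's (and that of \cite{CLR26}): a Calder\'on--Zygmund decomposition of $\omega^\infty_Qh$ at height $\lambda$, adapted to a superlevel set of $\M^\tau_{3Q}h$. The good part is handled by \eqref{e:L2} and Chebyshev, and the bad parts by the regularity of $K$ in the input variable, every pairing being controlled by testing \eqref{e:max} at a nearby point where $\M^\tau_{3Q}h\leq\lambda$. Your pointwise bound for the far piece is correct, after small repairs:
\begin{itemize}
\item the splitting needs $\omega^\infty_{Q'}\omega^\infty_Q=\omega^\infty_{Q'}$, e.g.\ $Q'=\frac34Q$ rather than $\frac54Q$;
\item the single cube $R=3Q$, which contains $Q\setminus\Omega$, suffices;
\item the test function is $K(x,\cdot)\chi$, not $|Q|K(x,\cdot)\chi$.
\end{itemize}
The local piece, however, is not handled correctly as written. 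Its support necessarily contains a region of measure $\sim|Q|$ outside $Q$, where nothing bounds $\M^\tau_{3Q}h$. Your Whitney cubes must cover that region, so $\sum_j|W_j|\lesssim r|Q|$ is false. You only get $\int_{Q\setminus\Omega^*}|\sum_jV\mathfrak b_j|\lesssim\lambda|Q|$, as in \eqref{e:wtbad}, and Chebyshev must be applied at level $\sim\lambda/\eps$. This, not the good part, is where $C_\eps\sim\eps^{-1}$ comes from, and your remark that level $\lambda r/\eps$ suffices is wrong. The conclusion survives this correction.

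The more substantive gap is hidden in ``enlarged slightly''. The bounds $|c_{j,\mu}|\lesssim\lambda$, $|\mathfrak g|\lesssim\lambda$ on the bad set, and the bad-part estimates all require, for each Whitney cube meeting $\supp g_1$, a point with $\M^\tau_{3Q}h\leq\lambda$ in a cube of comparable size contained in $3Q$. Suppose you Whitney-decompose a slight enlargement of $\Omega\cup(\supp g_1\setminus Q)$ relative to its own complement. Then the cubes near its outer boundary are small, and the nearby complementary points lie outside $Q$, where you have no information; so these bounds are unjustified there.

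The paper's device resolves this. Take the bad set to be $U\coloneqq\{x\in(3Q)^\circ:\M^\tau_{3Q}h(x)>\lambda\}$ on all of $3Q$, so that good points outside $Q$ are also available, and use the maximal dyadic $W\subseteq3Q$ with $8dW\subseteq U$. If $8d\widehat W\subseteq(3Q)^\circ$, it contains a good point by maximality. Otherwise, $2W\cap2Q\neq\varnothing$ forces $\ell(W)\gtrsim\ell(Q)$, and one tests on $3Q$ itself, which contains points of $Q\setminus U$. Then only $U\cap Q$ is discarded, of measure at most $r|Q|\leq\frac\eps2|Q|$ exactly. Shrinking the percentile level, as you suggest, is not permitted, since the statement fixes $0<r\leq\frac\eps2$. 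The factor $8d$ also places every point of $Q\setminus U$ in the separated regime of Lemma~\ref{l:kernel}; lying outside $2W_j$ alone does not.

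With this decomposition your far/near split becomes unnecessary. Subtracting the mean alone suffices, with decay exponent $\min\{1,\tau\}$; your higher-moment subtraction is harmless but not needed.
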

\begin{proof}
We employ the local notations \[
v\coloneqq\omega^\infty_Q h, \qquad\lambda\coloneqq(\M^\tau_{3Q}h)^{*,Q}(r).\] If $\lambda=0$, then $\M^\tau_{3Q}h$ vanishes at some point of $Q$. Testing \eqref{e:max} with the cube $3Q$ shows that $h$, and thus $v$, vanish almost everywhere on $3Q$. By homogeneity, we may thus assume $\lambda=1$. Multiplication by $\omega^\infty_Q$ maps each test function of \eqref{e:max}, relative to a cube $R\subseteq 3Q$, to a bounded multiple of a test function in the same class, relative to the same cube, by the Leibniz rule, as $0\leq\omega\leq1$ and $|\partial^\gamma\omega^\infty_Q|\lesssim\ell(Q)^{-|\gamma|}\lesssim\ell(R)^{-|\gamma|}$, no cancellation being required of the test functions. Hence
\begin{equation}\label{e:vh}
\M^\tau_{3Q}v\lesssim\M^\tau_{3Q}h\qquad \textrm{on } 3Q.
\end{equation}
\emph{Whitney decomposition.} Let \[U\coloneqq\left\{x\in(3Q)^\circ:\M^\tau_{3Q}h(x)>1\right\},\] so that $|U\cap Q|\leq r|Q|$ by \eqref{e:rearr}. The set $U$ is open by the lower semicontinuity noted in \S\ref{ss:max}.  Let
\[
\mathcal W\coloneqq \textrm{maximal cubes } W \in\mathcal D(3Q) \textrm{ such that } \Lambda W\subseteq U, \qquad \Lambda\coloneqq 8d.
\]
As $U$ is open and $3\Lambda Q\not\subseteq U$, the collection $\mathcal W$ is well defined, its cubes are pairwise disjoint, and their union is $U$. This is a dyadic Whitney decomposition of $U$, cf.\ \cite{DGWdom}*{(W1)--(W4)}, and the standard construction, cf.\ \cite{stein70}*{Chapter VI}, provides a smooth partition of unity $\{{\eta_W^\infty}: W\in \mathcal W \}$ of $U$ with \[\supp{\eta_W^\infty}\subseteq 2W, \qquad \inf_{W}{\eta_W^\infty}\gtrsim1, \qquad   \ell(W)^{|\gamma|}|\partial^\gamma{\eta_W^\infty}|\lesssim 1.\]
Let $W\in\mathcal W$ with $2W\cap 2Q\neq\varnothing$. If $\Lambda\widehat W\subseteq(3Q)^\circ$, let $S_W\coloneqq \Lambda\widehat W$, which is not contained in $U$, by the maximality of $W$. Otherwise, $\ell(Q)\leq(2\Lambda+3)\ell(W)$, as $2W\cap2Q\neq\varnothing$; let $S_W\coloneqq 3Q$, which contains points of $Q\setminus U$, as $|U\cap Q|<|Q|$. In both cases, $S_W$ is a cube with $\Lambda W\subseteq S_W\subseteq 3Q$ and $\ell(S_W)\lesssim\ell(W)$, containing a point $z\in(3Q)^\circ\setminus U$, at which $\M^\tau_{3Q}v(z)\lesssim1$, by \eqref{e:vh}. By \eqref{e:change}, each $\varphi\in \Phi^{\tau,1}(W)$ supported in $2W$ is a bounded multiple of a test function of \eqref{e:max} relative to $S_W$, whence \begin{equation}\label{e:test}
|\varphi(v)|\lesssim 1 \qquad \forall W\in\mathcal W, \, 2W\cap 2Q\neq\varnothing, \quad  \forall \varphi\in \Phi^{\tau,1}(W) \textrm{ with } \supp\varphi\subseteq 2W.
\end{equation}
\emph{Decomposition and good part.} We now decompose $v$ at height one. For $W\in\mathcal W$, set
\[
p_W\coloneqq\frac{\int_{\R^d}v{\eta_W^\infty}}{\int_{\R^d}{\eta_W^\infty}},\qquad b_W\coloneqq(v-p_W){\eta_W^\infty},\qquad g\coloneqq v-\sum_{W\in\mathcal W}b_W=v\ind_{\R^d\setminus U}+\sum_{W\in\mathcal W}p_W{\eta_W^\infty}.
\]
Each $b_W$ has vanishing mean. Moreover, $p_W=b_W=0$ when $2W\cap2Q=\varnothing$, as $v$ is supported in $2Q$, so that only the cubes covered by \eqref{e:test} play a role below. For such $W$, the function ${\eta_W^\infty}/\int{\eta_W^\infty}$ belongs to $C\Phi^{\tau,1}(W)$ and is supported in $2W$, as $\int{\eta_W^\infty}\gtrsim|W|$. Therefore \eqref{e:test} yields
\begin{equation}\label{e:polbound}
\sup_{W\in\mathcal W}|p_W|\lesssim1.
\end{equation}
On $U$, the values of $v$ have been replaced in $g$ by the constants $p_W$, so that $|g|\lesssim1$ on $U$, by \eqref{e:polbound} and as the functions ${\eta_W^\infty}$ form a partition of unity of $U$. On $(3Q)^\circ\setminus U$, Lemma \ref{l:diff} and \eqref{e:vh} give $|g|=|v|\lesssim\M^\tau_{3Q}v\lesssim\M^\tau_{3Q}h\leq1$ almost everywhere, while $g$ vanishes outside $3Q$. Therefore
\begin{equation}
\label{e:Tg2}
\|g\|_\infty\lesssim1,\qquad \supp g\subseteq3Q,\qquad \|Vg\|_2\lesssim|Q|^{\frac12},
\end{equation}
the last estimate by \eqref{e:L2}.

\emph{Bad part.} It remains to estimate $V(v-g)$ off $U$. The functions $b_W$ are supported in $U\subseteq3Q$, where the sum of their absolute values is at most $|v|+\sup_W|p_W|$. Hence their series converges to $v-g$ in $L^2(\R^d)$, and, as $V$ is bounded on $L^2(\R^d)$, an almost everywhere convergent subsequence of the images of the partial sums shows that $|V(v-g)|$ is at most the sum of the functions $|Vb_W|$, almost everywhere. Fix $W\in\mathcal W$ with $2W\cap2Q\neq\varnothing$ and $x\in\R^d\setminus U$. As $\Lambda W\subseteq U$,
\[
|x-c(W)|\geq4d\ell(W)\geq4|z-c(W)|,\qquad z\in2W,
\]
so that $x$ and $2W$ are in the separated regime of Lemma \ref{l:kernel}; this is the reason for the factor $8d$ in $\Lambda$. Let
\[
\Xi_{W,x}\coloneqq{\eta_W^\infty}\left[K(x,\cdot)-K(x,c(W))\right].
\]
As $b_W$ has vanishing mean, the value of the kernel at $c(W)$ may be subtracted at no cost and, by \eqref{e:kernelrep}, for almost every such $x$,
\[
Vb_W(x)=\Xi_{W,x}(v)-p_W\int_{\R^d}\Xi_{W,x}.
\]
With $\tau_1\coloneqq\min\{1,\tau\}$, the bracket is $\lesssim\ell(W)^{\tau_1}|x-c(W)|^{-d-\tau_1}$ on $2W$, by \eqref{e:ksize} with $|\alpha|=1$ if $\tau\geq1$, and by \eqref{e:kholder} with $\beta=0$ if $\tau<1$; the remaining derivatives, of orders $1\leq|\alpha|\leq\lfloor\tau\rfloor$, and the H\"older quotient of order $\{\tau\}$ of those of order $\lfloor\tau\rfloor$, controlled by \eqref{e:ksize}--\eqref{e:kholder}, are dominated by the same quantity times $\ell(W)^{-|\alpha|}$ and $\ell(W)^{-\tau}$, respectively. By the Leibniz rule, $\Xi_{W,x}$ is a bump adapted to $W$, damped by the separation:
\[
\Xi_{W,x}\in C\left(\frac{\ell(W)}{|x-c(W)|}\right)^{d+\tau_1}\Phi^{\tau,1}(W),\qquad \supp\Xi_{W,x}\subseteq2W.
\]
Therefore, \eqref{e:test} applies to $\Xi_{W,x}(v)$, while $|p_W|\|\Xi_{W,x}\|_1$ is controlled by \eqref{e:polbound}, and
\[
|Vb_W(x)|\lesssim\left(\frac{\ell(W)}{|x-c(W)|}\right)^{d+\tau_1},\qquad x\in\R^d\setminus U.
\]
Integrating outside $\Lambda W$, which is possible as $\tau_1>0$, and summing over $W$,
\begin{equation}\label{e:wtbad}
\int_{Q\setminus U}|V(v-g)|\lesssim\sum_{W\in\mathcal W}\int_{\R^d\setminus\Lambda W}\left(\frac{\ell(W)}{|x-c(W)|}\right)^{d+\tau_1}\d x\lesssim\sum_{W\in\mathcal W}|W|=|U|\lesssim|Q|.
\end{equation}
\emph{Conclusion.} Chebyshev's inequality, the bound \eqref{e:Tg2} for $\|Vg\|_2$ and \eqref{e:wtbad} imply, for $t\geq1$,
\[
\begin{split}
\left|\{x\in Q:|Vv(x)|>2t\}\right| &\leq|U\cap Q|+\frac{\|Vg\|_2^2}{t^2}+\frac1t\int_{Q\setminus U}|V(v-g)|
\leq\left(r+\frac{C_0}{t}\right)|Q|
\end{split}
\]
with $C_0$ admissible. As $r\leq\frac\eps2$, the choice $t\coloneqq\max\{1,2C_0\eps^{-1}\}$ proves the proposition with $C_\eps\coloneqq2t\lesssim\eps^{-1}$.
\end{proof}

\subsection{The oscillation estimate}\label{ss:osc}
This is the step where the polynomial renormalization of $h_Q$ in \eqref{e:FQ} is used. In particular, for $m\geq1$, it requires the cutoff testing condition \eqref{e:V1}, through Lemma \ref{l:poly}.
\begin{lemma}\label{l:osc}
Let $Q$ be a cube and $f\in L^2(\R^d)$ with compact support, and, if $m\geq1$, assume that $V$ satisfies \eqref{e:V1}. With reference to \eqref{e:FQ},
\[
\left|[F_Q-F_L](x)-[F_Q-F_L](y)\right|\lesssim\M^\tau_{3Q}h_Q(y)
\]
for all $L\in\mathcal D(Q)\setminus\{Q\}$ and almost all $x,y\in L$.
\end{lemma}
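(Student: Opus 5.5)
The plan is to compare the two renormalized inputs on $L$ and split their difference into a polynomial part and a part supported away from $L$. By \eqref{e:projdiff}, $h_L=f-\PM^m_{3L}f=h_Q-\PM^m_{3L}h_Q$. Writing $\pi\coloneqq\PM^m_{3L}h_Q\in\P^{m-1}$, with $\pi=0$ if $m=0$, we get on $L$
\[
F_Q-F_L=V\big((\omega^\infty_Q-\omega^\infty_L)h_Q\big)+V\big(\omega^\infty_L\pi\big).
\]
Both terms are estimated in absolute value, the second one crudely by twice its supremum.

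\emph{Polynomial term.} Assume $m\geq1$. Lemma \ref{l:poly} applies, since \eqref{e:V1} is in force, and gives $\|V(\omega^\infty_L\pi)\|_\infty\lesssim\|\pi\|_{L^\infty(3L)}$. By Lemma \ref{l:proj}(ii) applied on the cube $3L$, this is $\lesssim\inf_{3L}\M^\tau_{3L}h_Q$. Since $3L\subseteq3Q$, we have $\M^\tau_{3L}h_Q\leq\M^\tau_{3Q}h_Q$ on $3L$, so the whole term is $\lesssim\M^\tau_{3Q}h_Q(y)$ for every $y\in L$. This is the only place where the cutoff testing condition enters.

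\emph{Far term.} The function $u\coloneqq(\omega^\infty_Q-\omega^\infty_L)h_Q$ vanishes on $\frac32L$, because both cutoffs equal one there, and it is supported in $2Q$. Hence $x,y\in L$ lie at distance $\gtrsim\ell(L)$ from $\supp u$. By \eqref{e:kernelrep}, for almost every $x,y\in L$,
\[
Vu(x)-Vu(y)=\int\big[K(x,z)-K(y,z)\big]\,(\omega^\infty_Q-\omega^\infty_L)(z)\,h_Q(z)\,\d z.
\]
I will introduce a smooth dyadic partition of unity $\{\eta_j\}_{j\geq0}$ adapted to the annuli $|z-c(L)|\sim2^j\ell(L)$, truncated at the scale of $Q$. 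For each $j$, I will then choose, via \eqref{e:enclosing}, a cube $R_j$ with $y\in R_j\subseteq3Q$, $\ell(R_j)\sim\min\{2^j\ell(L),3\ell(Q)\}$, and $R_j$ containing $\supp\eta_j\cap2Q$. The aim is to show that the bumps
\[
\Theta_j\coloneqq\eta_j\,(\omega^\infty_Q-\omega^\infty_L)\,\big[K(x,\cdot)-K(y,\cdot)\big]
\]
satisfy $\Theta_j\in C2^{-j\nu}\Phi^{\tau,1}(R_j)$ and $\supp\Theta_j\subseteq R_j$.
\begin{itemize}
\item The size and the $z$-derivatives of the bracket come from \eqref{e:kx}, which gives the factor $(\ell(L)/2^j\ell(L))^\nu$ against $D^{-d-|\alpha|}$ with $D\sim2^j\ell(L)$.
\item The H\"older quotient of order $\{\tau\}$ of the top derivatives comes from \eqref{e:kxz} in the separated regime, and from the endpoint size bounds otherwise, as remarked after Lemma \ref{l:kernel}.
\item The derivatives of the cutoffs and of $\eta_j$ cost at most $\ell(L)^{-|\gamma|}$, and only where $2^j\lesssim1$; elsewhere they cost $\ell(Q)^{-|\gamma|}$ or $(2^j\ell(L))^{-|\gamma|}$, which are compatible with $\ell(R_j)$.
\end{itemize}
Testing \eqref{e:max} with the cube $R_j\ni y$ then gives $|\Theta_j(h_Q)|\lesssim2^{-j\nu}\M^\tau_{3Q}h_Q(y)$, and summing the geometric series over $j$ bounds the far term by $\M^\tau_{3Q}h_Q(y)$.

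I expect the main obstacle to be this last membership claim: checking the full $\Phi^{\tau,1}(R_j)$ norm of $\Theta_j$. This includes the H\"older increments that leave the separated regime, and the top annuli where $R_j$ is forced to be $3Q$ while $\supp\eta_j$ may be cut by $2Q$, so that $\ell(R_j)$ is no longer comparable to $2^j\ell(L)$. There I would bound the bump by the decay $D^{-d-\nu}\ell(L)^\nu$ with $D\gtrsim\ell(Q)$, and use \eqref{e:change} to absorb the bounded change of scale.
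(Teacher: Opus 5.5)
Your proposal is correct and follows the paper's proof essentially step by step. It uses the same splitting of $F_Q-F_L$ via \eqref{e:projdiff}, the same treatment of the polynomial term through Lemma \ref{l:poly} and Lemma \ref{l:proj}(ii) on $3L$, and an annular decomposition of the far term. In that decomposition the paper's pieces $u_j=(\omega^\infty_Q-\omega^\infty_L)(\omega^\infty_{2^{j+1}L}-\omega^\infty_{2^jL})$ play the role of your $\eta_j(\omega^\infty_Q-\omega^\infty_L)$, and the kernel differences are tested at $y$ on cubes inside $3Q$.

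There are two small corrections.
\begin{enumerate}
\item For the boundedly many annuli with $2^j\lesssim1$, the separation hypothesis $D\geq4|x-y|$ of \eqref{e:kx}--\eqref{e:kxz} fails. There you should estimate $K(x,\cdot)$ and $K(y,\cdot)$ separately by \eqref{e:ksize}--\eqref{e:kholder}, as the paper does; this costs nothing since $2^{-j\nu}\sim1$ on those annuli.
\item The top-annulus obstacle you anticipate does not arise. An annulus meeting $2Q$ has $2^j\ell(L)\lesssim\ell(Q)$, because $c(L)\in Q$, so $\ell(R_j)\sim2^j\ell(L)$ even when $R_j=3Q$.
\end{enumerate}
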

\begin{proof}
Fix $L\in\mathcal D(Q)\setminus\{Q\}$ and $x,y\in L$. By \eqref{e:projdiff}, on $L$
\begin{equation}\label{e:FQFL}
F_Q-F_L=V\left[\left(\omega^\infty_Q-\omega^\infty_L\right)h_Q\right]+V\left[\omega^\infty_L\pi_L\right],\qquad \pi_L\coloneqq\PM^m_{3L}h_Q=\PM^m_{3L}f-\PM^m_{3Q}f.
\end{equation}
If $m=0$, the second summand vanishes. If $m\geq1$, Lemma \ref{l:poly}, followed by Lemma \ref{l:proj}(ii) applied to the cube $3L$, yields
\[
\left\|V\left[\omega^\infty_L\pi_L\right]\right\|_\infty\lesssim\left\|\pi_L\right\|_{L^\infty(3L)}\lesssim\inf_{3L}\M^\tau_{3L}h_Q\leq\M^\tau_{3Q}h_Q(y),
\]
which controls the oscillation of the second summand in \eqref{e:FQFL}.

We turn to the first summand in \eqref{e:FQFL}. The cutoff $\omega^\infty_Q-\omega^\infty_L$ vanishes on $\frac32L$ and is supported in $2Q$. Let $J$ be the least integer with $\omega^\infty_{2^{J+1}L}=1$ on $2Q$; then $2^J\ell(L)\sim\ell(Q)$, as $L\subseteq Q$. Telescoping,
\begin{equation}\label{e:telescope}
\omega^\infty_Q-\omega^\infty_L=\sum_{j=-1}^{J}u_j,\qquad u_j\coloneqq\left(\omega^\infty_Q-\omega^\infty_L\right)\left(\omega^\infty_{2^{j+1}L}-\omega^\infty_{2^jL}\right),
\end{equation}
the lower end of the range being dictated by $\supp\omega^\infty_{2^{-1}L}\subseteq L$.

Each $u_j$ is supported in $2^{j+2}L\setminus 2^jL$, and satisfies $|\partial^\gamma u_j|\lesssim(2^j\ell(L))^{-|\gamma|}$, as both factors vary at scale $\gtrsim2^j\ell(L)$ there: the derivatives of $\omega^\infty_L$ occur only for $j\leq0$, those of $\omega^\infty_Q$ only for $2^j\ell(L)\sim\ell(Q)$. Finally, $|z-x|\sim|z-y|\sim2^j\ell(L)$ for $z\in\supp u_j$, as $u_j$ vanishes on $\frac32L$.

If $2^j\gg 1$, then $|x-y|\ll|z-x|$ for $z\in\supp u_j$, and \eqref{e:kx}--\eqref{e:kxz}, with $y$ in place of $x+h$, show that
\[
z\longmapsto u_j(z)\left[K(x,z)-K(y,z)\right]\quad\textrm{belongs to}\quad C2^{-j\nu}\Phi^{\tau,1}\left(2^{j+2}L\right),
\]
and is supported in $2^{j+2}L$: \eqref{e:kx} controls its derivatives in $z$ of order at most $\lfloor\tau\rfloor$, and the mixed estimate \eqref{e:kxz} the H\"older quotients of order $\{\tau\}$ of those of order $\lfloor\tau\rfloor$, as the class $\Phi^{\tau,1}$ requires. For the boundedly many remaining $j$, \eqref{e:ksize}--\eqref{e:kholder} show instead that $u_jK(x,\cdot)$ and $u_jK(y,\cdot)$ belong to $C\Phi^{\tau,1}(2^{j+2}L)$. These dilates contain $y$ and are contained in $3Q$, save at the top scale $2^j\ell(L)\sim\ell(Q)$, where $3Q$ replaces them, as $\supp u_j\subseteq2Q$. Testing \eqref{e:max} at $y$ and summing over $j$, by \eqref{e:kernelrep}, for almost all $x,y\in L$,
\[
\begin{split}
&\quad \left|V\left[\left(\omega^\infty_Q-\omega^\infty_L\right)h_Q\right](x)-V\left[\left(\omega^\infty_Q-\omega^\infty_L\right)h_Q\right](y)\right|\\ &\leq\sum_{j=-1}^{J}\left|\int_{\R^d} u_j\left[K(x,\cdot)-K(y,\cdot)\right]h_Q\right|
\lesssim\M^\tau_{3Q}h_Q(y)\Big[1+\sum_{j\geq 0}2^{-j\nu}\Big]\lesssim\M^\tau_{3Q}h_Q(y).
\end{split}
\]
This completes the proof.
\end{proof}

\subsection{Proof of Theorem \ref{t:model}}\label{ss:pfA}
Fix $0<r\leq r_0$ and $f\in L^2(\R^d)$ supported in a cube $Q'$. Let $Q_0\coloneqq 4Q'$, $\eps\coloneqq \frac12\eps_d$ and, for each cube $Q$, let $h_Q$, $F_Q$, $\lambda_Q$ be as in \eqref{e:FQ}, and $E_Q$ the union of the sets
\[
\left\{x\in Q:|F_Q(x)|>C_{\eps}\lambda_Q\right\},\qquad\left\{x\in Q:\M^\tau_{3Q}h_Q(x)>\lambda_Q\right\},
\]
with $C_\eps$ as in Proposition \ref{p:wt}, and of the null set, given by Fubini's theorem, of those $y\in Q$ at which the bound of Lemma \ref{l:osc}, for some $L\in\mathcal D(Q)\setminus\{Q\}$ containing $y$, fails for a set of $x$ of positive measure. As $r\leq \frac\eps2$, Proposition \ref{p:wt} with $h=h_Q$ and the definition of $\lambda_Q$ yield $|E_Q|\leq(\frac12\eps_d+r)|Q|\leq\eps_d|Q|$, which is \eqref{e:P1}. Assumption \eqref{e:P2} holds by construction, and \eqref{e:P3} follows from Lemma \ref{l:osc}, as $\M^\tau_{3Q}h_Q\leq\lambda_Q$ on $Q\setminus E_Q$. Therefore, Proposition \ref{p:principle} yields a $\frac34$-sparse collection $\mathcal S_0\subseteq\mathcal D(Q_0)$ with $Q_0\in \mathcal S_0$ and
\[
|F_{Q_0}|\lesssim \sum_{S\in\mathcal S_0}\lambda_S\ind_S\qquad \textrm{almost everywhere on } Q_0.
\]
As $\omega^\infty_{Q_0}=1$ on $\frac32Q_0\supseteq Q'$, on $Q_0$ we have
\[
Vf=F_{Q_0}+V(\omega^\infty_{Q_0}\PM^m_{3Q_0}f).
\]
The second term vanishes when $m=0$. Otherwise, Lemma \ref{l:poly} and Lemma \ref{l:po}(iv) bound it by a constant multiple of $\lambda_{Q_0}$, as $|Q_0\cap\supp f|\leq\frac14|Q_0|$.
Outside $Q_0$, write $S_j\coloneqq 2^jQ_0$ and fix $x\in S_j\setminus S_{j-1}$, $j\geq1$. Let $\zeta$ be a smooth function with $\ind_{[-\frac18,\frac18]^d}\leq\zeta\leq\ind_{[-\frac3{16},\frac3{16}]^d}$, and $\zeta_j\coloneqq\zeta^\infty_{S_j}$. Then $\zeta_j=1$ on $\frac14S_j\supseteq Q'$ and $\dist(x,\supp\zeta_j)\geq\frac1{16}\ell(S_j)$, so that, by \eqref{e:ksize}--\eqref{e:kholder}, $\zeta_jK(x,\cdot)\in C\Phi^{\tau,1}(S_j)$ is supported in $S_j$. By \eqref{e:kernelrep} and Lemma \ref{l:po}(iv), as $|S_j\cap\supp f|\leq\frac14|S_j|$, $|Vf(x)|=|(\zeta_jK(x,\cdot))(f)|\lesssim\lambda_{S_j}$ for almost every such $x$. Hence the conclusion of Theorem \ref{t:model} holds with the collection $\mathcal S\coloneqq\mathcal S_0\cup\{S_j:j\geq1\}$, which is $\frac12$-sparse: the sets $G_S$, $S\in\mathcal S_0$, are contained in $Q_0$, while $G_{S_j}\coloneqq S_j\setminus S_{j-1}$ has measure $(1-2^{-d})|S_j|$. This completes the proof of Theorem \ref{t:model}.

\subsection{Necessity of the cutoff testing condition}\label{ss:remA}
\begin{remark}\label{r:V1}
The cutoff testing condition \eqref{e:V1} is also necessary for the conclusion of Theorem \ref{t:model} with $m\geq1$. Indeed, assume the latter holds for $V$, let $f\coloneqq\omega^\infty_Q$, and let $\mathcal S$ be the sparse collection given for $f$. If $S\in\mathcal S$ and $3S\subseteq\frac32Q$, then $f$ is constant on $3S$ and $\nat{f}_{m,S}=0$. Otherwise, Lemma \ref{l:po}(iii), applied with the constant $f(c(S))$ if $\ell(S)<\ell(Q)$ and with $0$ if $\ell(S)\geq\ell(Q)$, gives
\[
\nat{f}_{m,S}\lesssim\frac1r\min\left\{\frac{\ell(S)}{\ell(Q)},\left(\frac{\ell(Q)}{\ell(S)}\right)^d\right\}.
\]
As each point belongs to boundedly many cubes $S\in\mathcal S$ with sidelength in any given interval $[2^i,2^{i+1})$, by \eqref{e:band}, it follows that $\|V(\omega^\infty_Q)\|_\infty\lesssim C/r$, $C$ being the constant of the assumed sparse bound. Thus, for model wavelet operators, the sparse bound of Theorem \ref{t:model} of order $m\geq1$ holds if and only if \eqref{e:V1} does, in analogy with the characterization of the pointwise potential bound of order one by the condition $T1\in L^\infty(\R^d)$ in \cite{lerner26}.
\end{remark}

\section{Weights and Hardy norms}\label{s:cons}
This section turns sparse bounds into weighted norm inequalities, with linear dependence on $[w]_{A_\infty}$, and proves Proposition \ref{c:hardynorm}, through a sparse bound modulo polynomials for the smooth grand maximal function.

\subsection{Weighted bounds}
Let $w\in A_\infty$, see \S\ref{ss:classes}. The sharp reverse H\"older inequality of \cite{HP13}*{Theorem 2.3}, combined with H\"older's inequality, yields
\begin{equation}\label{e:Ainfty}
w(E)\leq2\left(\frac{|E|}{|Q|}\right)^{\frac{1}{c_d[w]_{A_\infty}}}w(Q),\qquad E\subseteq Q .
\end{equation}
\begin{lemma}\label{l:goodl}
Let $1\leq q<\infty$,  $\mathcal F$ be a finite $\eta$-sparse collection of cubes and $\lambda_S\geq0$ for $S\in\mathcal F$. Suppose $w\in A_\infty$ and $\lambda,\gamma>0$.
Then
\[
w\left(\left\{\left(\sum_{S\in\mathcal F}\lambda_S^q\ind_S\right)^{\frac1q}>2\lambda,  \max_{S\in\mathcal F}\lambda_S\ind_S \leq\gamma\lambda\right\}\right)\lesssim\exp\left(-\frac{c}{\gamma^q[w]_{A_\infty}}\right)w\left(\left\{ \left(\sum_{S\in\mathcal F}\lambda_S^q\ind_S\right)^{\frac1q} >\lambda\right\}\right),
\]
where $c>0$ and the implied constant depend on $d$ and $\eta$ only.
\end{lemma}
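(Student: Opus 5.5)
The plan is the standard sparse good-$\lambda$ argument, organized around the stopping structure of the sparse family and the $A_\infty$ estimate \eqref{e:Ainfty}. Write $A\coloneqq(\sum_{S\in\mathcal F}\lambda_S^q\ind_S)^{1/q}$ and $E\coloneqq\{A>2\lambda,\ \max_S\lambda_S\ind_S\leq\gamma\lambda\}$. As $\mathcal F$ is finite and sparse, and after the reduction to dyadic grids noted in \S\ref{ss:classes}, we may assume that $\mathcal F$ has the grid property.

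The first step is to localize. Let $\mathcal M$ be the maximal cubes $P\in\mathcal F$ for which
\[
\Big(\sum_{S\in\mathcal F,\,S\supseteq P}\lambda_S^q\Big)^{\frac1q}>\lambda.
\]
These are pairwise disjoint, and their union is exactly $\{A>\lambda\}$, since each point of $\{A>\lambda\}$ lies in a minimal cube of the chain of cubes containing it at which the partial sum exceeds $\lambda$. By the maximality of $P$ and the bound $\lambda_P\leq\gamma\lambda$ on $E\cap P$, the sum over the cubes strictly containing $P$ is at most $\lambda^q$, so the sum over cubes $\supseteq P$ is at most $\lambda^q(1+\gamma^q)$ on $E\cap P$. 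We may assume $\gamma\leq1$: otherwise the claimed bound is trivial after enlarging the implied constant, since $\exp(-c/\gamma^q[w]_{A_\infty})\geq e^{-c}$. Then on $E\cap P$ the tail $A_P\coloneqq(\sum_{S\in\mathcal F,\,S\subsetneq P}\lambda_S^q\ind_S)^{1/q}$ satisfies $A_P^q\geq(4-2)\lambda^q=2\lambda^q$.

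The key step is an exponential Lebesgue estimate for $A_P$ on $\{\max\leq\gamma\lambda\}$. Here I would use the Carleson packing equivalent of sparseness and a John--Nirenberg type iteration. Restrict to the cubes $S\subsetneq P$ with $\lambda_S\leq\gamma\lambda$ that contain points of $E$; each contributes at most $\gamma^q\lambda^q$. Then $A_P^q\leq\gamma^q\lambda^q N$, where $N\coloneqq\sum_{S\subsetneq P}\ind_S$ over the relevant cubes. For a sparse (Carleson) family, the counting function satisfies
\[
|\{x\in P:N(x)>t\}|\lesssim e^{-c_\eta t}|P|,
\]
which is the standard stopping-time iteration: the cubes of depth $\geq k$ occupy a set of measure $\leq(1-\eta)^{k}|P|$, roughly. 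Consequently $|E\cap P|\leq|\{N>2\gamma^{-q}\}|\lesssim e^{-c\gamma^{-q}}|P|$. By \eqref{e:Ainfty},
\[
w(E\cap P)\leq2\big(Ce^{-c\gamma^{-q}}\big)^{1/(c_d[w]_{A_\infty})}w(P)\lesssim\exp\Big(-\frac{c'}{\gamma^q[w]_{A_\infty}}\Big)w(P).
\]
A harmless constant factor in the exponent base is absorbed using $[w]_{A_\infty}\geq1$.

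Summing over the disjoint $P\in\mathcal M$ gives the lemma. The main obstacle is making the exponential counting bound for sparse families rigorous with constants depending only on $d$ and $\eta$. I would do this through Carleson packing: if $\sum_{S\subseteq R}|S|\leq\Lambda|R|$, then at most $2\Lambda$ on average nested layers exist, and iterating the maximal subcubes where the local count exceeds $4\Lambda$ gives geometric decay $2^{-k}$ for count $>4\Lambda k$. A secondary subtlety is the need to discard the cubes with $\lambda_S>\gamma\lambda$, which is why the estimate is restricted to points of $E$ and only cubes all of whose points might lie in $E$ are counted.
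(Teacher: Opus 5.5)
Your argument is essentially the paper's: the same maximal cubes $\mathcal M$, exponential decay of the sparse counting function via the John--Nirenberg iteration from Carleson packing, and \eqref{e:Ainfty} applied inside each $P\in\mathcal M$.

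There is one arithmetic slip. On $E\cap P$ you only get $A_P^q>(2^q-1-\gamma^q)\lambda^q$. With $\gamma\leq1$ this is $\geq2\lambda^q$ when $q\geq2$, but it gives nothing at $q=1$, which is exactly the case used in Lemma \ref{l:Ainf}.

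The clean repair, which is what the paper does, is to keep $P$ itself in the count. Set $\mathcal C_P\coloneqq\sum_{S\in\mathcal F,\,S\subseteq P}\ind_S$. Every cube containing a point $x\in E$ has $\lambda_S\leq\gamma\lambda$, so $2^q\lambda^q<\lambda^q+\gamma^q\lambda^q\,\mathcal C_P(x)$. Hence $\mathcal C_P(x)>(2^q-1)\gamma^{-q}\geq\gamma^{-q}$, uniformly in $q\geq1$ and $\gamma>0$. This also makes both the reduction to $\gamma\leq1$ and the restriction to ``relevant'' cubes unnecessary.
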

\begin{proof}
The argument is that of \cite{DPRDF}*{Theorem F}, which compares a linear sparse sum with a quadratic one. Write
\begin{equation}
\label{e:GNGN}
G\coloneqq\left(\sum_{S\in\mathcal F}\lambda_S^q\ind_S\right)^{\frac1q},\qquad N\coloneqq\max_{S\in\mathcal F}\lambda_S\ind_S.
\end{equation}
 For $P\in\mathcal F$ and $k\geq0$, define
\[
\mathcal C_P\coloneqq\sum_{\substack{S\in\mathcal F\\ S\subseteq P}}\ind_S,\qquad E_k(P)\coloneqq\left\{x\in P:\mathcal C_P(x)>k\right\}.
\]
First,
\begin{equation}\label{e:JN}
w\left(E_k(P)\right)\lesssim\exp\left(-\frac{ck}{[w]_{A_\infty}}\right)w(P),\qquad P\in\mathcal F,\quad k\geq0.
\end{equation}
Indeed, letting $n\coloneqq\lceil2\eta^{-1}\rceil$, a standard John--Nirenberg argument entails $|E_k(P)|\leq2^{1-k/n}|P|$, and \eqref{e:Ainfty} yields \eqref{e:JN}.
Second, let $\mathcal M$ be the collection of the maximal cubes $P\in\mathcal F$ with
\[
\sum_{\substack{S\in\mathcal F\\ S\supseteq P}}\lambda_S^q>\lambda^q.
\]
The cubes $P\in\mathcal M$ are pairwise disjoint, and their union is $\{G>\lambda\}$. Furthermore,
\[
P\cap\{G>2\lambda,N\leq\gamma\lambda\}\subseteq E_k(P),\qquad P\in\mathcal M,\quad k\coloneqq\lfloor\gamma^{-q}\rfloor>\gamma^{-q}-1.
\]
Therefore, by \eqref{e:JN} and as $[w]_{A_\infty}\geq1$,
\[
w\left(\{G>2\lambda,\;N\leq\gamma\lambda\}\right)\leq\sum_{P\in\mathcal M}w\left(E_k(P)\right)\lesssim\exp\left(-\frac{c}{\gamma^q[w]_{A_\infty}}\right)\sum_{P\in\mathcal M}w(P),
\]
and the last sum is $w(\{G>\lambda\})$.
\end{proof}
\begin{lemma}\label{l:Ainf}
Let $0<p<\infty$, $w\in A_\infty$, and let $\mathcal S$ be an $\eta$-sparse collection of cubes.
  If $ \lambda_S\geq0$ for $S\in\mathcal S$, then
\[
\Big\|\sum_{S\in\mathcal S}\lambda_S\ind_S\Big\|_{L^p(w)}\lesssim_p[w]_{A_\infty}\Big\|\sup_{S\in\mathcal S}\lambda_S\ind_S\Big\|_{L^p(w)}.
\]
The implied constants also depend on $\eta$, and grow at most linearly in $p$ as $p\to\infty$.
\end{lemma}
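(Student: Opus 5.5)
Set $G\coloneqq\sum_{S\in\mathcal S}\lambda_S\ind_S$ (the case $q=1$ of \eqref{e:GNGN}) and $N\coloneqq\sup_{S\in\mathcal S}\lambda_S\ind_S$. The plan is a good-$\lambda$ argument based on Lemma \ref{l:goodl}, integrated against the layer-cake formula. By monotone convergence it suffices to treat finite subcollections $\mathcal F\subseteq\mathcal S$, which remain $\eta$-sparse, provided the constants do not depend on $\mathcal F$. For finite $\mathcal F$ and $f\in\mathcal C^\infty_c$-type data the quantity $\|G\|_{L^p(w)}$ may be infinite, since $G$ need not be compactly supported in the weighted sense. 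I would therefore first assume $\lambda_S$ vanishes except on finitely many cubes, so that $G$ is bounded with bounded support and $\|G\|_{L^p(w)}<\infty$. This finiteness is needed to absorb a term at the end.

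For $\lambda>0$ and a parameter $\gamma\in(0,1]$, Lemma \ref{l:goodl} with $q=1$ gives
\[
w(\{G>2\lambda\})\leq w(\{N>\gamma\lambda\})+C\exp\Big(-\frac{c}{\gamma[w]_{A_\infty}}\Big)w(\{G>\lambda\}).
\]
Multiplying by $p\lambda^{p-1}$ and integrating in $\lambda$ yields
\[
2^{-p}\|G\|_{L^p(w)}^p\leq\gamma^{-p}\|N\|_{L^p(w)}^p+C\exp\Big(-\frac{c}{\gamma[w]_{A_\infty}}\Big)\|G\|_{L^p(w)}^p.
\]
I then choose $\gamma\coloneqq c/\big([w]_{A_\infty}\log(C2^{p+1})\big)$, so that the exponential factor is at most $2^{-p-1}$ and the last term can be absorbed into the left-hand side. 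The result is
\[
\|G\|_{L^p(w)}\leq 2^{1+1/p}\gamma^{-1}\|N\|_{L^p(w)}\lesssim[w]_{A_\infty}\log(C2^{p+1})\,\|N\|_{L^p(w)}.
\]
The constant here grows like $p$ as $p\to\infty$, because $\log(C2^{p+1})\sim p$, and it stays bounded as $p\to0$, apart from the factor $2^{1/p}$, which is allowed in $\lesssim_p$. The linear dependence on $[w]_{A_\infty}$ comes exactly from $\gamma^{-1}$.

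The main obstacle is the absorption step, which requires $\|G\|_{L^p(w)}<\infty$ for the finite collection. Since $G$ is bounded and supported in a finite union of cubes, and $w$ is locally integrable, this holds. The full statement then follows by monotone convergence over an increasing exhaustion of $\mathcal S$ by finite subcollections, noting that the corresponding $N$ only increase to the full supremum.
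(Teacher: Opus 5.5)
Your proposal is correct and follows the paper's proof essentially verbatim. It reduces to finite collections by monotone convergence, applies Lemma \ref{l:goodl} with $q=1$, chooses $\gamma^{-1}=c^{-1}[w]_{A_\infty}\log(2^{p+1}C)$, and absorbs the last term using that $G$ is bounded with bounded support; the restriction $\gamma\le 1$ is unnecessary, since Lemma \ref{l:goodl} holds for every $\gamma>0$.
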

\begin{proof}
   By monotone convergence, we may assume that $\mathcal S$ is finite. Use the notation \eqref{e:GNGN} with $q=1$ and $\mathcal S$ in place of $\mathcal F$.
Lemma \ref{l:goodl}, with $q=1$, gives, for all $\lambda,\gamma>0$,
\[
w\left(\{G>2\lambda\}\right)\leq w\left(\{N>\gamma\lambda\}\right)+C\exp\left(-\frac{c}{\gamma[w]_{A_\infty}}\right)w\left(\{G>\lambda\}\right).
\]
Let $\gamma^{-1}\coloneqq c^{-1}[w]_{A_\infty}\log(2^{p+1}C)$. Multiplying by $p\lambda^{p-1}$ and integrating in $\lambda$,
\[
2^{-p}\|G\|_{L^p(w)}^p\leq\gamma^{-p}\|N\|_{L^p(w)}^p+2^{-p-1}\|G\|_{L^p(w)}^p,
\]
where $\|G\|_{L^p(w)}<\infty$, as $G$ is bounded with compact support. Hence
\[
\|G\|_{L^p(w)}\leq2^{1+\frac1p}\gamma^{-1}\|N\|_{L^p(w)},
\]
and $2^{1+\frac1p}\gamma^{-1}\lesssim_p[w]_{A_\infty}$ grows linearly in $p$.
\end{proof}

\subsection{Hardy norms}\label{ss:hardy}
This subsection contains the linear sparse bound modulo polynomials for the grand maximal function, and the proof of Proposition \ref{c:hardynorm}. The proof of the sparse bound is a simplified version of that of Theorem \ref{t:model}: the local weak-type estimate is built into the percentile oscillation, and the polynomials are handled by Lemma \ref{l:proj}(ii). We add it for completeness.
\begin{proposition}\label{p:grand}
Let $m\geq0$ be an integer and $0<r\leq r_0$. For each $f\in L^2(\R^d)$ with compact support there exists a sparse collection $\mathcal S$ of cubes of $\R^d$ such that
\[
\M^\tau_{\R^d}f\lesssim\sum_{S\in\mathcal S}\nat{f}_{m,S}\ind_S\qquad\textrm{almost everywhere on } \R^d.
\]
\end{proposition}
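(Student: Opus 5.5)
The plan is to run the same scheme as the proof of Theorem \ref{t:model}, through Proposition \ref{p:principle} applied to a family indexed by the dyadic subcubes of a large cube $Q_0\coloneqq4Q'$, where $Q'\supseteq\supp f$. Since $\M^\tau_{\R^d}$ is sublinear rather than linear, I use the extended-norm form of Proposition \ref{p:principle} noted in Remark \ref{r:principle}. For each cube $Q$ I set
\[
h_Q\coloneqq f-\PM^m_{3Q}f,\qquad F_Q(x)\coloneqq\Big(\varphi\mapsto\varphi(\omega^\infty_Qh_Q)\Big),\qquad \lambda_Q\coloneqq\nat{f}_{m,Q},
\]
with $F_Q(x)$ regarded as a function on the set of test functions $\varphi\in\Phi^{\tau,1}(R)$ supported in some cube $R\ni x$, and with the extended norm given by the supremum over these tests. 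Thus $\|F_Q(x)\|=\M^\tau_{\R^d}[\omega^\infty_Qh_Q](x)$. I take the exceptional set $E_Q\coloneqq\{x\in Q:\M^\tau_{3Q}h_Q(x)>\lambda_Q\}$. By \eqref{e:rearr}, $|E_Q|\leq r|Q|\leq\eps_d|Q|$, which is \eqref{e:P1}, and the local weak-type estimate is built into the percentile, so no analogue of Proposition \ref{p:wt} is required.

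For \eqref{e:P2}, I fix $x\in Q\setminus E_Q$ and a test $\varphi$ adapted to $R\ni x$. If $R\subseteq3Q$, the multiplication argument behind \eqref{e:vh} bounds $|\varphi(\omega^\infty_Qh_Q)|\lesssim\M^\tau_{3Q}h_Q(x)\leq\lambda_Q$. If $R\not\subseteq3Q$, then $\ell(R)\gtrsim\ell(Q)$, and $\varphi\,\omega^\infty_Q$ is, up to a constant, a test function adapted to $3Q$, by \eqref{e:change} and the Leibniz rule, since $\|\varphi\|_\infty\lesssim|R|^{-1}\lesssim|Q|^{-1}$ and its derivatives are even smaller. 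Testing \eqref{e:max} with $3Q$ at $x$ then gives the same bound.

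For \eqref{e:P3}, I use \eqref{e:projdiff} to write, on $L\in\mathcal D(Q)\setminus\{Q\}$,
\[
F_Q-F_L=\big[(\omega^\infty_Q-\omega^\infty_L)h_Q\big]+\omega^\infty_L\pi_L,\qquad\pi_L\coloneqq\PM^m_{3L}h_Q.
\]
By Lemma \ref{l:proj}(ii), $\|\pi_L\|_{L^\infty(3L)}\lesssim\inf_{3L}\M^\tau_{3L}h_Q\leq\M^\tau_{3Q}h_Q(y)$. Since $\M^\tau_{\R^d}$ of a bounded function supported in $2L$ is at most its sup norm, the polynomial term contributes at most $\lesssim\lambda_Q$ at every point, and this plays the role of Lemma \ref{l:poly}. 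The first term vanishes on $\frac32L$. For $x\in L$ and a test $\varphi$ adapted to $R\ni x$ there are two cases.
\begin{itemize}
\item[(a)] If $\ell(R)\lesssim\ell(L)$, the pairing is zero, because $R$ stays inside $\frac32L$ once the implicit constant is chosen suitably.
\item[(b)] Otherwise, decompose the cutoff as in \eqref{e:telescope}. Each piece $\varphi u_j$ with $2^j\ell(L)\lesssim\ell(R)$ is a constant times $\big(2^j\ell(L)/\ell(R)\big)^d$ multiplied by a test adapted to a cube of sidelength $\sim2^j\ell(L)$ containing $y$ and contained in $3Q$, or equal to $3Q$ at the top scale.
\end{itemize}
Summing the geometric series bounds case (b) by $\M^\tau_{3Q}h_Q(y)$. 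The resulting estimate is $\|[F_Q-F_L](x)\|\lesssim\lambda_Q$ for $y\in L\setminus E_Q$, which is a stronger form of \eqref{e:P3}, because the extended-norm version of Proposition \ref{p:principle} only needs the triangle inequality.

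Proposition \ref{p:principle} then gives the sparse bound on $Q_0$ for $\|F_{Q_0}\|$, and $\M^\tau_{\R^d}f\leq\|F_{Q_0}\|+\M^\tau_{\R^d}[\omega^\infty_{Q_0}\PM^m_{3Q_0}f]$ on $Q_0$. The error term is handled by Lemma \ref{l:po}(iv): since $|Q_0\cap\supp f|\leq\frac14|Q_0|$, we get $\|\PM^m_{3Q_0}f\|_{L^\infty(3Q_0)}\lesssim\lambda_{Q_0}$. Outside $Q_0$, for $x\in S_j\setminus S_{j-1}$ with $S_j=2^jQ_0$, I need $\M^\tau_{\R^d}f(x)\lesssim\sum_{i\leq j}\lambda_{S_i}$, or simply $\lesssim\lambda_{S_j}$. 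A test adapted to $R\ni x$ meeting $Q'$ has $\ell(R)\gtrsim\ell(S_j)$. After cutting off by $\zeta_{j'}$ at the comparable scale $S_{j'}\supseteq R\cup Q'$, it becomes a constant times a test adapted to $S_{j'}$, bounded by $\lambda_{S_{j'}}$ through the second claim of Lemma \ref{l:po}(iv), and the $(\ell(S_j)/\ell(R))^d$ decay is absorbed. The sparse collection is $\mathcal S_0\cup\{S_j\}$, exactly as before. I expect the main obstacle to be the bookkeeping in case (b), namely checking that the products $\varphi u_j$ are adapted tests with the correct decay when $R$ is much larger than $2^j\ell(L)$, including the contribution of the Hölder seminorm of order $\tau$.
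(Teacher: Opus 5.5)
Your argument is correct, but it takes a different route from the paper's. The difference lies in the family you feed into Proposition \ref{p:principle}.

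\textbf{The paper's family.} The paper does not cut off $h_Q$. It indexes by pairs $(R,\varphi)$ with $R\in\mathcal D(Q_0)$ and $\varphi$ a test on $3R$. It sets $F_Q(x)_{(R,\varphi)}=\ind_R(x)\,\varphi(h_Q)$ for $R\subseteq Q$, and zero otherwise.
\begin{itemize}
\item With this choice, \eqref{e:P3} follows at once from dyadic nesting. For $R\not\subseteq L$ one has $\ind_R(x)=\ind_R(y)$. For $R\subseteq L$ only the polynomial pairing $\varphi(\PM^m_{3L}h_Q)$ remains.
\item So neither the decomposition \eqref{e:telescope} nor any product-test estimate is needed.
\item The price is a final case analysis. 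An arbitrary test at $x$, relative to a cube $\widetilde R$, is either transferred to a dyadic $R\ni x$ with $\widetilde R\subseteq3R$ and controlled through $F_{Q_0}$, or, when $\ell(\widetilde R)\gtrsim\ell(Q_j)$, controlled by $\lambda_{Q_i}$ via Lemma \ref{l:po}(iv). This is done uniformly inside and outside $Q_0$.
\end{itemize}

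\textbf{Your family.} Yours is the literal specialization of the proof of Theorem \ref{t:model} to the identity operator. Here $\|F_{Q_0}\|$ is the full grand maximal function of $\omega^\infty_{Q_0}h_{Q_0}$, so on $Q_0$ all scales are handled at once. The cost is the cutoff bookkeeping in your case (b). It does go through with the factor $(2^j\ell(L)/\ell(R))^d$ as you describe, including the H\"older part, since $\ell(R)\gtrsim2^j\ell(L)$ for every contributing piece.

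Three small points should be made explicit.
\begin{itemize}
\item \emph{Fixed index set.} The index set must not depend on $x$, or the differences $[F_Q-F_L](x)-[F_Q-F_L](y)$ are not defined. Index by all pairs $(R,\varphi)$, with $R$ any cube and $\varphi\in\Phi^{\tau,1}(R)$ supported in $R$, and set $F_Q(x)_{(R,\varphi)}\coloneqq\ind_R(x)\,\varphi(\omega^\infty_Qh_Q)$. Then $\|F_Q(x)\|=\M^\tau_{\R^d}[\omega^\infty_Qh_Q](x)$, and your bound on $\|[F_Q-F_L](x)\|$ at every $x\in L$, through $\M^\tau_{3Q}h_Q(y)$, yields \eqref{e:P3} by the triangle inequality.
\item \emph{Range of $j$ in case (b).} State that $\varphi u_j=0$ when $2^j\ell(L)\gg\ell(R)$, because $R\ni x\in L$ while $\supp u_j$ lies outside $2^jL$. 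This is what restricts the sum to the geometric range.
\item \emph{Outside $Q_0$.} Cutting off at $S_j$ itself gives the bound $\lambda_{S_j}$, since $\ell(R)\geq\frac18\ell(S_j)$. If you instead pass to $S_{j'}\supseteq R$, you get $\lambda_{S_{j'}}$ with $j'\geq j$ unbounded. That is still fine, since $x\in S_{j'}$ and all the $S_i$ belong to $\mathcal S$, but it does not match your stated target $\sum_{i\leq j}\lambda_{S_i}$.
\end{itemize}
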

\begin{proof}
Let $f$ be supported in a cube $Q'$, and let $Q_0\coloneqq4Q'$ and $Q_j\coloneqq2^jQ_0$, $j\geq1$. For a cube $Q$, let
\[
h_Q\coloneqq f-\PM^m_{3Q}f,\qquad\lambda_Q\coloneqq\nat{f}_{m,Q},\qquad E_Q\coloneqq\left\{x\in Q:\M^\tau_{3Q}h_Q(x)>\lambda_Q\right\},
\]
so that $|E_Q|\leq r|Q|\leq\eps_d|Q|$. Let $I$ be the set of the pairs $(R,\varphi)$ with $R\in\mathcal D(Q_0)$ and $\varphi\in\Phi^{\tau,1}(3R)$ supported in $3R$, and endow $\C^I$ with the extended norm $\|v\|\coloneqq\sup_{i\in I}|v_i|$. For $Q\in\mathcal D(Q_0)$ and $x\in Q$, let
\[
F_Q(x)_{(R,\varphi)}\coloneqq\begin{cases}\ind_R(x)\,\varphi(h_Q),&R\subseteq Q,\\0,&\textrm{otherwise,}\end{cases}\qquad(R,\varphi)\in I.
\]
If $x\in R\subseteq Q$, the cube $3R$ is admissible in \eqref{e:max} for $\M^\tau_{3Q}$ at $x$, so that $\|F_Q\|\leq\M^\tau_{3Q}h_Q$ on $Q$, and \eqref{e:P2} holds. For \eqref{e:P3}, let $L\in\mathcal D(Q)\setminus\{Q\}$, $x\in L$ and $y\in L\setminus E_Q$. If $R\not\subseteq L$, then $\ind_R(x)=\ind_R(y)$, and the component of index $(R,\varphi)$ of $[F_Q-F_L](x)-[F_Q-F_L](y)$ vanishes. If $R\subseteq L$, it equals $(\ind_R(x)-\ind_R(y))\varphi(h_Q-h_L)$, and, by \eqref{e:projdiff} and Lemma \ref{l:proj}(ii) on the cube $3L$,
\[
\left|\varphi(h_Q-h_L)\right|=\left|\varphi\left(\PM^m_{3L}h_Q\right)\right|\leq\sup_{3L}\left|\PM^m_{3L}h_Q\right|\lesssim\M^\tau_{3L}h_Q(y)\leq\M^\tau_{3Q}h_Q(y)\leq\lambda_Q.
\]
Proposition \ref{p:principle}, in its vector valued form, see Remark \ref{r:principle}, yields a $\frac34$-sparse collection $\mathcal S_0\subseteq\mathcal D(Q_0)$ with $Q_0\in\mathcal S_0$ such that
\begin{equation}\label{e:grandQ0}
\left\|F_{Q_0}\right\|\lesssim\sum_{S\in\mathcal S_0}\lambda_S\ind_S\qquad\textrm{almost everywhere on } Q_0.
\end{equation}
Let $x\in\R^d$, at which \eqref{e:grandQ0} holds if $x\in Q_0$, let $j\geq0$ be the least index with $x\in Q_j$, and let $\varphi\in\Phi^{\tau,1}(\widetilde R)$ be supported in a cube $\widetilde R\ni x$, with $\varphi(f)\neq0$, so that $\widetilde R$ meets $Q'$. Two cases occur.
\begin{itemize}
\item If $j=0$ and $4\ell(\widetilde R)\leq\ell(Q_0)$, let $R\in\mathcal D(Q_0)$ contain $x$, with $2\ell(\widetilde R)\leq\ell(R)<4\ell(\widetilde R)$. Then $\widetilde R\subseteq3R$, so that $(R,\varphi/C)\in I$ for an admissible $C$, by \eqref{e:change}. Writing $f=h_{Q_0}+\PM^m_{3Q_0}f$, \eqref{e:grandQ0} and Lemma \ref{l:po}(iv), as $|Q_0\cap\supp f|\leq\frac14|Q_0|$, give
\[
|\varphi(f)|\leq C\left\|F_{Q_0}(x)\right\|+\left\|\PM^m_{3Q_0}f\right\|_{L^\infty(3Q_0)}\lesssim\sum_{S\in\mathcal S_0}\lambda_S\ind_S(x).
\]
\item Otherwise, $\ell(\widetilde R)\geq\frac18\ell(Q_j)$: by assumption if $j=0$, and, if $j\geq1$, as $\widetilde R$ meets $Q'$ and contains $x\notin Q_{j-1}$. As $x\in Q_j$, the cube $\widetilde R$ lies in the cube of center $c(Q_0)$ and sidelength $\ell(Q_j)+2\ell(\widetilde R)\leq10\ell(\widetilde R)$, so that the least $i\geq1$ with $\widetilde R\subseteq Q_i$ satisfies $\ell(Q_i)\leq20\ell(\widetilde R)$. By \eqref{e:change}, $\varphi\in C\Phi^{\tau,1}(Q_i)$ is supported in $Q_i\ni x$, and Lemma \ref{l:po}(iv), as $|Q_i\cap\supp f|\leq\frac14|Q_i|$, gives $|\varphi(f)|\lesssim\lambda_{Q_i}$.
\end{itemize}
Taking the supremum over $\widetilde R$ and $\varphi$, the proposition follows with the collection
\[
\mathcal S\coloneqq\mathcal S_0\cup\{Q_i:i\geq1\},
\]
which is $\frac12$-sparse: the sets witnessing the sparseness of $\mathcal S_0$ lie in $Q_0$, while the sets $Q_i\setminus Q_{i-1}$, $i\geq1$, are pairwise disjoint, do not meet $Q_0$, and have measure $(1-2^{-d})|Q_i|\geq\frac12|Q_i|$.
\end{proof}
\begin{proof}[Proof of Proposition \ref{c:hardynorm}]
As $\nat{f}_{m,S}\leq\mathcal N_mf$ on $S$, Proposition \ref{p:grand} and Lemma \ref{l:Ainf} give \eqref{e:hardynorm}. The comparability of the norms of $\mathcal N_nf$ and $\M^\tau_{\R^d}f$ follows from \eqref{e:hardynorm}, with $n$ in place of $m$, and from the reverse inequality of \S\ref{ss:hardymain}, and the last claim from the equivalence $\|\M^\tau_{\R^d}f\|_{L^p(\R^d)}\sim\|f\|_{H^p(\R^d)}$, $\frac d{d+\tau}<p\leq1$, recalled there, which we verify for the tests of \eqref{e:max}.

On the one hand, $\M^\tau_{\R^d}f$ dominates a constant multiple of the maximal function of $f$ built on a fixed $\phi\in \mathcal{C}^\infty_c(\R^d)$ with $\int\phi=1$, which controls $\|f\|_{H^p(\R^d)}$ \cite{FS72}. On the other hand, let $a$ be an atom of $H^p(\R^d)$ supported in a cube $R$, with vanishing moments up to order $\lfloor\tau\rfloor$ and $\|a\|_\infty\leq|R|^{-1/p}$. Taylor's formula at $c(R)$ gives
\[
\M^\tau_{\R^d}a(x)\lesssim|R|^{1-\frac1p}\,\frac{\ell(R)^\tau}{|x-c(R)|^{d+\tau}},\qquad |x-c(R)|\geq2\sqrt d\,\ell(R),
\]
which is $p$-integrable at infinity as $p>\frac d{d+\tau}$, while $\M^\tau_{\R^d}a\lesssim\M a$ has $L^2$ norm $\lesssim|R|^{\frac12-\frac1p}$, so that $\|\M^\tau_{\R^d}a\|_{L^p(\R^d)}\lesssim1$. Let $c_j$ and $a_j$ be the coefficients and the atoms of an atomic decomposition of $f$, converging in $\mathcal S'(\R^d)$, and let $\varphi$ be a test of \eqref{e:max} relative to a cube $P\ni x$. Its mollifications $\varphi_\varepsilon$ at the scales $\varepsilon\leq\ell(P)$ are admissible multiples of tests relative to the cube $3P$, by \eqref{e:change}, and converge to $\varphi$ uniformly, with supports in $3P$. Hence
\[
|\varphi(f)|=\lim_{\varepsilon\to0}|\varphi_\varepsilon(f)|\lesssim\sum_j|c_j|\,\M^\tau_{\R^d}a_j(x),
\]
and the $p$-triangle inequality, $p\leq1$, gives $\|\M^\tau_{\R^d}f\|_{L^p(\R^d)}\lesssim\|f\|_{H^p(\R^d)}$.
\end{proof}

\section{Sparse bounds for Calder\'on--Zygmund operators}\label{s:cz}
We prove the representation of Proposition \ref{p:rep} and use it to obtain the sparse bounds of Theorems \ref{t:HS} and \ref{t:jet}, and of Corollaries \ref{c:czpot} and \ref{c:sobjet}. The converse implications follow by testing on cut-off polynomials. The section ends with the sharp $A_p$ bounds of Proposition \ref{c:czweighted}.

\subsection{Test spaces and local notation}\label{ss:czloc}
The notation is that of \S\ref{ss:wavelets}, \S\ref{ss:forms}, \S\ref{ss:modeldef} and \S\ref{ss:pararep}. By a smooth partition of unity, the weak boundedness property \eqref{e:wbp} extends, up to a constant depending on $C_0$, to functions $\zeta_R,\xi_R$ supported in $C_0R$, $C_0$ being any admissible constant. This is used without further mention.

As in \cite{DGWdom}*{Remark 2.6}, the Calder\'on--Zygmund operators of Definition \ref{d:czo} act on classes of functions defined through their wavelet coefficients. For an integer $j\geq0$, let $\mathcal U_j$ be the space of those $f\in L^2(\R^d)$ for which
\[
\|f\|_{\mathcal U_j}\coloneqq\sum_{t\in2^{\Z}}\left(1+t^{-1}\right)^j\Big(\sum_{S\in\mathcal D,\,\ell(S)=t}|S|\left|\psi_S(f)\right|^2\Big)^{\frac12}<\infty.
\]
Then $\mathcal U_{j+1}\subseteq\mathcal U_j\subseteq L^2(\R^d)$, with $\|f\|_{L^2(\R^d)}\leq\|f\|_{\mathcal U_j}\leq\|f\|_{\mathcal U_{j+1}}$, the expansions \eqref{e:onb} of the elements of $\mathcal U_j$ converge in $\mathcal U_j$, and, for $j\leq k$, $\mathcal U_j$ contains $\mathcal{C}^\infty_c(\R^d)$ and the classes $\Phi^{j+\delta,\Subset}(R)$, by (ii) of the next lemma.
\begin{lemma}\label{l:Uj}
Let $0\leq\kappa\leq k$.
\begin{enumerate}[label=(\roman*)]
\item If $|\alpha|=\kappa$ and $g\in\mathcal U_\kappa$, the distribution $\partial^\alpha g$ belongs to $\mathcal U_0$, and $\|\partial^\alpha g\|_{\mathcal U_0}\lesssim\|g\|_{\mathcal U_\kappa}$.
\item If $K\subset\R^d$ is compact and $0<s<1$, then $\|u\|_{\mathcal U_\kappa}\lesssim_{K,s}\|u\|_{\mathcal{C}^{\kappa,s}(\R^d)}$ for all $u\in\mathcal{C}^{\kappa,s}(\R^d)$ supported in $K$, the norm on the right being the sum of the uniform norms of the derivatives of $u$ of order at most $\kappa$ and of the $s$-H\"older seminorms of those of order $\kappa$.
\item If $g$ is a finite linear combination of wavelets $\psi_S$, its mollifications at the scales $0<\eps\leq1$ are supported in a fixed compact set, and converge to $g$ in $\mathcal U_\kappa$ and uniformly as $\eps\to0$.
\end{enumerate}
\end{lemma}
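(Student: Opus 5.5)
The plan is to work entirely on the wavelet side, using the almost-orthogonality of the $L^\infty$-normalized smooth wavelets $|S|^{1/2}\psi_S$ together with the moment conditions of $\psi_S\in c\Psi^{k+1,\Subset}(S)$.

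For (ii), let $u$ be supported in $K$ with $\|u\|_{\mathcal C^{\kappa,s}}\le1$. The small and large scales are handled separately.

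At small scales, $\ell(S)=t\le1$, the $k+1\ge\kappa+1$ vanishing moments of $\psi_S$ allow subtracting the Taylor polynomial of $u$ of degree $\kappa$ at $c(S)$. The Hölder remainder then gives $|\psi_S(u)|\lesssim t^{\kappa+s}$. Only $\lesssim_K t^{-d}$ cubes of that scale meet $\w^{-1}$-neighbourhoods of $K$, so
\[
\Big(\sum_{\ell(S)=t}|S||\psi_S(u)|^2\Big)^{1/2}\lesssim_K t^{\kappa+s}.
\]
Multiplied by $(1+t^{-1})^\kappa\sim t^{-\kappa}$, this is summable over $t\le1$ since $s>0$.

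At large scales, $t>1$, use $|\psi_S(u)|\le\|\psi_S\|_\infty\|u\|_1\lesssim_K t^{-d}$. Only boundedly many $S$ of each scale meet $K$, so the inner sum is $\lesssim_K t^{d/2}t^{-d}=t^{-d/2}$, which is summable.

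For (i), the identity $\psi_S(\partial^\alpha g)=(-1)^\kappa(\partial^\alpha\psi_S)(g)$ and the expansion $g=\sum_R|R|\psi_R(g)\psi_R$ give the coefficient matrix
\[
\psi_S(\partial^\alpha g)=(-1)^{\kappa}\sum_R|R|\,\psi_R(g)\,\langle\partial^\alpha\psi_S,\psi_R\rangle.
\]
The matrix entries obey standard almost-orthogonality estimates. Since $\partial^\alpha\psi_S$ is $\ell(S)^{-\kappa}$ times a smooth bump with many vanishing moments, and $\psi_R$ has $k+1$ moments and $k+1$ derivatives, we get
\[
|R|^{1/2}|S|^{1/2}|\langle\partial^\alpha\psi_S,\psi_R\rangle|\lesssim\ell(R)^{-\kappa}\min\Big(\tfrac{\ell(R)}{\ell(S)},\tfrac{\ell(S)}{\ell(R)}\Big)^{N}\Big(1+\tfrac{|c(R)-c(S)|}{\max(\ell(R),\ell(S))}\Big)^{-d-1}.
\]
In the case $\ell(S)\le\ell(R)$, the bound $\ell(S)^{-\kappa}(\ell(S)/\ell(R))^{\kappa+1+\dots}$ is used. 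The decay exponent $N$ exceeds $\kappa$ because the moments and smoothness are of order $k+1>\kappa$.

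Schur's test at fixed scales $t=\ell(S)$ and $t'=\ell(R)$ then bounds the $\ell^2$ norm at scale $t$ by
\[
\sum_{t'}\min(t/t',t'/t)^{N}\,t'^{-\kappa}\,\|g\|_{t'},
\]
where $\|g\|_{t'}$ denotes the scale-$t'$ square sum. Summing over $t$, and using $t'^{-\kappa}\le(1+t'^{-1})^\kappa$, the geometric factor in $t/t'$ sums to $O(1)$. This yields $\|\partial^\alpha g\|_{\mathcal U_0}\lesssim\|g\|_{\mathcal U_\kappa}$.

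Before all this, I would note that $\partial^\alpha g\in L^2$ follows from the same bound, which gives a square-summable coefficient sequence. The coefficient sequence is then identified with the distribution $\partial^\alpha g$ by pairing with $\mathcal C^\infty_c$ functions.

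For (iii), let $g$ be a finite combination of wavelets and $\phi_\eps$ a mollifier. The supports are contained in a fixed neighbourhood of $\supp g$, and $g*\phi_\eps\to g$ uniformly since $g$ is continuous. For $\mathcal U_\kappa$ convergence, apply (ii) to $u=g*\phi_\eps-g$ with some $s<s'$, where $g\in\mathcal C^{\kappa,s'}$ as the wavelets lie in $\mathcal C^{k+1}\supseteq\mathcal C^{\kappa,1}$. Mollification converges in $\mathcal C^{\kappa,s}$ for $s<s'$, by interpolation between uniform convergence of the $\kappa$-th derivatives and uniform boundedness of their $s'$-seminorm, so (ii) concludes.

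The main obstacle is the almost-orthogonality bookkeeping in (i). One must ensure that the scale decay of $\langle\partial^\alpha\psi_S,\psi_R\rangle$ beats the factor $\ell(S)^{-\kappa}$ when $\ell(S)\ll\ell(R)\lesssim1$. This is where the smoothness $k+1>\kappa$ of $\psi_R$ is spent: integrate by parts onto $\psi_R$ and Taylor expand against the moments of $\psi_S$.
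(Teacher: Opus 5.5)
Your proposal is correct and follows the paper's route: almost orthogonality of the differentiated wavelets for (i), Taylor expansion against the vanishing moments at scales $t\leq1$ and the size bound at scales $t>1$ for (ii), and convergence of the mollifications in $\mathcal{C}^{\kappa,s}$ combined with (ii) for (iii). Two slips are harmless. First, Schur's test at fixed scales loses a factor $\max\{t/t',t'/t\}^{d/2}$ from counting interacting cubes, so the exponent in your scale sum should be $N-d/2$ rather than $N$; this loss is exactly compensated by the $|S|^{\frac12}|R|^{\frac12}$ gain in your entries, leaving decay $\min\{t/t',t'/t\}^{k+1-\kappa}$ with $k+1-\kappa\geq1$. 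Second, the wavelet class only guarantees $W^{k+1,\infty}\subseteq\mathcal{C}^{k,1}\subseteq\mathcal{C}^{\kappa,1}$, and the inclusion $\mathcal{C}^{k+1}\supseteq\mathcal{C}^{\kappa,1}$ you wrote is reversed.
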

\begin{proof}
Part (i) follows from almost orthogonality between the differentiated wavelets at different scales. For (ii), Taylor's formula and the vanishing moments give a summable factor $t^s$ at scales $t\leq1$, while the size bound gives $t^{-\frac d2}$ at scales $t>1$. Finally, a finite wavelet sum belongs to $\mathcal{C}^{k,1}$ and has compact support, so its mollifications converge in $\mathcal{C}^{\kappa,s}$ for $0<s<1$; (ii) gives (iii).
\end{proof}
As the expansions \eqref{e:onb} converge in $\mathcal U_0$ and in $\mathcal U_j$, they may be inserted in the pairings $\langle Tf,g\rangle$, $f\in\mathcal U_0$, $g\in\mathcal U_j$, by continuity, whenever the resulting series converge absolutely.
The basis $\{\psi_S\}$ of \S\ref{ss:wavelets} is a Daubechies basis, and stems from a multiresolution analysis with compactly supported scaling function \cite{MC97}. Denote by $\mathbb E_\ell$ the orthogonal projection onto the closed span of $\{\psi_S:\ell(S)>\ell\}$, so that
\begin{equation}\label{e:mra}
f-\mathbb E_\ell f=\sum_{\ell(S)\leq\ell}|S|\psi_S(f)\psi_S,\qquad f\in L^2(\R^d).
\end{equation}
The kernel of $\mathbb E_\ell$ is a sum of tensor products of scaling functions, so that $\mathbb E_\ell$ is bounded on $L^1(\R^d)$ and on $L^\infty(\R^d)$ uniformly in $\ell$, and, the wavelets being compactly supported, the identity \eqref{e:mra} extends to $f\in L^\infty(\R^d)$, almost everywhere, the series converging in $L^2$ on compacta.

\subsection{Polynomial actions and symbol size}\label{ss:cutoff}
If $\partial^\alpha Tf$ agrees on $\mathcal{C}^\infty_c(\R^d)$ with a locally integrable function $h$, then \[\langle\partial^\alpha Tf,g\rangle=\int hg\] for every finite wavelet sum $g$ by Lemma \ref{l:Uj}(iii). The next lemma allows us to regard $T$ as an operator of order $\kappa=|\alpha|$, and gives the polynomial actions used in \eqref{e:symbols}.
\begin{lemma}\label{l:reduce}\label{l:cutoff}
Let $0\leq\kappa\leq k$.
\begin{enumerate}[label=(\roman*)]
\item A Calder\'on--Zygmund operator $T$ of order $k$ is a constant multiple of one of order $\kappa$. In particular, for $|\alpha|=\kappa$, $\partial^\alpha T$ is a constant multiple of a Calder\'on--Zygmund operator of order $(\kappa,\kappa)$.
\item Let $\mathfrak L$ be a Calder\'on--Zygmund operator of order $(\kappa,\kappa)$, with kernel $K$, $Q$ a cube, $\pi$ a polynomial of degree at most $\kappa$, and $g\in\mathcal U_\kappa$ supported in a cube $P$, with $\int g=0$ if $\pi$ has degree $\kappa$. Then
\[
\lim_{A\to\infty}\left\langle\mathfrak L\left(\pi\omega^\infty_{AQ}\right),g\right\rangle
\]
exists, does not depend on $Q$, is linear in $\pi$ and in $g$, and equals
\begin{equation}\label{e:actionsplit}
\left\langle\mathfrak L\left(\pi\omega^\infty_P\right),g\right\rangle+\int_{\R^d}\pi(y)\left[1-\omega^\infty_P\right](y)\int_{\R^d}K(x,y)\,g(x)\,\d x\,\d y.
\end{equation}
We denote this limit by $\langle\mathfrak L\pi,g\rangle$; for $g=\psi_S$ it agrees with \eqref{e:polyaction}.
\item For $\mathfrak L$ as in \emph{(ii)} and $S\in\mathcal D$, the limit $\langle\mathfrak L\psi_S,\mathbf 1\rangle$ in \eqref{e:polyaction} exists. If $\mathfrak L=\partial^\alpha T$, with $T$ as in \emph{(i)} and $|\alpha|=\kappa\geq1$, it vanishes.
\end{enumerate}
\end{lemma}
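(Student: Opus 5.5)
For (i), I will work with the distributional pairing $\langle\partial^\alpha Tf,g\rangle=(-1)^\kappa\langle Tf,\partial^\alpha g\rangle$. The task is to show that $\partial^\alpha T$, up to an admissible constant, satisfies the weak boundedness property of order $(\kappa,\kappa)$ and the kernel estimates \eqref{e:czkernel} with $j=\kappa$. For the kernel, when $f,g$ have disjoint supports, integration by parts in $x$ moves $\partial^\alpha$ onto the kernel, giving the kernel $\partial^\alpha_xK$. Its $i$-th $x$-derivatives, $i\leq k-\kappa$, are $\nabla^{i+\kappa}_xK$, which obey \eqref{e:czkernel} with $j=\kappa$, up to constants.

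For the weak boundedness property, take $\xi_R\in\Phi^{\kappa+\delta,\Subset}(R)$. Then $\ell(R)^\kappa\partial^\alpha\xi_R$ is a constant multiple of an element of $\Phi^{\delta,\Subset}(R)$, which is exactly the test class of \eqref{e:wbp} with $j=0$. This gives $|R|\ell(R)^\kappa|\langle\partial^\alpha T\zeta_R,\xi_R\rangle|\lesssim1$. The first claim, that $T$ is an operator of order $\kappa$, is immediate: the kernel conditions for $i\leq k-\kappa$ are a subset of those for $i\leq k$, the classes $\Phi^{k+\delta,\Subset}\subseteq\Phi^{\kappa+\delta,\Subset}$ are handled by a smooth approximation, and continuity $\mathcal U_0\to\mathcal U_\kappa'$ follows from Lemma \ref{l:Uj}(i).

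For (ii), fix $A$ large enough that $\frac32AQ\supseteq2P$. Split
\[
\pi\omega^\infty_{AQ}=\pi\omega^\infty_P+\pi\left(\omega^\infty_{AQ}-\omega^\infty_P\right).
\]
The second piece is supported away from $\frac32P\supseteq\supp g$, so the kernel representation applies to it. This gives
\[
\int\pi(y)\left[\omega^\infty_{AQ}-\omega^\infty_P\right](y)\int K(x,y)g(x)\,\d x\,\d y.
\]
To pass to the limit I need the inner integral to be $O(|y|^{-d-\kappa-\epsilon})$ for large $|y|$, so that it is integrable against $|\pi(y)|\lesssim|y|^\kappa$.

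The main obstacle is this decay, because the crude bound $|K|\lesssim|x-y|^{-d-\kappa}$ only matches $|\pi|\lesssim|y|^{\kappa}$ and leaves a logarithmic divergence. When $\deg\pi<\kappa$, the size bound alone suffices, since $|y|^{\deg\pi-d-\kappa}$ is integrable at infinity. When $\deg\pi=\kappa$, I will use $\int g=0$: subtract $K(c(P),y)$ and apply the Hölder estimate of \eqref{e:czkernel} with $i=0$ in the variable $x$. This gives decay $|y|^{-d-\kappa-\delta}$, which is integrable against $|\pi|$. Dominated convergence then yields the limit \eqref{e:actionsplit}. Independence of $Q$ holds because the formula involves only $P$, and linearity is evident. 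Agreement with \eqref{e:polyaction} follows by taking $Q=S$ and $P$ a cube containing $\w S$.

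For (iii), I treat $\langle\mathfrak L\psi_S,\omega^\infty_{AS}\rangle$ symmetrically. Split $\omega^\infty_{AS}=\omega^\infty_{P}+(\omega^\infty_{AS}-\omega^\infty_P)$ with $P\supseteq\supp\psi_S$. Away from the diagonal, $\psi_S$ has vanishing mean, so the $y$-Hölder bound of \eqref{e:czkernel} gives decay $|x|^{-d-\kappa-\delta}$, and this is integrable, so the limit exists. When $\mathfrak L=\partial^\alpha T$ with $\kappa\geq1$, I write
\[
\langle\partial^\alpha T\psi_S,\omega^\infty_{AS}\rangle=(-1)^\kappa\langle T\psi_S,\partial^\alpha\omega^\infty_{AS}\rangle.
\]
The function $\partial^\alpha\omega^\infty_{AS}$ is supported in the annulus $2AS\setminus\frac32AS$ and has size $(A\ell(S))^{-\kappa}$. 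The kernel bound with the cancellation of $\psi_S$ gives $|K(x,\cdot)$ paired with $\psi_S|\lesssim\ell(S)^\delta|x|^{-d-\delta}$ there. Integrating over the annulus, the whole pairing is $\lesssim A^{-\kappa-\delta}\to0$, so the limit vanishes.
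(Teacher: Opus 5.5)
Your treatment of (ii) and (iii) is the paper's argument. Your direct check that $\partial^\alpha T$ has the kernel and weak boundedness estimates of order $(\kappa,\kappa)$ is also correct, via integration by parts and the observation that $\ell(R)^\kappa\partial^\alpha\xi_R\in C\Phi^{\delta,\Subset}(R)$, but only \emph{provided} $T$ is already known to have order $\kappa$.

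\textbf{The gap is the first claim of (i), which you call immediate.} Passing from order $k$ to order $\kappa$ leaves the kernel conditions and the continuity $\mathcal U_0\to\mathcal U_0'$ unchanged. What changes is the weak boundedness property. It is assumed only for $\zeta_R$ in the smaller class $\Phi^{k+\delta,\Subset}(R)$, and must now hold, with an admissible constant, for every $\zeta_R\in\Phi^{\kappa+\delta,\Subset}(R)$. This is the whole content of (i), and a smooth approximation does not supply it, for three reasons:
\begin{itemize}
\item A mollification of $\zeta_R$ at scale $\varepsilon\ll\ell(R)$ lies in $C\Phi^{k+\delta,\Subset}(R)$ only with a constant growing like a power of $\ell(R)/\varepsilon$.
\item A family bounded in $C\Phi^{k+\delta}(R)$ with fixed $C$ cannot converge to a function that is not of class $\mathcal C^{k,\delta}$.
\item The continuity of $T$ on $\mathcal U_0$ carries no admissible constant, so it can pass the pairing to the limit but cannot bound it.
\end{itemize}
The same gap affects the order-$(\kappa,\kappa)$ weak boundedness of $\partial^\alpha T$, which you deduce from this claim.

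\textbf{The missing ingredient} is a multiscale decomposition of $\zeta_R$, combined with the kernel estimates and with the weak boundedness property at \emph{smaller} cubes. The paper proceeds as follows.
\begin{itemize}
\item It expands $\zeta_R=\sum_S|S|\psi_S(\zeta_R)\psi_S$ in the wavelet basis, whose elements are smooth enough for the order-$k$ property.
\item The $\kappa+\delta$ smoothness of $\zeta_R$ and the moments of $\psi_S$ give $|\psi_S(\zeta_R)|\lesssim|R|^{-1}\min\{(\ell(S)/\ell(R))^{\kappa+\delta},(\ell(R)/\ell(S))^d\}$.
\item A cutoff around the smaller of $S$ and $R$ splits $\langle T\psi_S,\xi_R\rangle$ into a near part and a far part. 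The near part is controlled by \eqref{e:wbp} on the smaller cube. The far part is controlled by the H\"older bound and $\int\psi_S=0$ when $\ell(S)\leq\ell(R)$, and by the size bound with a loss $1+\log(\ell(S)/\ell(R))$ otherwise.
\item Counting cubes at each scale, the double sum is $\lesssim|R|^{-1}$, and the continuity of $T$ justifies inserting the expansion.
\end{itemize}
A Littlewood--Paley alternative would also work: split $\zeta_R$ into pieces smooth at scale $2^{-i}\ell(R)$ and of size $|R|^{-1}2^{-i(\kappa+\delta)}$, each cut into bumps at that scale. Some argument of this kind must be given; as written, (i) is not proved.

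Two minor slips. For order $(\kappa,0)$ the kernel conditions run over $0\leq i\leq\kappa$, not $i\leq k-\kappa$. The continuity $\mathcal U_0\to\mathcal U_\kappa'$ from Lemma \ref{l:Uj}(i) is what $\partial^\alpha T$ needs, not $T$.
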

\begin{proof}
We prove (i). The kernel estimates of order $(\kappa,0)$ are among those of order $(k,0)$, and it remains to verify \eqref{e:wbp} with $\kappa$ in place of $k$. Let $\zeta_R\in\Phi^{\kappa+\delta,\Subset}(R)$ and $\xi_R\in\Phi^{\delta,\Subset}(R)$. As $\psi_S$ annihilates $\P^{k+1}$ and is supported in $\w S$, subtracting from $\zeta_R$ its Taylor polynomial of degree $\kappa$ at $c(S)$ if $\ell(S)\leq\ell(R)$, and the bound $\|\psi_S\|_\infty\lesssim|S|^{-1}$ otherwise, give
\[
|\psi_S(\zeta_R)|\lesssim\frac1{|R|}\min\left\{\left(\frac{\ell(S)}{\ell(R)}\right)^{\kappa+\delta},\left(\frac{\ell(R)}{\ell(S)}\right)^d\right\},
\]
and $\psi_S(\zeta_R)=0$ unless $\w S\cap\w R\neq\varnothing$. For such $S$, we claim that
\begin{equation}\label{e:wavewbp}
|\langle T\psi_S,\xi_R\rangle|\lesssim\frac1{|R|}\quad\textrm{if }\ell(S)\leq\ell(R),\qquad |\langle T\psi_S,\xi_R\rangle|\lesssim\frac1{|S|}\left(1+\log\frac{\ell(S)}{\ell(R)}\right)\quad\textrm{if }\ell(S)>\ell(R).
\end{equation}
Granting the claim, as there are $\lesssim(\ell(R)/\ell)^d$ such cubes $S$ of each sidelength $\ell\leq\ell(R)$, and boundedly many of each sidelength $\ell>\ell(R)$,
\[
\sum_{S\in\mathcal D}|S||\psi_S(\zeta_R)||\langle T\psi_S,\xi_R\rangle|\lesssim\frac1{|R|}.
\]
Thus the expansion of $\zeta_R$ in \eqref{e:onb} may be inserted in $\langle T\zeta_R,\xi_R\rangle$, and \eqref{e:wbp} follows. For the claim, let $P$ be the smaller of the cubes $S$ and $R$, and $\phi$ a smooth function equal to one on $4\sqrt d\,\w P$ and supported in $8\sqrt d\,\w P$, with $|\nabla^i\phi|\lesssim\ell(P)^{-i}$. If $P=S$, then $\xi_R\phi$ is $|S||R|^{-1}$ times an element of $C\Phi^{\delta,1}(S)$ supported in $8\sqrt d\,\w S$, and \eqref{e:wbp} at the cube $S$ gives $|\langle T\psi_S,\xi_R\phi\rangle|\lesssim|R|^{-1}$, while the vanishing mean of $\psi_S$ and the H\"older bound in \eqref{e:czkernel} give
\[
\left|\langle T\psi_S,\xi_R(1-\phi)\rangle\right|\lesssim\frac1{|R|}\int_{|x-c(S)|\gtrsim\ell(S)}\frac{\ell(S)^\delta}{|x-c(S)|^{d+\delta}}\,\d x\lesssim\frac1{|R|}.
\]
If $P=R$, then $\psi_S\phi$ is $|R||S|^{-1}$ times an element of $C\Phi^{k+\delta,1}(R)$ supported in $8\sqrt d\,\w R$, and \eqref{e:wbp} at the cube $R$ gives $|\langle T(\psi_S\phi),\xi_R\rangle|\lesssim|S|^{-1}$, while the size bound in \eqref{e:czkernel} gives
\[
\left|\langle T(\psi_S(1-\phi)),\xi_R\rangle\right|\lesssim\int|\xi_R(x)|\int_{\ell(R)\lesssim|y-c(R)|\lesssim\ell(S)}\frac{|S|^{-1}}{|x-y|^d}\,\d y\,\d x\lesssim\frac1{|S|}\left(1+\log\frac{\ell(S)}{\ell(R)}\right).
\]
For $|\alpha|=\kappa$, Lemma \ref{l:Uj}(i) gives the continuity of $\partial^\alpha T$ from $\mathcal U_0$ to $\mathcal U_\kappa'$, and \cite{DGWdom}*{Lemma 5.4} gives its kernel and weak boundedness estimates of order $(\kappa,\kappa)$.

We next prove (ii) and (iii).  For $A$ large, $\omega^\infty_{AQ}-\omega^\infty_P$ vanishes on $\frac32P$, and
\[
\left\langle\mathfrak L\left(\pi\omega^\infty_{AQ}\right),g\right\rangle=\left\langle\mathfrak L\left(\pi\omega^\infty_P\right),g\right\rangle+\int_{\R^d}\pi(y)\left[\omega^\infty_{AQ}-\omega^\infty_P\right](y)\int_{\R^d}K(x,y)\,g(x)\,\d x\,\d y.
\]
If $\pi$ has degree less than $\kappa$, the size bound in \eqref{e:czkernel} bounds the inner integral by a constant multiple of $\|g\|_1|y-c(P)|^{-d-\kappa}$ off $\frac32P$, which is integrable against $|\pi(y)|$. If $\pi$ has degree $\kappa$, as $\int g=0$, $K(x,y)$ may be replaced by $K(x,y)-K(c(P),y)$, and the H\"older bound in \eqref{e:czkernel} provides an integrable majorant at infinity. By dominated convergence, the limit in (ii) exists and equals \eqref{e:actionsplit}, which is linear in $\pi$ and in $g$, and does not depend on $Q$; it is the action of \cite{DGWdom}*{(5.4)}. Likewise, with $\varpi\coloneqq\omega^\infty_{4\sqrt d\w S}$, for $A$ large,
\[
\left\langle\mathfrak L\psi_S,\omega^\infty_{AS}\right\rangle=\langle\mathfrak L\psi_S,\varpi\rangle+\int_{\R^d}\left[\omega^\infty_{AS}-\varpi\right](x)\int_{\R^d}K(x,y)\,\psi_S(y)\,\d y\,\d x,
\]
and the vanishing mean of $\psi_S$ gives \[\left|\int K(x,y)\psi_S(y)\,\d y\right|\lesssim\ell(S)^\delta|x-c(S)|^{-d-\kappa-\delta}\] where $\varpi(x)\neq1$, so that dominated convergence applies. Finally, if $\mathfrak L=\partial^\alpha T$ with $|\alpha|=\kappa\geq1$, then $\langle\partial^\alpha T\psi_S,\omega^\infty_{AS}\rangle=(-1)^\kappa\langle T\psi_S,\partial^\alpha\omega^\infty_{AS}\rangle$ by \eqref{e:dalpha}, where $\partial^\alpha\omega^\infty_{AS}$ is supported in $2AS\setminus\frac32AS$, with $L^1$ norm $\lesssim(A\ell(S))^{d-\kappa}$. The kernel bounds of $T$ and the vanishing mean of $\psi_S$ bound the last pairing by a constant multiple of $\ell(S)^\delta(A\ell(S))^{-d-\delta}(A\ell(S))^{d-\kappa}$, which tends to zero.
\end{proof}
\begin{lemma}\label{l:size}
Let $T$ be a Calder\'on--Zygmund operator of order $k$, $|\alpha|=\kappa\leq k$ and $|\gamma|\leq\kappa$.
\begin{enumerate}[label=(\roman*)]
\item The symbols $b^{\alpha,\gamma}$ satisfy \eqref{e:size}, and, if $\kappa=0$, $|b^{0,\star}_S|\lesssim1$ for all $S\in\mathcal D$.
\item If $|\gamma|<\kappa$ and $X$ is $L^\infty(\R^d)$ or $\mathrm{BMO}(\R^d)$, then $b^{\alpha,\gamma}$ belongs to $X$ if and only if it belongs locally to $X$, and its norm and its local norm differ by at most an admissible constant.
\end{enumerate}
\end{lemma}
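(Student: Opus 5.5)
We derive both parts from the representation \eqref{e:actionsplit} of Lemma \ref{l:cutoff}(ii) and from the kernel and weak boundedness estimates of order $(\kappa,\kappa)$ for $\mathfrak L\coloneqq\partial^\alpha T$, which hold up to a constant by Lemma \ref{l:reduce}(i).

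\emph{Part (i).} Fix $S\in\mathcal D$ and let $\pi\coloneqq(\cdot-c(S))^\gamma$, $g\coloneqq\psi_S$ and $P\coloneqq4\sqrt d\,\w S$. If $|\gamma|=\kappa$, the hypothesis $\int g=0$ of Lemma \ref{l:cutoff}(ii) holds by the vanishing moments of $\psi_S$. The symbol $b^{\alpha,\gamma}_S$ then equals the sum of the two terms in \eqref{e:actionsplit}.

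For the local term, note that $|\nabla^i(\pi\omega^\infty_P)|\lesssim\ell(S)^{|\gamma|-i}$ on $2P$. Hence $\pi\omega^\infty_P$ equals $\ell(S)^{|\gamma|}|P|$ times an element of $C\Phi^{\kappa+\delta,\Subset}(P)$. Also $\psi_S\in C\Phi^{\kappa+\delta,\Subset}(P)$ after the harmless enlargement of supports allowed in \S\ref{ss:czloc}. The weak boundedness property \eqref{e:wbp} of order $(\kappa,\kappa)$ at the cube $P$ then gives
\[
|\langle\mathfrak L(\pi\omega^\infty_P),\psi_S\rangle|\lesssim\ell(S)^{|\gamma|}|P|\cdot|P|^{-1}\ell(P)^{-\kappa}\sim\ell(S)^{|\gamma|-\kappa}.
\]

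For the tail term, $y$ lies outside $\frac32P$, so $|y-c(S)|\gtrsim\ell(S)$, far from $\supp\psi_S$. We subtract $K(c(S),y)$ using $\int\psi_S=0$, and the H\"older estimate in \eqref{e:czkernel}, with $i=0$ and $j=\kappa$, yields
\[
\Big|\int K(x,y)\psi_S(x)\,\d x\Big|\lesssim\ell(S)^\delta|y-c(S)|^{-d-\kappa-\delta}.
\]
Multiplying by $|\pi(y)|\leq|y-c(S)|^{|\gamma|}$ and integrating over $|y-c(S)|\gtrsim\ell(S)$ gives $\lesssim\ell(S)^{|\gamma|-\kappa}$, since $|\gamma|\leq\kappa<\kappa+\delta$.

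The bound $|b^{0,\star}_S|\lesssim1$ is obtained in the same way from the splitting in the proof of Lemma \ref{l:cutoff}(iii), with $\kappa=0$. The local term $\langle T\psi_S,\varpi\rangle$ is $\lesssim1$ by \eqref{e:wbp} at the cube $4\sqrt d\,\w S$. The tail is bounded by $\int_{|x-c(S)|\gtrsim\ell(S)}\ell(S)^\delta|x-c(S)|^{-d-\delta}\,\d x\lesssim1$.

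\emph{Part (ii).} Let $|\gamma|<\kappa$. The symbol $b^{\alpha,\gamma}$ and its high frequency part differ exactly by the low frequency symbol $\{b^{\alpha,\gamma}_S\ind_{\{\ell(S)>1\}}\}$. The plan is to show that this low frequency symbol belongs to $L^\infty(\R^d)$, hence to $\mathrm{BMO}(\R^d)$, with norm $\lesssim1$. We use the candidate function
\[
\mathfrak g\coloneqq\sum_{\ell(S)>1}|S|b^{\alpha,\gamma}_S\psi_S.
\]
By (i), $|b^{\alpha,\gamma}_S|\lesssim\ell(S)^{|\gamma|-\kappa}\leq\ell(S)^{-1}$ when $\ell(S)>1$. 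By \eqref{e:count}, at each $x$ only boundedly many $S$ of each sidelength have $\psi_S(x)\neq0$, and $|S||\psi_S|\lesssim1$. Hence the series converges absolutely and locally uniformly, with $\|\mathfrak g\|_\infty\lesssim\sum_{t\in2^{\Z},\,t>1}t^{-1}\lesssim1$.

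Next we identify the wavelet coefficients of $\mathfrak g$. Each $\psi_{S'}$ is compactly supported, so $\psi_{S'}(\mathfrak g)$ may be computed term by term by absolute convergence. Orthogonality then gives $\psi_{S'}(\mathfrak g)=b^{\alpha,\gamma}_{S'}$ if $\ell(S')>1$, and $\psi_{S'}(\mathfrak g)=0$ otherwise.

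Therefore, if $\mathfrak b_{\mathrm{loc}}$ represents the high frequency part, then $\mathfrak b_{\mathrm{loc}}+\mathfrak g$ represents the full symbol $b^{\alpha,\gamma}$. Conversely, if $\mathfrak b$ represents $b^{\alpha,\gamma}$, then $\mathfrak b-\mathfrak g$ represents the high frequency part. In both directions the norms in $X$ change by at most $\|\mathfrak g\|_X\lesssim\|\mathfrak g\|_\infty\lesssim1$.

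The only delicate point I anticipate is that $\mathfrak g$ is not in $L^2(\R^d)$, since the coefficients need not decay in position. Definition \ref{d:symbols}(ii) only requires $\psi_S(\mathfrak b)=b_S$ for all $S$, which the absolute convergence argument above provides. Everything else is routine kernel bookkeeping.
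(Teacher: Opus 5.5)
Your proposal is correct and follows the paper's proof essentially step for step. In (i) you use the splitting \eqref{e:actionsplit}, bound the local term by weak boundedness and the tail by the H\"older bound, and treat $b^{0,\star}$ by the same argument with the variables exchanged. In (ii) you use the bounded, uniformly convergent low-frequency series $\mathfrak g$, whose wavelet coefficients are identified by orthonormality.

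One cosmetic point: \eqref{e:wbp} is stated for dyadic cubes. It is therefore cleaner to apply it, as the paper does, at $S$ itself with supports enlarged to a fixed dilate $C_0S$, rather than at the non-dyadic cube $P=4\sqrt d\,\w S$.
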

\begin{proof}
We prove (i). Let $S\in\mathcal D$, and let $Q$ be the cube of center $c(S)$ and sidelength $8\sqrt d\,\w\ell(S)$. With $K$ the kernel of $\partial^\alpha T$, \eqref{e:actionsplit}, with $P\coloneqq Q$, gives
\[
b^{\alpha,\gamma}_S=\left\langle\partial^\alpha T\left[(\cdot-c(S))^\gamma\omega^\infty_Q\right],\psi_S\right\rangle+\int_{\R^d}(y-c(S))^\gamma\left[1-\omega^\infty_Q\right](y)\int_{\R^d}K(x,y)\,\psi_S(x)\,\d x\,\d y.
\]
As $(\cdot-c(S))^\gamma\omega^\infty_Q$ is $\ell(S)^{|\gamma|}|S|$ times an element of $C\Phi^{\kappa+\delta,1}(S)$ supported in $2Q$, \eqref{e:wbp} with $j=\kappa$ bounds the first term by a constant multiple of $\ell(S)^{|\gamma|-\kappa}$. In the second, $y\notin\frac32Q$, and the vanishing mean of $\psi_S$ and the H\"older bound in \eqref{e:czkernel} give \[\left|\int K(x,y)\psi_S(x)\,\d x\right|\lesssim\ell(S)^\delta|y-c(S)|^{-d-\kappa-\delta},\] whence the term is $\lesssim\ell(S)^{|\gamma|-\kappa}$, as $|\gamma|\leq\kappa$. This is \eqref{e:size}, cf.\ \cite{DGWdom}*{(5.7)}. If $\kappa=0$, the same argument, with the roles of the two arguments exchanged, bounds $b^{0,\star}_S$.

We next move to (ii). At each point and each sidelength, boundedly many $\psi_S$ do not vanish, and $|S||\psi_S|\lesssim1$. Thus, by \eqref{e:size}, the series
\[
g\coloneqq\sum_{\ell(S)>1}|S|b^{\alpha,\gamma}_S\psi_S
\]
converges absolutely and uniformly, with $\|g\|_\infty\lesssim1$, as $|\gamma|<\kappa$, and $\psi_S(g)=b^{\alpha,\gamma}_S\ind_{\{\ell(S)>1\}}$, by orthonormality and uniform convergence of the series. Hence a function $\mathfrak b\in X$ has the wavelet coefficients $b^{\alpha,\gamma}_S\ind_{\{\ell(S)\leq1\}}$ if and only if $\mathfrak b+g\in X$ has the wavelet coefficients $b^{\alpha,\gamma}_S$, and, as $\|g\|_{\mathrm{BMO}}\leq2\|g\|_\infty\lesssim1$, the norms of Definition \ref{d:symbols} differ by at most an admissible constant.
\end{proof}
\subsection{Paraproducts}\label{ss:para}
\begin{lemma}\label{l:para}
Let $b\in\mathrm{BMO}(\R^d)$.
\begin{enumerate}[label=(\roman*)]
\item Up to a constant multiple of $\|b\|_{\mathrm{BMO}}$, $P_b$ and $P^*_b$, as well as their restrictions to subcollections of $\mathcal D$, are model wavelet operators, for which \eqref{e:range} holds for every non-integer $0<\tau<k+1$.
\item If $b\in L^\infty(\R^d)$, then $P_b$ and its restrictions \[ f\mapsto \sum_{\ell(R)\leq\ell}|R|\psi_R(b)\vartheta_R(f )\psi_R,\qquad  \ell\in2^{\Z},\] satisfy \eqref{e:V1} up to a constant multiple of $\|b\|_\infty$.
\item Let $0\leq|\gamma|\leq\kappa\leq k$ and let $\beta=\{\beta_S\}$ be a sequence with $|\beta_S|\leq\ell(S)^{|\gamma|-\kappa}$ for all $S\in\mathcal D$. The operator $\Pi$ defined by $\langle\Pi f,g\rangle\coloneqq\Pi_\beta(\partial^\gamma f,g)$ is a constant multiple of a Calder\'on--Zygmund operator of order $(\kappa,\kappa)$, with
\[\langle\Pi[(\cdot-c(S))^\nu],\psi_S\rangle=\nu!\delta_{\nu\gamma}\beta_S, \qquad |\nu|\leq\kappa,\] and $\langle\Pi\psi_S,\mathbf 1\rangle=0$.
\item Let $\beta=\{\beta_S\}$ be a bounded sequence. The operator $\Pi^*$ defined by $\langle\Pi^*f,g\rangle\coloneqq\Pi_\beta(g,f)$ is a constant multiple of a Calder\'on--Zygmund operator of order $0$, and its actions are $\langle\Pi^*\mathbf 1,\psi_S\rangle=0$, $\langle\Pi^*\psi_S,\mathbf 1\rangle=\beta_S$. When $\beta_S=\psi_S(b)$, $\langle\Pi^*f,g\rangle=\langle f,P_bg\rangle$.
\end{enumerate}
\end{lemma}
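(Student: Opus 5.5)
We prove the four parts separately.

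\emph{Part (i).} We verify Definition \ref{d:model} for $P_b$, whose input packets are $\vartheta_Q\in C\Phi^{\sigma,\Subset}(Q)$ for every $\sigma$ and whose output packets are $\psi_Q\in c\Psi^{k+1,\Subset}(Q)$, with coefficients $|Q|\psi_Q(b)$. The uniform $L^2$ bound \eqref{e:L2}, over all subcollections, comes from the Carleson embedding theorem: $\{|Q||\psi_Q(b)|^2\}$ is a Carleson sequence with norm $\lesssim\|b\|_{\mathrm{BMO}}^2$. Restricting to a subcollection only decreases Carleson sums. Writing $\psi_Q(b)\vartheta_Q$ as $\|b\|_{\mathrm{BMO}}$ times a packet in $C\Phi^{\sigma,\Subset}(Q)$, and using $|\psi_Q(b)|\lesssim\|b\|_{\mathrm{BMO}}$, the model parameters may be taken as $\sigma$ arbitrarily large, $\sigma'=k+1$, and $\rho,\rho'$ arbitrary by compact support. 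Hence \eqref{e:range} holds for every non-integer $0<\tau<k+1$. The adjoint $P_b^*$ is the same model with the roles of the packets exchanged (Remark \ref{r:models}).

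\emph{Part (ii).} This is the main obstacle, since it is where $b\in L^\infty$ rather than $\mathrm{BMO}$ is used. Rescale to $Q=Q_*$ as in the proof of Lemmas \ref{l:V1mean} and \ref{l:poly}, and set $g=\omega$. Split the sum into the large scales $\ell(R)\gtrsim1$ and the small scales $\ell(R)\leq1$.
\begin{itemize}
\item[] At large scales, $|\vartheta_R(\omega)|\lesssim\ell(R)^{-d}$ and $|\psi_R(b)|\lesssim\|b\|_\infty$, so the sum is $\lesssim\|b\|_\infty\sum_q q^{-d}$.
\item[] At small scales, $\vartheta_R(\omega)$ equals $1$ when $\w R\subseteq\frac32Q_*$, vanishes when $\w R$ misses $2Q_*$, and is bounded in between. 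In the boundary layer there are boundedly many $R$ per scale at each point, each with coefficient $\lesssim\|b\|_\infty$. A cruder per-scale bound does not sum over scales, so the boundary layer must be organized instead by the cubes that straddle a fixed layer.
\end{itemize}
Concretely, write $\vartheta_R(\omega)=\omega(c(R))+O(\ell(R))$, using $\|\nabla\omega\|_\infty\lesssim1$. The error terms sum to $\lesssim\|b\|_\infty\sum_{q\leq1}q\lesssim\|b\|_\infty$. The main term is
\[
\sum_{\ell(R)\leq1}|R|\psi_R(b)\,\omega(c(R))\,\psi_R(x).
\]
Compare it with $\omega(x)\,(b-\mathbb E_1b)(x)$, using \eqref{e:mra} extended to $L^\infty$ and the uniform $L^\infty$ bound on $\mathbb E_\ell$, so that $\|b-\mathbb E_1 b\|_\infty\lesssim\|b\|_\infty$. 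The difference is
\[
\sum_{\ell(R)\leq1}|R|\psi_R(b)\bigl[\omega(c(R))-\omega(x)\bigr]\psi_R(x),
\]
and on $\supp\psi_R$ one has $|\omega(c(R))-\omega(x)|\lesssim\ell(R)$, so this sum is $\lesssim\|b\|_\infty$. The restrictions to $\ell(R)\leq\ell$ are handled identically, replacing $\mathbb E_1$ by $\mathbb E_{\min\{\ell,1\}}$ and keeping the large-scale part only when $\ell>1$; all constants are uniform in $\ell$.

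\emph{Part (iii).} For the kernel, write $\langle\Pi f,g\rangle=(-1)^{|\gamma|}\sum_S|S|\beta_S\,(\partial^\gamma\vartheta_S)(f)\,\psi_S(g)$. Since $\partial^\gamma\vartheta_S=\ell(S)^{-|\gamma|}\times C\Phi^{\infty,\Subset}(S)$, the kernel is $\sum_S|S|\beta_S(-1)^{|\gamma|}\partial^\gamma\vartheta_S(y)\psi_S(x)$. The factor $\ell(S)^{-\kappa}$ together with the lattice sum \eqref{e:lattice} give the estimates \eqref{e:czkernel} of order $(\kappa,\kappa)$, including $x$-derivatives up to order $k-\kappa$ by the smoothness of $\psi_S$. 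The weak boundedness property follows from the same summation, using the vanishing moments of $\psi_S$ against $\xi_R$ at small scales and decay at large scales. For the testing identities, $\vartheta_{S'}\bigl(\partial^\gamma[(\cdot-c(S))^\nu]\bigr)$ is a polynomial in $c(S')-c(S)$. Pairing with $\psi_S$ and using orthonormality selects $S'=S$, where \eqref{e:thetamom} gives $\nu!\,\delta_{\nu\gamma}$. This computation is justified through the cutoff limit \eqref{e:actionsplit}, dominated by the kernel bounds. The identity $\langle\Pi\psi_S,\mathbf 1\rangle=0$ holds because the output packets $\psi_{S'}$ have vanishing mean, which justifies passing to the limit.

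\emph{Part (iv).} The kernel and weak boundedness bounds for $\Pi^*$ are those of (iii) with $\kappa=\gamma=0$, transposed. Its actions are $\langle\Pi^*\mathbf 1,\psi_S\rangle=\sum_{S'}|S'|\beta_{S'}\vartheta_{S'}(\psi_S)\psi_{S'}(\mathbf 1)=0$, and $\langle\Pi^*\psi_S,\mathbf 1\rangle=\beta_S$ by orthonormality together with $\vartheta_{S'}(\mathbf 1)=1$. When $\beta_S=\psi_S(b)$, the identity $\langle\Pi^*f,g\rangle=\langle f,P_bg\rangle$ follows by unwinding \eqref{e:canpara}.
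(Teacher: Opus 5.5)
Your parts (i) and (ii) follow the paper's proof: Carleson embedding for (i); for (ii), replacing $\vartheta_R(\omega^\infty_Q)$ by the value of the cutoff at the point at cost $O(\ell(R)/\ell(Q))$, and recognizing the remaining small-scale sum as the cutoff times $b-\mathbb E b$. The gaps are in (iii)--(iv).

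\emph{First gap: the vanishing actions are not justified.} By \eqref{e:polyaction}, $\langle\Pi\psi_S,\mathbf 1\rangle$ is the limit as $A\to\infty$ of
\[
\langle\Pi\psi_S,\omega^\infty_{AS}\rangle=\sum_{Q\in\mathcal D}|Q|\,\beta_Q\,\vartheta_Q\left(\partial^\gamma\psi_S\right)\psi_Q\left(\omega^\infty_{AS}\right).
\]
The vanishing mean of the output packets $\psi_Q$ only shows that each \emph{fixed} term vanishes for $A$ large. It gives no domination, so it does not justify passing to the limit.

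For every $A$, the terms with $Q\cap\w S\neq\varnothing$ and $\w Q\not\subseteq\frac32AS$ remain; these have $\ell(Q)\gtrsim A\ell(S)$, and one only has $|\psi_Q(\omega^\infty_{AS})|\lesssim(A\ell(S))^d|Q|^{-1}$ for them. Their sum must be shown to tend to zero. The size bound $|\vartheta_Q(\partial^\gamma\psi_S)|\lesssim|Q|^{-1}\ell(Q)^{-|\gamma|}$ only gives a tail $\lesssim(A\ell(S))^{-\kappa}$, which does not decay when $\kappa=0$. That is exactly the case behind $\langle\Pi^*\mathbf 1,\psi_S\rangle=0$ in (iv), where your termwise identity $\psi_{S'}(\mathbf 1)=0$ has the same defect.

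The paper uses the cancellation of the \emph{input}. Since $\psi_S$ has vanishing mean,
\[
|\vartheta_Q(\partial^\gamma\psi_S)|=|(\partial^\gamma\vartheta_Q)(\psi_S)|\lesssim|Q|^{-1}\ell(Q)^{-|\gamma|-1}\ell(S).
\]
With $|\beta_Q|\le\ell(Q)^{|\gamma|-\kappa}$ and boundedly many contributing $Q$ per sidelength, the tail is then $\lesssim\ell(S)(A\ell(S))^{-\kappa-1}\to0$. Part (iv) follows from the same computation with $\kappa=\gamma=0$ and the two arguments exchanged.

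\emph{Second gap: continuity is never verified.} You do not show that $\Pi$ is continuous from $\mathcal U_0$ to $\mathcal U_\kappa'$, nor $\Pi^*$ from $\mathcal U_0$ to $\mathcal U_0'$. This is part of Definition \ref{d:czo} and does not follow from the kernel and weak boundedness estimates. Nor can you borrow it from $L^2$ bounds: for a bounded non-Carleson $\beta$, $\Pi^*$ is unbounded on $L^2(\R^d)$, see Example \ref{r:D}.

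What is needed is Cauchy--Schwarz at each fixed sidelength $t$, using
\[
\sum_{\ell(S)=t}|S|\left|\left(\ell(S)^{|\gamma|}\partial^\gamma\vartheta_S\right)(f)\right|^2\lesssim\|f\|_2^2,
\]
followed by the sum over $t$ with $t^{-\kappa}\leq(1+t^{-1})^\kappa$. This is where the $\ell^1$ sum over scales in $\|\cdot\|_{\mathcal U_\kappa}$ enters. It gives $|\langle\Pi f,g\rangle|\lesssim\|f\|_2\|g\|_{\mathcal U_\kappa}$, and the bound for $\Pi^*$ follows by exchanging the arguments.
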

\begin{proof}
Begin with (i). As $|\psi_Q(b)|\lesssim\|b\|_{\mathrm{BMO}}$, the operators \eqref{e:canpara} have the form \eqref{e:model}, with the packets $\psi_Q\in c\Phi^{k+1,\Subset}(Q)$ and $\psi_Q(b)\vartheta_Q\in C\|b\|_{\mathrm{BMO}}\Phi^{\sigma,\Subset}(Q)$ for every $\sigma>0$, whence \eqref{e:range} for $0<\tau<k+1$. By orthonormality,
\[
\|P_bh\|_2^2=\sum_{Q\in\mathcal D}|Q||\psi_Q(b)|^2|\vartheta_Q(h)|^2\lesssim\sum_{Q\in\mathcal D}|Q||\psi_Q(b)|^2\langle|h|\rangle_Q^2,
\]
and \eqref{e:L2} follows from the Carleson embedding theorem, as the wavelet coefficients of $b$ satisfy the Carleson condition
\[
\sum_{Q\in\mathcal D(R)}|Q||\psi_Q(b)|^2\lesssim\|b\|^2_{\mathrm{BMO}}|R|,\qquad R\in\mathcal D,
\]
see \cite{MC97}; the restrictions and the adjoints are handled in the same way.

 Next, move to (ii). Let $g\coloneqq\omega^\infty_Q$, so that $\|g\|_\infty+\ell(Q)\|\nabla g\|_\infty\lesssim1$, and fix $x$. The cubes $R$ with $\psi_R(x)\neq0$ satisfy $x\in\w R$, and are boundedly many at each sidelength. If $\ell(R)\geq\ell(Q)$, then $|\vartheta_R(g)|\lesssim|Q||R|^{-1}$; if $\ell(R)<\ell(Q)$, then $|\vartheta_R(g)-g(x)|\lesssim\ell(R)/\ell(Q)$, as $\vartheta_R$ is supported in $R$. These bounds contribute $\lesssim\|b\|_\infty$ to $P_bg(x)$, and what is left is
\[
g(x)\sum_{\ell(R)<\ell(Q)}|R|\psi_R(b)\psi_R(x)=g(x)\left[b-\mathbb E_{\ell'}b\right](x),\qquad \ell'\coloneqq\max\{q\in2^{\Z}:q<\ell(Q)\},
\]
by \eqref{e:mra}, which is bounded by a constant multiple of $\|b\|_\infty$. The restrictions are handled in the same way, with $\min\{\ell,\ell'\}$ in place of $\ell'$.

For (iii),  integrating by parts,
\[
\langle\Pi f,g\rangle=\sum_{S\in\mathcal D}|S|\,\ell(S)^{-|\gamma|}\beta_S\left((-1)^{|\gamma|}\ell(S)^{|\gamma|}\partial^\gamma\vartheta_S\right)(f)\,\psi_S(g)
\]
is a paraproduct form with the averaging functions $(-1)^{|\gamma|}\ell(S)^{|\gamma|}\partial^\gamma\vartheta_S\in C\Phi^{\sigma,\Subset}(S)$, $\sigma>0$, and coefficients bounded by $\ell(S)^{-\kappa}$. As these averaging functions are supported in $S$, the Cauchy--Schwarz inequality over the cubes of each sidelength $t$, and $t^{-\kappa}\leq(1+t^{-1})^\kappa$, give $|\langle\Pi f,g\rangle|\lesssim\|f\|_2\|g\|_{\mathcal U_\kappa}$, so that $\Pi$ is continuous from $\mathcal U_0$ to $\mathcal U_\kappa'$. The standard computations of \cite{DGWdom}*{Remark 5.3}, which use only the size, smoothness and support of the averaging functions, show that $\Pi$ is a constant multiple of a Calder\'on--Zygmund operator of order $(\kappa,\kappa)$. For the actions, let $f_A\coloneqq(\cdot-c(S))^\nu\omega^\infty_{AS}$. By orthonormality and \eqref{e:thetamom}, for $A$ large,
\[
\langle\Pi f_A,\psi_S\rangle=\beta_S\,\vartheta_S\left(\partial^\gamma(\cdot-c(S))^\nu\right)=\nu!\,\delta_{\nu\gamma}\beta_S,
\]
as $\partial^\gamma(\cdot-c(S))^\nu$ equals $\frac{\nu!}{(\nu-\gamma)!}(\cdot-c(S))^{\nu-\gamma}$ if $\gamma\leq\nu$, and zero otherwise. Next,
\[
\langle\Pi\psi_S,\omega^\infty_{AS}\rangle=\sum_{Q\in\mathcal D}|Q|\beta_Q\,\vartheta_Q\left(\partial^\gamma\psi_S\right)\psi_Q\left(\omega^\infty_{AS}\right),
\]
where a nonzero term needs $Q\cap\w S\neq\varnothing$ and, $\psi_Q$ having vanishing mean, $\w Q\not\subseteq\frac32AS$, which forces $\ell(Q)\gtrsim A\ell(S)$ for $A$ large. For such $Q$, the vanishing mean of $\psi_S$ gives $|\vartheta_Q(\partial^\gamma\psi_S)|=|(\partial^\gamma\vartheta_Q)(\psi_S)|\lesssim|Q|^{-1}\ell(Q)^{-|\gamma|-1}\ell(S)$, while $|\psi_Q(\omega^\infty_{AS})|\lesssim(A\ell(S))^d|Q|^{-1}$. As $|\beta_Q|\leq\ell(Q)^{|\gamma|-\kappa}$, and boundedly many $Q$ of each sidelength contribute, the sum is $\lesssim\ell(S)(A\ell(S))^{-\kappa-1}$, which tends to zero.

Concerning (iv),  the same computations apply, with $\kappa=\gamma=0$ and the roles of the two arguments exchanged, and give $\langle\Pi^*\psi_S,\omega^\infty_{AS}\rangle=\beta_S\vartheta_S(\omega^\infty_{AS})=\beta_S$ for $A$ large. The last claim is \eqref{e:canpara}.
\end{proof}

\subsection{Proof of Proposition \ref{p:rep}}\label{ss:rep}
For $R,S\in\mathcal D$ and $\delta_2>0$, let
\[
D(R,S)\coloneqq\max\left\{\ell(R),\ell(S),|c(R)-c(S)|\right\},\qquad (R,S)_{\delta_2}\coloneqq\frac{\min\{\ell(R),\ell(S)\}^{\delta_2}}{D(R,S)^{d+\delta_2}}.
\]
\begin{proof}
By Lemma \ref{l:size}(i), Lemma \ref{l:para}(iii) applies to the symbols $b^{\alpha,\gamma}$, $|\gamma|\leq\kappa$, and Lemma \ref{l:para}(iv) to $b^{0,\star}$ if $\kappa=0$. Let $\mathfrak L$ be the operator with
\[
\langle\mathfrak Lf,g\rangle\coloneqq\langle\partial^\alpha Tf,g\rangle-\sum_{|\gamma|\leq\kappa}\frac1{\gamma!}\Pi_{b^{\alpha,\gamma}}\left(\partial^\gamma f,g\right)-\ind_{\{\kappa=0\}}\Pi_{b^{0,\star}}(g,f).
\]
Then $\mathfrak L$ is a constant multiple of a Calder\'on--Zygmund operator of order $(\kappa,\kappa)$, and, by \eqref{e:symbols}, Lemma \ref{l:para}(iii)--(iv) and Lemma \ref{l:reduce}(iii),
\begin{equation}\label{e:balanced}
\left\langle\mathfrak L\left[(\cdot-c(S))^\nu\right],\psi_S\right\rangle=0,\quad |\nu|\leq\kappa,\qquad \langle\mathfrak L\psi_S,\mathbf 1\rangle=0,\qquad S\in\mathcal D.
\end{equation}
For every $0<\delta_2<\delta$, the almost diagonal estimate of \cite{DGWdom}*{Lemma 5.7} gives
\begin{equation}\label{e:matrix}
\ell(S)^\kappa\left|\langle\mathfrak L\psi_S,\psi_R\rangle\right|\lesssim(R,S)_{\delta_2},\qquad R,S\in\mathcal D.
\end{equation}
Indeed, if $\ell(S)\geq\ell(R)$, use \cite{DGWdom}*{(5.6)} with $m=\kappa$ and $k=\kappa$. Its Taylor subtraction, made when the cubes are close, does not change the pairing by \eqref{e:balanced}; hence, as $\ell(S)\leq D(R,S)$,
\[
\left|\langle\mathfrak L\psi_S,\psi_R\rangle\right|=\left|\left\langle\mathfrak L\left(\psi_S-\PM^{\kappa+1}_R\psi_S\right),\psi_R\right\rangle\right|\lesssim\frac{\ell(R)^{\delta_2}}{D(R,S)^{d+\kappa+\delta_2}}\leq\frac{(R,S)_{\delta_2}}{\ell(S)^\kappa}.
\]
If $\ell(S)<\ell(R)$, then $\PM^1_S\psi_R$ is a constant, and \cite{DGWdom}*{(5.6)} with $m=0$ gives
\[
\left|\langle\mathfrak L\psi_S,\psi_R\rangle\right|=\left|\left\langle\mathfrak L\psi_S,\psi_R-\PM^1_S\psi_R\right\rangle\right|\lesssim\frac{\ell(S)^{\delta_2-\kappa}}{D(R,S)^{d+\delta_2}}=\frac{(R,S)_{\delta_2}}{\ell(S)^\kappa}.
\]

Now let $\delta_1$ be as in the statement, and fix $\delta_1<\delta_2<\delta$. If $\kappa\geq1$, as $\psi_S$ annihilates $\P^\kappa$, Lemma \ref{l:anti}, applied on the cube $\w S$ with smoothness index $\delta_2$, provides $\psi_{S,\beta}\in C\Psi^{\delta_2,\Subset}(S)$, $|\beta|=\kappa$, such that
\begin{equation}\label{e:antiwave}
\psi_S(f)=(-1)^\kappa\ell(S)^\kappa\sum_{|\beta|=\kappa}\psi_{S,\beta}\left(\partial^\beta f\right),\qquad f\in \mathcal{C}^\infty_c(\R^d),
\end{equation}
and \eqref{e:antiwave} holds with $\psi_{S,0}\coloneqq\psi_S$ if $\kappa=0$. Define
\[
\varphi^{\alpha,\beta}_R\coloneqq(-1)^\kappa\sum_{S\in\mathcal D}|S|\ell(S)^\kappa\langle\mathfrak L\psi_S,\psi_R\rangle\psi_{S,\beta}.
\]
By \eqref{e:matrix} and \cite{DGWdom}*{Lemma 2.22}, $\varphi^{\alpha,\beta}_R\in C\Psi^{\delta_1,0}(R)$. Finally, let $f,g\in \mathcal{C}^\infty_c(\R^d)$. By \eqref{e:matrix} and the decay of the wavelet coefficients of $f$ and $g$ at both ends of the scale range, the expansion
\[
\langle\mathfrak Lf,g\rangle=\sum_{S,R\in\mathcal D}|S||R|\langle\mathfrak L\psi_S,\psi_R\rangle\psi_S(f)\psi_R(g)
\]
converges absolutely, and inserting \eqref{e:antiwave} and summing first over $S$ give
\[
\langle\mathfrak Lf,g\rangle=\sum_{R\in\mathcal D}|R|\psi_R(g)\sum_{|\beta|=\kappa}\varphi^{\alpha,\beta}_R\left(\partial^\beta f\right)=\sum_{|\beta|=\kappa}\left\langle V_{\alpha,\beta}\left(\partial^\beta f\right),g\right\rangle,
\]
which is \eqref{e:rep}.
\end{proof}

\subsection{Models for the derivative channels}\label{ss:channels}
By Lemma \ref{l:po}(iii), with $\pi=0$, for every cube $S$, every integer $n\geq0$ and every locally integrable $g$, scalar or vector valued,
\begin{equation}\label{e:jetcrude}
\nat{g}_{n,S}\lesssim\frac1r\left\langle g \right\rangle_{3S},
\end{equation}
while, by Lemma \ref{l:po}(i), $\nat{\nabla^jf}_{m-j,S}=0$ for $0\leq j\leq m$ if $f$ coincides on $3S$ with a polynomial of degree less than $m$.

Let $T$ be a Calder\'on--Zygmund operator of order $k$, $1\leq\kappa\leq k$, $|\alpha|=\kappa$ and $0<\tau<\delta$, and assume that the lower symbols belong to $\mathrm{BMO}(\R^d)$ and that the top symbols belong locally to $\mathrm{BMO}(\R^d)$. If $|\gamma|<\kappa$, then $\Pi_{b^{\alpha,\gamma}}(h,g)=\langle P_{\mathfrak b^{\alpha,\gamma}}h,g\rangle$, by \eqref{e:canpara}. If $|\gamma|=\kappa$, the wavelet coefficients of the function $\mathfrak b^{\alpha,\gamma}$ of the local condition are $b^{\alpha,\gamma}_S\ind_{\{\ell(S)\leq1\}}$, so that
\[
\Pi_{b^{\alpha,\gamma}}(\partial^\gamma f,g)=\left\langle P_{\mathfrak b^{\alpha,\gamma}}(\partial^\gamma f)+Y_{\alpha,\gamma}f,g\right\rangle,
\]
where
\begin{equation}\label{e:Ydef}
Y_{\alpha,\gamma}f\coloneqq\sum_{\ell(S)>1}|S|b^{\alpha,\gamma}_S\vartheta_S\left(\partial^\gamma f\right)\psi_S=(-1)^\kappa\sum_{\ell(S)>1}|S|\ell(S)^{-\kappa}b^{\alpha,\gamma}_S\big(\ell(S)^\kappa\partial^\gamma\vartheta_S\big)(f)\psi_S,
\end{equation}
the second expression by integration by parts. Thus Proposition \ref{p:rep}, with $\tau<\delta_1<\delta$, gives, for $f\in\mathcal{C}^\infty_c(\R^d)$,
\begin{equation}\label{e:channels}
\partial^\alpha Tf=\sum_{|\beta|=\kappa}V_{\alpha,\beta}(\partial^\beta f)+\sum_{|\gamma|\leq\kappa}\frac1{\gamma!}P_{\mathfrak b^{\alpha,\gamma}}(\partial^\gamma f)+\sum_{|\gamma|=\kappa}\frac1{\gamma!}Y_{\alpha,\gamma}f.
\end{equation}
Up to constants $\lesssim1+\mathfrak B(T)$, the operators in \eqref{e:channels} are models for which \eqref{e:range} holds: $V_{\alpha,\beta}$ by Proposition \ref{p:rep}, the paraproducts by Lemma \ref{l:para}(i), and $Y_{\alpha,\gamma}$ by Remark \ref{r:models}(a), as the coefficients in the last expression of \eqref{e:Ydef} are bounded, by \eqref{e:size}, and both of its packet families have vanishing mean. In particular, all terms of \eqref{e:channels} belong to $L^2(\R^d)$. Moreover, $V_{\alpha,\beta}$ and $Y_{\alpha,\gamma}$ satisfy \eqref{e:V1}, by Lemma \ref{l:V1mean}, and so does $P_{\mathfrak b^{\alpha,\gamma}}$ if $\mathfrak b^{\alpha,\gamma}\in L^\infty(\R^d)$, that is, if $b^{\alpha,\gamma}$ belongs to $L^\infty(\R^d)$ when $|\gamma|<\kappa$, and locally when $|\gamma|=\kappa$, by Lemma \ref{l:para}(ii).
\begin{lemma}\label{l:channelmodels}
Under the above assumptions, let $m\geq\kappa$ be an integer and $f\in\mathcal{C}^\infty_c(\R^d)$. If the conditions \emph{(i)} of Theorem \ref{t:jet} hold, there exists a sparse collection $\mathcal S$ such that, with $\nabla^0f\coloneqq f$,
\begin{equation}\label{e:jetfull}
\left|\partial^\alpha Tf\right|\lesssim\sum_{S\in\mathcal S}\sum_{j=0}^\kappa\nat{\nabla^jf}_{m-j,S}\ind_S\qquad\textrm{a.e.\ on }\R^d,
\end{equation}
with constant $\lesssim1+\mathfrak B(T)$. Under the $\mathrm{BMO}$ assumptions alone, the same bound holds with $\nat{\nabla^jf}_{0,S}$ in place of $\nat{\nabla^jf}_{m-j,S}$.
\end{lemma}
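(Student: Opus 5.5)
The plan is to apply Theorem \ref{t:model} to each term of the decomposition \eqref{e:channels}, channel by channel. We then merge the resulting sparse collections into one. Each model is applied to the channel $g=\partial^\beta f$, $\partial^\gamma f$, or $f$, with the polynomial order dictated by the residual order.

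\textbf{Choosing the order for each term.}
\begin{enumerate}[label=(\alph*)]
\item For $V_{\alpha,\beta}(\partial^\beta f)$, $|\beta|=\kappa$, the input packets have vanishing mean. So \eqref{e:V1} holds by Lemma \ref{l:V1mean}, and Theorem \ref{t:model} with order $m-\kappa$ gives a bound by $\nat{\partial^\beta f}_{m-\kappa,S}\leq\nat{\nabla^\kappa f}_{m-\kappa,S}$.
\item For $Y_{\alpha,\gamma}f$, the second expression in \eqref{e:Ydef} is a mean-zero model acting on $f$. Theorem \ref{t:model} with order $m$ applies and gives $\nat{f}_{m,S}$.
\item For the paraproducts $P_{\mathfrak b^{\alpha,\gamma}}(\partial^\gamma f)$ we use order $m-|\gamma|$.
\begin{itemize}
\item When $|\gamma|<\kappa$, the residual order is positive. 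Condition (i) gives $\mathfrak b^{\alpha,\gamma}\in L^\infty$, so \eqref{e:V1} holds by Lemma \ref{l:para}(ii).
\item When $|\gamma|=\kappa$ and $m>\kappa$, the local $L^\infty$ condition gives \eqref{e:V1} in the same way.
\item When $m=\kappa$, the order is zero and no testing condition is needed.
\end{itemize}
\end{enumerate}
In every case the bound is $\nat{\partial^\gamma f}_{m-|\gamma|,S}\leq\nat{\nabla^{|\gamma|}f}_{m-|\gamma|,S}$. All constants are $\lesssim1+\mathfrak B(T)$ by the normalizations in Lemma \ref{l:para}(i) and Proposition \ref{p:rep}.

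\textbf{Hypotheses and merging.} The inputs $\partial^\gamma f$ are in $L^2$ with compact support, as Theorem \ref{t:model} requires. The range condition \eqref{e:range} holds because $\tau<\delta_1$ and $\tau<k+1$; we should also take $\tau$ non-integer, perturbing it slightly if necessary. Boundedly many sparse collections arise, one per term. Their union is sparse with constant divided by their number, which is admissible. Each family can be made dyadic by the $3^d$ trick and Lemma \ref{l:po}(v), at the cost of lowering $r$ by an admissible factor. Summing over terms yields \eqref{e:jetfull} a.e., since \eqref{e:channels} holds in $L^2$.

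\textbf{The BMO-only statement.} Use order zero throughout, where Theorem \ref{t:model} needs no testing condition. Then $\nat{\nabla^jf}_{0,S}$ appears on the right.

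\textbf{Main obstacle.} The step needing most care is the top paraproducts. The local $\mathrm{BMO}$ or $L^\infty$ symbol has been split into $P_{\mathfrak b^{\alpha,\gamma}}$ plus the low-frequency part $Y_{\alpha,\gamma}$. One must check that $Y_{\alpha,\gamma}$, acting on $f$ rather than on $\partial^\gamma f$, is a genuine mean-zero model. This uses the integration by parts in \eqref{e:Ydef} together with the size bound \eqref{e:size} on the coefficients.
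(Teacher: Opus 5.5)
Your proposal is correct and is the paper's own argument. Theorem~\ref{t:model} is applied term by term to \eqref{e:channels}, at order $m-|\gamma|$ on the channel $\partial^\gamma f$ and at order $m$ for $Y_{\alpha,\gamma}f$; the condition \eqref{e:V1} comes from Lemma~\ref{l:V1mean} for $V_{\alpha,\beta}$ and $Y_{\alpha,\gamma}$, and from Lemma~\ref{l:para}(ii) for the paraproducts at positive residual order, while the $\mathrm{BMO}$ claim uses order zero throughout.

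Two of your precautions are superfluous but harmless. First, $\tau$ is automatically non-integer since $0<\tau<\delta\leq1$; perturbing it would not be legitimate anyway, as the coefficients depend on $\tau$. Second, no $3^d$ grid trick or change of percentile level is needed, since the union of boundedly many sparse collections is already sparse.
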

\begin{proof}
Under the conditions of the first claim, every operator of \eqref{e:channels} acting at a positive order satisfies \eqref{e:V1}, by the above: $V_{\alpha,\beta}$ and $Y_{\alpha,\gamma}$ always, and $P_{\mathfrak b^{\alpha,\gamma}}$, which acts on $\partial^\gamma f$ at the order $m-|\gamma|$, as this order is positive only if $|\gamma|<\kappa$ or $m>\kappa$, and then $\mathfrak b^{\alpha,\gamma}\in L^\infty(\R^d)$. Thus Theorem \ref{t:model}, applied to the terms of \eqref{e:channels} with the input $\partial^\gamma f$ at the order $m-|\gamma|$, $|\gamma|\leq\kappa$, and to the terms $Y_{\alpha,\gamma}f$ with the input $f$ at the order $m$, gives \eqref{e:jetfull}, the union of the resulting collections being sparse. At order zero, Theorem \ref{t:model} requires no cutoff testing condition, which gives the last claim.
\end{proof}

\subsection{Necessary testing conditions}\label{ss:tests}
The following $L^2$ tests give the local and global $\mathrm{BMO}$ conditions.
\begin{lemma}\label{l:carleson}
Let $T$ be a Calder\'on--Zygmund operator of order $k$, $0\leq\kappa\leq k$ and $|\alpha|=\kappa$.
\begin{enumerate}[label=(\roman*)]
\item Let $\kappa\geq1$, and assume that $\|\partial^\alpha Tf\|_{L^2(\R^d)}\leq C\|f\|_{W^{\kappa,2}(\R^d)}$ for all $f\in \mathcal{C}^\infty_c(\R^d)$. Then, for $|\gamma|\leq\kappa$ and $Q\in\mathcal D$ with $\ell(Q)\leq1$,
\[
\sum_{S\in\mathcal D,\,S\subseteq Q}|S|\left|b^{\alpha,\gamma}_S\right|^2\lesssim(1+C)^2\ell(Q)^{2(|\gamma|-\kappa)}|Q| .
\]
In particular, the top symbols belong locally to $\mathrm{BMO}(\R^d)$, with local norm $\lesssim1+C$.
\item If the lower symbols vanish and $\|\partial^\alpha Tf\|_{L^2(\R^d)}\leq C\|\nabla^\kappa f\|_{L^2(\R^d)}$ for all $f\in \mathcal{C}^\infty_c(\R^d)$, then the top symbols, and $b^{0,\star}$ if $\kappa=0$, belong to $\mathrm{BMO}(\R^d)$, with norms $\lesssim1+C$.
\end{enumerate}
\end{lemma}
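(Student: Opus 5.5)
The plan is to test the assumed $L^2$ inequality on localized wavelet sums and read off the Carleson packing of the symbols from the representation of Proposition \ref{p:rep}. The natural test functions are, for $Q\in\mathcal D$ and $|\gamma|\leq\kappa$,
\[
f_Q\coloneqq(\cdot-c(Q))^\gamma\omega^\infty_{AQ}
\]
with $A$ a large admissible constant. The aim is to compute the wavelet coefficients $\langle\partial^\alpha Tf_Q,\psi_S\rangle$ for $S\subseteq Q$ and compare them with $b^{\alpha,\gamma}_S$. First I will shift the center: since $(x-c(Q))^\gamma=\sum_{\nu\leq\gamma}\binom{\gamma}{\nu}(c(S)-c(Q))^{\gamma-\nu}(x-c(S))^\nu$, the action \eqref{e:actionsplit} of Lemma \ref{l:cutoff}(ii) gives
\[
\langle\partial^\alpha T[(\cdot-c(Q))^\gamma],\psi_S\rangle=\sum_{\nu\leq\gamma}\binom{\gamma}{\nu}(c(S)-c(Q))^{\gamma-\nu}b^{\alpha,\nu}_S .
\]
The error from replacing the polynomial by $f_Q$ is the tail integral in \eqref{e:actionsplit} over $y\notin\frac32AQ$. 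By the kernel bounds \eqref{e:czkernel}, together with the vanishing mean of $\psi_S$ when $|\gamma|=\kappa$, this error is $\lesssim\ell(S)^\delta(A\ell(Q))^{|\gamma|-\kappa-\delta}$, whose square sum over $S\subseteq Q$ is harmless.

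For part (i), I plan an induction on $|\gamma|$ to isolate $b^{\alpha,\gamma}$. Writing $\beta_S\coloneqq\langle\partial^\alpha Tf_Q,\psi_S\rangle$, Bessel's inequality and the hypothesis give
\[
\sum_{S\subseteq Q}|S||\beta_S|^2\leq\|\partial^\alpha Tf_Q\|_2^2\leq C^2\|f_Q\|_{W^{\kappa,2}}^2\lesssim C^2\ell(Q)^{2(|\gamma|-\kappa)}|Q|.
\]
The last inequality uses $\ell(Q)\leq1$: the derivatives of order $j$ of $f_Q$ have size $\ell(Q)^{|\gamma|-j}$, and the top order dominates. The mixed terms with $\nu<\gamma$ carry factors $|c(S)-c(Q)|^{|\gamma-\nu|}\lesssim\ell(Q)^{|\gamma|-|\nu|}$, and each is controlled by the induction hypothesis for $b^{\alpha,\nu}$. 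Combined with the tail estimate, this gives the claimed Carleson bound. For $|\gamma|=\kappa$ the resulting estimate is a Carleson condition at scales $\ell(Q)\leq1$. Together with the boundedness \eqref{e:size} at scales $\leq1$, the wavelet characterization of $\mathrm{BMO}$ by Carleson sequences \cite{MC97} then produces the function $\mathfrak b$ with coefficients $b^{\alpha,\gamma}_S\ind_{\{\ell(S)\leq1\}}$: Carleson boxes of sidelength larger than one contain only cubes of sidelength at most one, and they split into unit cubes.

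For part (ii), I will use the same test with $|\gamma|=\kappa$ and $Q$ now arbitrary. Here $\|\nabla^\kappa f_Q\|_2\lesssim|Q|^{1/2}$ uniformly in $\ell(Q)$, and the mixed terms vanish because the lower symbols are zero. This yields the global Carleson condition and hence $\mathrm{BMO}$. When $\kappa=0$, I will apply the same reasoning to the adjoint, testing with $g=\omega^\infty_{AQ}$ on the other side: boundedness of $T$ on $L^2$ passes to $T^*$, and the actions $\langle T\psi_S,\omega^\infty_{AQ}\rangle$ converge to $b^{0,\star}_S$ up to the analogous tail.

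I expect the main obstacle to be the tail and commutation errors. The cutoff test $f_Q$ differs from the polynomial outside $\frac32AQ$, and these errors must be square summable over $S\subseteq Q$ with the right normalization. This is especially delicate for intermediate $|\gamma|$, where there is no cancellation from $\psi_S$ against the polynomial degree; there I would rely on the size bound $|y-c(S)|^{-d-\kappa}$ and the growth $|y|^{|\gamma|}$ being integrable.
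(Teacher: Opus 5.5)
Your strategy is the paper's: test on $(\cdot-c(Q))^\gamma$ times a cutoff at a fixed dilate of $Q$, recenter via the binomial formula and the linearity of the polynomial action, and combine Bessel's inequality with induction on $|\gamma|$. For the local condition you split large boxes into unit cubes, and in (ii) you drop $\ell(Q)\leq1$ and pass to $T^*$ when $\kappa=0$.

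One step fails as you describe it: the tail estimate for $|\gamma|<\kappa$, which includes $\gamma=0$, the base of your induction. You use the vanishing mean of $\psi_S$ only when $|\gamma|=\kappa$, and otherwise rely on the size bound alone. That gives
\[
\Big|\int_{\R^d}(y-c(Q))^\gamma\big[1-\omega^\infty_{AQ}\big](y)\int_{\R^d}K(x,y)\,\psi_S(x)\,\d x\,\d y\Big|\lesssim\|\psi_S\|_1\,(A\ell(Q))^{|\gamma|-\kappa},
\]
uniformly in $S\subseteq Q$, with no decay in $\ell(S)$. Each dyadic generation of $Q$ contributes at least $|Q|$ to $\sum_{S\subseteq Q}|S|$. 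So these errors are not square summable with the weights $|S|$, and Bessel's inequality tells you nothing about $\sum_{S\subseteq Q}|S||b^{\alpha,\gamma}_S|^2$.

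The fix is that the cancellation of $\psi_S$ is not tied to the degree of the polynomial. It is needed for the existence of the limit \eqref{e:polyaction} only when $|\gamma|=\kappa$. Here, however, it is what produces decay in $\ell(S)/\ell(Q)$, and it is available for every $\gamma$. Replace $K(x,y)$ by $K(x,y)-K(c(S),y)$ and use the H\"older bound of \eqref{e:czkernel} in the output variable. This gives $|\int K(x,y)\psi_S(x)\,\d x|\lesssim\ell(S)^\delta|y-c(S)|^{-d-\kappa-\delta}$ for $y\notin\frac32AQ$. That is integrable against $|y-c(Q)|^{|\gamma|}$ at infinity, since $|\gamma|-\kappa-\delta<0$.

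The result is the bound $\ell(S)^\delta(A\ell(Q))^{|\gamma|-\kappa-\delta}$ you wrote down, now justified for all $|\gamma|\leq\kappa$. Its weighted square sum over $S\subseteq Q$ is $\lesssim\ell(Q)^{2(|\gamma|-\kappa)}|Q|$. This is exactly how the paper argues, and with this correction the rest of your proof goes through.
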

\begin{proof}
We first address (i). Let $Q\in\mathcal D$ with $\ell(Q)\leq1$, and \[f_Q\coloneqq(\cdot-c(Q))^\gamma\omega^\infty_{4\sqrt d\w Q}.\] As $\ell(Q)\leq1$, \[\|\partial^\mu f_Q\|_2\lesssim\ell(Q)^{|\gamma|-|\mu|}|Q|^{\frac12}\leq\ell(Q)^{|\gamma|-\kappa}|Q|^{\frac12}, \qquad |\mu|\leq\kappa,\] so that $\|\partial^\alpha Tf_Q\|_2\lesssim C\ell(Q)^{|\gamma|-\kappa}|Q|^{\frac12}$. Let $S\in\mathcal D$ with $S\subseteq Q$. By \eqref{e:actionsplit} with $P=4\sqrt d\,\w Q$, the vanishing mean of $\psi_S$ and the H\"older bound in \eqref{e:czkernel} give
\[
\left\langle\partial^\alpha T\left[(\cdot-c(Q))^\gamma\right],\psi_S\right\rangle=\psi_S\left(\partial^\alpha Tf_Q\right)+O\left(\ell(Q)^{|\gamma|-\kappa}\left(\frac{\ell(S)}{\ell(Q)}\right)^\delta\right),
\]
while, by the binomial formula and \eqref{e:symbols}, the left hand side equals
\[
b^{\alpha,\gamma}_S+\sum_{\gamma'<\gamma}\binom{\gamma}{\gamma'}\left(c(S)-c(Q)\right)^{\gamma-\gamma'}b^{\alpha,\gamma'}_S,\qquad |c(S)-c(Q)|_\infty\leq\ell(Q).
\]
By \eqref{e:onb}, the quantities
\[
\mathcal C_\gamma(Q)\coloneqq\bigg(\frac1{|Q|}\sum_{S\in\mathcal D,\,S\subseteq Q}|S|\left|b^{\alpha,\gamma}_S\right|^2\bigg)^{\frac12}
\]
satisfy the inductive estimate
\[
\mathcal C_\gamma(Q)\lesssim(1+C)\,\ell(Q)^{|\gamma|-\kappa}+\sum_{\gamma'<\gamma}\ell(Q)^{|\gamma|-|\gamma'|}\,\mathcal C_{\gamma'}(Q),
\]
and induction on $|\gamma|$ proves the claim. For $|\gamma|=\kappa$, decomposing a cube $Q\in\mathcal D$ with $\ell(Q)>1$ into its subcubes of unit sidelength, the sequence $\{b^{\alpha,\gamma}_S\ind_{\{\ell(S)\leq1\}}\}$ satisfies the Carleson condition on every $Q\in\mathcal D$, with constant $\lesssim(1+C)^2$, which characterizes the wavelet coefficients of $\mathrm{BMO}(\R^d)$ functions \cite{MC97}.

To prove (ii), let $|\gamma|=\kappa$ and $Q\in\mathcal D$. The third and fourth displays of the proof of (i) hold without the restriction $\ell(Q)\leq1$, and, as the lower symbols vanish, the sum over $\gamma'<\gamma$ in the second one vanishes, while $\|\partial^\alpha Tf_Q\|_2\leq C\|\nabla^\kappa f_Q\|_2\lesssim C|Q|^{\frac12}$. Thus $\mathcal C_\gamma(Q)\lesssim1+C$ for every $Q\in\mathcal D$, and $b^{\alpha,\gamma}$ belongs to $\mathrm{BMO}(\R^d)$, with norm $\lesssim1+C$ \cite{MC97}. If $\kappa=0$, the same argument applies to the $L^2$-bounded adjoint $T^*$, whose symbol $b^{0,0}$ is $b^{0,\star}$.
\end{proof}
For the $L^\infty$ conditions, we use weak-$*$ limits of finite scale sums.
\begin{lemma}\label{l:recon}
Let $b=\{b_S:S\in\mathcal D\}$ be a symbol with $\sup_{\ell(S)=t}|b_S|<\infty$ for each $t\in2^{\Z}$, let $F_1\subseteq F_2\subseteq\cdots$ be finite subsets of $2^{\Z}$ with union $\mathcal T$, and let $A\geq0$. Assume that, for each $n$, the function
\[
b_n\coloneqq\sum_{S\in\mathcal D,\,\ell(S)\in F_n}|S|b_S\psi_S
\]
satisfies $|\langle b_n,g\rangle|\leq A\|g\|_{L^1(\R^d)}$ for all $g\in \mathcal{C}^\infty_c(\R^d)$ supported in a cube of sidelength two. Then there exists $\mathfrak b\in L^\infty(\R^d)$ with $\|\mathfrak b\|_{L^\infty(\R^d)}\leq A$ and $\psi_S(\mathfrak b)=b_S\ind_{\mathcal T}(\ell(S))$ for all $S\in\mathcal D$.
\end{lemma}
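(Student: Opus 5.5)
The plan is a weak-$*$ compactness argument, localized to cubes of sidelength two and then patched. First, the hypothesis controls $b_n$ locally. Let $P$ be any cube of sidelength two. The map $g\mapsto\langle b_n,g\rangle$, for $g\in\mathcal{C}^\infty_c$ supported in $P^\circ$, is bounded by $A\|g\|_{L^1}$, and $\mathcal{C}^\infty_c(P^\circ)$ is dense in $L^1(P)$. Also, $b_n$ is a finite-scale wavelet sum, hence locally bounded and continuous: at each scale the series is locally finite and uniformly bounded, as $\sup_{\ell(S)=t}|b_S|<\infty$. Therefore $\|b_n\|_{L^\infty(P)}\leq A$. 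Since $P$ is arbitrary, $\|b_n\|_{L^\infty(\R^d)}\leq A$ for every $n$.

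Next I pass to a limit. By Banach--Alaoglu in $L^\infty(\R^d)=L^1(\R^d)'$, whose predual is separable, a subsequence $b_{n_i}$ converges weak-$*$ to some $\mathfrak b$ with $\|\mathfrak b\|_\infty\leq A$. I then identify the coefficients of the limit. Each $\psi_S$ is compactly supported and bounded, so $\psi_S\in L^1$, and
\[
\psi_S(\mathfrak b)=\lim_i\psi_S(b_{n_i}).
\]
By orthonormality of $\{|S|^{1/2}\psi_S\}$, $\psi_S(b_n)=b_S$ if $\ell(S)\in F_n$ and $0$ otherwise. To justify interchanging the pairing with the sum, note that at each fixed scale only finitely many wavelets meet $\w S$, so the pairing reduces to a finite sum. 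As the $F_n$ increase to $\mathcal T$, $\psi_S(b_n)$ is eventually $b_S\ind_{\mathcal T}(\ell(S))$, which gives the claimed coefficients of $\mathfrak b$.

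The step needing most care is the first one: passing from the $L^1$-duality bound on test functions to a pointwise bound for $b_n$. Here I need that $b_n$ is a genuine locally integrable function, with its series at each scale converging locally uniformly. This holds because the wavelets are compactly supported with $|S||\psi_S|\lesssim 1$ and boundedly many overlap at each point per scale, so each scale contributes a bounded function, and $F_n$ is finite. The remaining steps are standard.
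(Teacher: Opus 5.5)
Your proposal is correct and follows essentially the same route as the paper. You show that the finite-scale sums $b_n$ are bounded functions with $\|b_n\|_{L^\infty(\R^d)}\leq A$ by duality, extract a weak-$*$ convergent subsequence using the separability of $L^1(\R^d)$, and identify $\psi_S(\mathfrak b)$ by orthonormality, noting that only finitely many terms of $b_n$ meet the support of $\psi_S$.
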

\begin{proof}
At each point and each sidelength, boundedly many $\psi_S$ do not vanish, so that $b_n$ is a bounded function, and $\|b_n\|_{L^\infty(\R^d)}\leq A$ by duality. As $L^1(\R^d)$ is separable, a subsequence of $\{b_n\}$ converges weakly-$*$ in $L^\infty(\R^d)$ to a function $\mathfrak b$ with $\|\mathfrak b\|_{L^\infty(\R^d)}\leq A$. As $\psi_S\in L^1(\R^d)$, and as only finitely many terms of $b_n$ do not vanish on the support of $\psi_S$, orthonormality gives $\psi_S(\mathfrak b)=\lim_n\psi_S(b_n)=b_S\ind_{\mathcal T}(\ell(S))$.
\end{proof}

\begin{lemma}\label{l:tests}
Let $T$ be a Calder\'on--Zygmund operator of order $k$, $0\leq\kappa\leq k$, $|\alpha|=\kappa$, $m\geq\kappa$ and $H\geq0$.
\begin{enumerate}[label=(\alph*)]
\item Let $\kappa\geq1$. Fix a sufficiently large admissible $A_0$, let $Q_c,Q'_c$ have center $c$ and sidelengths $A_0,A_0^{1/2}$, and set $f_c^\gamma=(\cdot-c)^\gamma\omega^\infty_{Q_c}$. If the images $\partial^\alpha Tf_c^\gamma$ are locally integrable and
\[
|\partial^\alpha Tf_c^\gamma|\leq H\quad\textrm{a.e.\ on }Q'_c,
\qquad c\in\R^d,\quad |\gamma|\leq\kappa,\quad |\gamma|<m,
\]
then the lower symbols belong to $L^\infty(\R^d)$ and, if $m>\kappa$, the top symbols belong locally to $L^\infty(\R^d)$, with norms $\lesssim1+H$.
\item Suppose that for every cube $Q$ of center $c$, every $A\geq6$, and $|\gamma|\leq\kappa$ with $|\gamma|<m$, the image of $f_A=(\cdot-c)^\gamma\omega^\infty_{AQ}$ is locally integrable and
\begin{equation}\label{e:homtest}
|\partial^\alpha Tf_A|\leq H(A\ell(Q))^{|\gamma|-\kappa}\quad\textrm{a.e.\ on }Q.
\end{equation}
Then the lower symbols vanish, and, if $m>\kappa$, the top symbols belong to $L^\infty(\R^d)$ with norms $\lesssim H$.
\end{enumerate}
\end{lemma}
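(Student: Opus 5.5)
The plan is to read off the symbols from the values of $\partial^\alpha T$ on localized monomials, then reassemble these values into bounded functions through Lemma \ref{l:recon}. For a cube $S\in\mathcal D$ with $\ell(S)$ in a suitable range, write, by Lemma \ref{l:cutoff}(ii) and \eqref{e:actionsplit} with $P$ a large dilate of $S$,
\[
b^{\alpha,\gamma}_S=\psi_S\big(\partial^\alpha T[(\cdot-c(S))^\gamma\omega^\infty_{P}]\big)+\textrm{tail},
\]
where the tail is controlled by the kernel size (for $|\gamma|<\kappa$) or by the H\"older bound together with the vanishing mean of $\psi_S$ (for $|\gamma|=\kappa$). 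The first term is then compared with the tested function $f^\gamma_c$ by re-centering. The binomial expansion $(\cdot-c(S))^\gamma=\sum_{\gamma'\leq\gamma}\binom{\gamma}{\gamma'}(c-c(S))^{\gamma-\gamma'}(\cdot-c)^{\gamma'}$ expresses the relevant pairings through the images $\partial^\alpha Tf^{\gamma'}_c$, which are bounded by $H$ on $Q'_c$.

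\emph{Part (a).} Fix $c$ and the scales $\ell(S)\leq1$ with $\w S\subseteq Q'_c$, and form finite-scale sums
\[
b_n=\sum_{\ell(S)\in F_n}|S|b_S\psi_S,\qquad b_S=b^{\alpha,\gamma}_S.
\]
For $g$ supported in a cube of sidelength two centered near $c$, the goal is to show $|\langle b_n,g\rangle|\lesssim(1+H)\|g\|_1$.
\begin{itemize}
\item Write $\langle b_n,g\rangle$ as the pairing of $\partial^\alpha Tf^\gamma_c$ with $\sum_{\ell(S)\in F_n}|S|\psi_S(g)\psi_S$, which is the projection $g-\mathbb E_\ell g$ restricted to finitely many scales. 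This is bounded in $L^1$ by the uniform $L^1$ bounds of $\mathbb E_\ell$ in \eqref{e:mra}.
\item Add the lower-order corrections coming from the binomial terms $(c-c(S))^{\gamma-\gamma'}$. These are handled inductively in $|\gamma|$, exactly as in the proof of Lemma \ref{l:carleson}(i), since $|c-c(S)|\lesssim A_0^{1/2}$.
\item Control the tails and the difference between $\omega^\infty_{Q_c}$ and $\omega^\infty_P$ by the kernel estimates. Choosing $A_0$ large keeps the support of the projected $g$ inside $Q'_c$, well separated from where the cutoff varies.
\end{itemize}
Lemma \ref{l:recon} then gives a bounded function with coefficients $b_S\ind_{\{\ell(S)\leq1\}}$. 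For the lower symbols, Lemma \ref{l:size}(ii) upgrades the local statement to a global one. For the top symbols, the test with $|\gamma|=\kappa$ is only available when $|\gamma|<m$, that is, when $m>\kappa$, which yields the local $L^\infty$ membership.

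\emph{Part (b).} Here the test holds at every scale with the homogeneous bound \eqref{e:homtest}. For a lower symbol, take $Q\ni c(S)$ with $\ell(Q)\sim\ell(S)$ and let $A\to\infty$. By Lemma \ref{l:cutoff}(ii), the pairing converges to $b_S^{\alpha,\gamma}$ plus lower-order binomial corrections, while \eqref{e:homtest} bounds it by $H(A\ell(Q))^{|\gamma|-\kappa}\|\psi_S\|_1\to0$. Induction on $|\gamma|$ then gives vanishing of all lower symbols. For the top symbols when $m>\kappa$, the right-hand side of \eqref{e:homtest} is $H$, uniform in scale. The argument of (a) now runs with arbitrary finite sets of scales and cubes of any size: rescale $g$ and choose $Q$ large compared with $\supp g$, and Lemma \ref{l:recon} applied after rescaling gives $\mathfrak b$ with $\|\mathfrak b\|_\infty\lesssim H$.

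The main obstacle is the pairing step in (a): the finite-scale projection of $g$ is not compactly supported in $Q'_c$, only concentrated there with the decay of the Daubechies scaling functions. The part of $g-\mathbb E_\ell g$ lying outside $Q'_c$ must be estimated using the kernel of $\partial^\alpha T$ off the support of $\omega^\infty_{Q_c}$ together with the size bound \eqref{e:size}. I would first try truncating the wavelet sum to those $S$ with $\w S\subseteq Q'_c$, and treat the remaining boundedly many boundary scales directly through \eqref{e:size}.
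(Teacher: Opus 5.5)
Your architecture is the paper's: tests on cut-off monomials, binomial recentring, finite-scale sums $b_n$, Lemma \ref{l:recon}, and Lemma \ref{l:size}(ii). However, the estimate that actually carries part (a) is missing.

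After recentring, the correction terms are pairings of $\partial^\alpha T[(\cdot-c)^{\gamma'}]$, $\gamma'<\gamma$, with
\[
g_{\gamma'}\coloneqq\sum_{2^{-n}\leq\ell(S)\leq1}|S|\,(c-c(S))^{\mu}\psi_S(g)\,\psi_S,\qquad \mu\coloneqq\gamma-\gamma',
\]
and you need $\|g_{\gamma'}\|_1\lesssim\|g\|_1$ uniformly in $n$. This cannot be done ``exactly as in Lemma \ref{l:carleson}(i)''. There, the $S$-dependent factors are absorbed coefficientwise by orthogonality in $L^2$. In $L^1$, multiplying wavelet coefficients by a merely bounded sequence need not be bounded uniformly in the number of scales, since the wavelet basis is not unconditional in $L^1$. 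So the size bound $|c-c(S)|\lesssim A_0^{1/2}$ alone gives nothing. The missing idea is the smooth dependence on $c(S)$. Set $e_S(y)\coloneqq(c-c(S))^\mu-(c-y)^\mu$, which is $O(\ell(S))$ on $\w S$ for the contributing cubes. Then $g_{\gamma'}=[\mathbb E_{2^{-n-1}}-\mathbb E_1]((c-\cdot)^\mu g)+\sum_{2^{-n}\leq\ell(S)\leq1}|S|\psi_S(e_Sg)\psi_S$. The first term is bounded in $L^1$, and the second contributes $\lesssim t\|g\|_1$ at each scale $t\leq1$, which is summable.

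No induction on $|\gamma|$ is needed, since every $\gamma'\leq\gamma$ has $|\gamma'|<m$, so $f^{\gamma'}_c$ is itself tested. Also, the tail must be estimated either with the cancellation of each $\psi_S$ or for the aggregated finite combination. Cube-by-cube kernel-size bounds for $|\gamma|<\kappa$ grow linearly with the number of scales.

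The obstacle you single out is not there. The wavelets and scaling functions are compactly supported, $\psi_S$ in $\w S$. So for $g$ supported in a cube of sidelength two centered at $c$, the finite-scale projections are finite wavelet combinations supported in a cube of sidelength $\lesssim\w$ about $c$, hence in $Q'_c$ once $A_0$ is large. No truncation or boundary scales are involved.

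What $A_0$ buys is the separation between $Q'_c$ and $\R^d\setminus\frac32Q_c$. With $P=Q_c$ in \eqref{e:actionsplit}, the vanishing mean of such a combination $h$ and the H\"older bound give a tail $\lesssim\|h\|_1A_0^{|\gamma|-\kappa-\delta/2}\lesssim\|h\|_1$. The main term is $\leq H\|h\|_1$ by hypothesis.

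In (b), your recentring cannot be rerun on large cubes, where $|c-c(S)|$ is unbounded. What makes the scale-free argument work is that, once the lower symbols vanish, $\langle\partial^\alpha T[(\cdot-c)^\gamma],\psi_S\rangle=b^{\alpha,\gamma}_S$ for every $c$ when $|\gamma|=\kappa$. There are then no corrections, and one pairs directly with $[\mathbb E_{2^{-n-1}}-\mathbb E_{2^n}]g$, taking $Q$ to contain its support; no rescaling is needed. Your argument that the lower symbols vanish is fine, though centering $Q$ at $c(S)$ removes the need for induction there too.
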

\begin{proof}
The first task is (a). Let $|\gamma|<m$, and let $h$ be a finite linear combination of wavelets $\psi_S$ with $\ell(S)\leq1$, supported in $Q'_c$. Then $|\langle\partial^\alpha T[(\cdot-c)^\gamma],h\rangle|\lesssim(1+H)\|h\|_1$. Indeed, as $\int h=0$ and $h$ is supported in $Q_c$, \eqref{e:actionsplit} with $P=Q_c$ gives
\[
\left\langle\partial^\alpha T\left[(\cdot-c)^\gamma\right],h\right\rangle=\left\langle\partial^\alpha Tf^\gamma_c,h\right\rangle+\int_{\R^d}(y-c)^\gamma\left[1-\omega^\infty_{Q_c}\right](y)\int_{\R^d}K(x,y)\,h(x)\,\d x\,\d y,
\]
with $K$ the kernel of $\partial^\alpha T$. The first term is $\lesssim H\|h\|_1$ by the hypothesis. In the second, $y\notin\frac32Q_c$, and the vanishing mean of $h$ and the H\"older bound in \eqref{e:czkernel} give
\[\left|\int K(x,y)h(x)\,\d x\right|\lesssim\|h\|_1|y-c|^{-d-\kappa-\delta},\]
 so that it is $\lesssim\|h\|_1$, as $|\gamma|\leq\kappa$.
For $|\gamma|<m$ and $n\geq0$, set
\[
b^{(n)}\coloneqq\sum_{2^{-n}\leq\ell(S)\leq1}|S|b^{\alpha,\gamma}_S\psi_S .
\]
By \eqref{e:size} and Lemma \ref{l:recon}, the local conclusion follows once $|\langle b^{(n)},g\rangle|\lesssim(1+H)\|g\|_1$, uniformly in $n$, for $g\in \mathcal{C}^\infty_c(\R^d)$ supported in a cube of sidelength two with center $c$. Lemma \ref{l:size}(ii) then gives the global conclusion for the lower symbols. The binomial formula and \eqref{e:symbols} give
\[
\left\langle b^{(n)},g\right\rangle=\sum_{\gamma'\leq\gamma}\binom{\gamma}{\gamma'}\left\langle\partial^\alpha T\left[(\cdot-c)^{\gamma'}\right],g_{\gamma'}\right\rangle,\qquad g_{\gamma'}\coloneqq\sum_{2^{-n}\leq\ell(S)\leq1}|S|\,(c-c(S))^{\gamma-\gamma'}\psi_S(g)\,\psi_S,
\]
where each $g_{\gamma'}$ is a finite combination of wavelets of sidelength at most one, supported in $Q'_c$. By the preceding bound, it suffices to show that $\|g_{\gamma'}\|_1\lesssim\|g\|_1$. Let $\mu\coloneqq\gamma-\gamma'$ and $e_S(y)\coloneqq(c-c(S))^\mu-(c-y)^\mu$. Only the cubes $S$ for which $\w S$ meets the support of $g$ contribute below; for them, $|c-c(S)|\lesssim1$, and thus $|e_S|\lesssim\ell(S)$ on $\w S$, which contains the support of $\psi_S$. By \eqref{e:mra},
\[
g_{\gamma'}=\left[\mathbb E_{2^{-n-1}}-\mathbb E_1\right]\left((c-\cdot)^\mu g\right)+\sum_{2^{-n}\leq\ell(S)\leq1}|S|\,\psi_S\left(e_Sg\right)\psi_S .
\]
The first term has $L^1$ norm $\lesssim\|g\|_1$, as $\mathbb E_\ell$ is bounded on $L^1(\R^d)$, and the second $\lesssim\|g\|_1$, as the cubes $S$ of each sidelength $t\leq1$ contribute $\lesssim t\|g\|_1$, the cubes $\w S$ having bounded overlap.

We switch to (b). If $|\gamma|<\kappa$ and $S\in\mathcal D$ has $c(S)=c$ and $\w S\subseteq Q$, then, by \eqref{e:polyaction}, Lemma \ref{l:reduce}(ii) and \eqref{e:homtest},
\[
\left|b^{\alpha,\gamma}_S\right|=\lim_{A\to\infty}\left|\left\langle\partial^\alpha Tf_A,\psi_S\right\rangle\right|\leq\lim_{A\to\infty}H\left(A\ell(Q)\right)^{|\gamma|-\kappa}\|\psi_S\|_1=0,
\]
so that the lower symbols vanish. Consequently, the polynomials of degree less than $\kappa$ are annihilated by the actions of $\partial^\alpha T$ on the wavelets, and, for $|\gamma|=\kappa$,
\begin{equation}\label{e:center}
b^{\alpha,\gamma}_S=\left\langle\partial^\alpha T\left[(\cdot-c)^\gamma\right],\psi_S\right\rangle,\qquad c\in\R^d,\quad S\in\mathcal D.
\end{equation}
Let $|\gamma|=\kappa<m$. Then \eqref{e:homtest} bounds $|\partial^\alpha Tf_A|$ by $H$ on $Q$, uniformly in $A$, so that, for every finite linear combination $h$ of wavelets supported in $Q$, where $Q$ is any cube and $c$ its center,
\[
\left|\left\langle\partial^\alpha T\left[(\cdot-c)^\gamma\right],h\right\rangle\right|=\lim_{A\to\infty}\left|\left\langle\partial^\alpha Tf_A,h\right\rangle\right|\leq H\|h\|_1.
\] For $n\geq1$, let
\[
b^{(n)}\coloneqq\sum_{2^{-n}\leq\ell(S)\leq2^n}|S|b^{\alpha,\gamma}_S\psi_S,
\]
a bounded function by \eqref{e:size}. If $g\in \mathcal{C}^\infty_c(\R^d)$, then $g_n\coloneqq[\mathbb E_{2^{-n-1}}-\mathbb E_{2^n}]g$ is a finite linear combination of wavelets with $\|g_n\|_1\lesssim\|g\|_1$, and, choosing $Q$ containing its support, \eqref{e:center} gives
\[
\left|\left\langle b^{(n)},g\right\rangle\right|=\left|\left\langle\partial^\alpha T\left[(\cdot-c)^\gamma\right],g_n\right\rangle\right|\leq H\|g_n\|_1\lesssim H\|g\|_1.
\]
Thus Lemma \ref{l:recon}, with $F_n\coloneqq\{t\in2^{\Z}:2^{-n}\leq t\leq2^n\}$, provides $\mathfrak b^{\alpha,\gamma}\in L^\infty(\R^d)$ with $\|\mathfrak b^{\alpha,\gamma}\|_\infty\lesssim H$ and $\psi_S(\mathfrak b^{\alpha,\gamma})=b^{\alpha,\gamma}_S$ for all $S\in\mathcal D$.
\end{proof}

\subsection{Proof that (i) and (ii) are equivalent in Theorem \ref{t:HS}}\label{ss:czthm}
\begin{proof}
First we show (i) implies (ii). Assume (i), and let $f\in \mathcal{C}^\infty_c(\R^d)$. As the lower symbols vanish, Proposition \ref{p:rep}, with $\tau<\delta_1<\delta$, writes $\partial^\alpha Tf$ as a sum of boundedly many terms $T_i(\partial^{\beta_i}f)$, $|\beta_i|=\kappa$, each belonging to $L^2(\R^d)$, so that $\partial^\alpha Tf$ is locally integrable. Here $T_i$ is one of the operators $V_{\alpha,\beta}$, $\gamma!^{-1}P_{\mathfrak b^{\alpha,\gamma}}$ with $|\gamma|=\kappa$, and $P^*_{\mathfrak b^{0,\star}}$. Up to a normalization, each $T_i$ is a model for which $\tau$ satisfies \eqref{e:range}, by Remark \ref{r:models} and Lemma \ref{l:para}(i), and, if $m>\kappa$, satisfies \eqref{e:V1}: $V_{\alpha,\beta}$ and $P^*_{\mathfrak b^{0,\star}}$ by Lemma \ref{l:V1mean}, as their input packets have vanishing mean, and $P_{\mathfrak b^{\alpha,\gamma}}$ by Lemma \ref{l:para}(ii), as $\mathfrak b^{\alpha,\gamma}\in L^\infty(\R^d)$. Theorem \ref{t:model} of order $m-\kappa$, applied to each $T_i$ with the input $\partial^{\beta_i}f$, and the union of the resulting collections give \eqref{e:Dfine}, with constant $\lesssim1+\mathfrak B(T)$.

To see that (ii) implies (i),  assume (ii), with implied constant $C$. By \eqref{e:jetcrude}, the right hand side of \eqref{e:Dfine} for $f$ is at most a constant multiple of
\[
\frac Cr\sum_{S\in\mathcal S}\langle|\nabla^\kappa f|\rangle_{3S}\ind_S.
\]
Fix $c\in\R^d$, a cube $Q$ of center $c$, and $|\gamma|\leq\kappa$ with $|\gamma|<m$. For $A\geq6$, let $f_A\coloneqq(\cdot-c)^\gamma\omega^\infty_{AQ}$, and let $\mathcal S$ be the collection of (ii) for $f_A$. On $\frac32AQ$, $\nabla^\kappa f_A$ is a polynomial of degree less than $m-\kappa$, namely zero or a constant, and $|\nabla^\kappa f_A|\lesssim(A\ell(Q))^{|\gamma|-\kappa}$ on its support $2AQ$. Let $x\in Q$. A cube $S\in\mathcal S$ containing $x$ contributes to \eqref{e:Dfine} only if $3S\not\subseteq\frac32AQ$, whence $\ell(S)\geq\frac{A\ell(Q)}6$, and then its term is $\lesssim\frac Cr(A\ell(Q))^{|\gamma|-\kappa+d}\ell(S)^{-d}$. By \eqref{e:band},
\begin{equation}\label{e:necbound}
\left|\partial^\alpha Tf_A\right|\lesssim\frac Cr\left(A\ell(Q)\right)^{|\gamma|-\kappa}\qquad\textrm{almost everywhere on }Q.
\end{equation}
Lemma \ref{l:tests}(b), applied to \eqref{e:necbound}, gives the vanishing lower symbols and, when $m>\kappa$, the bounded top symbols.

By the $L^2$ bound for sparse operators \cite{LN19} and \eqref{e:jetcrude}, $\|\partial^\alpha Tf\|_2\lesssim (C/r)\|\nabla^\kappa f\|_2$. Lemma \ref{l:carleson}(ii) gives the top $\mathrm{BMO}$ conditions when $m=\kappa$ and the adjoint condition when $\kappa=0$, with norms $\lesssim1+C/r$.
\end{proof}

\subsection{Proof of Theorem \ref{t:jet} and Corollary \ref{c:sobjet}}\label{ss:jetthm}
\begin{proof}[Proof of Theorem \ref{t:jet}]
First we show that (i) implies (ii). Decrease $r_0$, as allowed in \S\ref{ss:perc}, so that $s\coloneqq2^{d+1}r$ is admissible in Lemmas \ref{l:sandwichperc} and \ref{l:channelmodels}. By \S\ref{ss:channels}, $\partial^\alpha Tf\in L^2(\R^d)$, and Lemma \ref{l:channelmodels} gives \eqref{e:jetfull} at the level $s$. As the percentiles do not increase with the level, Lemma \ref{l:sandwichperc}(i) bounds the terms of \eqref{e:jetfull} on the cubes $S$ with $\ell(S)<1$ by those of \eqref{e:jet} on $S$, and Lemma \ref{l:sandwichperc}(ii), at the level $s$, bounds those on the cubes with $\ell(S)\geq1$ by the terms of \eqref{e:jet} on $2S$, at the level $2^{-d-1}s=r$. Replacing the latter cubes by their doubles, which divides the sparseness constant by $2^d$, gives \eqref{e:jet}, with $C\lesssim1+\mathfrak B(T)$.

To see that (ii) implies (i), assume (ii), with implied constant $C$. By \eqref{e:jetcrude}, the right hand side of \eqref{e:jet} for $f$ is at most a constant multiple of
\[
\frac Cr\sum_{S\in\mathcal S}\left(\langle|f|\rangle_{3S}+\langle|\nabla^\kappa f|\rangle_{3S}\right)\ind_S.
\]
Let $A_0$, $Q_c$, $Q'_c$ and $f^\gamma_c$ be as in Lemma \ref{l:tests}(a), with $|\gamma|\leq\kappa$ and $|\gamma|<m$, and let $\mathcal S$ be the collection of (ii) for $f^\gamma_c$. Let $x\in Q'_c$. As $f^\gamma_c$ coincides on $\frac32Q_c$ with a polynomial of degree $|\gamma|<m$, a cube $S\in\mathcal S$ containing $x$ contributes only if $3S\not\subseteq\frac32Q_c$, whence $\ell(S)\gtrsim A_0$, and then its term is $\lesssim\frac Cr\ell(S)^{-d}$, as $\|f^\gamma_c\|_1+\|\nabla^\kappa f^\gamma_c\|_1\lesssim1$. By \eqref{e:band},
\[
\left|\partial^\alpha Tf^\gamma_c\right|\lesssim\frac Cr\qquad\textrm{almost everywhere on }Q'_c,
\]
and Lemma \ref{l:tests}(a) shows that the lower symbols belong to $L^\infty(\R^d)$ and, if $m>\kappa$, that the top symbols belong locally to $L^\infty(\R^d)$, with norms $\lesssim(1+C)/r$.

If $m=\kappa$, \eqref{e:jetcrude} and the $L^2$ bound for sparse operators \cite{LN19} give
\[
\|\partial^\alpha Tf\|_2\lesssim\frac Cr\|f\|_{W^{\kappa,2}(\R^d)},
\]
and Lemma \ref{l:carleson}(i) shows that the top symbols belong locally to $\mathrm{BMO}(\R^d)$, with local norms $\lesssim1+C/r$.
\end{proof}

\begin{proof}[Proof of Corollary \ref{c:sobjet}]
First we show that (i) implies (ii). Let $1\leq|\alpha|\leq k$. By \S\ref{ss:channels}, $\partial^\alpha Tf\in L^2(\R^d)$. Decreasing $r_0$ if necessary, Lemma \ref{l:channelmodels} gives \eqref{e:jetfull}, with $\kappa=|\alpha|$, at the level $2^{d+1}r$. As the percentiles do not increase with the level, Lemma \ref{l:sandwichperc}, with $k$ in place of $\kappa$, bounds its terms on the cubes $S$ with $\ell(S)<1$ by those of \eqref{e:sobjet} on $S$, and its terms on the cubes with $\ell(S)\geq1$ by those of \eqref{e:sobjet} on $2S$, at the level $r$. Replacing the latter cubes by their doubles, and taking the union of the resulting collections over $1\leq|\alpha|\leq k$, gives \eqref{e:sobjet}, with constant $\lesssim1+\mathfrak B(T)$.

Conversely, assume (ii), with implied constant $C$. If $|\alpha|=k$, \eqref{e:sobjet} contains \eqref{e:jet} with $\kappa=k$, and Theorem \ref{t:jet} gives the conditions (i) for $\alpha$. Let now $|\alpha|=\kappa<k$, and let $f^\gamma_c$ be as in Lemma \ref{l:tests}(a), with $|\gamma|\leq\kappa$. As $f^\gamma_c$ coincides on $\frac32Q_c$ with a polynomial of degree $|\gamma|<k\leq m$, the terms of \eqref{e:sobjet} for $f^\gamma_c$ vanish on the cubes $S$ with $3S\subseteq\frac32Q_c$, while those of the cubes $S$ containing a point of $Q'_c$ and with $3S\not\subseteq\frac32Q_c$ have $\ell(S)\gtrsim A_0$ and are $\lesssim\frac Cr\ell(S)^{-d}$, by \eqref{e:jetcrude}. By \eqref{e:band}, $|\partial^\alpha Tf^\gamma_c|\lesssim C/r$ almost everywhere on $Q'_c$, and Lemma \ref{l:tests}(a), as $m>\kappa$, gives the conditions (i) of Theorem \ref{t:jet} for $\alpha$, with norms $\lesssim(1+C)/r$.

Finally, if $T1\in L^\infty(\R^d)$ and $T^*1\in\mathrm{BMO}(\R^d)$, Theorem \ref{t:HS} with $\kappa=0$ bounds $|Tf|$ by a sparse sum of the terms $\nat{f}_{m,S}$, and the union of the two collections gives \eqref{e:sobjet} with $|Tf|$ added to its left hand side.
\end{proof}

\enlargethispage{2\baselineskip}\subsection{Proof of the Sobolev sparse bound}\label{ss:czcor}
\begin{proof}[Proof of \eqref{e:D}]
If $m>\kappa$, the second inequality in \eqref{e:poincareperc}, with $\partial^\beta f$ and $m-\kappa$ in place of $f$ and $m$, gives, for every cube $S$,
\begin{equation}\label{e:sobcube}
\nat{\partial^\beta f}_{m-\kappa,S}\lesssim\ell(S)^{m-\kappa}\left(\M^\tau_{3S}\left[\nabla^{m-\kappa}\partial^\beta f\right]\right)^{*,S}(r)\leq\ell(S)^{m-\kappa}\left(\M^\tau_{3S}\left[\nabla^mf\right]\right)^{*,S}(r),
\end{equation}
as $\M^\tau_{3S}[\nabla^{m-\kappa}\partial^\beta f]\leq\M^\tau_{3S}[\nabla^mf]$ by \eqref{e:msigmavec}; if $m=\kappa$, the same bound holds by \eqref{e:po}. Thus \eqref{e:Dfine} implies \eqref{e:D}. The bound \eqref{e:czMW} is proved in Section \ref{s:HS}.
\end{proof}
\subsection{Sharp $A_p$-weighted Sobolev bounds}\label{ss:Ap}
The order-zero sparse bounds give sharp $A_p$ estimates under $\mathrm{BMO}$ conditions on the symbols. Write
\begin{equation}\label{e:sobnorm}
\|f\|_{W^{k,p}(w)}\coloneqq\sum_{|\gamma|\leq k}\left\|\partial^\gamma f\right\|_{L^p(w)}.
\end{equation}
\begin{proposition}\label{c:czweighted}
Let $1<p<\infty$, $w\in A_p$ and $f\in \mathcal{C}^\infty_c(\R^d)$.
\begin{enumerate}[label=(\alph*)]
\item In the setting of Theorem \ref{t:HS} with $m=\kappa$, assume \emph{(i)}. Then
\[
\left\|\partial^\alpha Tf\right\|_{L^p(w)}\lesssim_p[w]_{A_p}^{\max\{1,\frac1{p-1}\}}\sum_{|\beta|=\kappa}\left\|\partial^\beta f\right\|_{L^p(w)}.
\]
\item In the setting of Theorem \ref{t:jet}, assume that the lower symbols belong to $\mathrm{BMO}(\R^d)$ and that the top symbols belong locally to $\mathrm{BMO}(\R^d)$. Then
\begin{equation}\label{e:czAp}
\left\|\partial^\alpha Tf\right\|_{L^p(w)}\lesssim_p[w]_{A_p}^{\max\{1,\frac1{p-1}\}}\,\|f\|_{W^{\kappa,p}(w)}.
\end{equation}
\item Let $T$ be a Calder\'on--Zygmund operator of order $k$. Assume that $T1$ and $T^*1$ belong to $\mathrm{BMO}(\R^d)$ and that, for every $1\leq\kappa\leq k$ and $|\alpha|=\kappa$, the lower symbols $b^{\alpha,\gamma}$, $|\gamma|<\kappa$, belong to $\mathrm{BMO}(\R^d)$, while the top symbols $b^{\alpha,\gamma}$, $|\gamma|=\kappa$, belong locally to $\mathrm{BMO}(\R^d)$. Then
\begin{equation}\label{e:sobolev}
\|Tf\|_{W^{k,p}(w)}\lesssim_p[w]_{A_p}^{\max\{1,\frac1{p-1}\}}\|f\|_{W^{k,p}(w)} .
\end{equation}
\end{enumerate}
\end{proposition}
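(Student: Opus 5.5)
The plan is to derive each part from an order-zero sparse bound by averages and then apply the sharp $A_p$ bound for sparse operators, $\|\sum_{S\in\mathcal S}\langle g\rangle_{3S}\ind_S\|_{L^p(w)}\lesssim_p[w]_{A_p}^{\max\{1,\frac1{p-1}\}}\|g\|_{L^p(w)}$. This is the Lerner/Moen $A_2$-type estimate. The dilation $3S$ is harmless: split the sparse family by the $3^d$ dyadic grid trick, or note that $\langle g\rangle_{3S}\lesssim\langle g\rangle_{\widetilde S}$ for a dyadic $\widetilde S\supseteq 3S$ of comparable size, and that the resulting family is still sparse.

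For (a), I would assume (i) of Theorem \ref{t:HS} with $m=\kappa$, so the implication (i)$\Rightarrow$(ii) gives a sparse $\mathcal S$ with $|\partial^\alpha Tf|\lesssim\sum_{S}\sum_{|\beta|=\kappa}\nat{\partial^\beta f}_{0,S}\ind_S$. By \eqref{e:jetcrude} with $n=0$, $\nat{\partial^\beta f}_{0,S}\lesssim r^{-1}\langle\partial^\beta f\rangle_{3S}$, with $r$ fixed admissible. The sparse $A_p$ bound applied to each $g=|\partial^\beta f|$ then concludes.

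For (b), the hypotheses are exactly the $\mathrm{BMO}$ assumptions of \S\ref{ss:channels}. The last claim of Lemma \ref{l:channelmodels} gives a sparse collection with $|\partial^\alpha Tf|\lesssim\sum_S\sum_{j=0}^\kappa\nat{\nabla^jf}_{0,S}\ind_S$. No cutoff testing is needed at order zero, and $\partial^\alpha Tf\in L^2$ is locally integrable. Using \eqref{e:jetcrude} again bounds each term by $r^{-1}\langle|\nabla^jf|\rangle_{3S}$, and summing over $j\le\kappa$ gives $\|f\|_{W^{\kappa,p}(w)}$ after the sparse $A_p$ estimate. Note that the intermediate channels are kept explicitly here, which is why no sandwich lemma is needed and $\mathrm{BMO}$ suffices for all lower symbols.

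For (c), I would sum (b) over $1\le|\alpha|\le k$ and add the $\kappa=0$ term. For $\kappa=0$, Theorem \ref{t:HS} with $\kappa=m=0$ under $T1,T^*1\in\mathrm{BMO}$ gives $|Tf|\lesssim\sum_S\nat{f}_{0,S}\ind_S$, hence $\|Tf\|_{L^p(w)}\lesssim[w]_{A_p}^{\max\{1,\frac1{p-1}\}}\|f\|_{L^p(w)}$. Since $\|f\|_{W^{\kappa,p}(w)}\le\|f\|_{W^{k,p}(w)}$, the sum gives \eqref{e:sobolev}.

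The only delicate point I foresee is bookkeeping. The first is to make sure the $\kappa=0$ case of Theorem \ref{t:HS} (i)$\Rightarrow$(ii) genuinely only needs the $\mathrm{BMO}$ conditions: at $m=\kappa=0$ no \eqref{e:V1} is required, by Theorem \ref{t:model}. The second is to confirm that the sparse $A_p$ constant is uniform for sparseness bounded below by the admissible $\eta_0$. Neither should be a real obstacle.
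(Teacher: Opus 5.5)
Your proposal is correct and matches the paper's proof. Parts (a) and (b) reduce to an order-zero sparse bound by averages over $3S$, via \eqref{e:Dfine} with $m=\kappa$ (respectively the $\mathrm{BMO}$-only claim of Lemma \ref{l:channelmodels}) together with \eqref{e:jetcrude}, followed by the sharp weighted bound for sparse operators applied to $\{3S:S\in\mathcal S\}$; part (c) follows by summing (b) over $1\leq|\alpha|\leq k$ and adding the case $\kappa=m=0$.
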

\begin{proof}[Proof of Proposition \ref{c:czweighted}]
In cases (a) and (b), it suffices to find, at a fixed admissible level, a sparse collection $\mathcal S$ with
\begin{equation}\label{e:avgsparse}
\left|\partial^\alpha Tf\right|\lesssim\sum_{|\gamma|\leq\kappa}\sum_{S\in\mathcal S}\left\langle\left|\partial^\gamma f\right|\right\rangle_{3S}\ind_S,
\end{equation}
with the terms $|\gamma|=\kappa$ only in case (a): the sharp weighted bound for sparse operators \cite{LN19}, applied to the sparse collection $\{3S:S\in\mathcal S\}$, then gives the claims. In case (a), \eqref{e:avgsparse} follows from \eqref{e:Dfine} with $m=\kappa$ and \eqref{e:jetcrude}. In case (b), Lemma \ref{l:channelmodels} gives the order-zero sparse bound, and \eqref{e:jetcrude} gives \eqref{e:avgsparse}. In case (c), the assumptions are those of (a) with $\kappa=m=0$ for $T$, and those of (b) for $\partial^\alpha T$, $1\leq|\alpha|\leq k$, so that summing over $|\alpha|\leq k$ gives \eqref{e:sobolev}.
\end{proof}
The exponent in (c), and in (a) with $\kappa=0$, is optimal for the Riesz transforms $R_j$, whose $L^p(w)$ bounds are sharp \cites{petermichl07,petermichl08}. Up to constant multiples, $R_j$ is a Calder\'on--Zygmund operator of every order $k$. Since $\partial^\alpha R_jf=R_j\partial^\alpha f$, Corollary \ref{c:endpoint}(a) gives the conditions of Theorem \ref{t:HS} with $m=\kappa$ for every $0\leq\kappa\leq k$, and hence the assumptions of (a) and (c). For (c), apply \eqref{e:sobolev} to $f(\lambda\cdot)$ and $w(\lambda\cdot)$ and multiply by $\lambda^{d/p}$. Dilations commute with $R_j$ and preserve $[w]_{A_p}$; letting $\lambda\to0$ gives the $L^p(w)$ bound with the same exponent. The assumptions of (b) do not imply \eqref{e:czextreme}, even for $\kappa=m=1$, see Example \ref{r:DGW}.

\section{Weighted Hardy--Sobolev inequalities}\label{s:HS}
We complete the proof of Theorem \ref{t:HS} and its corollaries, then prove Theorem \ref{t:HSjet} and Proposition \ref{p:rigidity}. The logarithmic tests below give the necessary $L^\infty$ conditions in both settings. The notation is that of Section \ref{s:cz}.

\subsection{Homogeneous bounds and the order-zero conditions}\label{ss:order0}
\begin{proof}[Proof that (ii) implies (iii) in Theorem \ref{t:HS}]
Let $f\in \mathcal{C}^\infty_c(\R^d)$, let $\mathcal S$ be the collection in \eqref{e:Dfine}, and set
\[
\lambda_S\coloneqq\sum_{|\beta|=\kappa}\nat{\partial^\beta f}_{m-\kappa,S},\qquad
\sup_{S\in\mathcal S}\lambda_S\ind_S\leq\sum_{|\beta|=\kappa}\mathcal N_{m-\kappa}(\partial^\beta f).
\]
Lemma \ref{l:Ainf} and the quasi-triangle inequality give \eqref{e:PHS}. Under (i), the sparse constant is $\lesssim1+\mathfrak B(T)$, by \S\ref{ss:czthm}; hence the norm constants are $\lesssim_p1+\mathfrak B(T)$ and grow at most linearly in $p$.
\end{proof}

For the converse, unweighted estimates first give the vanishing lower symbols and the top $\mathrm{BMO}$ conditions.
\begin{lemma}\label{l:order0}
Assume that $\|\partial^\alpha Tf\|_{L^p(\R^d)}\leq C\|\nabla^\kappa f\|_{L^p(\R^d)}$ for all $f\in \mathcal{C}^\infty_c(\R^d)$, for $p=2$ and for some $d<p<\infty$. Then the symbols $b^{\alpha,\gamma}$ vanish for $|\gamma|<\kappa$ and belong to $\mathrm{BMO}(\R^d)$ for $|\gamma|=\kappa$, and, if $\kappa=0$, $b^{0,\star}\in\mathrm{BMO}(\R^d)$, all norms being $\lesssim1+C$.
\end{lemma}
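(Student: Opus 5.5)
The plan is to split the conclusion into two parts: the BMO statements come from the $L^2$ bound, and the vanishing of the lower symbols comes from the $L^p$ bound with $p>d$ by a scaling argument.

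\emph{BMO conditions.} The top symbols, and $b^{0,\star}$ when $\kappa=0$, will follow from Lemma \ref{l:carleson}(ii) applied to the $p=2$ hypothesis, once we know that the lower symbols vanish. So the main task is to prove that vanishing.

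\emph{Vanishing of the lower symbols.} This is automatic when $\kappa=0$, so let $\kappa\geq1$. Fix $|\gamma|<\kappa$, a cube $Q$ with center $c$, and $A\geq6$. Set $f_A\coloneqq(\cdot-c)^\gamma\omega^\infty_{AQ}$, the same test function as in Lemma \ref{l:tests}(b). We want to verify \eqref{e:homtest} in a weak, averaged form strong enough to run the argument of Lemma \ref{l:tests}(b).

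\begin{enumerate}[label=(\arabic*)]
\item Take $S\in\mathcal D$ with $c(S)=c$ and $\w S\subseteq Q$. By \eqref{e:polyaction} and Lemma \ref{l:reduce}(ii),
\[
b^{\alpha,\gamma}_S=\lim_{A\to\infty}\langle\partial^\alpha Tf_A,\psi_S\rangle .
\]
\item By Hölder's inequality and the hypothesis at the exponent $p>d$,
\[
|\langle\partial^\alpha Tf_A,\psi_S\rangle|\leq\|\partial^\alpha Tf_A\|_{L^p}\|\psi_S\|_{L^{p'}}\leq C\|\nabla^\kappa f_A\|_{L^p}\,|S|^{-1/p}.
\]
\item On $\frac32AQ$ we have $\nabla^\kappa f_A=0$, since $|\gamma|<\kappa$. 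On the annulus $2AQ\setminus\frac32AQ$, $|\nabla^\kappa f_A|\lesssim(A\ell(Q))^{|\gamma|-\kappa}$. Hence
\[
\|\nabla^\kappa f_A\|_{L^p}\lesssim(A\ell(Q))^{|\gamma|-\kappa+d/p}.
\]
\item This tends to zero as $A\to\infty$ exactly when $|\gamma|-\kappa+d/p<0$. Since $|\gamma|-\kappa\leq-1$ and $d/p<1$, this holds, and $b^{\alpha,\gamma}_S=0$.
\end{enumerate}

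The main obstacle is that step (4) only works for the exponent $p>d$. At $p=2$ the exponent $d/2$ may exceed $\kappa-|\gamma|$, and this is why the hypothesis includes some $p>d$. Because every $S$ qualifies for some cube $Q$ centered at $c(S)$, every lower symbol vanishes, and no quantitative constant is needed for this part.

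\emph{Conclusion.} With the lower symbols vanishing, the hypothesis at $p=2$ gives $\|\partial^\alpha Tf\|_2\leq C\|\nabla^\kappa f\|_2$. Lemma \ref{l:carleson}(ii) then yields that the top symbols belong to $\mathrm{BMO}(\R^d)$, and, if $\kappa=0$, that $b^{0,\star}$ does too, all with norms $\lesssim1+C$.

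One point to check: Lemma \ref{l:carleson}(ii) for $\kappa=0$ uses $L^2$ boundedness of $T^*$. This follows by duality from the $p=2$ bound, since $\nabla^0 f=f$.
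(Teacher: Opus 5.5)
Your proposal is correct and matches the paper's proof essentially verbatim. Both arguments apply H\"older's inequality at the exponent $p>d$ to the truncated monomials, where $(A\ell)^{|\gamma|-\kappa+d/p}\to0$ forces the lower symbols to vanish, and then apply Lemma \ref{l:carleson}(ii) at $p=2$ for the $\mathrm{BMO}$ conditions. The only cosmetic difference is that you cut off with dilates of an auxiliary cube $Q\supseteq\w S$ centered at $c(S)$ rather than with $AS$ itself, which Lemma \ref{l:reduce}(ii) permits.
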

\begin{proof}
Let $|\gamma|<\kappa$, $S\in\mathcal D$ and $f_A\coloneqq(\cdot-c(S))^\gamma\omega^\infty_{AS}$. The derivatives of order $\kappa$ of $f_A$ are supported in $2AS\setminus\frac32AS$ and bounded by a constant multiple of $(A\ell(S))^{|\gamma|-\kappa}$, so that, with $d<p<\infty$ as in the assumption, \eqref{e:symbols} and \eqref{e:polyaction} give
\[
\left|b^{\alpha,\gamma}_S\right|=\lim_{A\to\infty}\left|\left\langle\partial^\alpha Tf_A,\psi_S\right\rangle\right|\leq\limsup_{A\to\infty}C\|\nabla^\kappa f_A\|_p\|\psi_S\|_{p'}\lesssim\limsup_{A\to\infty}C\left(A\ell(S)\right)^{|\gamma|-\kappa+\frac dp}\|\psi_S\|_{p'}=0.
\]
The top symbols, and $b^{0,\star}$ if $\kappa=0$, belong to $\mathrm{BMO}(\R^d)$, with norms $\lesssim1+C$, by Lemma \ref{l:carleson}(ii).
\end{proof}
Assume (iii) of Theorem \ref{t:HS}, with constants $C_p$. By Lemma \ref{l:po}(iii), and by (iii) with $w\equiv1$ and the boundedness of $\M$ on $L^p(\R^d)$,
\[
\mathcal N_{m-\kappa}(\partial^\beta f)\lesssim r^{-1}\M(\partial^\beta f),\qquad
\|\partial^\alpha Tf\|_p\lesssim_pr^{-1}C_p\|\nabla^\kappa f\|_p,\qquad 1<p<\infty.
\]
Lemma \ref{l:order0}, with $p=2$ and $p=2d$, therefore proves (i) when $m=\kappa$. When $m>\kappa$, it remains to show that the top symbols have bounded representatives.

\subsection{Logarithmic tests}\label{ss:signedlog}
Let $b\in\mathrm{BMO}(\R^d)$. The kernel $K_t$ of $\mathbb E_t$ satisfies
\[
\int_{\R^d}K_t(x,y)\,\d y=1,\qquad |K_t(x,y)|\lesssim t^{-d}\ind_{\{|x-y|\lesssim t\}},\qquad |\nabla_xK_t(x,y)|\lesssim t^{-d-1},
\]
so that $\mathbb E_tb(x)=\int K_t(x,y)b(y)\,\d y$ is well defined, $\mathbb E_tb\to b$ as $t\to0$ at the Lebesgue points of $b$, and, subtracting the average of $b$ on a ball of radius comparable to $\max\{t,t'\}+|x-x'|$,
\begin{equation}\label{e:Etb}
\begin{split}
\left|\mathbb E_tb(x)-\mathbb E_{t'}b(x')\right|&\lesssim\left(1+\left|\log\frac t{t'}\right|+\frac{|x-x'|}{\min\{t,t'\}}\right)\|b\|_{\mathrm{BMO}},\\
\left|\mathbb E_tb(x)-\mathbb E_tb(x')\right|&\lesssim\frac{|x-x'|}t\,\|b\|_{\mathrm{BMO}}\qquad\textrm{if }|x-x'|\leq t.
\end{split}
\end{equation}
Moreover, by \eqref{e:mra} applied to the restrictions of $b$, minus suitable constants, to large balls, with convergence in $L^2$ on compact sets,
\[
b-\mathbb E_tb=\sum_{\ell(S)\leq t}|S|\psi_S(b)\psi_S\quad\textrm{a.e.},\qquad\left|\mathbb E_{\frac t2}b-\mathbb E_tb\right|\leq\sum_{\ell(S)=t}|S||\psi_S(b)||\psi_S|\lesssim\|b\|_{\mathrm{BMO}}.
\]
Fix $x_0\in\R^d$ and an integer $N\geq1$, let $Q_j$ be the cube of center $x_0$ and sidelength $2^j$, $1\leq j\leq N$, and set
\[
\Omega=\Omega_{x_0,N}\coloneqq\sum_{j=1}^N\omega^\infty_{Q_j},\qquad f_\gamma\coloneqq\frac{(\cdot-x_0)^\gamma}{\gamma!}\,\Omega,\qquad A_N\coloneqq\frac1N\sum_{j=1}^N\mathbb E_{2^j},
\]
for multi-indices $\gamma$ with $|\gamma|\leq k$. As the dilations by powers of $2$ preserve $\mathcal D$ and the wavelets, the sidelengths $2^j$ are only a normalization. The center $x_0$, on the other hand, matters in (c) and (d) below, as $\mathcal D$ is not translation invariant.
\begin{lemma}\label{l:signedlog}
There is an admissible $c\in2^{-\N}$ such that, for all $x_0\in\R^d$, integers $N\geq1$ and multi-indices $|\gamma|\leq k$, the following hold.
\begin{enumerate}[label=(\alph*)]
\item If $|\beta|=|\gamma|$ and $1\leq n\leq m$, then
\[
\mathcal N_n\left(\partial^\beta f_\gamma\right)(x)\lesssim\frac1r\min\Big\{1,\frac{2^{Nd}}{|x-x_0|^d}\Big\},\qquad x\in\R^d.
\]
\item If $1\leq|\mu|\leq k$ and $0\leq n\leq m$, then
\[
\mathcal N_n\left(\partial^\mu\Omega\right)(x)\lesssim\frac1r\min\Big\{1,\frac{2^{Nd}}{|x-x_0|^d}\Big\},\qquad x\in\R^d.
\]
\item If $|\beta|=|\gamma|$ and $b\in\mathrm{BMO}(\R^d)$, then, for almost every $x\in B(x_0,c)$,
\[
\frac1NP_b\left(\partial^\beta f_\gamma\right)(x)=\delta_{\beta\gamma}\left[b(x)-A_Nb(x_0)\right]+O\left(\|b\|_{\mathrm{BMO}}\right).
\]
\item Let $\{b_S:S\in\mathcal D\}$ be a sequence as in \eqref{e:paraform}, and $b^*_t\coloneqq\sup_{\ell(S)=t}|b_S|$ for $t\in2^{\Z}$. On $B(x_0,c)$, in the sense of distributions,
\[
\sum_{S\in\mathcal D}|S|b_S\vartheta_S(\Omega)\psi_S=N\sum_{\ell(S)\leq c}|S|b_S\psi_S+O\Big(N\sum_{t>c}b^*_t\Big),
\]
and, if $1\leq|\mu|\leq k$,
\[
\Big|\sum_{S\in\mathcal D}|S|b_S\vartheta_S\left(\partial^\mu\Omega\right)\psi_S\Big|\lesssim\sum_{t>c}\frac{b^*_t}{t^{|\mu|}},
\]
whenever the sums over $t\in2^{\Z}$ on the right hand sides are finite.
\end{enumerate}
\end{lemma}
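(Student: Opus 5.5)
The plan is to exploit the structure $\Omega=\sum_{j=1}^N\omega^\infty_{Q_j}$. Each summand equals one on $\frac32Q_j$, vanishes off $2Q_j$, and has derivatives of order $i$ bounded by $2^{-ji}$. These derivatives are supported in the annulus $A_j\coloneqq2Q_j\setminus\frac32Q_j$, of scale $2^j$ around $x_0$. Consequently $\Omega=N$ on $\frac32Q_1\supseteq B(x_0,c)$, and every derivative of $\Omega$ decomposes into pieces living on the annuli $A_j$. Two geometric facts will be used throughout, for a cube $S$ of sidelength $s$ containing $x$:
\begin{itemize}
\item[(1)] only boundedly many annuli $A_j$ with $2^j\gtrsim s$ meet $3S$;
\item[(2)] $\sum_{2^j\lesssim s}|3S\cap A_j|/|3S|\lesssim1$.
\end{itemize}

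For (a), expand $\partial^\beta f_\gamma$ by the Leibniz rule. Since $|\beta|=|\gamma|$, this gives $\delta_{\beta\gamma}\Omega$ plus a sum $\sum_jg_j$. Each term $g_j$ collects the factors $(\cdot-x_0)^{\gamma-\beta+\mu}\partial^\mu\omega^\infty_{Q_j}$ with $\mu\neq0$; it is supported in $A_j$, with $|g_j|\lesssim1$ and $|\nabla g_j|\lesssim2^{-j}$. By Lemma~\ref{l:po}(ii)--(iii), $\nat{\cdot}_{n,S}\lesssim\nat{\cdot}_{1,S}\lesssim r^{-1}\inf_{\pi\text{ const}}\langle\cdot-\pi\rangle_{3S}$ for $n\geq1$. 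I will subtract the value at the center of $S$ and estimate the summands $\omega^\infty_{Q_j}$ and $g_j$ separately:
\begin{itemize}
\item[$\bullet$] those with $2^j\gtrsim s$ are bounded, by (1), with oscillation $\lesssim s2^{-j}$ on $3S$;
\item[$\bullet$] those with $2^j\lesssim s$ vary only on $A_j$ and are bounded there, so by (2) their contributions to the average sum to $\lesssim1$.
\end{itemize}
This yields the bound $r^{-1}$. For the decay when $|x-x_0|\gg2^N$, I distinguish two cases. If $3S$ misses $2Q_N$, the function vanishes on $3S$ and $\nat{\cdot}_{n,S}=0$. Otherwise $\ell(S)\gtrsim|x-x_0|$, and the order-zero bound $r^{-1}\langle|\cdot|\rangle_{3S}\lesssim r^{-1}2^{Nd}|x-x_0|^{-d}$ applies, using $\|\partial^\beta f_\gamma\|_1\lesssim2^{Nd}$ together with a bounded overlap of the supports. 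The restriction $n\geq1$ is essential here, because $\Omega$ itself has size $N$. Part (b) is the same argument, now allowing $n=0$. The function $\partial^\mu\Omega=\sum_j\partial^\mu\omega^\infty_{Q_j}$ has summands bounded by $2^{-j|\mu|}\leq1$ on $A_j$, so (1)--(2) bound $\langle|\partial^\mu\Omega|\rangle_{3S}\lesssim1$ directly.

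For (d), $\vartheta_S(\Omega)=N$ whenever $\ell(S)\leq c$ and $S$ meets $B(x_0,c)$, provided $c$ is small, since then $S\subseteq\frac32Q_1$. At each larger scale $t$, boundedly many $\psi_S$ are nonzero at the point, and $|\vartheta_S(\Omega)|\leq N$, which gives the first identity. For the second identity, I will write $\vartheta_S(\partial^\mu\omega^\infty_{Q_j})=\pm(\partial^\mu\vartheta_S)(\omega^\infty_{Q_j})$ when $2^j\leq t$, which is $\lesssim t^{-d-|\mu|}2^{jd}$; summing these over $2^j\leq t$ gives $\lesssim t^{-|\mu|}$. When $2^j>t$, the support of $\vartheta_S$ meets $A_j$ only for boundedly many $j$, each contributing $\lesssim2^{-j|\mu|}\leq t^{-|\mu|}$. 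Small cubes contribute nothing, because $\Omega$ is constant on $\frac32Q_1$.

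Part (c) is the main obstacle. The term $\frac1NP_b(\sum_jg_j)$ is harmless. Indeed, $\vartheta_Q(g_j)\neq0$ only for $\ell(Q)\gtrsim2^j$, with $|\vartheta_Q(g_j)|\lesssim\min\{1,(2^j/\ell(Q))^d\}$, and each scale layer satisfies $\sum_{\ell(Q)=t}|Q||\psi_Q(b)||\psi_Q|\lesssim\|b\|_{\mathrm{BMO}}$. Hence each $j$ contributes $O(\|b\|_{\mathrm{BMO}})$, and dividing the sum over $j$ by $N$ gives $O(\|b\|_{\mathrm{BMO}})$. For the main term $\delta_{\beta\gamma}P_b\Omega$, I will show that for $x\in B(x_0,c)$ and $\ell(Q)=t$ with $x\in\w Q$, the coefficient $\vartheta_Q(\Omega)$ equals $n(t)\coloneqq\#\{j:2^j\gtrsim t\}$ up to $O(1)$, and the error is nonzero only for boundedly many scales per $j$. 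Beyond $2^N$ it decays like $N2^{Nd}t^{-d}$. Writing the scale layers as $\mathbb E_{t/2}b-\mathbb E_tb$ via \eqref{e:mra} and summing by parts in $t$ will give $Nb(x)-\sum_{j=1}^N\mathbb E_{2^j}b(x)+O(N\|b\|_{\mathrm{BMO}})$. The second line of \eqref{e:Etb} then replaces $\mathbb E_{2^j}b(x)$ by $\mathbb E_{2^j}b(x_0)$ at cost $O(\|b\|_{\mathrm{BMO}})$ per $j$. After dividing by $N$, this yields $b(x)-A_Nb(x_0)+O(\|b\|_{\mathrm{BMO}})$. The care needed is in the bookkeeping of the transition scales, so that the total error is $O(N)$ and not $O(N\log N)$.
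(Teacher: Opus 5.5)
Apart from one step, your plan is the paper's proof. Your splitting $\partial^\beta f_\gamma=\delta_{\beta\gamma}\Omega+\sum_jg_j$ is the paper's identity $\partial^\beta f_\gamma=\sum_j\Sy_{Q_j}\omega_\beta$ written out by the Leibniz rule. Your treatment of the decay in (a), and of (b), (c) and (d), matches the paper's; in (d) you split $2^j\leq t$ versus $2^j>t$, where the paper uses the mean zero of $\partial^\mu\vartheta_S$, which is the same computation.

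\textbf{The gap: the choice of constant in (a).} In the bounded-oscillation part of (a), you cannot subtract from every summand its value at $c(S)$. If $2^j\ll s$ and $c(S)\in\frac32Q_j$, you subtract $1$ from $\omega^\infty_{Q_j}$. But $1-\omega^\infty_{Q_j}$ equals one on $3S\setminus2Q_j$, so its average on $3S$ is $\gtrsim1$. Your claim that these summands contribute only through $A_j$ is therefore false. Concretely, take $|c(S)-x_0|_\infty\leq1$ and $s=2^N$. Then $\Omega(c(S))=N$, while $\Omega$ vanishes on a fixed proportion of $3S$, so $\langle|\Omega-\Omega(c(S))|\rangle_{3S}\gtrsim N$.

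The remedy is to use the constant $\sum_{2^j\gtrsim s}F_j(c(S))$, summing only over the large-scale summands $F_j$, and the constant zero for the others. Then $\langle\omega^\infty_{Q_j}\rangle_{3S}\lesssim(2^j/s)^d$ for $2^j\lesssim s$, which is a geometric sum. This is exactly how the paper proves \eqref{e:logbmo}. For the $g_j$ nothing is needed: their supports $A_j$ are pairwise disjoint, so $|\sum_jg_j|\lesssim1$ pointwise.

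\textbf{A smaller inaccuracy in (c).} Fix the threshold as $2^j\geq Zt$, with $Z$ a large admissible constant. The error
\[
\vartheta_Q(\Omega)-\#\{j:2^j\geq Zt\}=\sum_{2^j<Zt}\vartheta_Q(\omega^\infty_{Q_j})
\]
is not confined to boundedly many scales for each $j$. Indeed, $\vartheta_Q(\omega^\infty_{Q_j})$ need not vanish at any scale $t\gtrsim2^j$; it only decays like $(2^j/t)^d$. The bookkeeping that works, and that the paper uses, has two parts:
\begin{itemize}
\item the error is $O(1)$ at each of the $N+O(1)$ scales $c<t\leq Z2^N$;
\item for $t>Z2^N$, the whole coefficient satisfies $|\vartheta_Q(\Omega)|\lesssim(2^N/t)^d$.
\end{itemize}
Combined with the layer bound $\sum_{\ell(Q)=t}|Q||\psi_Q(b)||\psi_Q|\lesssim\|b\|_{\mathrm{BMO}}$, this gives the total error $O(N\|b\|_{\mathrm{BMO}})$ directly, with no risk of a logarithmic loss.
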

\begin{proof}
First, $\Omega=N$ on $\frac32Q_1\supseteq B(x_0,1)$ and, by homogeneity,
\begin{equation}\label{e:fgamma}
\partial^\beta f_\gamma=\sum_{j=1}^N\Sy_{Q_j}\omega_\beta,\qquad |\beta|=|\gamma|,
\end{equation}
where $\omega_\beta\coloneqq\frac1{\gamma!}\partial^\beta(x^\gamma\omega)$ is supported in $(-1,1)^d$ and equals $\delta_{\beta\gamma}$ on $[-\frac34,\frac34]^d$; in particular, $\partial^\beta f_\gamma=\delta_{\beta\gamma}N$ on $\frac32Q_1$.

We prove (a). Let $P$ be a cube. The terms of \eqref{e:fgamma} with $2^j\geq\ell(P)$ are Lipschitz on $P$ with constant $\lesssim2^{-j}$, and those with $2^j<\ell(P)$ have averages on $P$ bounded by a constant multiple of $(2^j/\ell(P))^d$, so that
\begin{equation}\label{e:logbmo}
\inf_{a\in\C}\left\langle\partial^\beta f_\gamma-a\right\rangle_P\lesssim\sum_{j=1}^N\min\left\{\frac{\ell(P)}{2^j},\left(\frac{2^j}{\ell(P)}\right)^d\right\}\lesssim1.
\end{equation}
By Lemma \ref{l:po}(iii) and \eqref{e:logbmo} with $P=3Q$, as $n\geq1$, $\nat{\partial^\beta f_\gamma}_{n,Q}\lesssim r^{-1}$ for every cube $Q$. If $|x-x_0|\geq4\sqrt d\,2^N$ and $Q\ni x$, then either $3Q$ does not meet the support of $\partial^\beta f_\gamma$, and $\nat{\partial^\beta f_\gamma}_{n,Q}=0$, or $\ell(Q)\gtrsim|x-x_0|$, and Lemma \ref{l:po}(iii) with $\pi=0$ gives
\[
\nat{\partial^\beta f_\gamma}_{n,Q}\lesssim\frac{\|\partial^\beta f_\gamma\|_1}{r\,\ell(Q)^d}\lesssim\frac{2^{Nd}}{r\,|x-x_0|^d}.
\]

We turn to (b). The functions $\partial^\mu\omega^\infty_{Q_j}$ are bounded by a constant multiple of $2^{-j|\mu|}\leq1$ and supported in the pairwise disjoint sets $2Q_j\setminus\frac32Q_j$, so that
\[
\|\partial^\mu\Omega\|_\infty\lesssim1,\qquad\|\partial^\mu\Omega\|_1\lesssim2^{Nd}.
\]
By Lemma \ref{l:po}(iii) with $\pi=0$, $\nat{\partial^\mu\Omega}_{n,Q}\lesssim r^{-1}\langle|\partial^\mu\Omega|\rangle_{3Q}$, and the two cases of (a) conclude the proof.

To prove (c), fix a large admissible $Z\in2^{\N}$, $c\leq1/Z$, and $x\in B(x_0,c)$. Set $g\coloneqq\partial^\beta f_\gamma$. At each sidelength $t$, boundedly many cubes $S$ have $\psi_S(x)\neq0$, and all satisfy $x\in\w S$.

If $t\leq c$, then $S\subseteq B(x_0,1)$, where $g=\delta_{\beta\gamma}N$, so these scales contribute $\delta_{\beta\gamma}N(b-\mathbb E_cb)(x)$. If $t>Z2^N$, then $|\vartheta_S(g)|\lesssim t^{-d}\|g\|_1\lesssim(2^N/t)^d$, and their contribution is $O(\|b\|_{\mathrm{BMO}})$.

For $c<t\leq Z2^N$, the cube $S$ lies in $B(x_0,C't)$ for an admissible $C'$. In \eqref{e:fgamma}, the terms with $2^j\geq Zt$ equal $\delta_{\beta\gamma}$ on $S$, while the remaining terms have $\vartheta_S$-averages $\lesssim(2^j/t)^d$. Consequently,
\[
\vartheta_S(g)=\delta_{\beta\gamma}\#\{j:2^j\geq Zt\}+O(1).
\]
There are $N+O(1)$ such scales. Their contribution is therefore
\[
\delta_{\beta\gamma}\sum_{j=1}^N\sum_{c<t\leq2^j/Z}(\mathbb E_{t/2}b-\mathbb E_tb)(x)+O(N\|b\|_{\mathrm{BMO}}).
\]
The inner sum equals $\mathbb E_cb(x)-\mathbb E_{2^j/Z}b(x)$, and \eqref{e:Etb} gives
\[
\mathbb E_{2^j/Z}b(x)=\mathbb E_{2^j}b(x_0)+O(\|b\|_{\mathrm{BMO}}).
\]
Adding the small scales and dividing by $N$ proves (c).

Finally, we prove (d). Let $x\in B(x_0,c)$ and $x\in\w S$. If $\ell(S)\leq c$, then $S\subseteq\frac32Q_1$, where $\Omega=N$, so that $\vartheta_S(\Omega)=N$ and $\vartheta_S(\partial^\mu\Omega)=0$; these scales contribute the main term in the first display of (d), and nothing to the second. If $\ell(S)=t>c$, then $|\vartheta_S(\Omega)|\lesssim N$, and, as $\partial^\mu\vartheta_S$ has vanishing mean, \eqref{e:logbmo} with $\gamma=\beta=0$ gives $|\vartheta_S(\partial^\mu\Omega)|=|(\partial^\mu\vartheta_S)(\Omega-a)|\lesssim t^{-|\mu|}$ for a suitable constant $a$. As boundedly many $S$ of each sidelength have $\psi_S(x)\neq0$, and $|S||\psi_S(x)|\lesssim1$, (d) follows.
\end{proof}

The tests of Lemma \ref{l:signedlog} are combined with the power weights
\[
w_\varepsilon(x)\coloneqq|x-x_0|^{-d+\varepsilon},\qquad x\in\R^d,\quad 0<\varepsilon\leq\frac12,
\]
which concentrate at their pole. Below, $v_d$ is the volume of the unit ball of $\R^d$.
\begin{lemma}\label{l:power}
Let $x_0\in\R^d$, $0<\varepsilon\leq\frac12$, $\rho_0>0$ and $B\coloneqq B(x_0,\rho_0)$. Then
\[
[w_\varepsilon]_{A_\infty}\sim\frac1\varepsilon,\qquad w_\varepsilon(B)=\frac{dv_d}\varepsilon\,\rho_0^\varepsilon,
\]
and, for every measurable $F\geq0$ on $B$,
\begin{equation}\label{e:concentrate}
\frac1{w_\varepsilon(B)}\int_BFw_\varepsilon=\frac\varepsilon d\,\langle F\rangle_B+\Big(1-\frac\varepsilon d\Big)\int_0^{\rho_0}\langle F\rangle_{B(x_0,\rho)}\,\frac{\varepsilon\rho^{\varepsilon-1}}{\rho_0^\varepsilon}\,\d\rho.
\end{equation}
In particular, if $F$ is integrable on $B$ and $\langle F\rangle_{B(x_0,\rho)}\to0$ as $\rho\to0$, then the left hand side of \eqref{e:concentrate} tends to zero as $\varepsilon\to0$. Finally, for every $t>0$,
\[
\int_{\R^d}\min\Big\{1,\frac{t^d}{|x-x_0|^d}\Big\}^2w_\varepsilon(x)\,\d x=dv_d\Big(\frac1\varepsilon+\frac1{2d-\varepsilon}\Big)t^\varepsilon.
\]
\end{lemma}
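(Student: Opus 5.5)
The plan is to handle the five assertions separately, all by polar coordinates centred at $x_0$, writing $\rho\coloneqq|x-x_0|$ and using $\d x=dv_d\rho^{d-1}\,\d\rho\,\d\sigma/(dv_d)$ on spheres.

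\emph{Mass of $B$ and the last identity.} These are direct radial integrals. First,
\[
w_\varepsilon(B)=dv_d\int_0^{\rho_0}\rho^{-d+\varepsilon}\rho^{d-1}\,\d\rho=\frac{dv_d}\varepsilon\rho_0^\varepsilon .
\]
For the last display, split at $\rho=t$: the inner part gives $dv_d\,t^\varepsilon/\varepsilon$. The outer part is
\[
dv_d\,t^{2d}\int_t^\infty\rho^{-2d+\varepsilon-1}\,\d\rho=\frac{dv_d\,t^\varepsilon}{2d-\varepsilon}.
\]

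\emph{Concentration identity \eqref{e:concentrate}.} Here I use a layer-cake representation of the weight in the radial variable. For $|x-x_0|<\rho_0$,
\[
w_\varepsilon(x)=\rho_0^{\varepsilon-d}+(d-\varepsilon)\int_{|x-x_0|}^{\rho_0}\rho^{\varepsilon-d-1}\,\d\rho .
\]
Multiply by $F\geq0$, integrate over $B$, and apply Tonelli. This gives
\[
\int_BFw_\varepsilon=\rho_0^{\varepsilon-d}v_d\rho_0^d\langle F\rangle_B+(d-\varepsilon)v_d\int_0^{\rho_0}\rho^{\varepsilon-1}\langle F\rangle_{B(x_0,\rho)}\,\d\rho,
\]
using $\int_{B(x_0,\rho)}F=v_d\rho^d\langle F\rangle_{B(x_0,\rho)}$. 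Dividing by $w_\varepsilon(B)=dv_d\rho_0^\varepsilon/\varepsilon$ yields exactly the two coefficients $\varepsilon/d$ and $(1-\varepsilon/d)$, with the kernel $\varepsilon\rho^{\varepsilon-1}\rho_0^{-\varepsilon}\,\d\rho$.

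\emph{The limit as $\varepsilon\to0$.} The kernel $\varepsilon\rho^{\varepsilon-1}\rho_0^{-\varepsilon}\,\d\rho$ is a probability measure on $(0,\rho_0)$. It gives mass $1-(\delta/\rho_0)^\varepsilon\to0$ to $[\delta,\rho_0)$ for each fixed $\delta>0$. The function $\rho\mapsto\langle F\rangle_{B(x_0,\rho)}$ is bounded on $[\delta,\rho_0]$, by $(\rho_0/\delta)^d\langle F\rangle_B$, and is small on $(0,\delta)$ by hypothesis. Splitting the integral at $\delta$ therefore shows that it tends to zero, and the first term $\frac\varepsilon d\langle F\rangle_B$ does so trivially.

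\emph{$A_\infty$ characteristic.} This is the only step needing a case analysis, and it is the main obstacle, though it is standard.

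For the upper bound, I use the Fujii--Wilson definition together with $\M(w\ind_Q)\leq\M w$. It then suffices to show $\M w_\varepsilon\lesssim\varepsilon^{-1}w_\varepsilon$, that is, $[w_\varepsilon]_{A_1}\lesssim1/\varepsilon$. Let $P$ be a cube. If $\dist(x_0,P)\geq\ell(P)$, then $w_\varepsilon$ is comparable to a constant on $P$, uniformly in $\varepsilon\leq\frac12$, so $\langle w_\varepsilon\rangle_P\lesssim\inf_Pw_\varepsilon$. Otherwise $P\subseteq B(x_0,C\ell(P))$. The radial computation above then gives
\[
\langle w_\varepsilon\rangle_P\lesssim\frac1\varepsilon\ell(P)^{\varepsilon-d}\lesssim\frac1\varepsilon\inf_Pw_\varepsilon,
\]
because $|y-x_0|\lesssim\ell(P)$ on $P$ and $\varepsilon-d<0$.

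For the lower bound, take $Q$ the cube centred at $x_0$ of sidelength $1$. For $x\in Q$, test the maximal function with the cube centred at $x_0$ of sidelength $2|x-x_0|_\infty$, which lies in $Q$ up to halving, and contains $x$. The same radial integral gives
\[
\M(w_\varepsilon\ind_Q)(x)\gtrsim\frac1\varepsilon|x-x_0|^{\varepsilon-d}=\frac1\varepsilon w_\varepsilon(x).
\]
Integrating over $Q$ yields $\int_Q\M(w_\varepsilon\ind_Q)\gtrsim\varepsilon^{-1}w_\varepsilon(Q)$, so $[w_\varepsilon]_{A_\infty}\gtrsim1/\varepsilon$.
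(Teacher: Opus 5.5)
Your proposal is correct and follows the paper's proof essentially step by step. You use the same layer-cake formula for $w_\varepsilon$ on $B$ with Tonelli, the same probability-measure splitting for the limit, polar coordinates for the mass of $B$ and the last identity, and the same two-case bound $\M w_\varepsilon\lesssim\varepsilon^{-1}w_\varepsilon$ together with a test cube centred at $x_0$ for $[w_\varepsilon]_{A_\infty}\gtrsim\varepsilon^{-1}$. The only cosmetic point: since cubes are half-open, $x$ may lie on the boundary of your test cube of sidelength $2|x-x_0|_\infty$, and the paper avoids this by taking sidelength $\min\{\ell(Q),4|x-x_0|\}$, which gives the same estimate.
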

\begin{proof}
For $x\in B$,
\[
w_\varepsilon(x)=\rho_0^{-d+\varepsilon}+(d-\varepsilon)\int_{|x-x_0|}^{\rho_0}\rho^{-d+\varepsilon-1}\,\d\rho,
\]
so that, by Tonelli's theorem, as $|B(x_0,\rho)|=v_d\rho^d$,
\[
\int_BFw_\varepsilon=v_d\rho_0^\varepsilon\langle F\rangle_B+(d-\varepsilon)v_d\int_0^{\rho_0}\langle F\rangle_{B(x_0,\rho)}\,\rho^{\varepsilon-1}\,\d\rho.
\]
With $F=1$, this gives $w_\varepsilon(B)$, and then \eqref{e:concentrate}. The measure $\varepsilon\rho^{\varepsilon-1}\rho_0^{-\varepsilon}\,\d\rho$ in \eqref{e:concentrate} is a probability measure on $(0,\rho_0)$, which assigns the mass $(\rho_1/\rho_0)^\varepsilon$ to $(0,\rho_1)$. Given $\eta>0$, let $0<\rho_1<\rho_0$ be such that $\langle F\rangle_{B(x_0,\rho)}\leq\eta$ for $0<\rho<\rho_1$. As $\langle F\rangle_{B(x_0,\rho)}\leq|B(x_0,\rho_1)|^{-1}\int_BF$ for $\rho\geq\rho_1$,
\[
\frac1{w_\varepsilon(B)}\int_BFw_\varepsilon\leq\frac\varepsilon d\,\langle F\rangle_B+\eta+\Big(1-\Big(\frac{\rho_1}{\rho_0}\Big)^\varepsilon\Big)\frac1{|B(x_0,\rho_1)|}\int_BF,
\]
and the right hand side tends to $\eta$ as $\varepsilon\to0$. In polar coordinates, the integral of the last claim equals
\[
dv_d\left(\int_0^t\rho^{\varepsilon-1}\,\d\rho+t^{2d}\int_t^\infty\rho^{-2d+\varepsilon-1}\,\d\rho\right).
\]

Finally, let $P$ be a cube. If $\dist(x_0,P)\geq\ell(P)$, the values of $w_\varepsilon$ on $P$ are comparable, so that $\langle w_\varepsilon\rangle_P\lesssim\inf_Pw_\varepsilon$. Otherwise, $P\subseteq B(x_0,2\sqrt d\,\ell(P))$, and
\[
\langle w_\varepsilon\rangle_P\leq\frac{w_\varepsilon\big(B(x_0,2\sqrt d\,\ell(P))\big)}{|P|}\lesssim\frac{\ell(P)^{-d+\varepsilon}}\varepsilon\lesssim\frac1\varepsilon\inf_Pw_\varepsilon.
\]
Hence $\M(w_\varepsilon\ind_Q)\leq\M w_\varepsilon\lesssim\varepsilon^{-1}w_\varepsilon$ for every cube $Q$, and $[w_\varepsilon]_{A_\infty}\lesssim\varepsilon^{-1}$. Conversely, let $Q$ be a cube of center $x_0$, and $x\in Q\setminus\{x_0\}$. The cube $P$ of center $x_0$ and sidelength $\min\{\ell(Q),4|x-x_0|\}$ contains $x$ and is contained in $Q$, so that
\[
\M(w_\varepsilon\ind_Q)(x)\geq\langle w_\varepsilon\rangle_P\geq\frac{w_\varepsilon\big(B(x_0,\frac12\ell(P))\big)}{|P|}\gtrsim\frac{\ell(P)^{-d+\varepsilon}}\varepsilon\gtrsim\frac{w_\varepsilon(x)}\varepsilon.
\]
Thus $[w_\varepsilon]_{A_\infty}\gtrsim\varepsilon^{-1}$.
\end{proof}

\subsection{Proof that (iii) implies (i) in Theorem \ref{t:HS}}\label{ss:Linfty}
Assume (iii) and $m>\kappa$. By \S\ref{ss:order0}, the lower symbols vanish, and the top symbols and, when $\kappa=0$, the adjoint symbol belong to $\mathrm{BMO}(\R^d)$, with norms $\lesssim C_*\coloneqq r^{-1}(1+C_2+C_{2d})$. By Proposition \ref{p:rep}, with $\tau<\delta_1<\delta$, for $f\in \mathcal{C}^\infty_c(\R^d)$,
\[
\partial^\alpha Tf=\sum_{|\beta|=\kappa}V_{\alpha,\beta}\left(\partial^\beta f\right)+\ind_{\{\kappa=0\}}P^*_{\mathfrak b^{0,\star}}f+\Pi f,\qquad \Pi f\coloneqq\sum_{|\gamma'|=\kappa}\frac1{\gamma'!}P_{\mathfrak b^{\alpha,\gamma'}}\left(\partial^{\gamma'}f\right).
\]
The operators $V_{\alpha,\beta}$ and $P^*_{\mathfrak b^{0,\star}}$ are models with mean zero input packets, see Remark \ref{r:models} and Lemma \ref{l:para}(i), which satisfy \eqref{e:V1}, by Lemma \ref{l:V1mean}. By Theorem \ref{t:model} and Lemma \ref{l:Ainf}, with $m-\kappa$ in place of $m$, the first two terms satisfy \eqref{e:PHS} with constants $\lesssim_p1+C_*$, and so does $\Pi$, with constants $\lesssim_pC_p+1+C_*$.

Fix $|\gamma|=\kappa$, let $b\coloneqq\mathfrak b^{\alpha,\gamma}$, and let $x_0$ be a point with $\langle|b-b(x_0)|^2\rangle_{B(x_0,\rho)}\to0$ as $\rho\to0$, which holds for almost every $x_0$, as $b\in L^2_{\mathrm{loc}}(\R^d)$. Let $N\geq2$ and $f\coloneqq f_\gamma$, see \S\ref{ss:signedlog}. Lemma \ref{l:signedlog}(c), applied to each $\partial^{\gamma'}f$ with symbol $\mathfrak b^{\alpha,\gamma'}$, gives, for almost every $x\in B\coloneqq B(x_0,c)$,
\begin{equation}\label{e:mainterm}
\frac{\gamma!}N\,\Pi f(x)=b(x)-A_Nb(x_0)+O\left(C_*\right).
\end{equation}
Let $w\coloneqq w_{1/N}$, so that $[w]_{A_\infty}\lesssim N$ and $w(B)\gtrsim N$, by Lemma \ref{l:power}. By \eqref{e:PHS} for $\Pi$ with $p=2$, Lemma \ref{l:signedlog}(a) and Lemma \ref{l:power}, as $(2^N)^{1/N}=2$,
\[
\left\|\Pi f\right\|_{L^2(w)}\lesssim(1+C_2+C_*)N\sum_{|\beta|=\kappa}\left\|\mathcal N_{m-\kappa}\left(\partial^\beta f\right)\right\|_{L^2(w)}\lesssim\frac{1+C_2+C_*}r\,N^{\frac32}.
\]
Comparing with \eqref{e:mainterm} on $B$, uniformly in $N\geq2$,
\[
\bigg(\frac1{w(B)}\int_B\left|b-A_Nb(x_0)\right|^2w\bigg)^{\frac12}\leq C_\sharp\coloneqq C'\,\frac{1+C_2+C_*}r,
\]
for an admissible $C'$. By Minkowski's inequality and Lemma \ref{l:power} with $F=|b-b(x_0)|^2$ and $\varepsilon=\frac1N$,
\[
\limsup_{N\to\infty}\left|b(x_0)-A_Nb(x_0)\right|\leq C_\sharp.
\]
If $x_0'$ is another such point, the sequence $\mathbb E_{2^j}b(x_0)-\mathbb E_{2^j}b(x_0')$, $j\geq1$, is bounded and tends to zero, by \eqref{e:Etb}, so that its Ces\`aro means $A_Nb(x_0)-A_Nb(x_0')$ tend to zero as $N\to\infty$. Therefore $|b(x_0)-b(x_0')|\leq2C_\sharp$, and $b$ belongs to $L^\infty(\R^d)$, with $\inf_{a\in\C}\|b-a\|_\infty\leq2C_\sharp$. As $C_\sharp\lesssim r^{-2}(1+C_2+C_{2d})$, this also gives the quantitative converse in Theorem \ref{t:HS}.

\subsection{Homogeneous corollaries}
\begin{proof}[Proof of \eqref{e:czMW}]
Let $f\in \mathcal{C}^\infty_c(\R^d)$. By \eqref{e:sobcube} if $m>\kappa$, and by \eqref{e:po} and \eqref{e:msigmavec} if $m=\kappa$, taking suprema over the cubes containing a given point gives $\mathcal N_{m-\kappa}(\partial^\beta f)\lesssim\mathcal N^{m-\kappa}[\nabla^mf]$ for $|\beta|=\kappa$. Thus \eqref{e:PHS}, which holds under (i) by Theorem \ref{t:HS}, implies \eqref{e:czMW}, with the same constants.
\end{proof}
\begin{proof}[Proof of Corollary \ref{c:endpoint}]
We prove (a). Under (i), \eqref{e:Dfine} at a fixed admissible level and \eqref{e:jetcrude} give
\[
|\partial^\alpha Tf|\lesssim\sum_{S\in\mathcal S}\langle|\nabla^\kappa f|\rangle_{3S}\ind_S.
\]
The $L^p$ bounds for sparse operators \cite{LN19}, applied to the sparse collection $\{3S:S\in\mathcal S\}$, give the asserted estimates. Conversely, Lemma \ref{l:order0} gives (i).

We turn to (b). Under (i), \eqref{e:czpot}, at a fixed admissible level, and H\"older's inequality in Lorentz spaces give \eqref{e:Linfty}, since $|\cdot|^{s-d}\in L^{\frac d{d-s},\infty}(\R^d)$. When $\kappa=0$, (i) also gives the $L^2$ boundedness of $T$ by Proposition \ref{p:rep} and Lemma \ref{l:para}.

Conversely, let $Q$ have center $c$, let $A\geq6$, and set $f_A\coloneqq(\cdot-c)^\gamma\omega^\infty_{AQ}$, $|\gamma|\leq\kappa$. As $m>\kappa$, $\nabla^mf_A$ is supported in $2AQ\setminus\frac32AQ$ and bounded by a constant multiple of $(A\ell(Q))^{|\gamma|-m}$. Thus \eqref{e:Linfty} gives
\[
\|\partial^\alpha Tf_A\|_\infty\lesssim\|\nabla^mf_A\|_{L^{\frac ds,1}}
\lesssim(A\ell(Q))^{|\gamma|-m+s}=(A\ell(Q))^{|\gamma|-\kappa}.
\]
Lemma \ref{l:tests}(b) gives the vanishing lower symbols and the bounded top symbols. When $\kappa=0$, the assumed $L^2$ boundedness and Lemma \ref{l:carleson}(ii) give the adjoint $\mathrm{BMO}$ condition.
\end{proof}

\begin{proof}[Proof of Corollary \ref{c:hardy}]
By \S\ref{ss:hardymain}, at the level $r_0$, $\|\mathcal N_0g\|_{L^p(w)}\lesssim\|\M^\tau_{\R^d}g\|_{L^p(w)}$, with a constant depending on $p$ and $w$, for each $g=\partial^\beta f$, $|\beta|=\kappa$. Thus the first claim follows from \eqref{e:PHS} with $m=\kappa$ at the same level, and the second from the first with $w\equiv1$, as $\|\M^\tau_{\R^d}g\|_{L^p(\R^d)}$ is equivalent to the norm of $g$ in $H^p(\R^d)$ for $\frac d{d+\tau}<p\leq1$, see Proposition \ref{c:hardynorm}.
\end{proof}

\subsection{Intermediate channels}\label{ss:middle}
Throughout \S\S\ref{ss:middle}--\ref{ss:separation}, $1\leq\kappa\leq m$, $f\in \mathcal{C}^\infty_c(\R^d)$, and
\[
G\coloneqq\mathcal N_mf+\sum_{|\beta|=\kappa}\mathcal N_{m-\kappa}\left(\partial^\beta f\right).
\]
The next lemma gives decay at large scales for positive derivatives, and at small scales for tests with vanishing mean.
\begin{lemma}\label{l:sandwich}
Let $L$ be a cube and $\varphi\in\Phi^{\kappa+1,1}(L)$ be supported in $L$. Then
\begin{equation}\label{e:sandwich1}
\left|\varphi\left(\partial^\nu f\right)\right|\lesssim\ell(L)^{-|\nu|}\inf_L\mathcal N_mf,\qquad 1\leq|\nu|\leq\kappa.
\end{equation}
If moreover $\ell(L)\leq1$ and $\varphi$ has vanishing mean, then
\begin{equation}\label{e:sandwich2}
\left|\varphi\left(\partial^\nu f\right)\right|\lesssim\ell(L)\left(1+\log\frac1{\ell(L)}\right)\inf_LG,\qquad 1\leq|\nu|<\kappa.
\end{equation}
\end{lemma}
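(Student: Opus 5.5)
For \eqref{e:sandwich1} I will integrate by parts and then subtract a polynomial. Write $\varphi(\partial^\nu f)=(-1)^{|\nu|}(\partial^\nu\varphi)(f)$. Since $\varphi\in\Phi^{\kappa+1,1}(L)$ is supported in $L$ and $|\nu|\leq\kappa$, the function $\ell(L)^{|\nu|}\partial^\nu\varphi$ belongs to $C\Phi^{\tau,1}(L)$ and is supported in $L$; here I use $\tau<\delta\leq1$, so only one further Hölder derivative is needed. Because $|\nu|\geq1$, the test $\partial^\nu\varphi$ annihilates $\P^{|\nu|-1}$, but not necessarily $\P^{m-1}$. So I will not subtract $\PM^m_{3L}f$ directly. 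Instead I split
\[
f=\big(f-\PM^m_{3L}f\big)+\PM^m_{3L}f .
\]
The first piece, tested at any $x\in L$ with the cube $L\subseteq 3L$, gives $\ell(L)^{-|\nu|}\M^\tau_{3L}[f-\PM^m_{3L}f](x)$. To turn a pointwise bound into a percentile bound, I note that every test with cube $L$ is admissible at every point of $L$. Hence $|(\partial^\nu\varphi)(f-\PM^m_{3L}f)|\lesssim\ell(L)^{-|\nu|}\inf_L\M^\tau_{3L}[\cdots]\leq\ell(L)^{-|\nu|}\nat{f}_{m,L}$, since the infimum is below any percentile.

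The polynomial piece is the delicate one. We have $(\partial^\nu\varphi)(\PM^m_{3L}f)=(-1)^{|\nu|}\varphi(\partial^\nu\PM^m_{3L}f)$, and $|\partial^\nu\pi|\lesssim\ell(L)^{-|\nu|}\|\pi\|_{L^\infty(3L)}$ by Markov's inequality for polynomials. But $\|\PM^m_{3L}f\|_\infty$ is not controlled by $\nat{f}_{m,L}$, since a polynomial has zero oscillation. My fix is to telescope the projections up the scales: $L=L_0\subset L_1\subset\cdots$ with $\ell(L_i)=2^i\ell(L)$, until $L_N$ is so large that $|L_N\cap\supp f|\leq\frac14|L_N|$. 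Then Lemma~\ref{l:po}(iv) gives $\|\PM^m_{3L_N}f\|_{L^\infty(3L_N)}\lesssim\nat{f}_{m,L_N}$. Each difference $\PM^m_{3L_i}f-\PM^m_{3L_{i+1}}f=\PM^m_{3L_i}(f-\PM^m_{3L_{i+1}}f)$ is bounded on $3L_i$ by $\inf_{L_i}\M^\tau_{3L_{i+1}}[\cdots]\lesssim\nat{f}_{m,L_{i+1}}$, by Lemma~\ref{l:proj}(ii). Its $\nu$-derivative on $L$ then costs $\ell(L_i)^{-|\nu|}$, again by Markov's inequality. For the final polynomial I use the same Markov estimate on $3L_N$. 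Summing, the factors $2^{-i|\nu|}$ form a convergent geometric series precisely because $|\nu|\geq1$. Since each $\nat{f}_{m,L_i}\leq\inf_L\mathcal N_mf$ as $L\subseteq L_i$, this yields \eqref{e:sandwich1}.

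For \eqref{e:sandwich2}, where $\varphi$ has vanishing mean and $\ell(L)\leq1$, I interpolate between $f$ and $\nabla^\kappa f$ using the decomposition of Lemma~\ref{l:sandwichperc}(ii). Write $\varphi=\varphi*\xi_{s}+\sum_i\varphi*(\xi_{2^{-i-1}s}-\xi_{2^{-i}s})$ with mollifiers $\xi$ at scales starting from $s=1$ and going down to $\ell(L)$.
\begin{itemize}
\item The coarse pieces, at scales $t\in[\ell(L),1]$, are handled by moving all derivatives onto the mollifier. Using the vanishing mean of $\varphi$, they are bounded by \eqref{e:sandwich1}-type estimates at scale $t$ with an extra factor $\ell(L)/t$. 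This is roughly $\ell(L)\,t^{-1-|\nu|}\times$(the contribution of $f$), which is too weak.
\end{itemize}
So I will organize it differently. On each dyadic scale $t\in[\ell(L),1]$, I will bound the $t$-frequency piece of $\partial^\nu f$ by $t^{\kappa-|\nu|}\cdot t$-scale oscillation of $\nabla^\kappa f$. I will also bound it by $t^{-|\nu|}\cdot$oscillation of $f$, choosing whichever is better; the vanishing mean of $\varphi$ provides a gain of $\ell(L)/t$. With $1\leq|\nu|<\kappa$, summing $\ell(L)\,t^{-1}\min\{t^{\kappa-|\nu|},t^{-|\nu|}\}$ over $\ell(L)\leq t\leq1$ gives one factor per scale, hence the logarithm $\ell(L)(1+\log\frac1{\ell(L)})$. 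Scales below $\ell(L)$ are handled through $\nabla^\kappa f$ with the geometric gain $(t/\ell(L))^{\kappa-|\nu|}$, and scales above $1$ through $f$ with the gain $t^{-|\nu|}$. Each oscillation is dominated by $\inf_LG$ via the polynomial telescoping above, applied also to $\partial^\beta f$ with order $m-\kappa$.

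The main obstacle is this bookkeeping for \eqref{e:sandwich2}: getting exactly one factor $\ell(L)$ per scale, so that the sum is logarithmic and not worse. If the direct scale decomposition does not close, I would first try Lemma~\ref{l:anti} once on $\varphi$, writing $\varphi=\ell(L)\sum_j\partial_j\varphi_j$. This converts the task into \eqref{e:sandwich1}-type bounds for $\partial^{\nu+e_j}f$ with $|\nu+e_j|\leq\kappa$, tested by noncancellative $\varphi_j$, and the scale-by-scale estimate above is then applied to those.
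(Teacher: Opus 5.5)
Your proof of \eqref{e:sandwich1} is correct and follows the paper's argument. You integrate by parts, test $f-\PM^m_{3L}f$ directly, and telescope the projections $\PM^m_{3L_i}f$, with Markov's inequality supplying the summable factors $\ell(L_i)^{-|\nu|}$. Stopping at a large $L_N$ via Lemma~\ref{l:po}(iv), rather than letting $i\to\infty$ as the paper does, is immaterial.

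For \eqref{e:sandwich2} there is a genuine gap, exactly at the bookkeeping you flag. When $m>\kappa$, your bound on the pieces at scale $t\in[\ell(L),1]$ needs $|\psi(\partial^\beta f)|\lesssim\inf_LG$, $|\beta|=\kappa$, for tests $\psi$ at scale $t$. But $G$ controls $\partial^\beta f$ only modulo $\P^{m-\kappa-1}$, so the polynomial part must be handled separately.

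The ``polynomial telescoping above'' does not handle it. In \eqref{e:sandwich1} that telescoping converges only because $\partial^\nu$, $|\nu|\geq1$, falls on the polynomials. Here nothing differentiates $\PM^{m-\kappa}_{3L_i}\partial^\beta f$, so the terms $\nat{\partial^\beta f}_{m-\kappa,L_{i+1}}$ carry no decay.
\begin{itemize}
\item Telescoping up to the support of $f$ gives a bound that is not uniform in $f$.
\item Stopping at the unit scale with \eqref{e:taylorsize}, as the paper does at the top order of \eqref{e:sandwich3}, costs $1+\log\frac1t$ per scale. For $|\nu|=\kappa-1$ this sums to $\ell(L)\log^2\frac1{\ell(L)}$, which is the ``worse'' outcome you feared.
\end{itemize}

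The missing idea is to keep the cancellation at each scale. Write $\xi_{t/2}-\xi_t=t^n\sum_{|\mu|=n}\partial^\mu\zeta_\mu$ with $n=\kappa-|\nu|$, as in Lemma~\ref{l:sandwichperc}(ii), and $\varphi=\ell(L)\sum_k\partial_k\varphi_k$ by Lemma~\ref{l:anti}. For $t\geq\ell(L)$, the functions paired with $\partial^{\mu+\nu}f$ are then $\varphi*\zeta_\mu=\frac{\ell(L)}t\sum_k\varphi_k*(t\partial_k\zeta_\mu)$, and each summand still has vanishing mean. For a mean zero test $\psi$ supported in a cube $L'\supseteq L$, one has $|\psi(\pi)|\lesssim\ell(L')\|\nabla\pi\|_{L^\infty(L')}$ for polynomials $\pi$. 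Hence the telescoping differences contribute $\lesssim2^{-i}\inf_LG$, and the top-order bound holds with no logarithm. Your sum then closes, with the logarithm coming only from the number of scales. Completed this way, your argument is a genuinely different route from the paper's.

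Your fallback does not close the gap as stated. After one anti-integration, \eqref{e:sandwich1} gives only $\ell(L)^{-|\nu|-1}$ for the noncancellative $\varphi_j$. What is needed is the scale-uniform bound \eqref{e:sandwich3}, which the paper proves by downward induction on $|\nu|$:
\begin{itemize}
\item At $|\nu|=\kappa$, it telescopes $\PM^{m-\kappa}_{3L_i}\partial^\nu f$ up to the unit cube $L_n$. This uses $n\lesssim1+\log\frac1{\ell(L)}$ terms, each $\lesssim\inf_LG$, and \eqref{e:taylorsize} on $L_n$.
\item At lower orders, it splits $\varphi=(\varphi-a\vartheta_L)+a\sum_{i<n}(\vartheta_{L_i}-\vartheta_{L_{i+1}})+a\vartheta_{L_n}$. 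It applies \eqref{e:sandwich2} to the mean zero pieces and \eqref{e:sandwich1} to the last one.
\end{itemize}
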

\begin{proof}
Let $L_j\coloneqq2^jL$ and $P_j\coloneqq\PM^m_{3L_j}f$, $j\geq0$. By \eqref{e:projdiff}, $P_j-P_{j+1}=\PM^m_{3L_j}(f-P_{j+1})$, so that Lemma \ref{l:proj}(ii) on the cube $3L_j$, the inequality $\M^\tau_{3L_j}\leq\M^\tau_{3L_{j+1}}$ on $3L_j$, and $|L_j|>r|L_{j+1}|$ give
\begin{equation}\label{e:polytel}
\sup_{9L_j}\left|P_j-P_{j+1}\right|\lesssim\inf_{L_j}\M^\tau_{3L_{j+1}}\left[f-P_{j+1}\right]\leq\nat{f}_{m,L_{j+1}}\leq\inf_L\mathcal N_mf .
\end{equation}
Let $1\leq|\nu|\leq\kappa$. As $f$ has compact support, $\partial^\nu P_j\to0$ uniformly on $L$, by \eqref{e:proj}. Telescoping, and applying Markov's inequality for polynomials of degree less than $m$ on the cubes $9L_j\supseteq L$, \eqref{e:polytel} yields
\begin{equation}\label{e:taylorsize}
\left\|\partial^\nu P_0\right\|_{L^\infty(L)}\lesssim\sum_{j\geq0}\ell(L_j)^{-|\nu|}\inf_L\mathcal N_mf\lesssim\ell(L)^{-|\nu|}\inf_L\mathcal N_mf .
\end{equation}
As $\ell(L)^{|\nu|}\partial^\nu\varphi\in C\Phi^{\tau,1}(L)$ is supported in $L$, \eqref{e:max}, tested at every point of $L$, gives
\[
\left|\varphi\left(\partial^\nu(f-P_0)\right)\right|=\left|\left(\partial^\nu\varphi\right)(f-P_0)\right|\lesssim\ell(L)^{-|\nu|}\inf_L\M^\tau_{3L}\left[f-P_0\right]\leq\ell(L)^{-|\nu|}\nat{f}_{m,L},
\]
while $|\varphi(\partial^\nu P_0)|\leq\|\partial^\nu P_0\|_{L^\infty(L)}$. With \eqref{e:taylorsize}, this proves \eqref{e:sandwich1}.

For \eqref{e:sandwich2}, let $n\geq0$ be the least integer with $2^n\ell(L)\geq\frac12$. Then $\frac12\leq\ell(L_n)\leq1$ and $n\lesssim1+\log(1/\ell(L))$. We prove the claim by downward induction on $|\nu|$, together with the bound
\begin{equation}\label{e:sandwich3}
\left|\varphi\left(\partial^\nu f\right)\right|\lesssim\left(1+\log\frac1{\ell(L)}\right)\inf_LG,\qquad 1\leq|\nu|\leq\kappa,
\end{equation}
for general tests $\varphi\in\Phi^{\kappa+1,1}(L)$ supported in $L$. Throughout, $\inf_{L_i}G\leq\inf_LG$.

Let first $|\nu|=\kappa$. If $m=\kappa$, \eqref{e:max}, tested at every point of $L$, gives $|\varphi(\partial^\nu f)|\lesssim\nat{\partial^\nu f}_{0,L}\leq\inf_LG$. If $m>\kappa$, let $g\coloneqq\partial^\nu f$ and $\pi_i\coloneqq\PM^{m-\kappa}_{3L_i}g$, which equals $\partial^\nu\PM^m_{3L_i}f$, by Lemma \ref{l:proj}(iii). Then
\[
\varphi(g)=\varphi\left(g-\pi_0\right)+\sum_{i<n}\varphi\left(\pi_i-\pi_{i+1}\right)+\varphi\left(\pi_n\right),
\]
where the first term is $\lesssim\nat{g}_{m-\kappa,L}\leq\inf_LG$, as above, each summand is $\lesssim\nat{g}_{m-\kappa,L_{i+1}}\leq\inf_LG$, by \eqref{e:polytel} with $g$ and $m-\kappa$ in place of $f$ and $m$, and the last term is $\lesssim\ell(L_n)^{-\kappa}\inf_{L_n}\mathcal N_mf\lesssim\inf_LG$, by \eqref{e:taylorsize} on the cube $L_n$. This proves \eqref{e:sandwich3} for $|\nu|=\kappa$.

Let now $1\leq|\nu|<\kappa$, and assume \eqref{e:sandwich3} for the multi-indices of length $|\nu|+1$. If $\varphi$ has vanishing mean, Lemma \ref{l:anti} with $m=1$ writes
\[
\varphi=\ell(L)\sum_{i=1}^d\partial_i\varphi_i,\qquad \varphi_i\in C\Phi^{\kappa+1,1}(L),\quad \supp\varphi_i\subseteq L,
\]
and \eqref{e:sandwich3} for $\nu+e_i$ gives \eqref{e:sandwich2} for $\nu$. Let $a\coloneqq\int\varphi$, so that $|a|\leq1$, and write
\[
\varphi=\left(\varphi-a\vartheta_L\right)+a\sum_{i<n}\left(\vartheta_{L_i}-\vartheta_{L_{i+1}}\right)+a\vartheta_{L_n},
\]
where the first two terms are mean zero elements of $C\Phi^{\kappa+1,1}(L)$, respectively $C\Phi^{\kappa+1,1}(L_{i+1})$, supported in $L$, respectively $L_{i+1}$. By \eqref{e:sandwich2} for $\nu$, their contribution to $\varphi(\partial^\nu f)$ is at most a constant multiple of
\[
\sum_{i\leq n}\ell(L_i)\left(1+\log\frac1{\ell(L_i)}\right)\inf_{L_i}G\lesssim\inf_LG,
\]
and the last term contributes $\lesssim\ell(L_n)^{-|\nu|}\inf_{L_n}\mathcal N_mf\lesssim\inf_LG$, by \eqref{e:sandwich1}. This proves \eqref{e:sandwich3} for $\nu$.
\end{proof}

We also need the weighted John--Nirenberg inequality for the localized pieces of a $\mathrm{BMO}$ function.
\begin{lemma}\label{l:wJN}
Let \[\mathfrak b\in\mathrm{BMO}(\R^d), \qquad L\in\mathcal D, \qquad  \mathfrak b_L\coloneqq\sum_{S\in\mathcal D,\,S\subseteq L}|S|\psi_S(\mathfrak b)\psi_S.\]
 For $0<p<\infty$ and $w\in A_\infty$,
\[
\int_{\R^d}|\mathfrak b_L|^pw\leq C\,\Gamma(p+1)\left(C[w]_{A_\infty}\|\mathfrak b\|_{\mathrm{BMO}}\right)^pw(\w L),
\]
with an admissible constant $C$. Here $\w\geq1$ is the dilation parameter of \S\ref{ss:classes}, so that $\w L$ contains the supports of the wavelets $\psi_S$, $S\subseteq L$, and thus that of $\mathfrak b_L$.
\end{lemma}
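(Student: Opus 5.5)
We prove the bound in two steps: an unweighted exponential-integrability bound for $\mathfrak b_L$, and then a transfer to $w$ through the $A_\infty$ decay \eqref{e:Ainfty}. By homogeneity we normalize $\|\mathfrak b\|_{\mathrm{BMO}}=1$. Two facts about $\mathfrak b_L$ will be used.

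First, the wavelet coefficients of $\mathfrak b_L$ are $\psi_S(\mathfrak b)\ind_{\{S\subseteq L\}}$. A restriction of a Carleson sequence is still Carleson, so $\mathfrak b_L\in\mathrm{BMO}(\R^d)$ with $\|\mathfrak b_L\|_{\mathrm{BMO}}\lesssim1$, by the characterization of \cite{MC97} used in Lemma \ref{l:carleson}.

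Second, $\mathfrak b_L$ is supported in $\w L$ and its mean over $\w L$ is controlled. Since $\mathfrak b_L\in L^2$ is supported in $\w L$ and $\|\mathfrak b_L\|_2^2=\sum_{S\subseteq L}|S||\psi_S(\mathfrak b)|^2\lesssim|L|$, Cauchy--Schwarz gives
\[
\langle|\mathfrak b_L|\rangle_{\w L}\lesssim1 .
\]

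For the unweighted step, we would ideally apply John--Nirenberg on the cube $2\w L$ to obtain
\[
|\{x\in \w L:|\mathfrak b_L(x)|>\lambda\}|\lesssim e^{-c\lambda}|\w L|.
\]
This is the step we expect to be the main obstacle, because John--Nirenberg controls $\mathfrak b_L$ minus its mean, and we need that mean to be bounded. Our plan handles this as follows. Apply John--Nirenberg on $\w L$ to $\mathfrak b_L-\langle\mathfrak b_L\rangle_{\w L}$; the constant $\langle \mathfrak b_L\rangle_{\w L}$ is bounded by $\langle|\mathfrak b_L|\rangle_{\w L}\lesssim1$ from the second fact. This gives the distributional bound above on $\w L$ after shifting $\lambda$ by an admissible constant. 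Off $\w L$ there is nothing to estimate, since $\mathfrak b_L=0$ there.

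For the weighted step, we transfer to $w$ with \eqref{e:Ainfty}, applied to $E_\lambda\coloneqq\{x\in\w L:|\mathfrak b_L|>\lambda\}\subseteq\w L$:
\[
w(E_\lambda)\leq 2\left(Ce^{-c\lambda}\right)^{1/(c_d[w]_{A_\infty})}w(\w L)\lesssim \exp\!\left(-\frac{c'\lambda}{[w]_{A_\infty}}\right)w(\w L),
\]
where the factor $C^{1/(c_d[w]_{A_\infty})}\leq C$ is absorbed since $[w]_{A_\infty}\geq1$. Finally, the layer-cake formula gives
\[
\int|\mathfrak b_L|^pw=\int_0^\infty p\lambda^{p-1}w(E_\lambda)\,\d\lambda\lesssim w(\w L)\,p\int_0^\infty\lambda^{p-1}e^{-c'\lambda/[w]_{A_\infty}}\,\d\lambda=\Gamma(p+1)\left(\frac{[w]_{A_\infty}}{c'}\right)^pw(\w L).
\]
Undoing the normalization restores the factor $\|\mathfrak b\|_{\mathrm{BMO}}^p$, which is the claim with an admissible $C$.
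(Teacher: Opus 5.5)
Your proposal is correct and follows the paper's route: the Carleson characterization gives $\|\mathfrak b_L\|_{\mathrm{BMO}}\lesssim\|\mathfrak b\|_{\mathrm{BMO}}$, then John--Nirenberg on $\w L$, the transfer to $w$ through \eqref{e:Ainfty}, and the layer-cake formula. The only difference is that you bound $\langle\mathfrak b_L\rangle_{\w L}$ by Cauchy--Schwarz, whereas the paper observes that this mean actually vanishes, because $\mathfrak b_L$ is supported in $\w L$ and is an $L^2(\w L)$-convergent series of mean-zero wavelets; so the step you expected to be the main obstacle does not arise.
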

\begin{proof}
The wavelet coefficients of $\mathfrak b_L$ are a subsequence of those of $\mathfrak b$, so that $\mathfrak b_L\in L^2(\R^d)$ and $\|\mathfrak b_L\|_{\mathrm{BMO}}\lesssim\|\mathfrak b\|_{\mathrm{BMO}}$, by the Carleson characterization of $\mathrm{BMO}(\R^d)$ \cite{MC97}. Moreover, $\mathfrak b_L$ is supported in $\w L$, and it has vanishing mean, as each $\psi_S$ does and the series converges in $L^2(\w L)$. Thus the John--Nirenberg inequality on the cube $\w L$ and \eqref{e:Ainfty} give
\[
w\left(\left\{|\mathfrak b_L|>\lambda\right\}\right)\lesssim\exp\left(-\frac{\lambda}{C[w]_{A_\infty}\|\mathfrak b\|_{\mathrm{BMO}}}\right)w(\w L),\qquad\lambda>0,
\]
and integration against $p\lambda^{p-1}\,\d\lambda$ concludes the proof.
\end{proof}

Combining Lemmas \ref{l:wJN} and \ref{l:sandwich} gives the intermediate-channel estimate.
\begin{lemma}\label{l:middle}
Let $1\leq|\gamma|<\kappa$ and $\mathfrak b\in\mathrm{BMO}(\R^d)$. For $0<p<\infty$ and $w\in A_\infty$,
\[
\left\|P_{\mathfrak b}\left(\partial^\gamma f\right)\right\|_{L^p(w)}\lesssim_p[w]_{A_\infty}\|\mathfrak b\|_{\mathrm{BMO}}\|G\|_{L^p(w)},
\]
with an implied constant growing at most linearly in $p$ as $p\to\infty$.
\end{lemma}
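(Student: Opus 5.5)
The plan is to expand $P_{\mathfrak b}(\partial^\gamma f)=\sum_{S}|S|\psi_S(\mathfrak b)\vartheta_S(\partial^\gamma f)\psi_S$ and separate the scales at one. The coefficients $\vartheta_S(\partial^\gamma f)$ are controlled by Lemma \ref{l:sandwich}. \eqref{e:sandwich1} bounds them by $\ell(S)^{-|\gamma|}\inf_S\mathcal N_mf$, which decays at large scales. At small scales I would use the telescoping $\vartheta_S=\vartheta_{L}+\sum(\vartheta_{S_i}-\vartheta_{S_{i+1}})$ along dyadic ancestors up to the ancestor $L$ of unit sidelength. The mean-zero differences are handled by \eqref{e:sandwich2}, and the top piece by \eqref{e:sandwich1} on $L$. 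The outcome should be the bound $|\vartheta_S(\partial^\gamma f)|\lesssim \inf_S G$ for $\ell(S)\le 1$, and the variation $|\vartheta_S(\partial^\gamma f)-\vartheta_{\widehat S}(\partial^\gamma f)|\lesssim \ell(S)(1+\log\frac1{\ell(S)})\inf_S G$.

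\emph{Large scales.} The part with $\ell(S)>1$ has coefficients bounded by $|\psi_S(\mathfrak b)|\,\ell(S)^{-|\gamma|}\inf_S\mathcal N_mf\lesssim\|\mathfrak b\|_{\mathrm{BMO}}\ell(S)^{-1}\inf_S G$. Since the $\psi_S$ at a given scale have bounded overlap and $|S||\psi_S|\lesssim1$, this part is pointwise dominated by $\|\mathfrak b\|_{\mathrm{BMO}}\sum_{t>1}t^{-1}\sup\{\inf_S G:\ell(S)=t,\ x\in\w S\}$. Each such infimum is at most $\mathcal N_mf$ evaluated at points near $x$, at distance $\lesssim t$. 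A cleaner route is to bound $\inf_S G\le G(y)$ for the nearest appropriate point. This gives a sparse-type, indeed fractional-like, average that I would treat as $\lesssim\|\mathfrak b\|_{\mathrm{BMO}}\sum_t t^{-1}\inf_{\w S}G$. I would then dominate it by a sum over a sparse family, one cube per scale, and apply Lemma \ref{l:Ainf}. This piece costs only $[w]_{A_\infty}$ and contributes $\|G\|_{L^p(w)}$.

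\emph{Small scales: the main obstacle.} For $\ell(S)\le1$ I would use the coefficients' structure in a stopping argument. Partition into unit dyadic cubes $L$ and write the contribution on $L$ as an Abel-summed paraproduct. By summation by parts in the scale, it becomes $\vartheta_{S}(\partial^\gamma f)$-weighted partial sums of $\mathfrak b_L$-type localized pieces. More concretely, I would run a stopping-time on $L$ driven by the level sets of $G$: the stopping cubes $\mathcal S$ are maximal subcubes where $\inf G$ doubles relative to the parent stopping cube, as in Proposition \ref{p:principle}. On each stopping cube $P$, the coefficients $\vartheta_S(\partial^\gamma f)$ for $S$ between $P$ and its stopping children vary by at most $\inf_P G$. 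Hence the piece $\sum_{S\in\text{tree}(P)}|S|\psi_S(\mathfrak b)\vartheta_S(\partial^\gamma f)\psi_S$ equals $\vartheta_P(\partial^\gamma f)\mathfrak b_{P}$ minus the children's parts, plus a remainder. The summable factor $\ell(S)\log(1/\ell(S))$ in the variation controls this remainder by a bounded-coefficient, mean-zero model, which is bounded by $\inf_P G$ times a function with exponential $w$-integrability. Lemma \ref{l:wJN} then gives $\int|\mathfrak b_P|^pw\lesssim (C[w]_{A_\infty}\|\mathfrak b\|_{\mathrm{BMO}})^p\Gamma(p+1)w(\w P)$. I expect the hardest step to be summing these over the sparse tree without losing more than one factor of $[w]_{A_\infty}$. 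The plan is to combine the pieces as $\sum_{P}\lambda_P|\mathfrak b_P^{\flat}|$ with $\lambda_P\lesssim\inf_P G$. Here $\mathfrak b_P^\flat$ is the pruned piece, supported in $\w P$, and I would use that the sparse family has geometric decay in $P$. I would then argue as in Lemma \ref{l:goodl}, by a good-$\lambda$ or distributional estimate. The John--Nirenberg tail from Lemma \ref{l:wJN} supplies the exponential decay that replaces the counting function $\mathcal C_P$. The $\Gamma(p+1)^{1/p}\sim p$ factor accounts for the linear growth in $p$.
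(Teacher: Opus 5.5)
You have a genuine gap in the small-scale step. Your coefficient bounds are right. The large scales need no sparse family: \eqref{e:sandwich1} on $\w S$, which contains every $x$ with $\psi_S(x)\neq0$, gives $|\vartheta_S(\partial^\gamma f)|\lesssim\ell(S)^{-|\gamma|}G(x)$, so that part is pointwise $\lesssim\|\mathfrak b\|_{\mathrm{BMO}}G(x)$. The cubes of all sidelengths $>1$ do not form a sparse family anyway.

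The small-scale step stops exactly where you say the difficulty lies, and the route you propose there is not shown to work. Stopping where $\inf G$ doubles does not produce a sparse family, since the children may cover almost all of $P$. More seriously, once absolute values are inside $\sum_P\lambda_P|\mathfrak b^\flat_P|$, a good-$\lambda$ argument that uses only mean zero, localization and John--Nirenberg tails loses $[w]_{A_\infty}^2$. Take the sparse chain $P_j$ of cubes of side $2^{-j}$ centered at $0$, with pieces $(b-\langle b\rangle_{P_j})\ind_{P_j}$, $b=\log_2\frac1{|x|}$, and $\lambda_{P_j}=1$. These pieces have exactly such tails, yet their absolute sum is $\gtrsim J^2$ where $|x|\sim2^{-J}$. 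With $w=w_\varepsilon$ its $L^p(w)$-norm is $\gtrsim_p\varepsilon^{-2}\|\ind_{P_0}\|_{L^p(w)}$, while $[w_\varepsilon]_{A_\infty}\sim\varepsilon^{-1}$ by Lemma \ref{l:power}. So your argument would have to exploit the orthogonality of the pruned pieces, which the sketch does not do.

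The missing idea is that no stopping time is needed. The decay you already derived, $|(\vartheta_L-\vartheta_{\widehat L})(\partial^\gamma f)|\lesssim\varrho(\ell(L))\inf_{\w\widehat L}G$ with $\varrho(t)=t(1+\log\frac1t)$, is summable in the scale. Abel-sum along \emph{all} dyadic ancestors up to sidelength one. This gives $\sum_{\ell(L)\leq1}a_L\mathfrak b_L$, with $a_L=\vartheta_L(\partial^\gamma f)$ if $\ell(L)=1$ and $a_L=(\vartheta_L-\vartheta_{\widehat L})(\partial^\gamma f)$ otherwise, so that $|a_L|\lesssim\varrho(\ell(L))G(y)$ for every $y\in\w L$. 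At a fixed scale $t\leq1$ the cubes $\w L$ have bounded overlap, and Lemma \ref{l:wJN} gives
\[
\int\Big|\sum_{\ell(L)=t}a_L\mathfrak b_L\Big|^pw\leq C^{p+1}\Gamma(p+1)\big(\varrho(t)[w]_{A_\infty}\|\mathfrak b\|_{\mathrm{BMO}}\big)^p\|G\|_{L^p(w)}^p .
\]
The scales are then summed by Minkowski's inequality for $p\geq1$, or by $p$-subadditivity for $p<1$, since $\varrho$ and $\varrho^p$ are summable over $t\leq1$. Convergence is handled by finite partial sums and Fatou's lemma. The factor $[w]_{A_\infty}$ enters only once, through John--Nirenberg, and $\Gamma(p+1)^{1/p}\lesssim p$ gives the linear growth in $p$.
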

\begin{proof}
Set $g\coloneqq\partial^\gamma f$ and $\varrho(t)\coloneqq t(1+\log(1/t))$, $0<t\leq1$, and write $P_{\mathfrak b,>1}$ for the restriction to $\ell(S)>1$. By \eqref{e:sandwich1} on $\w S$, $|\vartheta_S(g)|\lesssim\ell(S)^{-|\gamma|}\inf_{\w S}G$. Hence \eqref{e:count} and $|\psi_S(\mathfrak b)|\lesssim\|\mathfrak b\|_{\mathrm{BMO}}$ give
\[
|P_{\mathfrak b,>1}g(x)|\lesssim\|\mathfrak b\|_{\mathrm{BMO}}G(x)\sum_{t>1}t^{-|\gamma|}\lesssim\|\mathfrak b\|_{\mathrm{BMO}}G(x),
\]
where $t$ ranges over $2^{\Z}$.

For $\ell(S)\leq1$, telescoping $\vartheta_S(g)$ along the dyadic ancestors of $S$ up to the sidelength one gives, in the sense of distributions,
\[
\sum_{\ell(S)\leq1}|S|\psi_S(\mathfrak b)\vartheta_S(g)\psi_S=\sum_{\ell(L)=1}\vartheta_L(g)\,\mathfrak b_L+\sum_{\ell(L)<1}\left(\vartheta_L-\vartheta_{\widehat L}\right)(g)\,\mathfrak b_L\eqqcolon\sum_{\ell(L)\leq1}a_L\mathfrak b_L,
\]
the sums being taken over $L\in\mathcal D$, with $\mathfrak b_L$ as in Lemma \ref{l:wJN}. The last series converges in $L^2_{\mathrm{loc}}(\R^d)$, as $|a_L|\lesssim\ell(L)\|\nabla\partial^\gamma f\|_\infty$ for $\ell(L)<1$ and $\|\mathfrak b_L\|_2\lesssim\|\mathfrak b\|_{\mathrm{BMO}}|L|^{\frac12}$; at each scale the dilates $\w L$ have bounded overlap. The estimates below apply to finite partial sums, and an almost everywhere convergent subsequence and Fatou's lemma conclude. If $\ell(L)=1$, \eqref{e:sandwich1} on the cube $\w L$ gives $|a_L|\lesssim\inf_{\w L}G$. If $\ell(L)<1$, then $\vartheta_L-\vartheta_{\widehat L}$ is a mean zero element of $C\Phi^{\kappa+1,1}(\w\widehat L)$ supported in $\w\widehat L$, and \eqref{e:sandwich2} on that cube, or \eqref{e:sandwich1} if $\ell(\w\widehat L)>1$, gives $|a_L|\lesssim\varrho(\ell(L))\inf_{\w\widehat L}G$. As $\w L\subseteq\w\widehat L$, in all cases
\[
|a_L|\lesssim\varrho\left(\ell(L)\right)G(y),\qquad y\in\w L .
\]
Fix $t\in2^{\Z}$ with $t\leq1$. At each point, at most $(\w+1)^d$ of the functions $\mathfrak b_L$ with $\ell(L)=t$ do not vanish, by \eqref{e:count}. Hence, by Lemma \ref{l:wJN} and the last display, for an admissible $C$,
\[
\begin{split}
\int_{\R^d}\Big|\sum_{\ell(L)=t}a_L\mathfrak b_L\Big|^pw&\leq C^p\sum_{\ell(L)=t}|a_L|^p\int_{\R^d}|\mathfrak b_L|^pw\\
&\leq C^{p+1}\,\Gamma(p+1)\left(\varrho(t)[w]_{A_\infty}\|\mathfrak b\|_{\mathrm{BMO}}\right)^p\sum_{\ell(L)=t}\int_{\w L}G^pw,
\end{split}
\]
and the last sum is at most $(\w+1)^d\|G\|^p_{L^p(w)}$, by \eqref{e:count} again. If $p\geq1$, Minkowski's inequality over $t$, and, if $p<1$, the subadditivity of the $p$-th power of the quasi-norm, conclude the proof, as $\varrho$ and $\varrho^p$ are summable over $t\in2^{\Z}$, $t\leq1$, $\Gamma(p+1)^{1/p}\lesssim p$ for $p\geq1$, and $[w]_{A_\infty}\geq1$.
\end{proof}

\subsection{Proof of Theorem \ref{t:HSjet} and Proposition \ref{p:rigidity}(a)--(b)}\label{ss:HSjet}
\begin{proof}[Proof of Theorem \ref{t:HSjet}]
Let $f\in \mathcal{C}^\infty_c(\R^d)$. The decomposition \eqref{e:channels} writes $\partial^\alpha Tf$ as the sum of the intermediate paraproducts and of boundedly many terms $T_i(\partial^{\gamma_i}f)$ with $\gamma_i=0$ or $|\gamma_i|=\kappa$. By assumptions (a) and (c), and by \S\ref{ss:channels}, the latter operators satisfy the hypotheses of Theorem \ref{t:model} at order $m-|\gamma_i|$.
By Theorem \ref{t:model} and Lemma \ref{l:Ainf},
\[
\left\|T_i\left(\partial^{\gamma_i}f\right)\right\|_{L^p(w)}\lesssim_p[w]_{A_\infty}\left\|\mathcal N_{m-|\gamma_i|}\left(\partial^{\gamma_i}f\right)\right\|_{L^p(w)}\leq[w]_{A_\infty}\|G\|_{L^p(w)},
\]
with $G$ as in \S\ref{ss:middle}. If $0<|\gamma|<\kappa$, Lemma \ref{l:middle} bounds $\|P_{\mathfrak b^{\alpha,\gamma}}(\partial^\gamma f)\|_{L^p(w)}$ by a constant multiple of $[w]_{A_\infty}\|\mathfrak b^{\alpha,\gamma}\|_{\mathrm{BMO}}\|G\|_{L^p(w)}$. The quasi-triangle inequality gives \eqref{e:czextreme}, with $C_p(T)\lesssim_p1+\mathfrak B(T)$ growing at most linearly in $p$.
\end{proof}

\begin{proof}[Proof of Proposition \ref{p:rigidity}(a)--(b)]
We prove (a). Assume \eqref{e:czextreme}. With $w\equiv1$ and $p=2$, Lemma \ref{l:po}(iii) gives $\mathcal N_mf\lesssim r^{-1}\M f$ and $\mathcal N_{m-\kappa}(\partial^\beta f)\lesssim r^{-1}\M(\partial^\beta f)$, so that $\|\partial^\alpha Tf\|_2\lesssim r^{-1}C_2(T)\|f\|_{W^{\kappa,2}}$, and the top symbols belong locally to $\mathrm{BMO}(\R^d)$, by Lemma \ref{l:carleson}(i).

Let $b\coloneqq b^{\alpha,0}$, fix $x_0\in\R^d$ and $N\geq2$, let $c$ be as in Lemma \ref{l:signedlog}, $\Omega=\Omega_{x_0,N}$ and $B\coloneqq B(x_0,c)$. By Proposition \ref{p:rep}, with $\tau<\delta_1<\delta$,
\[
\partial^\alpha T\Omega=\sum_{|\beta|=\kappa}V_{\alpha,\beta}\left(\partial^\beta\Omega\right)+\sum_{|\gamma|\leq\kappa}\frac1{\gamma!}P^{\alpha,\gamma}\left(\partial^\gamma\Omega\right),
\]
where $\langle P^{\alpha,\gamma}u,v\rangle\coloneqq\Pi_{b^{\alpha,\gamma}}(u,v)$, see \eqref{e:paraform}. By \eqref{e:size}, the quantities $b^*_t$ of Lemma \ref{l:signedlog}(d) for the symbol $b^{\alpha,\gamma}$ are $\lesssim t^{|\gamma|-\kappa}$, and $t^{-\kappa}$ is summable over $t\in2^{\Z}$, $t>c$, as $\kappa\geq1$, so that Lemma \ref{l:signedlog}(d), with $\mu=\gamma$ if $\gamma\neq0$, gives, on $B$ and in the sense of distributions,
\[
\sum_{|\gamma|\leq\kappa}\frac1{\gamma!}P^{\alpha,\gamma}\left(\partial^\gamma\Omega\right)=N\,\mathfrak b_{\leq c}+O(N),\qquad\mathfrak b_{\leq c}\coloneqq\sum_{\ell(S)\leq c}|S|\,b_S\psi_S .
\]
The functions $V_{\alpha,\beta}(\partial^\beta\Omega)$ belong to $L^2(\R^d)$, and so does $\partial^\alpha T\Omega$, by \eqref{e:czextreme} with $w\equiv1$. Hence $\mathfrak b_{\leq c}\in L^2(B)$, and the last identity holds almost everywhere on $B$. This distribution is independent of $x_0$ and $N$; since $x_0$ is arbitrary, it belongs to $L^2_{\mathrm{loc}}(\R^d)$. By \eqref{e:size} and the Cauchy--Schwarz inequality at each sidelength, the series defining it is continuous in the norm of $\mathcal U_\kappa$ on the functions supported in a fixed compact set, so that Lemma \ref{l:Uj}(iii) and orthonormality give $\psi_S(\mathfrak b_{\leq c})=b_S\ind_{\{\ell(S)\leq c\}}$ for all $S\in\mathcal D$. Let $w\coloneqq w_{1/N}$, so that $[w]_{A_\infty}\lesssim N$ and $w(B)\gtrsim N$, by Lemma \ref{l:power}. The operators $V_{\alpha,\beta}$ satisfy the assumptions of Theorem \ref{t:model} of order $m-\kappa$, so that this theorem, Lemma \ref{l:Ainf}, \eqref{e:czextreme}, Lemma \ref{l:signedlog}(a)--(b) and Lemma \ref{l:power} give
\[
\left\|V_{\alpha,\beta}\left(\partial^\beta\Omega\right)\right\|_{L^2(w)}\lesssim N\left\|\mathcal N_{m-\kappa}\left(\partial^\beta\Omega\right)\right\|_{L^2(w)}\lesssim\frac{N^{\frac32}}r,\qquad\left\|\partial^\alpha T\Omega\right\|_{L^2(w)}\lesssim\frac{C_2(T)N^{\frac32}}r.
\]
Hence,
\[
\bigg(\frac1{w(B)}\int_B\left|\mathfrak b_{\leq c}\right|^2w\bigg)^{\frac12}\lesssim\frac{1+C_2(T)}r,
\]
uniformly in $x_0$ and $N\geq2$. Letting $N\to\infty$ at the points $x_0$ with $\langle|\mathfrak b_{\leq c}-\mathfrak b_{\leq c}(x_0)|^2\rangle_{B(x_0,\rho)}\to0$ as $\rho\to0$, which form a set of full measure, Lemma \ref{l:power} and Minkowski's inequality give $|\mathfrak b_{\leq c}(x_0)|\lesssim(1+C_2(T))/r$. As the cubes with $c<\ell(S)\leq1$ contribute to the high frequency part of $b$ a function bounded by a constant multiple of $c^{-\kappa}\lesssim1$, by \eqref{e:size}, $b$ belongs locally to $L^\infty(\R^d)$, and hence to $L^\infty(\R^d)$, by Lemma \ref{l:size}(ii).

We turn to (b). If $\kappa=m=1$, there are no intermediate symbols, and the conditions of Theorem \ref{t:HSjet} coincide with (i) of Theorem \ref{t:jet}, as do the conclusions of (a).
\end{proof}
Part (a) of Proposition \ref{p:rigidity} holds, with the same proof, under the weaker assumption in which the right hand side of \eqref{e:czextreme} also contains the intermediate channels $\partial^\gamma f$, $0<|\gamma|<\kappa$, through the norms $\|\mathcal N_{m-|\gamma|}(\partial^\gamma f)\|_{L^p(w)}$.

\subsection{An unbounded intermediate symbol: proof of Proposition \ref{p:rigidity}(c)}\label{ss:separation}
\begin{proof}[Proof of Proposition \ref{p:rigidity}(c)]
Let $Q_j$ be the cube of center $0$ and sidelength $q_j\coloneqq2^{-j}$, $j\geq0$. Fix a real valued $U\in\mathcal{C}^\infty_c((-\frac12,\frac12)^d)$ with $\partial^{\alpha+e_1}U(0)=1$, and let $u\coloneqq\partial_1U$ and $\upsilon\coloneqq\partial^\alpha u$, so that $u$ has vanishing mean and $\upsilon(0)=1$. Let
\[
\langle Tf,g\rangle\coloneqq\sum_{j\geq0}q_j^\kappa|Q_j|\,\vartheta_{Q_j}\left(\partial^\gamma f\right)u_{Q_j}(g),\qquad f,g\in\mathcal U_0.
\]
By \eqref{e:norm1} and integration by parts in $\vartheta_{Q_j}(\partial^\gamma f)$, the kernel $K$ of $T$ is the sum of the functions
\[
K_j(x,y)\coloneqq(-1)^{|\gamma|}q_j^{\kappa-|\gamma|-d}\,(\partial^\gamma\vartheta)\Big(\frac y{q_j}\Big)\,u\Big(\frac x{q_j}\Big),\qquad j\geq0,
\]
and $K_j(x,y)=0$ unless $|x|_\infty,|y|_\infty<\frac12q_j$. Let $0\leq i\leq k$, $x\neq y$, and $|x-x'|+|y-y'|\leq\frac12|x-y|$. The terms $K_j$ which do not vanish at $(x,y)$ or at $(x',y')$ have $|x-y|\lesssim q_j\leq1$, and
\[
\left|\nabla_x^iK_j(x,y)\right|\lesssim q_j^{\kappa-|\gamma|-d-i},\qquad\left|\nabla_x^iK_j(x',y')-\nabla_x^iK_j(x,y)\right|\lesssim q_j^{\kappa-|\gamma|-d-i}\left(\frac{|x-x'|+|y-y'|}{q_j}\right)^\delta.
\]
Summing over $q_j\gtrsim|x-y|$, as $\kappa-|\gamma|>0$, gives \eqref{e:czkernel} up to an admissible constant; moreover, $K(x,y)=0$ if $|x-y|_\infty\geq1$. For \eqref{e:wbp}, let $\zeta_R\in\Phi^{k+\delta,\Subset}(R)$ and $\xi_R\in\Phi^{\delta,\Subset}(R)$. If $q_j\leq\ell(R)$, then $|\vartheta_{Q_j}(\partial^\gamma\zeta_R)|\lesssim\ell(R)^{-|\gamma|}|R|^{-1}$ and, by the vanishing mean of $u$, $|u_{Q_j}(\xi_R)|\lesssim|R|^{-1}(q_j/\ell(R))^\delta$. If $q_j>\ell(R)$, then $|\vartheta_{Q_j}(\partial^\gamma\zeta_R)|=|(\partial^\gamma\vartheta_{Q_j})(\zeta_R)|\lesssim q_j^{-d-|\gamma|}$ and $|u_{Q_j}(\xi_R)|\lesssim q_j^{-d}$. Hence
\[
|R|\left|\langle T\zeta_R,\xi_R\rangle\right|\lesssim\frac{\ell(R)^{-|\gamma|}}{|R|}\sum_{q_j\leq\min\{1,\ell(R)\}}q_j^{\kappa+d}\Big(\frac{q_j}{\ell(R)}\Big)^\delta+|R|\sum_{\ell(R)<q_j\leq1}q_j^{\kappa-|\gamma|-d}\lesssim1,
\]
again as $\kappa-|\gamma|>0$. Finally, as $q_j^\kappa|Q_j|\|\partial^\gamma\vartheta_{Q_j}\|_2\|u_{Q_j}\|_2\lesssim q_j^{\kappa-|\gamma|}$, $T$ is bounded on $L^2(\R^d)$, hence continuous from $\mathcal U_0$ to its dual, and $T$ is a constant multiple of a Calder\'on--Zygmund operator of order $k$. As $\partial^\alpha u_{Q_j}=q_j^{-\kappa}\upsilon_{Q_j}$, \eqref{e:dalpha} and integration by parts give
\[
\partial^\alpha Tf=\sum_{j\geq0}|Q_j|\,\vartheta_{Q_j}\left(\partial^\gamma f\right)\upsilon_{Q_j},\qquad \Upsilon\coloneqq\sum_{j\geq0}|Q_j|\,\upsilon_{Q_j}=\sum_{j\geq0}\upsilon(2^j\cdot).
\]
As $\upsilon$ is supported in $(-\frac12,\frac12)^d$ and $\upsilon(0)=1$, $\Upsilon(x)=\log_2^+\frac1{2|x|_\infty}+O(1)$, so that $\Upsilon\in\mathrm{BMO}(\R^d)\setminus L^\infty(\R^d)$. By \eqref{e:polyaction}, \eqref{e:symbols} and \eqref{e:thetamom}, $b^{\alpha,\gamma}_S=\gamma!\,\psi_S(\Upsilon)$ for all $S\in\mathcal D$, and $b^{\alpha,\gamma}$ does not belong to $L^\infty(\R^d)$, by uniqueness of the $\mathrm{BMO}$ representative modulo constants. Hence the conditions (i) of Theorem \ref{t:jet} fail for every $m\geq\kappa$.

On the other hand, let $f\in \mathcal{C}^\infty_c(\R^d)$ and $g\coloneqq\partial^\gamma f$. Telescoping $\vartheta_{Q_j}(g)$ along the chain $\{Q_j\}$,
\[
\partial^\alpha Tf=\vartheta_{Q_0}(g)\Upsilon+\sum_{i\geq1}\left(\vartheta_{Q_i}-\vartheta_{Q_{i-1}}\right)(g)\,\Upsilon(2^i\cdot),
\]
where $\Upsilon(2^i\cdot)$ is supported in $Q_i$, has vanishing mean (the defining series converges in $L^1$, and $\int\upsilon=0$), and has the same $\mathrm{BMO}$ norm as $\Upsilon$, so that the John--Nirenberg inequality on $Q_i$ and \eqref{e:Ainfty} give the bound of Lemma \ref{l:wJN} for it, with $Q_i$ in place of $\w L$. By \eqref{e:sandwich1} on the cube $Q_0$ and \eqref{e:sandwich2} on the cubes $Q_{i-1}$, the coefficients are $\lesssim\inf_{Q_0}G$ and $\lesssim i\,2^{-i}\inf_{Q_{i-1}}G$. As $i2^{-i}$ and $(i2^{-i})^p$ are summable over $i\geq1$, Minkowski's inequality for $p\geq1$, or $p$-subadditivity for $p<1$, gives
\[
\left\|\partial^\alpha Tf\right\|_{L^p(w)}\lesssim_p[w]_{A_\infty}\|\Upsilon\|_{\mathrm{BMO}}\|G\|_{L^p(w)}.
\]
Thus $T$ satisfies \eqref{e:czextreme} for every $m\geq\kappa$.
\end{proof}

\section{Examples and open questions}\label{s:examples}
The examples distinguish the homogeneous, inhomogeneous and $A_p$ theories, and show why the $L^2$ assumption at the Lorentz endpoint is needed.
\begin{example}[Multiplication operators]\label{r:mult}
Let $a$ be a bounded function and $Tf\coloneqq af$. Up to a constant multiple, $T$ is a Calder\'on--Zygmund operator of every order $k$, with vanishing kernel. Let $1\leq\kappa\leq k$ and $|\alpha|=\kappa$. If $\nabla^\kappa a\in L^\infty(\R^d)$, then $\nabla^ja\in L^\infty(\R^d)$ for $0\leq j\leq\kappa$, by the Gagliardo--Nirenberg inequality, and each term of
\[
\partial^\alpha Tf=\sum_{\nu\leq\alpha}\binom\alpha\nu\partial^\nu f\,\partial^{\alpha-\nu}a
\]
is the image of $\partial^\nu f$ under the multiplication by a bounded function, an operator of order zero whose symbols $b^{0,0}$ and $b^{0,\star}$ are the wavelet coefficients of that function. Theorem \ref{t:HS} with $\kappa=0$ and $m-|\nu|$ in place of $m$, applied to each term at the level $2^{d+1}r$, gives \eqref{e:jetfull} at that level, and Lemma \ref{l:sandwichperc}, at the same level and with the cubes of sidelength at least one doubled, gives \eqref{e:jet} at the level $r$, for every $m\geq\kappa$, and thus \eqref{e:czextreme}, by Theorems \ref{t:jet} and \ref{t:HSjet}.

Conversely, $b^{\alpha,0}_S=\psi_S(\partial^\alpha a)$ in the sense of distributions, and, by Proposition \ref{p:rigidity}(a), \eqref{e:czextreme} provides $\mathfrak b\in L^\infty(\R^d)$ with these wavelet coefficients; as the expansions \eqref{e:onb} of the test functions with vanishing mean, and their termwise derivatives of order $\kappa$, converge absolutely in $L^1(\R^d)$, the distribution $\partial^\alpha a-\mathfrak b$ is constant, and $\partial^\alpha a\in L^\infty(\R^d)$. Thus \eqref{e:jet} and \eqref{e:czextreme}, for all $|\alpha|=\kappa$, are both equivalent to $\nabla^\kappa a\in L^\infty(\R^d)$: the pointwise and norm theories coincide for multiplication operators. By the same argument, the lower symbols $b^{\alpha,0}$ vanish for all $|\alpha|=\kappa$ only if $\nabla^\kappa a$ is constant, that is, as $a$ is bounded, only if $a$ is constant; otherwise, the conditions of Theorem \ref{t:HS} fail for some $|\alpha|=\kappa$, cf.\ \S\ref{ss:introjet}.
\end{example}
\begin{example}[The $L^2$ assumption at the endpoint]\label{r:D}
When $\kappa=0$, the $L^2$ boundedness in Corollary \ref{c:endpoint}(b) is not implied by \eqref{e:Linfty}. Indeed, if $\beta=\{\beta_S\}$ is a bounded sequence which is not a Carleson sequence, the operator $\Pi^*$ of Lemma \ref{l:para}(iv) has the symbol $b^{0,\star}=\beta$, which does not belong to $\mathrm{BMO}(\R^d)$, so that $\Pi^*$ is unbounded on $L^2(\R^d)$, by the $T(1)$ theorem \cite{DJ84}; however, for $d\geq2$, as $|\psi_S(f)|\lesssim\ell(S)\langle|\nabla f|\rangle_{\w S}$ by Lemma \ref{l:anti} on the cube $\w S$, we have
\[
|\Pi^*f(x)|\lesssim\sum_{S\in\mathcal D}\ell(S)\langle|\nabla f|\rangle_{\w S}\ind_S(x)
\lesssim\int_{\R^d}\frac{|\nabla f(y)|}{|x-y|^{d-1}}\,\d y.
\]
Indeed, at each scale there is one dyadic cube containing $x$, and a contributing cube has $\ell(S)\gtrsim|x-y|$; summing $\ell(S)^{1-d}$ proves the second inequality. This also gives absolute convergence almost everywhere and a locally integrable representative of $\Pi^*f$. H\"older's inequality in Lorentz spaces gives \eqref{e:Linfty} with $s=1$.
\end{example}
\begin{example}[Sobolev bounds and testing conditions]\label{r:DGW}
Let $\kappa=m=1$. The assumptions of Proposition \ref{c:czweighted}(b) do not imply \eqref{e:czextreme}. Indeed, let $\phi\in \mathcal{C}^\infty_c(\R^d)$ be equal to one near the origin, and $a(x)\coloneqq x_1\log(1/|x|)\phi(x)$, so that $a$ is bounded and $\nabla a\in\mathrm{BMO}(\R^d)$. For the multiplication by $a$ and $\alpha=e_1$,
\[
b^{e_1,0}_S=\psi_S(\partial_1a),\qquad b^{e_1,e_j}_S=\delta_{1j}\psi_S(a)+\psi_S\big((\cdot-c(S))_j\,\partial_1a\big),
\]
where the last term is $O(\ell(S))$: the test $\ell(S)^{-1}(\cdot-c(S))_j\psi_S$ has vanishing mean, is supported in $\w S$, and is bounded by $C|S|^{-1}$, so subtracting the average of $\partial_1a$ on $\w S$ gives the bound. This term therefore contributes a bounded function at the scales at most one. Hence the assumptions of Proposition \ref{c:czweighted}(b) hold, while \eqref{e:czextreme} fails, as $\partial_1a\notin L^\infty(\R^d)$, see Example \ref{r:mult}.

For a single exponent and $w\equiv1$, \eqref{e:sobolev} does not force the lower symbols to belong to $\mathrm{BMO}(\R^d)$. Indeed, fix $p>d$ and $0<\varepsilon<d/p$, and set $a_\varepsilon(x)=|x|^{1-\varepsilon}\phi(x)$. Then $a_\varepsilon$ is bounded, $\nabla a_\varepsilon\in L^p$, and Sobolev embedding gives
\[
\|a_\varepsilon f\|_{W^{1,p}}\lesssim\|a_\varepsilon\|_\infty\|f\|_{W^{1,p}}+\|\nabla a_\varepsilon\|_p\|f\|_\infty\lesssim\|f\|_{W^{1,p}}.
\]
However, $\partial_1a_\varepsilon=(1-\varepsilon)x_1|x|^{-1-\varepsilon}$ near zero has mean oscillation comparable to $\rho^{-\varepsilon}$ on balls of radius $\rho$ centered at zero, and so is not in $\mathrm{BMO}$.
\end{example}
\begin{remark}[Open questions]\label{r:open}
It remains open whether \eqref{e:czextreme} with $m>\kappa$ forces the top symbols to belong locally to $L^\infty(\R^d)$. The logarithmic argument of \S\ref{ss:Linfty} does not settle this question: the lowest-order norm of the polynomial times the logarithmic cutoff grows at the large scales used there. Restricting the tests to small scales avoids that growth but loses the comparison of centers which identifies a single limiting constant.

It also remains to characterize the intermediate symbols compatible with \eqref{e:czextreme} when $\kappa\geq2$. Theorem \ref{t:HSjet} gives the sufficient condition that they belong to $\mathrm{BMO}(\R^d)$, while Lemma \ref{l:carleson}(i) supplies the necessary Carleson conditions of order $|\gamma|-\kappa$, as \eqref{e:czextreme} with $p=2$ and $w\equiv1$ implies its assumption, by Lemma \ref{l:po}(iii).

Finally, we do not know whether \eqref{e:sobolev}, for all $1<p<\infty$ and $w\in A_p$, with the sharp dependence on $[w]_{A_p}$, forces the lower symbols to belong to $\mathrm{BMO}(\R^d)$; by Example \ref{r:DGW}, a single exponent does not.
\end{remark}

\subsection*{Declaration of generative AI and AI-assisted technologies
in the manuscript preparation process}
During the preparation of this work, the author used Claude
(Anthropic)  to assist in writing up and revising the text and the proofs from the author's drafts and outlines, in
checking the mathematical arguments, cross-references and citations,
and in  editing the \LaTeX{} source. The author reviewed
and edited the manuscript, independently verified the mathematical
arguments and references, and takes full responsibility for the
content of the article.
 \bibliographystyle{plain}
\bibliography{cancellativesparse}
\end{document}